\documentclass[12pt,a4paper,fleqn]{article}
\usepackage{a4wide,amsfonts,amsmath,latexsym,amssymb,euscript,graphicx,units,mathrsfs}

\usepackage[english]{babel}
\usepackage{graphicx}
\usepackage{color}
\usepackage{amssymb}
\usepackage{amssymb}
\usepackage[T1]{fontenc}
\usepackage{latexsym}
\usepackage{xypic}
\usepackage{eufrak}
\usepackage{euscript}
\usepackage{amsfonts,amsmath}
\usepackage{verbatim}
\usepackage{fancyhdr}
\usepackage{mathrsfs}
\usepackage{units}

\newtheorem{prop}{Proposition}[section]

\newtheorem{corollary}[prop]{Corollary}
\newtheorem{lemme}[prop]{Lemma}
\newtheorem{lemma}[prop]{Lemma}

\newtheorem{theorem}[prop]{Theorem}

\newtheorem{definition}[prop]{Definition}

\renewcommand{\geq}{\geqslant}
\def\leq{\leqslant}

\newcommand{\N}{\mathbb{N}}
\newcommand{\Z}{\mathbb{Z}}

\newcommand{\R}{\mathbb{R}}

\newcommand{\C}{\mathbb{C}}

\def\1{{\mathbf{1}}}

\def\1{{\mathbf{1}}}
\def\0.5{{\frac{1}{2}}}

\newenvironment{proof}[1]{\begin{trivlist}\item {\it
\bf Proof.}\quad} {\qed\end{trivlist}}

\newcommand{\qed}{\nopagebreak\hspace*{\fill}
{\vrule width6pt height6ptdepth0pt}\par}

\begin{document}
\thispagestyle{empty}

\begin{center}
{\bf\Large Change-of-variable formula for the bi-dimensional fractional Brownian motion in Brownian time}
\end{center}
\begin{center}
{\bf Raghid Zeineddine\footnote{Laboratoire J.A. Dieudonn\'e, UMR CNRS -UNSA  7351, Universit\'e de Nice Sophia-Antipolis; {\tt raghid.zeineddine@unice.fr}}}
\end{center}

\bigskip
\begin{abstract}
 Let $X^{1}, X^{2}$ be  two independent (two-sided) fractional Brownian motions having the same Hurst parameter $H \in (0,1)$, and let $Y$ be a standard (one-sided) Brownian motion independent of $(X^{1},X^{2})$. In dimension 2, fractional Brownian motion in Brownian motion time  (of index $H$) is, by definition, the process $Z_t:= (Z^1_t, Z^2_t)= (X^{1}_{Y_t},X^{2}_{Y_t}), \:\: t \geq 0$. 
The main result of the present paper is an It\^{o}'s type formula for $f(Z_t)$, when $f:\R^2\to\R$ is smooth and $H\in [ 1/6,1)$.
When $H>1/6$, the change-of-variable formula we obtain is similar to that of the classical calculus.
In the critical case $H=1/6$, our change-of-variable formula is in law and involves the third partial derivatives of $f$ as well as an extra Brownian motion independent of $(X^1,X^2,Y)$. We also discuss  the case $H<1/6$.
\end{abstract}
 \textbf{Keywords:} Fractional Brownian motion in Brownian time; change-of-variable formula in law; Malliavin calculus.
\tableofcontents

\section{Introduction}
Our aim in the present paper is to provide a change-of-variable formula for the fractional Brownian motion in Brownian time (fBmBt) in multi-dimension. For simplicity of the exposition and because the computations are rather involved, we will stick on dimension 2, which is representative of the difficulty. In dimension 1, the mathematical definition of fBmBt (together with its terminology) was introduced in our previous paper \cite{ItoD1}. Let us give an analogue definition in dimension 2. Set
\begin{equation}\label{fBmBt}
Z_t:= (Z^1_t, Z^2_t)= (X^{1}_{Y_t},X^{2}_{Y_t}), \:\: t \geq 0,
\end{equation}
where $X^{1}, X^{2}$ are  two independent (two-sided) fractional Brownian motions having the same Hurst parameter $H \in (0,1)$, and $Y$ is a standard (one-sided) Brownian motion independent of $(X^{1},X^{2})$. 

The present work may be seen a natural follow-up of \cite{ItoD1}, in which we proved a change-of-variable for fBmBt in dimension one, that is, for $Z^1$. 
Before stating the results we have obtained, let us start with some historical comments
and relationships with the existing literature.
When the Hurst index of the fractional Brownian motion is $H=1/2$, we note that $Z^{1}$ reduces to the iterated Brownian motion (iBm), a process introduced by Burdzy in \cite{burdzy ibm}. IBm is self-similar of order $\frac14$, has stationary increments, and it is neither a Dirichlet process, nor a semimartingale, nor a Markov process in its own filtration. A key question was therefore how to define a stochastic calculus with respect to it. A beautiful answer was given by Khoshnevisan and Lewis \cite{kh-lewis1}, who developed a Stratonovich-type stochastic calculus with respect to iBm. Recall that the Stratonovich integral of a continuous process $X$ with respect to another continuous process $Y$ may be defined (provided the limit exists in some suitable sense) as follows:
 \begin{equation}\label{symmetric}
\int_0^t X_sd^\circ Y_s := \lim_{n\to\infty} \sum_{k=0}^{\lfloor 2^n t\rfloor -1}
\frac12\big(X_{k2^{-n}}+X_{(k+1)2^{-n}}\big)(Y_{(k+1)2^{-n}}-Y_{k2^{-n}}).
\end{equation}
As observed in \cite{kh-lewis2}, in the iBm case it appears to be a very hard task to work directly with definition (\ref{symmetric}).
To circumvent this difficulty, a nice idea of Khoshnevisan and Lewis have consisted in modifying the definition (\ref{symmetric}) by replacing the uniform dyadic partition in the right-hand side by a suitable arrays of Brownian stopping times, relying to some classical excursion-theoretic arguments. Based on this new definition for the symmetric integral, Khoshnevisan and Lewis obtained, for the iBm (corresponding to $H=\frac12$) and in dimension 1, a change-of-variable formula having a classical form:
 \begin{equation}
 f(Z^{1}_t)= f(0) + \int_0^t f(Z^{1}_s)d^{\circ}Z^{1}_s, \: \: t \geq 0.\label{ito-ibm}
 \end{equation} 
 
A natural question was then to extend (\ref{ito-ibm}) for other values of $H$. We did it in the joint paper \cite{ItoD1} with Nourdin, by proving the following theorem.
 \begin{theorem}\label{1Dito}
Let $f:\R\to\R$ be a smooth and bounded enough function.
\begin{enumerate}
\item If $H>\frac16$ then
\[
f(Z^1_t)=f(0)+\int_0^t f'(Z^1_s)d^\circ Z^1_s,\quad t\geq 0.
\]

\item If $H=\frac16$ then,  with $\kappa_3\simeq 2.322$,
\begin{equation*}
f(Z^1_t)-f(0)+\frac{\kappa_3}{12}\int_0^t f'''(Z^1_s)d^{\circ 3}Z^1_s \overset{law}{=} \int_0^t f'(Z^1_s)d^\circ Z^1_s,\quad t\geq 0,
\end{equation*}
where $\int_0^t f'''(Z^1_s)d^{\circ 3}Z^1_s$ is a random variable equal in law to $\int_0^{Y_t} f'''(X^1_s)dW_s$, for $W$ a two-sided Brownian motion independent of the pair $(X^1,Y)$.

\item If $H<\frac16$, then
\[
\int_0^t (Z^1_s)^2 d^\circ Z^1_s \mbox{ does not exist (even stably in law)}.
\]
\end{enumerate}
\end{theorem}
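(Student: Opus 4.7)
The plan is to adopt the Khoshnevisan–Lewis strategy, which replaces the deterministic dyadic partition in (\ref{symmetric}) with a random partition built from stopping times of the clock $Y$. Define $T_{n,0}=0$ and $T_{n,k+1}=\inf\{s>T_{n,k}:|Y_s-Y_{T_{n,k}}|=2^{-n/2}\}$, so that $V_{n,k}:=Y_{T_{n,k}}$ is a symmetric random walk on the spatial grid $2^{-n/2}\Z$. The Stratonovich integral in (\ref{symmetric}) is then reinterpreted using the partition $\{T_{n,k}\}_k$ in place of $\{k2^{-n}\}_k$. The decisive consequence is that, conditionally on $Y$, the increments $\Delta_k Z^1:=X^{1}_{V_{n,k+1}}-X^{1}_{V_{n,k}}$ are centered Gaussian with explicit covariance structure inherited from the random walk trajectory, reducing the problem to a weighted-sum analysis for the fBm $X^1$ observed along a random grid.

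Next, I would telescope $f(Z^1_t)-f(0)$ over this partition and Taylor-expand each increment around the trapezoidal midpoint, keeping only odd-order contributions (even-order terms vanish by Gaussian symmetry up to negligible remainders). Schematically this yields
\begin{equation*}
f(Z^1_t)-f(0)=\sum_k\tfrac12\big(f'(Z^1_{T_{n,k}})+f'(Z^1_{T_{n,k+1}})\big)\Delta_k Z^1-\tfrac{1}{12}\sum_k f'''(Z^1_{T_{n,k}})(\Delta_k Z^1)^3+R_n,
\end{equation*}
where $R_n$ collects higher odd-order terms. The first sum is the prelimit of $\int_0^t f'(Z^1_s)d^\circ Z^1_s$, so the whole analysis concentrates on the cubic sum $S_n:=\sum_k f'''(Z^1_{T_{n,k}})(\Delta_k Z^1)^3$ and the negligibility of $R_n$.

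The heart of the proof is the asymptotic behavior of $S_n$. Conditionally on $Y$, decompose each cubed increment through the Hermite identity $\xi^3=H_3(\xi)+3\xi$, where $\xi$ is the standardized Gaussian associated to $\Delta_k Z^1$; the linear piece merges into the Stratonovich term, while the third-Hermite piece is a weighted Hermite sum of order $3$ for an fBm sampled on $2^{-n/2}\Z$ by the random walk $V$. The Nourdin–Peccati fourth-moment criterion, combined with Breuer–Major-type asymptotics and a delicate pairing argument that exploits the excursion structure of $V$ (opposite-direction crossings of a given bond produce cubic terms that cancel up to local-time corrections), produces the critical exponent $H=1/6$. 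Above it, $S_n\to 0$ in probability, giving Part 1. At $H=1/6$, $S_n$ converges stably in law to a conditionally Gaussian variable with variance identified, after sharp second-moment computations, as $(\kappa_3^2/144)\int_0^{Y_t}f'''(X^1_s)^2 ds$; this is represented as $(\kappa_3/12)\int_0^{Y_t}f'''(X^1_s)dW_s$ for $W$ an independent Brownian motion, giving Part 2. Below the critical exponent, taking $f(x)=x^3/3$ one shows that the conditional variance of $S_n$ blows up along every subsequence, giving Part 3. The main technical obstacle is the precise variance computation at $H=1/6$: it demands fine control of the joint law of $X^1$ along the random grid $V_{n,k}$ via Malliavin calculus, together with local-time estimates for the Brownian clock $Y$ needed to identify the limiting constant $\kappa_3$.
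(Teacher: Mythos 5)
Your overall architecture is the right one (and is the one used in \cite{ItoD1}, which this paper only quotes; the same machinery is redeployed here in Sections 2--4 for the 2D case): random partition by hitting times of $Y$, Taylor expansion keeping odd orders, Hermite decomposition of the cubic term, critical exponent $H=1/6$, stable convergence to a conditionally Gaussian correction. But there is a genuine gap at the reduction step. You propose to work \emph{conditionally on $Y$} with the increments $X^1_{V_{n,k+1}}-X^1_{V_{n,k}}$ along the random grid, invoking a ``pairing argument'' in which opposite crossings ``cancel up to local-time corrections.'' This misses the exact mechanism that makes the problem tractable: the Khoshnevisan--Lewis identity (Lemma \ref{lemme-kl} here, \cite[Lemma 2.4]{kh-lewis1}) converts the sum over the $\sim 2^n$ steps of the walk into a sum over \emph{spatial} bonds $j$, each weighted by $U_{j,n}(t)-D_{j,n}(t)$, and by \cite[Lemma 2.5]{kh-lewis1} this difference is exactly $\pm\mathbf{1}_{\{j \text{ between } 0 \text{ and } j^*(n,t)\}}$ --- there is no residual local-time term for odd powers (local time governs $U+D$, not $U-D$). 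The upshot is that the cubic sum collapses \emph{identically} to a deterministic-grid weighted power variation of $X^1$ evaluated at the random endpoint $Y_{T_{\lfloor 2^nt\rfloor,n}}\to Y_t$ (cf.\ (\ref{transforme tilde{V}pq})). Without this collapse, your random-grid analysis faces $2^n$ highly dependent terms (each bond is revisited of order $2^{n/2}$ times near the origin), and it is not clear the variance computations or the Breuer--Major asymptotics go through in that setting.

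Two further points. First, the fourth-moment criterion you invoke at $H=1/6$ does not apply as stated: the cubic sum is a \emph{weighted} third-Hermite variation, the weights are random and correlated with the increments, and the required mode of convergence is stable (to a mixed Gaussian independent of $X^1$ and $Y$). The tool actually needed is the conditional-characteristic-function method via Malliavin integration by parts (as in \cite{NRS,nourdin} and Section 3.2 here, where one verifies the PDE system (\ref{EDP}) for the limiting conditional law), together with the uniform moment bounds of the type in Lemmas \ref{lemma3}--\ref{lemma5}; the identification of $\kappa_3$ comes from the a.s.\ convergence of $\sum_{j,k}\rho^3(j-k)$-weighted Riemann sums, not from local-time estimates for $Y$. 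Second, for $H<1/6$, ``the conditional variance blows up'' does not by itself preclude stable convergence; the paper's argument is the cleaner one: if the symmetric integral of $(Z^1)^2$ existed, the algebraic identity $b^3-a^3=3(\frac{a+b}{2})^2(b-a)+\frac14(b-a)^3$ would force the cubic variation to converge, contradicting the known nondegenerate limit of $2^{-n(1-6H)/4}\tilde V_n^{(3)}$ from \cite{zeineddine}.
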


Theorem \ref{1Dito} was proved by combining some techniques introduced in \cite{kh-lewis1} with a recent line of research in which, by means of Malliavin calculus, one aims to exhibit change-of-variable formulas in law with a correction term which is an It\^o integral with respect to martingale independent of the underlying Gaussian processes. Papers dealing with this problem and which are prior to our work include \cite{HN1, HN2, HN3, nourdin, NR, NRS}.

In the present paper, our main aim is to extend Theorem \ref{1Dito} to the bi-dimensional case. To reach this goal, we follow and use a strategy introduced in \cite{nourdin} and \cite{NRS}.  
We continue to let $X^1, X^2,Y,Z$ be as in (\ref{fBmBt}), and we set $X=(X^1,X^2)$. The following definition will play a pivotal role in the sequel.

\begin{definition}\label{ivan-definition} Let $f: \R^2 \to \R$ be a continuously differentiable function, and fix a time $t >0$. \\
1) Provided it exists, we define $\int_0^t\nabla f(X_s)dX_s$ to be the limit in probability, as $n \to \infty$, of
\begin{eqnarray}
&&O_n(f,t)\label{2Dintegral}\\
&=&\sum_{j=0}^{\lfloor 2^{n/2} t \rfloor -1}\frac{\partial f}{\partial x}\bigg( \frac{X^{1}_{(j+1)2^{-n/2}} + X^{1}_{j2^{-n/2}} }{2},\frac{X^{2}_{(j+1)2^{-n/2}} + X^{2}_{j2^{-n/2}}}{2} \bigg)\big(X^{1}_{(j+1)2^{-n/2}} - X^{1}_{j2^{-n/2}}\big)\notag\\
&& + \sum_{j=0}^{\lfloor 2^{n/2} t \rfloor -1}\frac{\partial f}{\partial y}\bigg( \frac{X^{1}_{(j+1)2^{-n/2}} + X^{1}_{j2^{-n/2}} }{2},\frac{X^{2}_{(j+1)2^{-n/2}} + X^{2}_{j2^{-n/2}}}{2} \bigg)\big(X^{2}_{(j+1)2^{-n/2}} - X^{2}_{j2^{-n/2}}\big). \notag
\end{eqnarray}
2)
When $O_n(f,t)$ defined by (\ref{2Dintegral}) does not converge in probability but converges stably instead, we denote the limit by $\int_0^t\nabla f(X_s)d^{\ast}X_s$.
\end{definition}

A first preliminary result, which concerns the bi-dimensional fractional Brownian motion $X$, can now be stated. An analogue
result for the fBmBt $Z$ will be the object of the forthcoming Theorem \ref{second-main}.

\begin{theorem}\label{first-main}Let $f: \R^2 \to \R$ be a function belonging to $C_b^{\infty}$, and fix a time $t > 0$.

\begin{enumerate}

\item If $H > 1/6$ then $\int_0^t\nabla f(X_s)dX_s$ is well-defined, and we have
\begin{equation}
f(X_t) =f(0) + \int_0^t\nabla f(X_s)dX_s. \label{first-main-1}
\end{equation}

\item If $H =1/6$ then $\int_0^t\nabla f(X_s)d^{\ast}X_s$ is well-defined, and we have
\begin{equation}
f(X_t) -f(0) - \int_0^t D^3f(X_s)d^3X_s \overset{law}{=}  \int_0^t\nabla f(X_s)d^{\ast}X_s \label{first-main-2}
\end{equation}
where $\int_0^t D^3f(X_s)d^3X_s$ is short-hand for
\begin{eqnarray}
 \int_0^t D^3f(X_s)d^3X_s &=& \kappa_1\int_0^t\frac{\partial^3 f}{\partial x^3}\big(X^{1}_s,X^{2}_s\big)dB^{1}_s + \kappa_2\int_0^t\frac{\partial^3 f}{\partial y^3}\big(X^{1}_s,X^{2}_s\big)dB^{2}_s \label{additional}\\
 && + \kappa_3\int_0^t\frac{\partial^3 f}{\partial x^2 \partial y}\big(X^{1}_s,X^{2}_s\big)dB^{3}_s + \kappa_4\int_0^t\frac{\partial^3 f}{\partial x \partial y^2}\big(X^{1}_s,X^{2}_s\big)dB^{4}_s \notag
\end{eqnarray}
with $B= (B^{1}, \ldots , B^{4})$  a 4-dimensional Brownian motion independent of $X$, $\kappa_1^2 = \kappa_2^2 = \frac{1}{96}\sum_{r\in \Z}\rho^3(r)$ and $\kappa_3^2 = \kappa_4^2 = \frac{1}{32}\sum_{r\in \Z}\rho^3(r)$ with $\rho$ defined in (\ref{rho}).

\item If $H< 1/6$, for $f(x,y) =x^3$ then 
\begin{equation}
\int_0^t\nabla f(X_s)d^{\ast}X_s \: \text{\:does not exist, even stably in law.}  \label{first-main-3}
\end{equation}
So, it is impossible to write an It\^o's type formula.
\end{enumerate}

\end{theorem}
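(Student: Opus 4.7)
The plan is to expand $f(X_t)-f(0)$ telescopically along the dyadic grid $j2^{-n/2}$ using a midpoint Taylor expansion, which kills all even-order terms by symmetry. Writing $\bar X_j=\tfrac12(X_{(j+1)2^{-n/2}}+X_{j2^{-n/2}})$ and $\Delta X_j=X_{(j+1)2^{-n/2}}-X_{j2^{-n/2}}$, I would use
\begin{equation*}
f(X_{(j+1)2^{-n/2}})-f(X_{j2^{-n/2}})=\nabla f(\bar X_j)\cdot\Delta X_j+\frac{1}{24}\sum_{i=0}^{3}\binom{3}{i}\frac{\partial^3 f}{\partial x^i\partial y^{3-i}}(\bar X_j)(\Delta X^1_j)^i(\Delta X^2_j)^{3-i}+r_{n,j},
\end{equation*}
with $r_{n,j}=O(|\Delta X_j|^5)$. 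Summing over $j$ produces the master decomposition $f(X_t)-f(0)=O_n(f,t)+\tfrac{1}{24}T_n(f,t)+R_n(f,t)$, so the whole theorem reduces to understanding the asymptotic behaviour of the cubic sum $T_n$ and the fifth-order remainder $R_n$.

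Every summand in $T_n$ or $R_n$ has the form $g(\bar X_j)(\Delta X^1_j)^a(\Delta X^2_j)^b$ with $a+b\in\{3,5\}$ and $g$ a higher partial of $f$. After replacing $g(\bar X_j)$ by $g(X_{j2^{-n/2}})$ at negligible $C^1$-cost and exploiting the independence of $X^1$ and $X^2$, I would expand the normalized monomial $(\Delta X^1_j)^a(\Delta X^2_j)^b/\sigma^{a+b}$ (with $\sigma^2=2^{-nH}$) into products $H_p(U^1_j)H_q(U^2_j)$ of Hermite polynomials, with $p+q\in\{1,3\}$ (cubic case) or $\{1,3,5\}$ (quintic case). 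The rank-$1$ parts factor out $\sigma^{a+b-1}$ times a sum $\sum_j g_j\Delta X^\ell_j$ whose $L^2$-norm is uniformly bounded (since $\esp[(\sum_j\Delta X^\ell_j)^2]=t^{2H}$), and hence are $O(\sigma^2)$ for $a+b=3$ and $O(\sigma^4)$ for $a+b=5$, vanishing. For higher ranks, a Breuer--Major computation based on the orthogonality identity $\esp[H_p(U^1_j)H_q(U^2_j)H_p(U^1_k)H_q(U^2_k)]=p!q!\rho(j-k)^{p+q}$ and on the convergence of $\sum_r\rho(r)^{p+q}$ (valid in the relevant range of $H$) gives that the rank-$(p+q)$ contribution has $L^2$-norm of order $\sigma^{a+b}\cdot 2^{n/4}$. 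In particular the leading rank-$3$ term in $T_n$ is $\asymp 2^{n(1-6H)/4}$ and the leading rank-$5$ term in $R_n$ is $\asymp 2^{n(1-10H)/4}$; both vanish for $H>1/6$, which proves part (1) (the case $H>1/2$ being also accessible by classical Young integration).

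At the critical value $H=1/6$ we have $R_n\to 0$ in $L^2$ while the rank-$3$ parts of the four cubic monomials stay of order $1$, and I would then invoke the stable-convergence form of the multivariate Breuer--Major theorem from \cite{nourdin,NRS}: each of the four sums converges stably to an $X$-conditionally Gaussian random variable of the form $\int_0^t g(X_s)\,dB^\ell_s$ with $B^1,\dots,B^4$ independent standard Brownian motions independent of $X$. Computing the limit variances using $\esp[H_3(U^1_j)H_3(U^1_k)]=6\rho(j-k)^3$ and $\esp[H_2(U^1_j)U^2_jH_2(U^1_k)U^2_k]=2\rho(j-k)^3$, and tracking the combinatorial prefactors $\binom{3}{i}/24$, pins down $\kappa_1^2=\kappa_2^2=\tfrac{1}{96}\sum_r\rho(r)^3$ and $\kappa_3^2=\kappa_4^2=\tfrac{1}{32}\sum_r\rho(r)^3$. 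For part (3) with $f(x,y)=x^3$ the Taylor expansion is \emph{exact}, so $O_n(f,t)=(X^1_t)^3-\tfrac14\sum_j(\Delta X^1_j)^3$; since $(\Delta X^1_j)^3=\sigma^3 H_3(U^1_j)+3\sigma^2\Delta X^1_j$ and $\var\bigl(\sigma^3\sum_jH_3(U^1_j)\bigr)\asymp 2^{n(1-6H)/2}\to\infty$ for $H<1/6$, the sum $\sum_j(\Delta X^1_j)^3$ cannot converge in distribution, and neither can $O_n(f,t)$. The main technical obstacle throughout is the stable-convergence step at $H=1/6$: one must establish joint conditional Gaussianity of the four cubic monomials, asymptotic independence of the four limit Brownian motions from $X$, and the exact identification of the covariance constants, which requires carefully extending the Malliavin-based scheme of \cite{nourdin,NRS} to the bi-dimensional setting while keeping track of the $X^1$/$X^2$-independence and of the interaction between the smooth weights $g(\bar X_j)$ and the Hermite sums.
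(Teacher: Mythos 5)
Your overall architecture is the right one and matches the paper's: a midpoint Taylor expansion (which kills even orders by symmetry), identification of the cubic correction with coefficients $\tfrac1{24}$ and $\tfrac18$, a weighted Breuer--Major/stable limit theorem for the four cubic Hermite sums at $H=1/6$, and the exact identity $b^3-a^3=3\big(\tfrac{a+b}{2}\big)^2(b-a)+\tfrac14(b-a)^3$ for part (3). However, there are two concrete gaps in how you close the argument.

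First, and most seriously, you truncate the Taylor expansion after the cubic term and treat the remainder as $r_{n,j}=O(|\Delta X_j|^5)$, then assert that every summand of $R_n$ has the form $g(\bar X_j)(\Delta X^1_j)^a(\Delta X^2_j)^b$ with $a+b=5$. It does not: a Taylor remainder carries the high-order derivatives at \emph{intermediate} points (in the paper's Theorem \ref{theorem-Taylor}, the weight is $h_\alpha(l,k)$, not a fixed smooth function of the midpoint), so the Hermite-decomposition/orthogonality machinery you invoke for $T_n$ is not available for $R_n$. The only bound you then have is the absolute-value one, $\sum_j E|r_{n,j}|\asymp 2^{n/2}\cdot 2^{-5nH/2}=2^{n(1-5H)/2}$, which diverges for every $H<1/5$ --- in particular throughout the critical range $H\in[1/6,1/5]$ that the theorem is about. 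The paper avoids this by expanding to order $13$ (Lemma \ref{taylor-expansion}): the odd orders $5,7,\dots,13$ then appear as genuine midpoint-weighted power variations $V_n^{p,q}$, which are killed in $L^2$ by the cancellation structure of Proposition \ref{proposition, H >1/6}, and only the order-$13$ remainder is bounded in absolute value, which works because $13H>1$ for $H\geq 1/6$. Second, your treatment of the chaos-$1$ (rank-$1$) part of the cubic sum, $\sigma^2\sum_j g(\bar X_j)\Delta X^\ell_j$, rests on the unweighted identity $E[(\sum_j\Delta X^\ell_j)^2]=t^{2H}$; this does not transfer to the weighted sum, whose direct $L^2$ bound (via integration by parts) is only $O(2^{n/3})$ at $H=1/6$, not $O(1)$, so $\sigma^2\cdot 2^{n/3}=2^{n/6}$ and the naive estimate fails. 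The paper handles this term ($P_n$ in Corollary \ref{corollary f.d.d.h=1/6}) by a separate, nontrivial double iteration of the telescoping identity applied to $g=\partial_{11}f+\partial_{22}f$ and then to $\partial_{11}g+\partial_{22}g$. Finally, note that the stable convergence of the four weighted cubic Hermite sums at $H=1/6$ is not an off-the-shelf multivariate Breuer--Major statement: because the weights are functionals of the same Gaussian field, proving the conditional Gaussianity and the independence of the limiting Brownian motions is the bulk of the paper (Theorem \ref{f.d.d-h=1/6}, via the conditional characteristic function and a PDE system); you correctly identify the method but leave the entire step as a citation.
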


Theorem \ref{first-main} together with a suitable extension of the Khoshnevisan-Lewis definition for the Stratonovich integral (see the next section for a precise statement) with respect to $Z$ then lead to the following change-of-variable formula for 2D fBmBt,
which represents the main finding of our paper.

\begin{theorem}\label{second-main}Let $f: \R^2 \to \R$ be a function belonging to $C_b^{\infty}$, and fix a time $t > 0$.

\begin{enumerate}

\item If $H > 1/6$ then $\int_0^t\nabla f(Z_s)dZ_s$ is well defined, and we have
\begin{equation}
f(Z_t) =f(0) + \int_0^t\nabla f(Z_s)dZ_s. \label{second-main-1}
\end{equation}

\item If $H =1/6$ then $\int_0^t\nabla f(Z_s)d^{\ast}Z_s$ is well defined, and we have
\begin{equation}
f(Z_t) -f(0) - \int_0^t D^3f(Z_s)d^3Z_s \overset{law}{=}  \int_0^t\nabla f(Z_s)d^{\ast}Z_s \label{second-main-2}
\end{equation}
where $\int_0^t D^3f(Z_s)d^3Z_s$ is short-hand for
\begin{eqnarray*}
 \int_0^t D^3f(Z_s)d^3Z_s &= & \kappa_1\int_0^{Y_t}\frac{\partial^3 f}{\partial x^3}\big(X^1_s, X^2_s\big)dB^{1}_s + \kappa_2\int_0^{Y_t}\frac{\partial^3 f}{\partial y^3}\big(X^1_s, X^2_s\big)dB^{2}_s \\
 && + \kappa_3\int_0^{Y_t}\frac{\partial^3 f}{\partial x^2 \partial y}\big(X^1_s, X^2_s\big)dB^{3}_s + \kappa_4\int_0^{Y_t}\frac{\partial^3 f}{\partial x \partial y^2}\big(X^1_s, X^2_s\big)dB^{4}_s,
\end{eqnarray*}
with $B= (B^{1}, \ldots , B^{4})$ is a 4-dimensional two-sided Brownian motion independent of $X$, and $\kappa_1,\ldots,\kappa_4$ as in Theorem \ref{first-main}. ($B$ is also independent from $Y$.)

\item If $H< 1/6$, for $f(x,y) =x^3$ then 
\begin{equation}
\int_0^t\nabla f(Z_s)d^{\ast}Z_s \: \text{\: does not exist, even stably in law.} \label{second-main-3}
\end{equation}
So, it is impossible to write an It\^o's type formula.
\end{enumerate}

\end{theorem}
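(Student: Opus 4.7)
The plan is to transfer Theorem \ref{first-main} from $X$ to $Z=X\circ Y$ via the Khoshnevisan--Lewis substitution of a Brownian intrinsic partition for the uniform dyadic partition, in the spirit of the 1D strategy of \cite{ItoD1} but now in two coordinates. Concretely, one introduces a sequence of stopping times $(T^n_k)_{k\geq 0}$ with $T^n_0=0$ and $T^n_{k+1}=\inf\{s>T^n_k:Y_s\in 2^{-n/2}\Z\setminus\{Y_{T^n_k}\}\}$, so that $Y(T^n_k)=U^n_k 2^{-n/2}$ where $(U^n_k)$ is a simple random walk on $\Z$. The midpoint Riemann sum $\widetilde O_n(f,t)$ summed over $\{k:T^n_k\leq t\}$ then defines $\int_0^t\nabla f(Z_s)\,dZ_s$ (resp.\ $d^{\ast}Z_s$). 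The essential observation is that, conditionally on $Y$, $\widetilde O_n(f,t)$ only involves $X$-increments between neighbouring grid points $j\,2^{-n/2}$, and back-and-forth pairs of excursions of $Y$ contribute opposite-signed copies of the same increment, so that the sum telescopes into a sum of the type $O_n(f,\cdot)$ from Definition \ref{ivan-definition}, with index range determined by the extrema of $U^n_\cdot$.

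For case 1 ($H>1/6$), this reorganisation reduces the problem to Theorem \ref{first-main}(1) applied conditionally on $Y$: for every realisation of $Y$, $\widetilde O_n(f,t)$ converges in probability to $f(X_{Y_t})-f(0)=f(Z_t)-f(0)$, and standard $L^1$-type bounds (uniform in the realisation of $Y$) allow the conditional convergence to be integrated out. For case 2 ($H=1/6$), the same conditioning combined with Theorem \ref{first-main}(2) produces a limit in law equal to $f(Z_t)-f(0)$ minus a correction of the form $\int_0^{Y_t} D^3 f(X_s)\,d^3X_s$; the upper limit $Y_t$ arises naturally because the $X$-grid increments generating the correction are precisely those on the range visited by $Y$ up to time $t$, and in the limit this range fills $[0,Y_t]$. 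For case 3 ($H<1/6$), take $f(x,y)=x^3$: if $\int_0^t\nabla f(Z_s)\,d^{\ast}Z_s$ existed stably in law, conditioning on $Y$ and inverting the excursion reordering would produce a stable limit for $3\int_0^{Y_t}(X^1_s)^2\,d^{\ast}X^1_s$, contradicting Theorem \ref{first-main}(3) on a set of $Y$'s of positive probability.

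The principal technical obstacle lies in case 2. One must justify that the random reindexing by $Y$'s excursions preserves stable convergence, and that the limiting four-dimensional Brownian motion $B$ remains independent of both $X$ and $Y$. This reduces to verifying that the martingale residuals built in the proof of Theorem \ref{first-main}(2) are (i) asymptotically orthogonal to the Gaussian space of $X$, by a Malliavin-calculus computation of their variance and covariance asymptotics, and (ii) measurable with respect to $X$ alone, so that they carry no information about $Y$; the independence of $B$ from $Y$ then follows from that of $X$ and $Y$. Once these are established, a Dambis--Dubins--Schwarz time change identifies each correction term with the stated It\^o integral up to $Y_t$ against an independent Brownian motion, which is the content of \eqref{second-main-2}.
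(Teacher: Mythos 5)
Your overall architecture --- replace the dyadic partition by the Khoshnevisan--Lewis stopping times of $Y$, separate the roles of $X$ and $Y$, and transfer Theorem \ref{first-main} to $Z$ --- is exactly the paper's, but two points in your reduction are respectively wrong and missing. First, the telescoping. After grouping the terms of $\widetilde O_n(f,t)$ by the level $j$ of the interval crossed at each step, the multiplicity attached to level $j$ is the \emph{net} crossing number $U_{j,n}(t)-D_{j,n}(t)$ (the sign coming from the odd total order $p+q$ of the increments), and by \cite[Lemma 2.5]{kh-lewis1} this equals $\pm\mathbf{1}_{\{j\ \mathrm{between}\ 0\ \mathrm{and}\ j^*(n,t)\}}$ with $j^*(n,t)=2^{n/2}Y_{T_{\lfloor 2^nt\rfloor,n}}$ the \emph{terminal} position of the embedded random walk --- not its extrema, and not the range visited by $Y$. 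That range is $[\min_{[0,t]}Y,\max_{[0,t]}Y]$, which strictly contains $[0,Y_t]$ in general, so your stated reason for the upper limit $Y_t$ in the correction term ("the range visited by $Y$ fills $[0,Y_t]$") would actually give the wrong integration domain; the correct mechanism is the cancellation of every completed back-and-forth excursion, leaving only the levels netted by the endpoint.

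Second, the random time. Even granting the identity $\widetilde V_n^{(3)}(f,t)=W_n^{(3)}\big(f,Y_{T_{\lfloor 2^nt\rfloor,n}}\big)$, you cannot simply condition on $Y$ and apply Theorem \ref{first-main} at a fixed time, because the time $Y_{T_{\lfloor 2^nt\rfloor,n}}$ itself depends on $n$. The paper needs an $n$-uniform second-moment modulus of continuity for $s\mapsto W_n^{(3)}(f,s)$, namely $E[(W_n^{(3)}(f,t)-W_n^{(3)}(f,s))^2]\leq C\max(|s|^{1/3},|t|^{1/3})(2^{-n/2}+|t-s|)$ (Step 4 of Section 4), combined with $Y_{T_{\lfloor 2^nt\rfloor,n}}\to Y_t$ in $L^2$, to replace the $n$-dependent random time by $Y_t$ before invoking the stable convergence of Corollary \ref{corollary f.d.d.h=1/6}; your sketch omits this estimate, which is the main new technical ingredient of part 2. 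By contrast, the independence of $B$ from $X$, on which you spend most of your effort, is already contained in Theorem \ref{first-main}(2), and independence from $Y$ is then immediate because the whole construction of $B$ lives on the $X$-side; no Dambis--Dubins--Schwarz argument is required. For part 3, the paper likewise avoids any conditional inversion of the excursion reordering (which would again face the $n$-dependence of the random time) and instead shows directly that convergence of $\widetilde O_n(x\mapsto x^3,t)$ would force convergence of $\widetilde V_n^3(x\mapsto x^3,t)$, contradicting the non-degenerate fluctuation result of \cite{zeineddine}.
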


A brief outline of the paper is as follows. In section 2, we introduce the framework and the preliminaries to prove our results, as well as the notation and some technical lemmas. In section 3, we prove Theorem \ref{first-main}. In section 4, we prove Theorem \ref{second-main}, and finally in section 5, we give the proof of a  technical lemma. 

\section{Framework, preliminaries, notation and technical lemmas}

\subsection{The framework of Theorem \ref{second-main}}

In this section, we explain and introduce the missing definitions of the mathematical objects appearing in Theorem \ref{second-main}.

\begin{enumerate}

\item \textbf{Khoshnevisan-Lewis' definition of the Stratonovich-integral with respect to the 1D fBmBt}

Since the paths of $Z^1$ are very irregular (precisely: H\"older continuous of order $\alpha$ if and only if $\alpha$ is strictly less than $H/2$), as a matter of fact we won't be able to define a stochastic integral with respect to it as the limit of Riemann sums with respect to a {\it deterministic} partition of the time axis. A winning idea, borrowed from Khoshnevisan and Lewis \cite{kh-lewis1, kh-lewis2}, is to approach deterministic partitions by means of random partitions defined in terms of hitting times of the underlying Brownian motion $Y$. As such, one can bypass the random ``time-deformation'' forced by $Y$, and perform asymptotic procedures by separating the roles of $X$ and $Y$ in the overall definition of $Z^1$.

Following Khoshnevisan and Lewis \cite{kh-lewis1, kh-lewis2}, we start by introducing the so-called intrinsic skeletal structure of $Z^1$. This structure is defined through a sequence of collections of stopping times (with
respect to the natural filtration of $Y$), noted
\begin{equation}
\mathscr{T}_n=\{T_{k,n}: k\geq0\}, \quad n\geq1, \label{TN}
\end{equation}
which are in turn expressed in terms of the subsequent hitting
times of a dyadic grid cast on the real axis. More precisely, let
$\mathscr{D}_n= \{j2^{-n/2}:\,j\in\Z\}$, $n\geq 1$, be the dyadic
partition (of $\R$) of order $n/2$. For every $n\geq 1$, the
stopping times $T_{k,n}$, appearing in (\ref{TN}), are given by
the following recursive definition: $T_{0,n}= 0$, and
\[
T_{k,n}= \inf\big\{s>T_{k-1,n}:\quad
Y(s)\in\mathscr{D}_n\setminus\{Y(T_{k-1,n})\}\big\},\quad k\geq 1.
\]
Note that the definition of $T_{k,n}$, and
therefore of $\mathscr{T}_n$, only involves the one-sided Brownian
motion $Y$. Also, for every $n\geq1$, the discrete stochastic
process
\[
\mathscr{Y}_n=\{Y(T_{k,n}):k\geq0\}
\]
defines a simple random
walk over $\mathscr{D}_n$.  As shown in
\cite[Lemma 2.2]{kh-lewis1}, as $n$ tends to
infinity the collection $\{T_{k,n}:\,1\leq k \leq 2^nt\}$ approximates the
common dyadic partition $\{k2^{-n}:\,1\leq k \leq 2^nt\}$ of order $n$ of the time interval $[0,t]$. More precisely,
\begin{equation}\label{lemma2.2}
\sup_{0\leq s\leq t} \big| T_{\lfloor 2^n s\rfloor,n}-s\big|\to 0\quad\mbox{almost surely and in $L^2(\Omega)$.}
\end{equation}
Based on this fact, one may introduce the counterpart of (\ref{symmetric}) based on $\mathscr{T}_n$, namely,
\begin{equation*}
V_n(f, t)= \sum_{k=0}^{\lfloor 2^n t \rfloor -1} f\bigg(\frac{Z^1_{T_{k,n}}+Z^1_{T_{k+1,n}}}{2}\bigg)(Z^1_{T_{k+1,n}}-Z^1_{T_{k,n}}).
\end{equation*}

So, the integral of $f(Z^1)$ with respect to $Z^1$ is defined as
\begin{equation}\label{1Dintegral}
\int_0^t f(Z^1_s)dZ^1_s := \lim_{n \to \infty}V_n(f, t),
\end{equation} 
provided the limit exists in some sense.

\item \textbf{A suitable definition for the Stratonovich-integral with respect to the 2D fBmBt}

In the light of the previous definition of the integral with respect to 1D fBmBt  and of Definition \ref{ivan-definition}, it might seem natural to introduce the following definition for the integral with respect to the 2D fBmBt based on $\mathscr{T}_n$.

\begin{definition}\label{ivan-definition'} Let $f: \R^2 \to \R$ be a continuously differentiable function, and fix a time $t >0$. Provided it exists, we define $\int_0^t\nabla f(Z_s)dZ_s$ to be the limit in probability, as $n \to \infty$, of
\begin{eqnarray}
\tilde{O}_n(f,t) &:=& \sum_{j=0}^{\lfloor 2^{n/2} t \rfloor -1}\frac{\partial f}{\partial x}\bigg( \frac{Z^{1}_{T_{j+1,n}} + Z^{1}_{T_{j,n}} }{2},\frac{Z^{2}_{T_{j+1,n}} + Z^{2}_{T_{j,n}}}{2} \bigg)\big(Z^{1}_{T_{j+1,n}} - Z^{1}_{T_{j,n}}\big)\notag\\
&& + \sum_{j=0}^{\lfloor 2^{n/2} t \rfloor -1}\frac{\partial f}{\partial y}\bigg( \frac{Z^{1}_{T_{j+1,n}} + Z^{1}_{T_{j,n}} }{2},\frac{Z^{2}_{T_{j+1,n}} + Z^{2}_{T_{j,n}}}{2} \bigg)\big(Z^{2}_{T_{j+1,n}} - Z^{2}_{T_{j,n}}\big). \notag \\\label{2Dintegral'}
\end{eqnarray}
 If $\tilde{O}_n(f,t)$ defined by (\ref{2Dintegral'}) does not converge in probability but converges stably, we denote the limit by $\int_0^t\nabla f(Z_s)d^{\ast}Z_s$.
\end{definition}

\end{enumerate}

\subsection{Some preliminary results}

We provide now a description of the tools of Malliavin calculus that we need in this article. We follow in this section the idea introduced in \cite{nourdin}. The reader in referred to \cite{NP} for details and any unexplained result.

Let $ X=(X_t^{1},X_t^{2})_{t \in \R}$ be a 2D fBm with Hurst parameter belonging to $ (0,1) $. For all $n \in \N^*$, we let $\mathscr{E}_n$ be the set of step $\R^2$-valued functions on $[-n,n]$, and $\displaystyle{\mathscr{E}:= \cup_n \mathscr{E}_n}$. Set $\varepsilon_t = \textbf{1}_{[0,t]}$ (resp. $\textbf{1}_{[t,0]}$) if $t \geq 0$ (resp. $t < 0$). Let $\mathscr{H}$ be the Hilbert space defined as the closure of $\mathscr{E}$ with respect to the inner product
   \[
    \langle (\varepsilon_{t_1}, \varepsilon_{t_2}), (\varepsilon_{s_1}, \varepsilon_{s_2}) \rangle_{\mathscr{H}} = C_{H}(t_1,s_1) + C_{H}(t_2,s_2) ,\quad s_1, s_2, t_1, t_2 \in \R,
    \]
    where $C_{H}(t,s) = \frac{1}{2}(|s|^{2H} + |t|^{2H} -|t-s|^{2H}) = E\big(X^{i}_s X^{i}_t \big)$ ($i$ equals 1 or 2). The mapping $ (\varepsilon_{t_1}, \varepsilon_{t_2})\mapsto X_{t_1}^{1} + X_{t_2}^{2} $ can be extended to an isometry between $\mathscr{H}$ and the Gaussian space associated with $X$. Also, let $\mathscr{F}_n$ denote the set of step $\R$-valued functions on $[-n,n]$, $\displaystyle{\mathscr{F}:= \cup_n \mathscr{F}_n}$ and $\mathcal{G}$ denote the Hilbert space defined as the closure of $\mathscr{F}$ with respect to the scalar product induced by 
\begin{equation}\label{inner-product}
\langle \varepsilon_t, \varepsilon_s \rangle_{\mathcal{G}} = C_{H}(t,s), \:\:\: s,t \in \R.
\end{equation}
The mapping $ \varepsilon_t \mapsto X_t^{i} $ ($i$ equals 1 or 2) can be extended to an isometry between $ \mathcal{G} $ and the Gaussian space associated with $X^{i}$.

We consider the set of smooth cylindrical random variables, i.e. of the form
\[
F= f\big(X(\rho_1), \ldots, X(\rho_m)\big), \:\:\: \rho_i \in \mathscr{H}, \: i=1,\ldots,m,
\]
where $ f \in C_b^{\infty} $ is bounded with bounded derivatives. The derivative operator $D$ of a smooth random variable of the above form is defined as the $ \mathscr{H}$-valued random variable
\begin{equation}
DF = \sum_{i=1}^m \frac{\partial f}{\partial x_i}\big(X(\rho_1), \ldots, X(\rho_m)\big)\rho_i =: \big(D_{X^{1}}F, D_{X^{2}}F \big). \label{diabolic-derivation}
\end{equation}
For example, if $F = f(X^1_t, X^2_s)$ with $f \in C_b^\infty(\R^2)$, then 
\[
DF = \frac{\partial f}{\partial x}(X^1_t, X^2_s)(\varepsilon_{t}, 0) + \frac{\partial f}{\partial y}(X^1_t, X^2_s)(0,\varepsilon_s).
\]
 So, we deduce from (\ref{diabolic-derivation}) that 
 \[
 D_{X^{1}}F = \frac{\partial f}{\partial x}(X^1_t, X^2_s)\varepsilon_{t} \:\text{\: and\:}\: D_{X^{2}}F = \frac{\partial f}{\partial y}(X^1_t, X^2_s)\varepsilon_s.
 \]
In particular,  for $j,k\in \{1,2\}$, we have 
\begin{eqnarray*}
D_{X^{j}}X_t^{k} = \left\{
\begin{array}{ccc}
\varepsilon_t && \mbox{if $j=k$}\\
0 && \mbox{if $j\neq k$}
\end{array}
\right .
\end{eqnarray*}
For any integer $k \geq 2$, one can define, by iteration, the $k$-th derivative $D^k F$ (which is a symmetric element of $L^2(\Omega, \mathscr{H}^{\otimes k})$). As usual, for any $k \geq 1$, the space $\mathbb{D}^{k,2}$ denotes the closure of the set of smooth random variables with respect to the norm $\|.\|_{k,2}$ defined by
   \[ 
   \|F\|_{k,2}^{2} = E(F^{2}) + \sum_{j=1}^{k} E[ \|D^{j}F\|_{\mathscr{H}^{\otimes j}}^{2}].
   \]
   The Malliavin derivative $D$ satisfies the chain rule. If $\varphi : \mathbb{R}^{n} \rightarrow \mathbb{R}$ is $C_{b}^{1}$ and if $F_1,\ldots,F_n$ are in $\mathbb{D}^{1,2}$, then $\varphi(F_{1},...,F_{n}) \in \mathbb{D}^{1,2}$ and we have
   \[ 
   D\varphi(F_{1},...,F_{n}) = \sum_{i=1}^{n} \frac{\partial \varphi}{\partial x_{i}}(F_{1},...,F_{n})DF_{i}.
   \]
   We have the following Leibniz formula. For any $F, G \in \mathbb{D}^{q,2}$ $(q \geq 1)$ such that $FG \in \mathbb{D}^{q,2}$, for $i \in \{1,2\}$, we have
    \begin{equation}
    D^q_{X^{i}}(FG) = \sum_{l=0}^q \binom{q}{l} (D^l_{X^{i}}(F))\tilde{\otimes}(D^{q-l}_{X^{i}}G), \label{Leibnitz0}
    \end{equation}
    where $\tilde{\otimes}$ stands for the symmetric tensor product.
   In particular, we have the following formula. Let $\varphi, \psi \in C_{b}^{q}(\R^2)$ $(q\geq 1)$, and fix $0 \leq u<v $ and $0\leq s<t .$ Then  $\varphi\big(\frac{X^{1}_{t}+X^{1}_{s}}{2}, \frac{X^{2}_{t}+X^{2}_{s}}{2}\big)\psi\big(\frac{X^{1}_{v}+X^{1}_{u}}{2}, \frac{X^{2}_{v}+X^{2}_{u}}{2}\big)\in \mathbb{D}^{q,2}$ and for $i\in \{1,2\}$ we have
   \begin{eqnarray}
&& D_{X^{i}}^{q}\bigg( \varphi\bigg(\frac{X^{1}_{t}+X^{1}_{s}}{2}, \frac{X^{2}_{t}+X^{2}_{s}}{2}\bigg)\psi\bigg(\frac{X^{1}_{v}+X^{1}_{u}}{2}, \frac{X^{2}_{v}+X^{2}_{u}}{2}\bigg)\bigg)\label{Leibnitz1}\\
& =& \sum_{l=0}^{q} \binom{q}{l} \frac{\partial^l \varphi}{\partial x_i^l} \bigg(\frac{X^{1}_{t}+X^{1}_{s}}{2}, \frac{X^{2}_{t}+X^{2}_{s}}{2}\bigg)\frac{\partial^{q-l} \psi}{\partial x_i^{q-l}} \bigg(\frac{X^{1}_{v}+X^{1}_{u}}{2}, \frac{X^{2}_{v}+X^{2}_{u}}{2}\bigg)\notag\\
&& \hspace{2cm} \times \bigg(\frac{\varepsilon_s+ \varepsilon_t}{2}\bigg)^{\otimes l}\tilde{\otimes}\bigg(\frac{\varepsilon_u+ \varepsilon_v}{2}\bigg)^{\otimes (q-l)}.\notag
\end{eqnarray}
   A similar statement holds for $ u<v \leq 0 $ and $ s<t \leq 0$.
   
   If a random element $u \in L^{2}(\Omega, \mathscr{H})$ belongs to the domain of the divergence operator, that is, if it satisfies
   \[ |E\langle DF,u\rangle_{\mathscr{H}}|\leq c_{u}\sqrt{E(F^{2})} \text{\: for  any\:} F\in \mathscr{F},\] then $I(u)$ is defined by the duality relationship
\[
E \big( FI(u)\big) = E \big( \langle DF,u\rangle_{\mathscr{H}}\big),
\]
for every $F \in \mathbb{D}^{1,2}.$

For every $n\geq 1$, let $\mathbb{H}_{n}$ be the $n$th Wiener chaos of $X$, that is, the closed linear subspace of $ L^{2}(\Omega, \mathscr{A},P)$ generated by the random variables $\lbrace H_{n}(X(h)), h \in \mathscr{H}, \|h\|_{\mathscr{H}}=1 \rbrace,$ where $H_{n}$ is the $n$th Hermite polynomial. The mapping
\begin{equation}
I_{n}(h^{\otimes n}) = H_{n}(X(h)), \label{linear-isometry}
\end{equation}
 provides a linear isometry between the symmetric tensor product $\mathscr{H}^{\odot n}$ and $\mathbb{H}_{n}$. The following duality formula holds
\begin{eqnarray*}
  E \big( FI_{n}(h)\big) = E \big( \langle D^{n}F,h\rangle_{\mathscr{H}^{\otimes n}}\big),
  \end{eqnarray*}
  for any element $ h\in \mathscr{H}^{\odot n}$ and any random variable $F \in \mathbb{D}^{n,2}.$ In particular, we have
  
  \begin{equation}\label{duality formula}
  E \big( FI_{n}^{(i)}(h)\big) = E \big( \langle D_{X^{i}}^{n}F,h\rangle_{\mathcal{G}^{\otimes n}}\big), \: \: i=1,2,
  \end{equation}
  for any $h \in \mathcal{G}^{\odot n}$ and $F \in \mathbb{D}^{n,2}$, where we write $ I_{n}^{(i)}(h) $ whenever the corresponding $n$-th multiple integral is only with respect to $X^{i}$.
  
  Finally, we mention the following particular cases (the only one we will need in the sequel): if $f,g \in \mathcal{G}$, $n,m \geq 1$ and $i \in \{1,2\}$, then we have the classical multiplication formula
  \begin{equation}\label{product formula}
  I_{n}^{(i)}(f^{\otimes n})I_{m}^{(i)}(g^{\otimes m})= \sum_{r=0}^{n\wedge m} r! \binom{n}{r}\binom{m}{r} I_{n+m-2r}^{(i)}(f^{\otimes n+m -r}\otimes g^{\otimes n+m-r})\langle f,g \rangle^r_{\mathcal{G}}.
  \end{equation}
  We have also the following isometric property,
  \begin{equation}\label{isometry}
  E[I_{n}^{(i)}(f^{\otimes n})^2] = n! \langle f,f \rangle^n_{\mathcal{G}},
  \end{equation}
  and, for $j\in \{1,2\}$,
  \begin{eqnarray}\label{derivative- multiple,integral}
D_{X^{j}}\big(I_{n}^{(i)}(f^{\otimes n})\big) = \left\{
\begin{array}{ccc}
n I_{n-1}^{(i)}(f^{\otimes n-1}) && \mbox{if $i=j$}\\
0 && \mbox{if $i\neq j$}
\end{array}
\right .
\end{eqnarray}
  \subsection{Notation}

  Throughout all the forthcoming proofs, we shall use the following notation.
For all $k,n \in \N$ we write 
\begin{eqnarray*}
\varepsilon_{k2^{-n/2}} &=&\textbf{1}_{[0,k2^{-n/2}]},\quad \delta_{k2^{-n/2}} = \textbf{1}_{[(k-1)2^{-n/2},k2^{-n/2}]}.
\end{eqnarray*}
For all $k \in \Z$, $H \in (0,1)$, we write 
\begin{equation}\label{rho}
\rho(k) = \frac{1}{2}(|k+1|^{2H} + |k-1|^{2H} -2|k|^{2H}).
\end{equation}
For any sufficiently smooth function $f : \R^2 \to \R$, the notation $\partial_{1 \ldots 1 2 \ldots 2}^{k,l}f$ (where the index 1 is repeated $k$ times and the index 2 is repeated $l$ times) means that $f$ is differentiated $k$ times with respect to the first component and $l$ times with respect to the second one. \\
We denote for any $j \in \Z$ , $\Delta_{j,n} f(X^{1},X^{2}):= f \bigg( \frac{X^{1}_{(j+1)2^{-n/2}} + X^{1}_{j2^{-n/2}} }{2},\frac{X^{2}_{(j+1)2^{-n/2}} + X^{2}_{j2^{-n/2}}}{2} \bigg) $.\\
 For $i \in \{1,2\}$, $H \in (0,1)$, $X^{i,n}_j := 2^{\frac{nH}{2}}X^{i}_{j2^{-\frac{n}{2}}}$.

\begin{definition}\label{definition Kni} For any $ t \in \R^+$ and any $n \in \N$, we define :
\begin{eqnarray*}
K^{(1)}_n(f,t) & :=& \frac{1}{24} \sum_{j=0}^{\lfloor 2^{\frac{n}{2}} t \rfloor -1} \Delta_{j,n}\partial_{111} f(X^{1},X^{2}) I^{(1)}_3 \big(\delta_{(j+1)2^{-n/2}}^{\otimes 3} \big)\\
K^{(2)}_n(f,t) &:=& \frac{1}{24} \sum_{j=0}^{\lfloor 2^{\frac{n}{2}} t \rfloor -1} \Delta_{j,n}\partial_{222} f(X^{1},X^{2}) I^{(2)}_3 \big(\delta_{(j+1)2^{-n/2}}^{\otimes 3} \big)\\
\end{eqnarray*}
\begin{eqnarray*}
K^{(3)}_n(f,t) &:=& \frac{1}{8}  \sum_{j=0}^{\lfloor 2^{\frac{n}{2}} t \rfloor -1} \Delta_{j,n}\partial_{122} f(X^{1},X^{2}) I^{(1)}_1 \big(\delta_{(j+1)2^{-n/2}} \big)I^{(2)}_2 \big(\delta_{(j+1)2^{-n/2}}^{\otimes 2} \big)\\
K^{(4)}_n(f,t) &:=& \frac{1}{8}  \sum_{j=0}^{\lfloor 2^{\frac{n}{2}} t \rfloor -1} \Delta_{j,n}\partial_{112} f(X^{1},X^{2}) I^{(1)}_2 \big(\delta_{(j+1)2^{-n/2}}^{\otimes 2} \big)I^{(2)}_1 \big(\delta_{(j+1)2^{-n/2}} \big).
\end{eqnarray*}
\end{definition}
For any $r \in \N^*$ and $\psi \in C_b^{\infty}( \R^{2r}, \R)$, we define $\xi$ as follows :
\begin{equation}
\xi = \psi\big( X^{1}_{s_1},X^{2}_{s_1}, \ldots, X^{1}_{s_r},X^{2}_{s_r} \big), \label{xi}
\end{equation}
where $s_1,\ldots, s_r \in \R$.
In the proofs contained in this paper, $C$ shall denote a positive, finite constant that may change value from line to line.

 \subsection{Some technical lemmas}
  
  A key tool in our analysis will be the next lemma, which can be deduced from  the following Taylor's theorem with remainder.
 
 \begin{theorem}\label{theorem-Taylor} Let $n$ be a nonnegative integer. If $g \in C^n(\R^2)$, then
 \begin{equation}
 g(k) = \sum_{|\alpha| < n} \partial^{\alpha}g(l)\frac{(k-l)^\alpha}{\alpha!} + R_n(l,k),\label{taylor1}
 \end{equation}
 where
 \[
 R_n(l,k) = n \sum_{|\alpha|=n}\frac{(k-l)^\alpha}{\alpha!}\int_0^1 (1-u)^{n-1}[ \partial^\alpha g(l +u(k-l)) -\partial^\alpha g(l) ] du
 \]
 if $n \geq 1$, and $R_0(l,k) = g(k) - g(l)$. In particular, $R_n(l,k) = \sum_{|\alpha|=n}h_{\alpha}(l,k)(k-l)^\alpha$, where $h_{\alpha}$ is a continuous function with $h_{\alpha}(l,l)=0$ for all l. Moreover,
 \[
 \big|R_n(l,k)\big| \leq (n \vee 1)\sum_{|\alpha|=n}M_{\alpha}\big|(k-l)^\alpha\big|,
 \]
 where $M_{\alpha} = \sup \{ |\partial^\alpha g(l +u(k-l)) -\partial^\alpha g(l)| : 0 \leq u \leq 1 \}.$
 
 \end{theorem}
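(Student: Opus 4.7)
The plan is to deduce the multivariable Taylor expansion from the classical one-variable Taylor formula with integral remainder, applied along the line segment from $l$ to $k$. Fix $l,k\in\R^2$ and set $\phi(u):=g\bigl(l+u(k-l)\bigr)$, $u\in[0,1]$. Since $g\in C^n(\R^2)$, an iterated application of the chain rule (equivalently, the multinomial expansion of the directional derivative $((k-l)\cdot\nabla)^j$) gives $\phi\in C^n([0,1])$ with
\[
\phi^{(j)}(u)=\sum_{|\alpha|=j}\frac{j!}{\alpha!}(k-l)^{\alpha}\,\partial^{\alpha}g\bigl(l+u(k-l)\bigr),\quad 0\leq j\leq n.
\]
Evaluating at $u=0$ and summing over $j\leq n$ reproduces the Taylor polynomial $\sum_{|\alpha|\leq n}\partial^{\alpha}g(l)(k-l)^{\alpha}/\alpha!$ appearing in \eqref{taylor1}.

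For $n\geq 1$, I invoke the one-dimensional Taylor formula with integral remainder of order $n-1$, namely $\phi(1)=\sum_{j=0}^{n-1}\phi^{(j)}(0)/j!+\frac{1}{(n-1)!}\int_0^1(1-u)^{n-1}\phi^{(n)}(u)\,du$, and split the integrand as $\phi^{(n)}(u)=\phi^{(n)}(0)+[\phi^{(n)}(u)-\phi^{(n)}(0)]$. Since $\int_0^1(1-u)^{n-1}du=1/n$, the constant piece produces exactly the missing polynomial term $\phi^{(n)}(0)/n!$, and substituting the multinomial expression for $\phi^{(n)}$ into the remaining integral yields
\[
R_n(l,k)=\sum_{|\alpha|=n}\frac{n}{\alpha!}(k-l)^{\alpha}\int_0^1(1-u)^{n-1}\bigl[\partial^{\alpha}g(l+u(k-l))-\partial^{\alpha}g(l)\bigr]du,
\]
which matches the expression in the statement up to routine bookkeeping concerning the integral weight and the factor $n$. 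The case $n=0$ is a tautology, and the formula $R_0(l,k)=g(k)-g(l)$ is read directly from $\phi(1)-\phi(0)$.

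The factorization $R_n(l,k)=\sum_{|\alpha|=n}h_{\alpha}(l,k)(k-l)^{\alpha}$ is obtained by defining $h_{\alpha}(l,k):=\frac{n}{\alpha!}\int_0^1(1-u)^{n-1}[\partial^{\alpha}g(l+u(k-l))-\partial^{\alpha}g(l)]du$; continuity of $h_{\alpha}$ on $\R^2\times\R^2$ follows from continuity of $\partial^{\alpha}g$ together with dominated convergence on the compact interval $[0,1]$, and $h_{\alpha}(l,l)=0$ is immediate since the integrand vanishes identically. The uniform bound is then a consequence of $|h_{\alpha}(l,k)|\leq \frac{n}{\alpha!}M_{\alpha}\int_0^1(1-u)^{n-1}du=M_{\alpha}/\alpha!\leq M_{\alpha}$, giving $|R_n(l,k)|\leq\sum_{|\alpha|=n}M_{\alpha}|(k-l)^{\alpha}|\leq (n\vee 1)\sum_{|\alpha|=n}M_{\alpha}|(k-l)^{\alpha}|$. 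I do not anticipate any essential obstacle: this is a routine reduction of the bivariate statement to the scalar Taylor formula, and the only point of care is keeping track of multinomial coefficients and the normalization of the integral remainder.
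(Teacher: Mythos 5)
Your proof is correct, and it is the standard (surely the intended) derivation: the paper itself offers no proof of this theorem, importing it as the classical multivariable Taylor formula with integral remainder, so the reduction to the one-variable formula along $u\mapsto g(l+u(k-l))$ via the multinomial identity $\phi^{(j)}(u)=\sum_{|\alpha|=j}\frac{j!}{\alpha!}(k-l)^\alpha\partial^\alpha g(l+u(k-l))$ is exactly what is being invoked. All the steps check out: the split $\phi^{(n)}(u)=\phi^{(n)}(0)+[\phi^{(n)}(u)-\phi^{(n)}(0)]$ converts the order-$(n-1)$ remainder into the order-$n$ polynomial plus the stated $R_n$, the continuity of $h_\alpha$ and $h_\alpha(l,l)=0$ follow from continuity of $\partial^\alpha g$, and the bound $|h_\alpha|\leq M_\alpha/\alpha!\leq M_\alpha$ gives the claimed estimate with room to spare (the factor $n\vee 1$ is not even needed).

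One point you should not bury under ``routine bookkeeping'': the formula you derive carries the weight $(1-u)^{n-1}$, while the statement as printed has $(1-u)^n$. These are genuinely different expressions, and it is your version that is correct --- the printed one fails already for $n=1$, $g(x,y)=x^2$, $l=0$, $k=(1,0)$, where it would give $R_1=1/3$ instead of the required $1$. So rather than asserting that your formula ``matches the expression in the statement up to routine bookkeeping,'' you should state plainly that the exponent in the displayed remainder is a typographical slip for $n-1$ (this is the form in the standard reference for this theorem), and observe that every consequence actually used downstream --- the factorization $R_n=\sum_{|\alpha|=n}h_\alpha(l,k)(k-l)^\alpha$ with $h_\alpha$ continuous and vanishing on the diagonal, and the bound by $(n\vee1)\sum_{|\alpha|=n}M_\alpha|(k-l)^\alpha|$ --- holds verbatim for the corrected remainder.
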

 Thanks to the previous theorem we deduce the following lemma.
 
 \begin{lemma}\label{taylor-expansion} Let $f\in C_b^{13}(\R^2)$, then
 \begin{eqnarray*}
 f(b,d)= f(a,c) &+& \sum_{i=1}^6\sum_{\alpha_1 +\alpha_2 = 2i-1}C(\alpha_1,\alpha_2)\: \partial_{1 \ldots 1 2 \ldots 2}^{\alpha_1,\alpha_2} f\bigg(\frac{a+b}{2}, \frac{c+d}{2}\bigg)(b-a)^{\alpha_1}(d-c)^{\alpha_2}\\
  && + R_{13}\big( (b,d), (a,c) \big),
 \end{eqnarray*}
 where $\alpha_1, \alpha_2 \in \N$, and 
 \[
 |R_{13}\big( (b,d), (a,c) \big)| \leq C_f \sum_{\alpha_1 + \alpha_2 =13}|b-a|^{\alpha_1}|d-c|^{\alpha_2},
 \]
 with $C_f$ is a constant depending only on $f$. On the other hand, we have $C(1,0)=C(0,1)=1$, $C(3,0)=C(0,3)=\frac{1}{24}$ and $C(2,1)=C(1,2)=\frac18$. The other constants:  $C(\alpha_1,\alpha_2)$ could also be determined explicitly, but won't need their explicit values.
 \end{lemma}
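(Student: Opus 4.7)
The plan is to apply Theorem \ref{theorem-Taylor} centred at the \emph{midpoint} $(m_1,m_2):=\bigl(\tfrac{a+b}{2},\tfrac{c+d}{2}\bigr)$ and then exploit a parity cancellation. The key observation is that $b-m_1=\tfrac{b-a}{2}=-(a-m_1)$ and $d-m_2=\tfrac{d-c}{2}=-(c-m_2)$, so for any multi-index $\alpha=(\alpha_1,\alpha_2)$ one has
\[
(a-m_1)^{\alpha_1}(c-m_2)^{\alpha_2}=(-1)^{|\alpha|}(b-m_1)^{\alpha_1}(d-m_2)^{\alpha_2}.
\]
Hence, when one subtracts the Taylor expansions of $f(b,d)$ and $f(a,c)$ around the common base point $(m_1,m_2)$, all even-order contributions cancel identically and only the odd orders survive. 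This is precisely the mechanism producing the stated sum over $|\alpha|=2i-1$.

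Concretely, I would apply Theorem \ref{theorem-Taylor} twice with $g=f$, $n=13$, $l=(m_1,m_2)$, first at $k=(b,d)$ and then at $k=(a,c)$. Since the two expansions share the same coefficients $\partial^\alpha f(m_1,m_2)/\alpha!$, their difference reads
\begin{align*}
f(b,d)-f(a,c) &= \sum_{\substack{|\alpha|\leq 13\\ |\alpha|\text{ odd}}}\frac{\partial_{1\ldots 1 2\ldots 2}^{\alpha_1,\alpha_2}f(m_1,m_2)}{\alpha_1!\,\alpha_2!}\cdot\frac{2\,(b-a)^{\alpha_1}(d-c)^{\alpha_2}}{2^{|\alpha|}}\\
&\quad + R_{13}\bigl((m_1,m_2),(b,d)\bigr)-R_{13}\bigl((m_1,m_2),(a,c)\bigr).
\end{align*}
Grouping the surviving odd orders $|\alpha|=2i-1$ for $i=1,\dots,7$ gives the expansion of the lemma with
\[
C(\alpha_1,\alpha_2)=\frac{1}{\alpha_1!\,\alpha_2!\,2^{\alpha_1+\alpha_2-1}}.
\]
A direct evaluation immediately yields $C(1,0)=C(0,1)=1$, $C(3,0)=C(0,3)=\tfrac{1}{24}$ and $C(2,1)=C(1,2)=\tfrac{1}{8}$, matching the claim.

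It remains to control the residual $R_{13}\bigl((b,d),(a,c)\bigr):=R_{13}\bigl((m_1,m_2),(b,d)\bigr)-R_{13}\bigl((m_1,m_2),(a,c)\bigr)$. The quantitative bound from Theorem \ref{theorem-Taylor}, together with the assumption $f\in C_b^{13}$ (which ensures $M_\alpha\leq 2\|\partial^\alpha f\|_\infty<\infty$ uniformly), estimates each remainder by a constant times
\[
\sum_{|\alpha|=13}|b-m_1|^{\alpha_1}|d-m_2|^{\alpha_2}=2^{-13}\sum_{\alpha_1+\alpha_2=13}|b-a|^{\alpha_1}|d-c|^{\alpha_2}.
\]
A triangle inequality for the two pieces and absorption of all numerical factors into a single $C_f$ delivers the stated remainder bound. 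No genuine obstacle is anticipated: the whole argument is a midpoint-centred Taylor expansion combined with an even/odd parity cancellation, and the only care required is the clerical bookkeeping of the multinomial constants $C(\alpha_1,\alpha_2)$ to verify the prescribed numerical values.
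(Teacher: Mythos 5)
Your proof is correct, and it takes a genuinely different — and in fact cleaner — route than the paper's. The paper first expands $f(b,d)$ around $(a,c)$ via Theorem \ref{theorem-Taylor}, then re-expands each coefficient $\partial^{\alpha}f(a,c)$ around the midpoint and substitutes back; this produces unknown coefficients $\tilde C(\alpha_1,\alpha_2)$, so it must then run a separate symmetry argument (swapping $a\leftrightarrow b$, $c\leftrightarrow d$ and testing against monomials $x^{\alpha_1}y^{\alpha_2}$) to show the even-order coefficients vanish, and plug in further monomials to identify $C(1,0),C(3,0),C(2,1)$, etc. You instead expand \emph{both} $f(b,d)$ and $f(a,c)$ around the common midpoint and subtract, so the vanishing of the even orders is an immediate parity cancellation and all coefficients come out in closed form, $C(\alpha_1,\alpha_2)=\bigl(\alpha_1!\,\alpha_2!\,2^{\alpha_1+\alpha_2-1}\bigr)^{-1}$, which matches the three values the lemma records (e.g.\ $C(3,0)=\tfrac{1}{6\cdot 4}=\tfrac{1}{24}$, $C(2,1)=\tfrac{1}{2\cdot 4}=\tfrac18$). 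Your remainder treatment is also sound: each of the two midpoint remainders is controlled by Theorem \ref{theorem-Taylor} with $M_\alpha\le 2\|\partial^\alpha f\|_\infty$ and $|b-m_1|^{\alpha_1}|d-m_2|^{\alpha_2}=2^{-13}|b-a|^{\alpha_1}|d-c|^{\alpha_2}$, and the triangle inequality gives the stated bound. The only thing your argument buys beyond the paper's is the explicit value of every $C(\alpha_1,\alpha_2)$, which the lemma does not need but which does no harm; conversely, the paper's two-stage expansion is more laborious but reaches the same conclusion.
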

  
  \begin{proof}{}
  By applying (\ref{taylor1}) to $f$, we get
  \begin{equation}\label{specific-taylor1}
 f(b,d) = f(a,c) + \sum_{\alpha_1 + \alpha_2 \leq 12}\partial_{1 \ldots 1 2 \ldots 2}^{\alpha_1,\alpha_2} f(a,c)\frac{(b-a)^{\alpha_1}}{\alpha_1 !}\frac{(d-c)^{\alpha_2}}{\alpha_2 !} + \tilde{R}_{13}\big( (b,d), (a,c) \big),
 \end{equation}
 where 
 \[
 |\tilde{R}_{13}\big( (b,d), (a,c) \big)| \leq C_f \sum_{\alpha_1 + \alpha_2 =13}|b-a|^{\alpha_1}|d-c|^{\alpha_2}.
 \]

  For each $i \in \{1, \ldots , 12 \}$ and each $\alpha_1 , \alpha_2 \in \N$ such that $\alpha_1 + \alpha_2 =i$, we define $g_{\alpha_1, \alpha_2,i}$ as $g_{\alpha_1, \alpha_2,i}:= \partial_{1 \ldots 1 2 \ldots 2}^{\alpha_1,\alpha_2} f$ and we set $k:= (a,c)$, $l:=\big(\frac{a+b}{2},\frac{c+d}{2}\big)$. So, by applying (\ref{taylor1}) to $g_{\alpha_1, \alpha_2,i}$ with $k:= (a,c)$, $l:=\big(\frac{a+b}{2},\frac{c+d}{2}\big)$ and $n=13-i$, we get
  \begin{eqnarray*}
  g_{\alpha_1, \alpha_2,i}(a,c) &=& g_{\alpha_1, \alpha_2,i}\big(\frac{a+b}{2}, \frac{c+d}{2} \big) + \sum_{\beta_1 + \beta_2 < 13-i}\partial_{1 \ldots 1 2 \ldots 2}^{\beta_1,\beta_2} g_{\alpha_1, \alpha_2,i}\big(\frac{a+b}{2}, \frac{c+d}{2} \big)\\
  && \times \frac{(a-b)^{\beta_1}}{2^{\beta_1}\beta_1 !}\frac{(c-d)^{\beta_2}}{2^{\beta_2}\beta_2 !}+ \bar{R}_{13-i}\big( (a,c) , \big(\frac{a+b}{2}, \frac{c+d}{2}\big)\big)\\
   &=& g_{\alpha_1, \alpha_2,i}\big(\frac{a+b}{2}, \frac{c+d}{2} \big)+ \sum_{\beta_1 + \beta_2 < 13-i}C(\beta_1,\beta_2)\partial_{1 \ldots 1 2 \ldots 2}^{\beta_1,\beta_2}g_{\alpha_1, \alpha_2,i}\big(\frac{a+b}{2}, \frac{c+d}{2} \big)\\
   && \times(b-a)^{\beta_1} (d-c)^{\beta_2} + \bar{R}_{13-i}\big( (a,c) , \big(\frac{a+b}{2}, \frac{c+d}{2}\big)\big).
  \end{eqnarray*}
 By replacing $g_{\alpha_1, \alpha_2,i}(a,c)$ in (\ref{specific-taylor1}) we get
  \begin{eqnarray}
  f(b,d) = f(a,c) &+& \sum_{\alpha_1 + \alpha_2 \leq 12}\tilde{C}(\alpha_1, \alpha_2)\partial_{1 \ldots 1 2 \ldots 2}^{\alpha_1,\alpha_2}f\big(\frac{a+b}{2}, \frac{c+d}{2}\big)(b-a)^{\alpha_1}(d-c)^{\alpha_2}\notag\\
  &&  + R_{13}\big( (a,c) , \big(\frac{a+b}{2}, \frac{c+d}{2}\big)\big), \label{specific-taylor2}
  \end{eqnarray}
  where 
  \[
  \big| R_{13}\big( (a,c) , \big(\frac{a+b}{2}, \frac{c+d}{2}\big)\big)\big| \leq C_f \sum_{\alpha_1 + \alpha_2 =13}|b-a|^{\alpha_1}|d-c|^{\alpha_2}.
  \]
  
  Let us prove that  $ \forall i \in \{1, \ldots , 6\}$ and  $\forall \alpha_1, \alpha_2 \in \N \: \text{\:such that} \: \alpha_1 + \alpha_2 =2i$, we have
  \begin{equation}
   \tilde{C}(\alpha_1, \alpha_2)=0. \label{proof-taylor}
   \end{equation}
  Indeed, let $f(x,y) = x^{\alpha_1}y^{\alpha_2}$. Thanks to (\ref{specific-taylor2}), we get
  \begin{eqnarray}
  b^{\alpha_1}d^{\alpha_2} = a^{\alpha_1}c^{\alpha_2} &+& \sum_{\beta_1 + \beta_2 \leq 2i-1}\tilde{C}(\beta_1, \beta_2)\partial_{1 \ldots 1 2 \ldots 2}^{\beta_1,\beta_2}f\big(\frac{a+b}{2}, \frac{c+d}{2}\big)(b-a)^{\beta_1}(d-c)^{\beta_2} \notag\\
  && + \alpha_1! \alpha_2!\tilde{C}(\alpha_1, \alpha_2)(b-a)^{\alpha_1}(d-c)^{\alpha_2}.\label{proof-taylor1}
  \end{eqnarray}
  Let us now change  $a$ into $b$ and $c$ into $d$ in the previous formula, so to get
  \begin{eqnarray}
  a^{\alpha_1}c^{\alpha_2} = b^{\alpha_1}d^{\alpha_2} &+& \sum_{\beta_1 + \beta_2 \leq 2i-1}\tilde{C}(\beta_1, \beta_2)\partial_{1 \ldots 1 2 \ldots 2}^{\beta_1,\beta_2}f\big(\frac{b+a}{2}, \frac{d+c}{2}\big)(a-b)^{\beta_1}(c-d)^{\beta_2} \notag\\
  && + \alpha_1! \alpha_2!\tilde{C}(\alpha_1, \alpha_2)(a-b)^{\alpha_1}(c-d)^{\alpha_2}.\label{proof-taylor2}
  \end{eqnarray}
  Observe that if in (\ref{proof-taylor2}) $\beta_1 + \beta_2$ is odd (resp. is even) then $(a-b)^{\beta_1}(c-d)^{\beta_2} = (-1)^{\beta_1 + \beta_2}(b-a)^{\beta_1}(d-c)^{\beta_2}= -(b-a)^{\beta_1}(d-c)^{\beta_2}$ (resp. $(b-a)^{\beta_1}(d-c)^{\beta_2}$). So, by taking the sum of (\ref{proof-taylor1}) and (\ref{proof-taylor2}) we get
  \begin{eqnarray*}
  b^{\alpha_1}d^{\alpha_2} + a^{\alpha_1}c^{\alpha_2}&=& a^{\alpha_1}c^{\alpha_2}+ b^{\alpha_1}d^{\alpha_2} + \sum_{k=1}^{i-1}\sum_{\beta_1 + \beta_2 = 2k}2 \tilde{C}(\beta_1, \beta_2)\partial_{1 \ldots 1 2 \ldots 2}^{\beta_1,\beta_2}f\big(\frac{b+a}{2}, \frac{d+c}{2}\big)(b-a)^{\beta_1}\\
 && \times (d-c)^{\beta_2} + 2 \alpha_1! \alpha_2!\tilde{C}(\alpha_1, \alpha_2)(b-a)^{\alpha_1}(d-c)^{\alpha_2},  
  \end{eqnarray*}
  leading to
  \begin{eqnarray}
  0&=&  \sum_{k=1}^{i-1}\sum_{\beta_1 + \beta_2 = 2k}2 \tilde{C}(\beta_1, \beta_2)\partial_{1 \ldots 1 2 \ldots 2}^{\beta_1,\beta_2}f\big(\frac{b+a}{2}, \frac{d+c}{2}\big)(b-a)^{\beta_1}(d-c)^{\beta_2} \label{proof-taylor3}\\
  && + 2 \alpha_1! \alpha_2!\tilde{C}(\alpha_1, \alpha_2)(b-a)^{\alpha_1}(d-c)^{\alpha_2}. \notag 
  \end{eqnarray}
  We deduce thanks to (\ref{proof-taylor3}) that, for $i=1$ and each $\alpha_1, \alpha_2 \in \N$ satisfying $\alpha_1+ \alpha_2 = 2$, we have $ \forall a, b, c, d \in \R$,
  \[
  2 \alpha_1! \alpha_2!\tilde{C}(\alpha_1, \alpha_2)(b-a)^{\alpha_1}(d-c)^{\alpha_2} =0,
  \]
implying in turn $\tilde{C}(\alpha_1, \alpha_2)=0$. Then, a simple recursive argument shows that for all $ \forall i \in \{1, \ldots , 6 \}$ and $ \forall \alpha_1, \alpha_2 \in \N \: / \: \alpha_1 + \alpha_2 =2i$ we have $\tilde{C}(\alpha_1, \alpha_2)=0$. As a result, (\ref{proof-taylor}) holds true. 
  
  It remains to prove that $\tilde{C}(1,0)=\tilde{C}(0,1)=1$, $\tilde{C}(3,0)=\tilde{C}(0,3)=\frac{1}{24}$ and $\tilde{C}(2,1)=\tilde{C}(1,2)=\frac18$. Thanks to (\ref{specific-taylor2}) and (\ref{proof-taylor}), by taking $f(x,y)=x$ (resp. $f(x,y)=y$) we deduce immediately that $\tilde{C}(1,0)$ (resp. $\tilde{C}(0,1)$) equals 1. By taking $f(x,y)=x^3$ (resp. $f(x,y)=y^3$) we deduce that $\tilde{C}(3,0)$ (resp. $\tilde{C}(0,3)$) equals $\frac{1}{24}$. Finally, by taking $f(x,y)=x^2y$ (resp. $f(x,y)=xy^2$) we deduce that $\tilde{C}(2,1)$ (resp. $\tilde{C}(1,2)$) equals $\frac18$. The proof of  Lemma \ref{taylor-expansion} is complete.
  \end{proof}

The following lemma gathers several estimates that will be needed while completing the proof of our theorems .

\begin{lemma} Suppose that $H < 1/2$. Then
\begin{enumerate}
 
\item  For all $j , k \in \N$ and $u\in\R$,
\begin{eqnarray}
 |\langle \varepsilon_u, \delta_{(j+1)2^{-n/2}} \rangle_\mathcal{G} | &\leq & 2^{-nH},\label{12}
 \end{eqnarray}

\item For all integers $r,n \geq 1$ and all $t\in\R_+$, and with $C_{H,r}$ a constant depending  only on $H$  and $r$ (but independent of $t$ and $n$),
 \begin{eqnarray}\label{13}
 \sum_{k,l=0}^{\lfloor 2^{n/2} t \rfloor - 1}|\langle \delta_{(k+1)2^{-n/2}} ; \delta_{(l+1)2^{-n/2}}\rangle_\mathcal{G}|^r &\leq& C_{H,r}  \: t\:  2^{n(\frac12-rH)}.
 \end{eqnarray}

\item For all integer $n \geq 1$ and all $t\in\R_+$,
\begin{eqnarray}\label{14}
 \sum_{k,l=0}^{\lfloor 2^{n/2} t \rfloor - 1}|\langle \varepsilon_{k2^{-n/2}} ; \delta_{(l+1)2^{-n/2}}\rangle_\mathcal{G}| &\leq& 2^{-nH-1} + 2^{1+n/2} t^{2H + 1},\\
\label{15}
 \sum_{k,l=0}^{\lfloor 2^{n/2} t \rfloor - 1}|\langle \varepsilon_{(k+1)2^{-n/2}} ; \delta_{(l+1)2^{-n/2}}\rangle_\mathcal{G}| &\leq& 2^{-nH-1} + 2^{1+n/2} t^{2H + 1}.
\end{eqnarray}
\end{enumerate}
\end{lemma}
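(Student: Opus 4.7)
The estimates (12)--(15) are all direct calculations with the explicit form $\langle\varepsilon_t,\varepsilon_s\rangle_{\mathcal{G}}=C_H(t,s)=\frac12(|t|^{2H}+|s|^{2H}-|t-s|^{2H})$ together with the decomposition $\delta_{(j+1)2^{-n/2}}=\varepsilon_{(j+1)2^{-n/2}}-\varepsilon_{j2^{-n/2}}$. The single recurring tool is the sub-additivity inequality $|a^{2H}-b^{2H}|\leq|a-b|^{2H}$ for $a,b\geq0$, which follows from the concavity of $x\mapsto x^{2H}$ on $[0,\infty)$ since $2H<1$.

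For (12) I would expand
\[
\langle\varepsilon_u,\delta_{(j+1)2^{-n/2}}\rangle_{\mathcal{G}}=\tfrac12\bigl(((j+1)2^{-n/2})^{2H}-(j2^{-n/2})^{2H}\bigr)-\tfrac12\bigl(|u-(j+1)2^{-n/2}|^{2H}-|u-j2^{-n/2}|^{2H}\bigr);
\]
the $|u|^{2H}$ contributions cancel and each remaining bracket is at most $2^{-nH}$ in absolute value by sub-additivity, yielding exactly $2^{-nH}$ after the two factors $\tfrac12$ are combined. For (13), a direct computation gives $\langle\delta_{(k+1)2^{-n/2}},\delta_{(l+1)2^{-n/2}}\rangle_{\mathcal{G}}=2^{-nH}\rho(k-l)$, so with $N:=\lfloor 2^{n/2}t\rfloor$ the double sum equals $2^{-nrH}\sum_{k,l=0}^{N-1}|\rho(k-l)|^r$ and is therefore at most $2^{-nrH}\cdot N\cdot\sum_{m\in\Z}|\rho(m)|^r$. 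A second-order Taylor expansion of $\rho$ in $1/m$ yields $\rho(m)=O(|m|^{2H-2})$ as $|m|\to\infty$; since $H<1/2$ and $r\geq1$ one has $r(2-2H)>1$, so the series converges to a constant $C_{H,r}$, and bounding $N\leq 2^{n/2}t$ then gives (13).

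For (14) and (15) I would exploit the telescoping structure of
\[
\langle\varepsilon_{k2^{-n/2}},\delta_{(l+1)2^{-n/2}}\rangle_{\mathcal{G}}=\frac{2^{-nH}}{2}\bigl[\bigl((l+1)^{2H}-l^{2H}\bigr)+\bigl(|k-l|^{2H}-|k-l-1|^{2H}\bigr)\bigr],
\]
and apply the triangle inequality to the two brackets. Summing the first bracket over $l=0,\dots,N-1$ telescopes to $N^{2H}$; the second, after splitting the range at $l=k$, telescopes to $k^{2H}+(N-k)^{2H}$. Summing over $k$ and using $\sum_{k=0}^{N-1}k^{2H}\leq N^{2H+1}$ gives a total of order $2^{-nH}N^{2H+1}$, which becomes the claimed $2^{1+n/2}t^{2H+1}$ after substituting $N\leq 2^{n/2}t$; the additive $2^{-nH-1}$ piece absorbs the isolated $N\cdot N^{2H}$ contribution from the first bracket. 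The proof of (15) is verbatim the same with $k$ replaced by $k+1$ in the second telescope.

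The only delicate point is that one must keep the telescoping alive through the absolute values rather than bounding each inner product crudely by $2^{-nH}$, since that would yield only $N^2\cdot 2^{-nH}=2^{n(1-H)}t^2$ and lose the correct $t^{2H+1}$ dependence; and in (13) the convergence of $\sum_m|\rho(m)|^r$ is precisely what forces the hypothesis $H<1/2$.
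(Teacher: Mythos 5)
Your proof is correct and follows essentially the same route as the paper's: the explicit covariance expansion combined with the subadditivity $|b^{2H}-a^{2H}|\leq|b-a|^{2H}$ for (\ref{12}), reduction to the finiteness of $\sum_{m\in\Z}|\rho(m)|^r$ (which the paper simply asserts from $H<1/2$ and you justify via $\rho(m)=O(|m|^{2H-2})$) for (\ref{13}), and the two telescoping sums for (\ref{14})--(\ref{15}). One cosmetic slip: the first bracket's contribution $2^{-nH-1}N\cdot N^{2H}$ is of size $\tfrac12\,2^{n/2}t^{2H+1}$ and is not ``absorbed'' by the additive $2^{-nH-1}$, but since your two telescoped pieces total at most $\tfrac32\,2^{n/2}t^{2H+1}\leq 2^{1+n/2}t^{2H+1}$ the stated bound holds anyway.
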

\begin{proof}{ }
\begin{enumerate}
\item[1)]
 We have, for all $0 \leq s \leq t$ and $i \in \{1,2\}$,
 \[ 
 E\big( X^{i}_{u}(X^{i}_{t}-X^{i}_{s})\big) = \frac{1}{2}\big( t^{2H} -s^{2H}\big) + \frac{1}{2}\big(|s-u|^{2H} -|t-u|^{2H}\big).
 \]
Thanks to (\ref{inner-product}), we have 
\[
 \langle \varepsilon_u, \delta_{(j+1)2^{-n/2}} \rangle_\mathcal{G} = E\big( X^{i}_{u}(X^{i}_{(j+1)2^{-n/2}}-X^{i}_{j2^{-n/2}})\big).
 \]
 Since for $H < 1/2$  one has $|b^{2H} - a^{2H}|\leq |b-a|^{2H}$ for any $a, b \in \mathbb{R}_{+}$,  we immediately deduce (\ref{12}).

\item[2)]  Thanks to (\ref{inner-product}), for $i \in \{1,2\}$, we have that 
\[\langle\delta_{(k+1)2^{-n/2}};\delta_{(l+1)2^{-n/2}}\rangle_\mathcal{G}^r = (E[(X^{i}_{(k+1)2^{-n/2}}-X^{i}_{k2^{-n/2}})(X^{i}_{(l+1)2^{-n/2}}-X^{i}_{l2^{-n/2}})])^r.\]
 Thus,
 \begin{eqnarray*}
 &&\sum_{k,l=0}^{\lfloor 2^{n/2} t \rfloor -1}|\langle \delta_{(k+1)2^{-n/2}};\delta_{(l+1)2^{-n/2}}\rangle_\mathcal{G}|^r\\
 & =& 2^{-nrH-r}\sum_{k,l=0}^{\lfloor 2^{n/2} t \rfloor -1}\big| |k-l+1|^{2H} + |k-l-1|^{2H} -2|k-l|^{2H} \big|^r.
 \end{eqnarray*} 
 By first setting $p=k-l$ and then applying Fubini, we get that the latter quantity is equal to: 
\begin{eqnarray*}  
&& 2^{-nrH-r}\sum_{p=1-\lfloor 2^{n/2} t \rfloor}^{\lfloor 2^{n/2} t \rfloor -1}\big| |p+1|^{2H} + |p-1|^{2H} -2|p|^{2H} \big|^r\big( (p+\lfloor 2^{n/2} t \rfloor)\wedge \lfloor 2^{n/2} t \rfloor - p\vee 0 \big) \\
&\leq & 2^{-nrH-r} \lfloor 2^{n/2} t \rfloor \sum_{p=1-\lfloor 2^{n/2} t \rfloor}^{\lfloor 2^{n/2} t \rfloor -1}\big| |p+1|^{2H} + |p-1|^{2H} -2|p|^{2H} \big|^r \\
&\leq & C_{H,r}\: t \:2^{\frac{n}{2} \: - nrH},
\end{eqnarray*}
where $C_{H,r}:=\frac{1}{2^r} \displaystyle{\sum_{p=-\infty}^{+\infty}\big| |p+1|^{2H} + |p-1|^{2H} -2|p|^{2H} \big|^r}$. Observe that $C_{H,r}$ is finite because $H< \frac12$.
This shows (\ref{13}).
\item[3)] Thanks to (\ref{inner-product}), for $i \in \{1,2 \}$, we have that 
\[
\langle \varepsilon_{k2^{-n/2}} ; \delta_{(l+1)2^{-n/2}}\rangle_\mathcal{G} = E[ X^{i}_{k2^{-n/2}}(X^{i}_{(l+1)2^{-n/2}}-X^{i}_{l2^{-n/2}})].
\] 
Thus,
\begin{eqnarray*}
&&\sum_{k,l=0}^{\lfloor 2^{n/2} t \rfloor - 1}|\langle \varepsilon_{k2^{-n/2}} ; \delta_{(l+1)2^{-n/2}}\rangle_\mathcal{G}|\\
 &=& 2^{-nH-1}\sum_{k,l=0}^{\lfloor 2^{n/2} t \rfloor - 1}|(l+1)^{2H} - l^{2H} + |k-l|^{2H} -|k-l-1|^{2H} |\\
 &\leq & 2^{-nH-1}\sum_{k,l=0}^{\lfloor 2^{n/2} t \rfloor - 1}|(l+1)^{2H} - l^{2H}| + 2^{-nH-1}\sum_{k,l=0}^{\lfloor 2^{n/2} t \rfloor - 1} \big| |k-l|^{2H} - |k-l-1|^{2H}\big|.
\end{eqnarray*}
 We have 
 \begin{eqnarray}\label{lem1-3} 
 && 2^{-nH-1}\sum_{k,l=0}^{\lfloor 2^{n/2} t \rfloor - 1}|(l+1)^{2H} - l^{2H}|=2^{-nH-1}\lfloor 2^{n/2} t \rfloor \sum_{l=0}^{\lfloor 2^{n/2} t \rfloor - 1}((l+1)^{2H} - l^{2H})\notag\\
 &=& 2^{-nH-1}\lfloor 2^{n/2} t \rfloor (\lfloor 2^{n/2} t \rfloor)^{2H}\leq \frac{1}{2}2^{n/2}t^{2H+1}.
\end{eqnarray}
On the other hand, a telescoping sum argument leads to
\begin{eqnarray}\label{lem1-3'} 
 2^{-nH-1}\sum_{k,l=0}^{\lfloor 2^{n/2} t \rfloor - 1} \big| |k-l|^{2H} - |k-l-1|^{2H}\big| \leq 2^{-nH-1} + 2^{n/2}t^{2H+1}.
 \end{eqnarray}
 By combining (\ref{lem1-3}) and (\ref{lem1-3'}) we deduce (\ref{14}).
The proof of (\ref{15}) may be done similarly.
\end{enumerate}
\end{proof}

\begin{lemma} Suppose that $H < 1/2$. Then

\begin{enumerate}

\item For $t \geq 0$
\begin{eqnarray}
&&\sum_{j=0}^{\lfloor 2^{\frac{n}{2}} t \rfloor -1}\bigg|\bigg\langle \frac{\varepsilon_{j2^{-n/2}} + \varepsilon_{(j+1)2^{-n/2} }}{2} , \delta_{(j+1)2^{-n/2}} \bigg\rangle_\mathcal{G}\bigg| \leq \frac{1}{2}t^{2H}.\label{lemma2-1}
\end{eqnarray}

\item For $s \in \R$ and $t \geq 0$
\begin{eqnarray}
&&\sum_{j=0}^{\lfloor 2^{\frac{n}{2}} t \rfloor -1}\big|\big\langle \varepsilon_s , \delta_{(j+1)2^{-n/2}} \big\rangle_\mathcal{G}\big| \leq 2t^{2H}.\label{lemma2-3}
\end{eqnarray}
\end{enumerate}
\end{lemma}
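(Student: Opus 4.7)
{Plan for the final Lemma}
The plan is to reduce both bounds to telescoping sums through the explicit covariance formula $\langle\varepsilon_u,\varepsilon_v\rangle_{\mathcal{G}} = C_H(u,v) = \frac{1}{2}(|u|^{2H}+|v|^{2H}-|u-v|^{2H})$, and to control the residual non-telescoping pieces by the subadditivity inequality $(a+b)^{2H}\leq a^{2H}+b^{2H}$ (for $a,b\geq 0$), which holds strictly since $H<1/2$ implies $2H<1$.

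For part~(1), I would first rewrite the summand as a variance increment. Since $\varepsilon_{a}=\mathbf{1}_{[0,a]}$ for $a\geq 0$, we can interpret
\[
\Big\langle\tfrac{\varepsilon_{j2^{-n/2}}+\varepsilon_{(j+1)2^{-n/2}}}{2},\delta_{(j+1)2^{-n/2}}\Big\rangle_{\mathcal{G}} = \tfrac{1}{2}\,E\!\left[(X_{(j+1)2^{-n/2}}+X_{j2^{-n/2}})(X_{(j+1)2^{-n/2}}-X_{j2^{-n/2}})\right],
\]
which by the difference-of-squares identity equals $\tfrac{1}{2}\big[((j+1)2^{-n/2})^{2H}-(j2^{-n/2})^{2H}\big]$. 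This expression is non-negative for all $j\geq 0$, so the absolute values are redundant and the sum telescopes to $\tfrac{1}{2}(\lfloor 2^{n/2}t\rfloor 2^{-n/2})^{2H}\leq\tfrac{1}{2}t^{2H}$, which is stronger than required.

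For part~(2), the same covariance formula gives
\[
2\langle\varepsilon_s,\delta_{(j+1)2^{-n/2}}\rangle_{\mathcal{G}} = A_j - B_j(s),
\]
where $A_j=((j+1)2^{-n/2})^{2H}-(j2^{-n/2})^{2H}$ and $B_j(s)=|(j+1)2^{-n/2}-s|^{2H}-|j2^{-n/2}-s|^{2H}$. The triangle inequality then splits the target into two sums; the $A_j$-sum telescopes exactly as in part~(1) and contributes $\tfrac{1}{2}t^{2H}$. The $B_j(s)$-sum is the crux and is treated by case analysis on $s$. When $s$ lies outside the interval $[0,\lfloor 2^{n/2}t\rfloor 2^{-n/2}]$, the map $j\mapsto |j2^{-n/2}-s|^{2H}$ is monotone on the summation range, the absolute values can again be removed, and telescoping combined with the subadditivity of $x\mapsto x^{2H}$ yields a bound by $\tfrac{1}{2}t^{2H}$, so the total is $\leq t^{2H}$. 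When $0<s<\lfloor 2^{n/2}t\rfloor 2^{-n/2}$, I set $j^\ast=\lfloor s\,2^{n/2}\rfloor$, split the sum into $j<j^\ast$ (where $|j2^{-n/2}-s|=s-j2^{-n/2}$ is decreasing), $j>j^\ast$ (where $|j2^{-n/2}-s|=j2^{-n/2}-s$ is increasing), and the single index $j=j^\ast$; the first two contribute $\tfrac{1}{2}s^{2H}\leq\tfrac{1}{2}t^{2H}$ and $\tfrac{1}{2}(\lfloor 2^{n/2}t\rfloor 2^{-n/2} - s)^{2H}\leq\tfrac{1}{2}t^{2H}$ respectively after telescoping.

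The main obstacle is the leftover term at $j=j^\ast$ in the interior case: both $(j^\ast+1)2^{-n/2}-s$ and $s-j^\ast 2^{-n/2}$ lie in $[0,2^{-n/2}]$, so this single term is bounded only by $2^{-nH}$, which is not manifestly $\leq t^{2H}$. The key observation rescuing the bound is that the interior case requires $\lfloor 2^{n/2}t\rfloor\geq 1$, i.e.\ $t\geq 2^{-n/2}$, and hence $t^{2H}\geq 2^{-nH}$. Adding the three interior contributions gives $\tfrac{3}{2}t^{2H}$ for the $B$-sum, so combined with the $\tfrac{1}{2}t^{2H}$ from the $A$-sum we obtain the claimed $2t^{2H}$, with the constant $2$ being sharp in this strategy precisely because of the exceptional $j=j^\ast$ term.
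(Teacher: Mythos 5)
Your proposal is correct and follows essentially the same route as the paper: the explicit covariance $C_H$, the reduction of part (1) to the exact telescoping increment $\tfrac12\big(((j+1)2^{-n/2})^{2H}-(j2^{-n/2})^{2H}\big)$, and for part (2) the split into the variance part and the $|{\cdot}-s|^{2H}$ part with a case analysis on the position of $s$, telescoping on each monotone range, subadditivity of $x\mapsto x^{2H}$, and the same rescue of the single exceptional index via $2^{-nH}\le \big(\lfloor 2^{n/2}t\rfloor 2^{-n/2}\big)^{2H}\le t^{2H}$ when the sum is nonempty. Your difference-of-squares phrasing in part (1) is a slightly slicker presentation of the identical computation.
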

\begin{proof}{}

\item[1)] Thanks to (\ref{inner-product}) we have
\begin{eqnarray*}
 \big\langle \varepsilon_{j2^{-n/2}} , \delta_{(j+1)2^{-n/2}} \big\rangle_\mathcal{G} &=& \frac{1}{2}2^{-nH}( |j+1|^{2H} - |j|^{2H} - 1),\\
 \big\langle \varepsilon_{(j+1)2^{-n/2}} , \delta_{(j+1)2^{-n/2}} \big\rangle_\mathcal{G} &=& \frac{1}{2}2^{-nH}( |j+1|^{2H} - |j|^{2H} + 1).
\end{eqnarray*}
Hence, we get that
\begin{equation*}
 \bigg\langle \frac{\varepsilon_{j2^{-n/2}} + \varepsilon_{(j+1)2^{-n/2}}}{2} , \delta_{(j+1)2^{-n/2}} \bigg\rangle_\mathcal{G} = \frac{1}{2}2^{-nH}(|j+1|^{2H} -|j|^{2H}),
 \end{equation*}
and, by a telescoping argument, it yields
\begin{equation*}
 \sum_{j=0}^{\lfloor 2^{\frac{n}{2}} t \rfloor -1} \bigg|\bigg\langle \frac{\varepsilon_{j2^{-n/2}} + \varepsilon_{(j+1)2^{-n/2}}}{2} , \delta_{(j+1)2^{-n/2}} \bigg\rangle_\mathcal{G}\bigg| \leq \frac{1}{2} t^{2H},
\end{equation*}
which proves (\ref{lemma2-1}).

\item[2)] Thanks to (\ref{inner-product}), for $i \in \{1,2\}$, we have
\begin{eqnarray*}
&& \big\langle \varepsilon_s , \delta_{(j+1)2^{-n/2}} \big\rangle_\mathcal{G} = E[X_s^{i}(X^{i}_{(j+1)2^{-n/2}} -X^{i}_{j2^{-n/2}})] \\
&=& \frac{1}{2}2^{-nH}\big( |j+1|^{2H} - |j|^{2H} \big) + \frac{1}{2}2^{-nH}\big( |s2^{-n/2} -j|^{2H} - |s2^{-n/2} - j -1|^{2H}\big).
\end{eqnarray*}
We deduce that 
\begin{enumerate}

\item If $s \geq 0$ : 

\begin{enumerate}

\item if $s \leq t$ : 
\begin{eqnarray*}
 &&\sum_{j=0}^{\lfloor 2^{\frac{n}{2}} t \rfloor -1}\big|\big\langle \varepsilon_s , \delta_{(j+1)2^{-n/2}} \big\rangle_\mathcal{G}\big|\\
 &\leq & \frac{1}{2}t^{2H} + \frac{1}{2}2^{-nH}\sum_{j=0}^{\lfloor 2^{\frac{n}{2}} s \rfloor -1}\big( (s2^{n/2} - j)^{2H}  - (s2^{n/2} - j -1)^{2H} \big)\\
&& + \frac{1}{2}2^{-nH}\big| |s2^{n/2} - \lfloor s2^{\frac{n}{2}}  \rfloor|^{2H} - |s2^{n/2} - \lfloor s2^{\frac{n}{2}}  \rfloor -1|^{2H} \big|\\
&& + \frac{1}{2}2^{-nH}\sum_{j=\lfloor 2^{\frac{n}{2}} s \rfloor +1}^{\lfloor 2^{\frac{n}{2}} t \rfloor -1} \big( (j+1 -s2^{n/2})^{2H} - (j -s2^{n/2})^{2H} \big)\\
&=& \frac{1}{2}t^{2H} + \frac{1}{2}2^{-nH}\big( (s2^{n/2})^{2H} - (s2^{n/2} - \lfloor s2^{\frac{n}{2}}  \rfloor)^{2H} \big)\\
&& +\frac{1}{2}2^{-nH}\big| |s2^{n/2} - \lfloor s2^{\frac{n}{2}}  \rfloor|^{2H} - |s2^{n/2} - \lfloor s2^{\frac{n}{2}}  \rfloor -1|^{2H} \big|\\
&& + \frac{1}{2}2^{-nH}\big( (\lfloor 2^{\frac{n}{2}} t \rfloor -s2^{n/2} )^{2H} - (\lfloor s2^{\frac{n}{2}}  \rfloor -s2^{n/2} +1 )^{2H} \big),
\end{eqnarray*}
where we have obtained the first equality by a telescoping argument. As a consequence of the previous calculation, and  since $|b^{2H} - a^{2H}|\leq |b-a|^{2H}$ for $H <1/2$ and $a, b \in \mathbb{R}_{+}$, we deduce that
\begin{eqnarray*}
&&\sum_{j=0}^{\lfloor 2^{\frac{n}{2}} t \rfloor -1}\big|\big\langle \varepsilon_s , \delta_{(j+1)2^{-n/2}} \big\rangle_\mathcal{G}\big|\\
& \leq &  \frac{1}{2}t^{2H} + \frac{1}{2}2^{-nH} ( s^{2H}2^{nH}) + \frac{1}{2}2^{-nH}1^{2H} + \frac{1}{2}2^{-nH} (\lfloor 2^{\frac{n}{2}} t \rfloor - \lfloor 2^{\frac{n}{2}} s \rfloor -1)^{2H}\\
\end{eqnarray*}
\begin{eqnarray*}
& \leq & \frac{1}{2}t^{2H} + \frac{1}{2}t^{2H} + \frac{1}{2}2^{-nH}(\lfloor 2^{\frac{n}{2}} t \rfloor )^{2H} + \frac{1}{2}2^{-nH} (\lfloor 2^{\frac{n}{2}} t \rfloor )^{2H}\\
&\leq & 2t^{2H},
\end{eqnarray*}
meaning that (\ref{lemma2-3}) holds true.

\item if $s >t$ : by the same argument that was used in the previous case, we have
\begin{eqnarray*}
&&\sum_{j=0}^{\lfloor 2^{\frac{n}{2}} t \rfloor -1}\big|\big\langle \varepsilon_s , \delta_{(j+1)2^{-n/2}} \big\rangle_\mathcal{G}\big|\\
& \leq & \frac{1}{2}t^{2H} + \frac{1}{2}2^{-nH}\sum_{j=0}^{\lfloor 2^{\frac{n}{2}} t \rfloor -1}\big( (s2^{n/2} - j)^{2H}  - (s2^{n/2} - j -1)^{2H} \big)\\
\end{eqnarray*}
\begin{eqnarray*}
&=& \frac{1}{2}t^{2H} + \frac{1}{2}2^{-nH}\big( (s2^{n/2})^{2H} - (s2^{n/2} - \lfloor 2^{\frac{n}{2}} t \rfloor )^{2H} \big)\\
& \leq & \frac{1}{2}t^{2H} + \frac{1}{2}2^{-nH}(\lfloor 2^{\frac{n}{2}} t \rfloor)^{2H} \leq t^{2H},
\end{eqnarray*}
and (\ref{lemma2-3}) holds true.
\end{enumerate}

\item If $ s < 0$ :
\begin{eqnarray*}
&&\sum_{j=0}^{\lfloor 2^{\frac{n}{2}} t \rfloor -1}\big|\big\langle \varepsilon_s , \delta_{(j+1)2^{-n/2}} \big\rangle_\mathcal{G}\big|\\
&\leq & \frac{1}{2} t^{2H} + \frac{1}{2}2^{-nH}\sum_{j=0}^{\lfloor 2^{\frac{n}{2}} t \rfloor -1} \big| |s2^{n/2} -j|^{2H} - |s2^{n/2} - j -1|^{2H}\big|\\
&=& \frac{1}{2} t^{2H} + \frac{1}{2}2^{-nH}\sum_{j=0}^{\lfloor 2^{\frac{n}{2}} t \rfloor -1} \big((j +1 + (-s)2^{n/2} )^{2H} - (j + (-s)2^{n/2} )^{2H} \big)\\
&=& \frac{1}{2} t^{2H} + \frac{1}{2}2^{-nH} \big( (\lfloor 2^{\frac{n}{2}} t \rfloor + (-s)2^{n/2})^{2H} - (-s2^{n/2})^{2H}\big)\\
& \leq & \frac{1}{2} t^{2H} + \frac{1}{2} \big( (t +(-s))^{2H} - (-s)^{2H} \big)\\
& \leq & \frac{1}{2} t^{2H} + \frac{1}{2} t^{2H},
\end{eqnarray*}
where the second equality follows from a telescoping argument and the last inequality follows from the relation $|b^{2H} - a^{2H}|\leq |b-a|^{2H}$ for any $a, b \in \mathbb{R}_{+}$. The last inequality shows that (\ref{lemma2-3}) holds true.
\end{enumerate} 
 \end{proof}
\begin{lemma}\label{biglemma}
Suppose that $f_1, f_2, f_3, f_4 \in C_b^{\infty}(\R^2)$ and set $H=1/6$. Fix $ t \geq 0$. Then 
\begin{enumerate}
\item For $(i,j) \in \{ (1,2) , (2,1)\},$ we have
 \begin{eqnarray}
 &&\sup_{n \geq 1} \sup_{i_1,i_2 \in \{0, \ldots, \lfloor 2^{\frac{n}{2}} t \rfloor -1\}} \sum_{i_3,i_4=0}^{\lfloor 2^{\frac{n}{2}} t \rfloor -1}\bigg|E\bigg(\Delta_{i_1,n}f_1(X^{1},X^{2})\Delta_{i_2,n}f_2(X^{1},X^{2})\Delta_{i_3,n}f_3(X^{1},X^{2})\notag\\
 && \times \Delta_{i_4,n}f_4(X^{1},X^{2})I^{(i)}_1 \big(\delta_{(i_3+1)2^{-n/2}} \big)I^{(j)}_2 \big(\delta_{(i_3+1)2^{-n/2}}^{\otimes 2} \big)I^{(i)}_1 \big(\delta_{(i_4+1)2^{-n/2}} \big)I^{(j)}_2 \big(\delta_{(i_4+1)2^{-n/2}}^{\otimes 2} \big)\bigg)\bigg|\notag\\
 &\leq & C(t +t^2), \label{lemma3}
 \end{eqnarray}
 
 \item For $i \in \{1,2\},$ we have 
 \begin{eqnarray}
 &&\sup_{n \geq 1} \sup_{i_1,i_2 \in \{0, \ldots, \lfloor 2^{\frac{n}{2}} t \rfloor -1\}} \sum_{i_3,i_4=0}^{\lfloor 2^{\frac{n}{2}} t \rfloor -1}\bigg|E\bigg(\Delta_{i_1,n}f_1(X^{1},X^{2})\Delta_{i_2,n}f_2(X^{1},X^{2})\Delta_{i_3,n}f_3(X^{1},X^{2}) \notag\\
 && \hspace{4cm} \times \Delta_{i_4,n}f_4(X^{1},X^{2})I^{(i)}_3 \big(\delta_{(i_3+1)2^{-n/2}}^{\otimes 3} \big)I^{(i)}_3 \big(\delta_{(i_4+1)2^{-n/2}}^{\otimes 3} \big)\bigg)\bigg|\notag\\
 & \leq & C(t + t^2), \label{lemma4}
 \end{eqnarray}
 
 \item For $(i,j) \in \{ (1,2) , (2,1)\},$ we have
 \begin{eqnarray}
 && \sup_{n \geq 1}\sum_{i_1,i_2,i_3,i_4=0}^{\lfloor 2^{\frac{n}{2}} t \rfloor -1}\bigg|E\bigg(\prod_{a=1}^4 \Delta_{i_a,n}f_a(X^{1},X^{2})I^{(i)}_1 \big(\delta_{(i_a+1)2^{-n/2}} \big)I^{(j)}_2 \big(\delta_{(i_a+1)2^{-n/2}}^{\otimes 2} \big)\bigg)\bigg|\notag\\
 &\leq & C(t +t^2+t^3+t^4), \label{lemma4'}
 \end{eqnarray}
 
 \item For $i \in \{1,2\}$, we have
 \begin{eqnarray}
 && \sup_{n \geq 1}\sum_{i_1,i_2,i_3,i_4=0}^{\lfloor 2^{\frac{n}{2}} t \rfloor -1}\bigg|E\bigg(\prod_{a=1}^4 \Delta_{i_a,n}f_a(X^{1},X^{2})I^{(i)}_3 \big(\delta_{(i_a+1)2^{-n/2}}^{\otimes 3} \big)\bigg)\bigg|\notag\\
 &\leq & C(t +t^2+t^3 + t^4), \label{lemma4''}
 \end{eqnarray}
 \end{enumerate}
 where $C$ is a positive constant depending only on $f$ (but independent of $n$ and $t$).
 \end{lemma}
 \begin{proof}{}
 The proof, which is quite long and rather technical, is postponed in Section 5.
 \end{proof}
 
\begin{lemma}\label{lemma5} Suppose $H=1/6$. Then
\begin{enumerate}

\item For all $j \in \N$, for $l\in \{1,2\}$
\begin{eqnarray}
  \big|\big\langle D^l_{X^{2}}\xi , \delta_{(j+1)2^{-n/2}}^{\otimes l} \big\rangle\big| & \leq & C 2^{-nl/6},\label{lemma5-1}
\end{eqnarray}
where $C$ in a positive constant depending only on $\psi$ introduced in (\ref{xi}).

\item For all $j \in \N$, for $i \in \{1,2,3,4\}$, for $l\in \{1,2\}$
\begin{equation}
E[\big\langle D^l_{X^{2}} \big(K^{(i)}_n(f,t)\big) , \delta^{\otimes l}_{(j+1)2^{-n/2}} \big\rangle^2] \leq C 2^{-nl/3}(t^2 +t+1),\label{lemma5-2}
\end{equation}
where $C$ in a positive constant depending only on $f$.

\item For all $j \in \N$, for $i \in \{1,2,3,4\}$
\begin{equation}
E[\big\langle D_{X^{2}} \big(K^{(i)}_n(f,t)\big) , \delta_{(j+1)2^{-n/2}} \big\rangle^4] \leq C 2^{-2n/3}(t^3+t^2 +t+1),\label{lemma5-4}
\end{equation}
where $C$ in a positive constant depending only on $f$.

\end{enumerate}
\end{lemma}
\begin{proof}{}
\begin{enumerate}

\item[1)] For $l=1$, observe that, thanks to (\ref{diabolic-derivation}) (see also the example given after (\ref{diabolic-derivation})), we have
\[
 D_{X^{2}}\xi = \sum_{k=1}^r \frac{\partial \psi}{\partial x_{2k}}\big( X^{1}_{s_1},X^{2}_{s_1}, \ldots, X^{1}_{s_r},X^{2}_{s_r} \big)\varepsilon_{s_k},
 \]
 As a consequence, since $\psi$ is bounded and thanks to (\ref{12}), we deduce that 
 \begin{eqnarray*}
  \big|\big\langle D_{X^{2}}\xi , \delta_{(j+1)2^{-n/2}} \big\rangle\big| &\leq & C \sum_{k=1}^r |\langle \varepsilon_{s_k} , \delta_{(j+1)2^{-n/2}} \rangle |\\
  & \leq & C 2^{-n/6},
  \end{eqnarray*}
  which proves (\ref{lemma5-1}). On the other hand, for $l=2$, we have
  \[
 D^2_{X^{2}}\xi = \sum_{k,k'=1}^r \frac{\partial^2 \psi}{\partial x_{2k}\partial x_{2k'}}\big( X^{1}_{s_1},X^{2}_{s_1}, \ldots, X^{1}_{s_r},X^{2}_{s_r} \big)\varepsilon_{s_k}\otimes\varepsilon_{s_{k'}}.
 \]
 So, as previously, we deduce  that
 \begin{equation*}
  \big|\big\langle D^2_{X^{2}}\xi , \delta_{(j+1)2^{-n/2}}^{\otimes 2} \big\rangle\big| \leq C (2^{-n/6})^2= C2^{-n/3},
  \end{equation*}
  which proves (\ref{lemma5-1}). 
  \item [2)]  We will prove (\ref{lemma5-2}) for $l=1$ and $i =2,3$. The proof is similar for the other values of $i$ and $l$.
  \begin{enumerate}

  \item For $ i=2$ : Thanks to (\ref{Leibnitz0}) and (\ref{derivative- multiple,integral}), we have
  \begin{eqnarray*}
&&D_{X^{2}}K_n^{(2)}(f, t ) \\
&=& \frac{1}{24} \sum_{l=0}^{\lfloor 2^{\frac{n}{2}} t \rfloor -1} \Delta_{l,n}\partial_{2222} f(X^{1},X^{2}) I^{(2)}_3 \big(\delta_{(l+1)2^{-n/2}}^{\otimes 3} \big)\bigg(\frac{\varepsilon_{l2^{-n/2}} + \varepsilon_{(l+1)2^{-n/2} }}{2}\bigg)\\
&& + \frac{1}{8} \sum_{l=0}^{\lfloor 2^{\frac{n}{2}} t \rfloor -1} \Delta_{l,n}\partial_{222} f(X^{1},X^{2}) I^{(2)}_2 \big(\delta_{(l+1)2^{-n/2}}^{\otimes 2} \big)\delta_{(l+1)2^{-n/2}}.
\end{eqnarray*}
So, we deduce that
\begin{eqnarray*}
&& E\big[\big\langle D_{X^{2}}K^{(2)}_n(f,t) , \delta_{(j+1)2^{-n/2}} \big\rangle^2\big]\\
 &\leq & 2(\frac{1}{24})^2 \sum_{l,k=0}^{\lfloor 2^{\frac{n}{2}} t \rfloor -1} \big| E[\Delta_{l,n}\partial_{2222} f(X^{1},X^{2})\Delta_{k,n}\partial_{2222} f(X^{1},X^{2})  \\
 && \hspace{3cm} \times I^{(2)}_3 \big(\delta_{(l+1)2^{-n/2}}^{\otimes 3} \big)I^{(2)}_3 \big(\delta_{(k+1)2^{-n/2}}^{\otimes 3} \big)]\big|\\
 && \hspace{2.5cm}\times \bigg|\bigg\langle\bigg(\frac{\varepsilon_{l2^{-n/2}} + \varepsilon_{(l+1)2^{-n/2} }}{2}\bigg), \delta_{(j+1)2^{-n/2}} \bigg\rangle\bigg|\\
 && \hspace{2.5cm}\times \bigg|\bigg\langle\bigg(\frac{\varepsilon_{k2^{-n/2}} + \varepsilon_{(k+1)2^{-n/2} }}{2}\bigg), \delta_{(j+1)2^{-n/2}} \bigg\rangle\bigg|\\
&& + 2(\frac{1}{8})^2 \sum_{l,k=0}^{\lfloor 2^{\frac{n}{2}} t \rfloor -1}\big|E[ \Delta_{l,n}\partial_{222} f(X^{1},X^{2})\Delta_{k,n}\partial_{222} f(X^{1},X^{2})\\
&& \hspace{3cm} \times  I^{(2)}_2 \big(\delta_{(l+1)2^{-n/2}}^{\otimes 2} \big)I^{(2)}_2 \big(\delta_{(k+1)2^{-n/2}}^{\otimes 2} \big)]\big|\\
&& \hspace{2cm}\times \big|\big\langle \delta_{(l+1)2^{-n/2}}, \delta_{(j+1)2^{-n/2}} \big\rangle\big|\big|\big\langle \delta_{(k+1)2^{-n/2}}, \delta_{(j+1)2^{-n/2}} \big\rangle\big|.
\end{eqnarray*}
By (\ref{lemma4}), (\ref{12}), (\ref{inner-product}), (\ref{rho}) and since
\[
\big|E[ \Delta_{l,n}\partial_{222} f(X^{1},X^{2})\Delta_{k,n}\partial_{222} f(X^{1},X^{2})I^{(2)}_2 \big(\delta_{(l+1)2^{-n/2}}^{\otimes 2} \big)I^{(2)}_2 \big(\delta_{(k+1)2^{-n/2}}^{\otimes 2} \big)]\big|
\] 
is uniformly bounded in $n$ \big(because $f \in C_b^{\infty}(\R^2)$ and thanks to the Cauchy-Schwarz inequality and to (\ref{isometry}) \big), we deduce that
\begin{eqnarray*}
E\big[\big\langle D_{X^{2}}K^{(2)}_n(f,t) , \delta_{(j+1)2^{-n/2}} \big\rangle^2\big] &\leq & C(2^{-n/6})^2(t+t^2) + C(2^{-n/6})^2 (\sum_{n \in \Z}|\rho(n)|)^2 \\
&\leq & C2^{-n/3}(t^2+t+1),
\end{eqnarray*}
which proves (\ref{lemma5-2}) for $i=2$.

\item For $i=3$ : Thanks to (\ref{Leibnitz0}) and (\ref{derivative- multiple,integral}), we have
\begin{eqnarray*}
&& D_{X^{2}}K_n^{(3)}(f, t )\\
&=& \frac{1}{8} \sum_{l=0}^{\lfloor 2^{\frac{n}{2}} t \rfloor -1} \Delta_{l,n}\partial_{1222} f(X^{1},X^{2}) I^{(1)}_1 \big(\delta_{(l+1)2^{-n/2}} \big)I^{(2)}_2 \big(\delta^{\otimes 2}_{(l+1)2^{-n/2}} \big)\\
&& \hspace{2cm}\times \bigg(\frac{\varepsilon_{l2^{-n/2}} + \varepsilon_{(l+1)2^{-n/2} }}{2} \bigg)\\
&& + \frac{1}{4} \sum_{l=0}^{\lfloor 2^{\frac{n}{2}} t \rfloor -1} \Delta_{l,n}\partial_{122} f(X^{1},X^{2}) I^{(1)}_1 \big(\delta_{(l+1)2^{-n/2}} \big)I^{(2)}_1 \big(\delta_{(l+1)2^{-n/2}} \big)\delta_{(l+1)2^{-n/2}} .
\end{eqnarray*}
Hence, we deduce that
\begin{eqnarray*}
&& E[\big\langle D_{X^{2}}K^{(3)}_n(f,t) , \delta_{(j+1)2^{-n/2}} \big\rangle^2]\\
 & \leq & 2(\frac{1}{8})^2 \sum_{l,k=0}^{\lfloor 2^{\frac{n}{2}} t \rfloor -1}\big|E[ \Delta_{l,n}\partial_{1222} f(X^{1},X^{2})\Delta_{k,n}\partial_{1222} f(X^{1},X^{2})\\
 && \hspace{2cm} \times I^{(1)}_1 \big(\delta_{(l+1)2^{-n/2}} \big)I^{(2)}_2 \big(\delta_{(l+1)2^{-n/2}}^{\otimes 2} \big)I^{(1)}_1 \big(\delta_{(k+1)2^{-n/2}} \big)I^{(2)}_2 \big(\delta_{(k+1)2^{-n/2}}^{\otimes 2} \big)]\big|\\
 && \hspace{2cm}\times \bigg|\bigg\langle\bigg(\frac{\varepsilon_{l2^{-n/2}} + \varepsilon_{(l+1)2^{-n/2} }}{2}\bigg), \delta_{(j+1)2^{-n/2}} \bigg\rangle\bigg|\\
 && \hspace{2cm}\times\bigg|\bigg\langle\bigg(\frac{\varepsilon_{k2^{-n/2}} + \varepsilon_{(k+1)2^{-n/2} }}{2}\bigg), \delta_{(j+1)2^{-n/2}} \bigg\rangle\bigg|\\
&& + 2(\frac{1}{4})^2 \sum_{l,k=0}^{\lfloor 2^{\frac{n}{2}} t \rfloor -1} \big|E[\Delta_{l,n}\partial_{122} f(X^{1},X^{2})\Delta_{k,n}\partial_{122} f(X^{1},X^{2})\\
&& \hspace{1cm}\times I^{(1)}_1 \big(\delta_{(l+1)2^{-n/2}} \big)I^{(2)}_1 \big(\delta_{(l+1)2^{-n/2}} \big)I^{(1)}_1 \big(\delta_{(k+1)2^{-n/2}} \big)I^{(2)}_1 \big(\delta_{(k+1)2^{-n/2}} \big)]\big|\\
&& \hspace{3cm}\times \big|\big\langle \delta_{(l+1)2^{-n/2}}, \delta_{(j+1)2^{-n/2}} \big\rangle\big|\big|\big\langle \delta_{(k+1)2^{-n/2}}, \delta_{(j+1)2^{-n/2}} \big\rangle\big|.
\end{eqnarray*}
Since $f \in C_b^{\infty}(\R^2)$, since $X^{1}$ and $X^{2}$ are independent, and thanks to the Cauchy-Schwarz inequality and to (\ref{isometry}), we have
\begin{eqnarray*}
&& \big|E[\Delta_{l,n}\partial_{122} f(X^{1},X^{2})\Delta_{k,n}\partial_{122} f(X^{1},X^{2})I^{(1)}_1 \big(\delta_{(l+1)2^{-n/2}} \big)\\
&& \hspace{3cm} \times I^{(2)}_1 \big(\delta_{(l+1)2^{-n/2}} \big)I^{(1)}_1 \big(\delta_{(k+1)2^{-n/2}} \big)I^{(2)}_1 \big(\delta_{(k+1)2^{-n/2}} \big)]\big|\\
&\leq & C E\big(\big|I^{(1)}_1 \big(\delta_{(l+1)2^{-n/2}} \big)\big|\big|I^{(1)}_1 \big(\delta_{(k+1)2^{-n/2}} \big)\big|\big) E\big(\big|I^{(2)}_1 \big(\delta_{(l+1)2^{-n/2}} \big)\big|\\
&&\hspace{3cm}\times\big|I^{(2)}_1 \big(\delta_{(k+1)2^{-n/2}} \big)\big|\big)\\
&\leq & C \|I^{(1)}_1 \big(\delta_{(l+1)2^{-n/2}} \big) \|_2 \|I^{(1)}_1 \big(\delta_{(k+1)2^{-n/2}} \big) \|_2 \|I^{(2)}_1 \big(\delta_{(l+1)2^{-n/2}} \big) \|_2 \\
&&\hspace{3cm}\times \| I^{(2)}_1 \big(\delta_{(k+1)2^{-n/2}} \big)\|_2\\
& \leq& C (2^{-n/12})^4 \leq C.
\end{eqnarray*}
 Thanks to the previous estimation, to (\ref{lemma3}) and to (\ref{12}) (see also (\ref{rho}) for the definition of $\rho$), we deduce that 
\begin{eqnarray*}
 E[\big\langle D_{X^{2}}K^{(3)}_n(f,t) , \delta_{(j+1)2^{-n/2}} \big\rangle^2] &\leq & C(2^{-n/6})^2(t+t^2) + C(2^{-n/6})^2 (\sum_{n \in \Z}|\rho(n)|)^2\\
 & \leq & C2^{-n/3}(t^2+t+1), 
\end{eqnarray*}
which proves (\ref{lemma5-2}) for $i=3$.
\end{enumerate}
Finally, we have proved (\ref{lemma5-2}).

\item[3)] We will prove (\ref{lemma5-4}) for $i =2$. The proof is similar for the other values of $i$.

\underline{For $i=2$ :}
\begin{eqnarray*}
&& \langle D_{X^{2}}(K_n^{(2)}(f,t)) , \delta_{(j+1)2^{-n/2}} \rangle^4\\
&\leq & C\sum_{i_1,i_2,i_3,i_4=0}^{\lfloor 2^{\frac{n}{2}} t \rfloor -1}\prod_{a=1}^4 \Delta_{i_a,n}\partial_{2222}f(X^{1},X^{2})I^{(2)}_3 \big(\delta_{(i_a+1)2^{-n/2}}^{\otimes 3} \big)\\
&& \times \prod_{a=1}^4 \bigg\langle \frac{\varepsilon_{i_a 2^{-n/2}} +\varepsilon_{(i_a +1) 2^{-n/2}}} {2}, \delta_{(j+1)2^{-n/2}} \bigg\rangle\\
&& + C \sum_{i_1,i_2,i_3,i_4=0}^{\lfloor 2^{\frac{n}{2}} t \rfloor -1}\prod_{a=1}^4 \Delta_{i_a,n}\partial_{222}f(X^{1},X^{2})I^{(2)}_2 \big(\delta_{(i_a+1)2^{-n/2}}^{\otimes 2} \big)\\
&& \times \prod_{a=1}^4 \big\langle \delta_{(i_a+1)2^{-n/2}} , \delta_{(j+1)2^{-n/2}} \big\rangle.
\end{eqnarray*}
Thus, we have
\begin{eqnarray*}
&& E\big(\langle D_{X^{2}}(K_n^{(2)}(f,t)) , \delta_{(j+1)2^{-n/2}} \rangle^4\big)\\
&\leq & C\sum_{i_1,i_2,i_3,i_4=0}^{\lfloor 2^{\frac{n}{2}} t \rfloor -1}\bigg|E\bigg(\prod_{a=1}^4 \Delta_{i_a,n}\partial_{2222}f(X^{1},X^{2})I^{(2)}_3 \big(\delta_{(i_a+1)2^{-n/2}}^{\otimes 3} \big)\bigg)\bigg|\\
&& \times \prod_{a=1}^4 \bigg|\bigg\langle \frac{\varepsilon_{i_a 2^{-n/2}} +\varepsilon_{(i_a +1) 2^{-n/2}}} {2}, \delta_{(j+1)2^{-n/2}} \bigg\rangle\bigg|\\
&& + C \sum_{i_1,i_2,i_3,i_4=0}^{\lfloor 2^{\frac{n}{2}} t \rfloor -1}\bigg|E\bigg(\prod_{a=1}^4 \Delta_{i_a,n}\partial_{222}f(X^{1},X^{2})I^{(2)}_2 \big(\delta_{(i_a+1)2^{-n/2}}^{\otimes 2} \big)\bigg)\bigg|\\
\end{eqnarray*}
\begin{eqnarray*}
&& \times \prod_{a=1}^4 \big|\big\langle \delta_{(i_a+1)2^{-n/2}} , \delta_{(j+1)2^{-n/2}} \big\rangle\big|\\
&=& Z_{n,1}^{(2)}(t) +Z_{n,2}^{(2)}(t).
\end{eqnarray*}
Thanks to (\ref{12}) and to (\ref{lemma4''}), we have
\[
Z_{n,1}^{(2)}(t) \leq C(2^{-n/6})^4(t+t^2+t^3)=C2^{-2n/3}(t+t^2+t^3).
\]
On the other hand, 
\[
\bigg|E\bigg(\prod_{a=1}^4 \Delta_{i_a,n}\partial_{222}f(X^{1},X^{2})I^{(2)}_2 \big(\delta_{(i_a+1)2^{-n/2}}^{\otimes 2} \big)\bigg)\bigg|
\]
is uniformly bounded in $n$. In fact, since $f \in C_b^\infty$ and by applying the Cauchy-Schwarz inequality two times, we get
\begin{equation*}
\bigg|E\bigg(\prod_{a=1}^4 \Delta_{i_a,n}\partial_{222}f(X^{1},X^{2})I^{(2)}_2 \big(\delta_{(i_a+1)2^{-n/2}}^{\otimes 2} \big)\bigg)\bigg| \leq \prod_{a=1}^4 \|I^{(2)}_2 \big(\delta_{(i_a+1)2^{-n/2}}^{\otimes 2} \big)\|_{L^4}.
\end{equation*}
Thanks to the Hypercontractivity property of the $p$th multiple integral with $p \geq 1$ (see for example Theorem 2.7.2 in \cite{NP}), we have for $a \in \{1, \ldots, 4 \}$, 
\begin{eqnarray*}
\|I^{(2)}_2 \big(\delta_{(i_a+1)2^{-n/2}}^{\otimes 2} \big)\|_{L^4} \leq C \|I^{(2)}_2 \big(\delta_{(i_a+1)2^{-n/2}}^{\otimes 2} \big)\|_{L^2} \leq C 2^{-n/6},
\end{eqnarray*}
where $C$ is some positive and finite constant, and we have the last inequality thanks to (\ref{isometry}). We deduce immediately from the last inequality that $\exists C >0$ such that for all $n \in \N$
\begin{equation}
\bigg|E\bigg(\prod_{a=1}^4 \Delta_{i_a,n}\partial_{222}f(X^{1},X^{2})I^{(2)}_2 \big(\delta_{(i_a+1)2^{-n/2}}^{\otimes 2} \big)\bigg)\bigg| \leq C. \label{majoration-apres-tous}
\end{equation} 
So, we get
\begin{eqnarray*}
Z_{n,2}^{(2)}(t) &\leq & C (2^{-n/6})^4 \sum_{i_1,i_2,i_3,i_4=0}^{\lfloor 2^{\frac{n}{2}} t \rfloor -1} |\rho(i_1-j)||\rho(i_2-j)||\rho(i_3-j)||\rho(i_4-j)|\\
& \leq & C2^{-2n/3}(\sum_{r \in \Z}|\rho(r)|)^4 = C2^{-2n/3}.
\end{eqnarray*}
Consequently, we deduce that $E\big(\langle D_{X^{2}}(K_n^{(2)}(f,t)) , \delta_{(j+1)2^{-n/2}} \rangle^4\big)\leq C2^{-2n/3}(1+t+t^2+t^3).$ Hence, we have proved (\ref{lemma5-4}) for $i=2$, which ends the proof of Lemma \ref{lemma5}.
\end{enumerate}
\end{proof}

\section{Proof of Theorem \ref{first-main}}

 We suppose $H < 1/2$. The proof in the case  $H \geq 1/2$ is immediate and, consequently, is left to the reader. 
 \begin{definition} For all $p,q \in \N$ such that $p+ q$ is odd, we define $V_n^{p,q}(f,t)$ as follows:
\begin{eqnarray}
V_n^{p,q}(f,t) := \sum_{j=0}^{\lfloor 2^{\frac{n}{2}} t \rfloor -1}\Delta_{j,n} f(X^{1},X^{2})\big(X^{1}_{(j+1)2^{-n/2}} - X^{1}_{j2^{-n/2}} \big)^p\big(X^{2}_{(j+1)2^{-n/2}} - X^{2}_{j2^{-n/2}} \big)^q.\notag \\\label{pq-varition}
\end{eqnarray}
\end{definition}
We have the following proposition which will play a pivotal role in the sequel :
\begin{prop}\label{proposition, H >1/6} If ($H > 1/6$ and $p+q \geq 3$) or if ($H = 1/6$ and $p+q \geq 5$), then
\begin{equation*}
V_n^{p,q}(f,t)\overset{L^2}{\longrightarrow} 0, \:\:\text{as}\:\: n \to \infty.
\end{equation*}
\end{prop}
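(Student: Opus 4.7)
Throughout, I would follow the paper's reduction and assume $H<1/2$ (the case $H\geq 1/2$ being immediate). The plan is to combine a Wiener-chaos (Hermite) decomposition of the increments with repeated Malliavin integration-by-parts. Using the isometry (\ref{linear-isometry}) and $\|\delta_{(j+1)2^{-n/2}}\|_{\mathcal G}^2=2^{-nH}$, the monomial $(X^1_{(j+1)2^{-n/2}}-X^1_{j2^{-n/2}})^p$ expands as $\sum_{k_1=0}^{\lfloor p/2\rfloor}c_{p,k_1}\,2^{-nHk_1}\,I^{(1)}_{p-2k_1}\big(\delta_{(j+1)2^{-n/2}}^{\otimes (p-2k_1)}\big)$ with $c_{p,k}=p!/(2^k k!(p-2k)!)$, and analogously for the $X^2$-monomial with index $k_2$. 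Substituting into $V_n^{p,q}$ gives
\begin{equation*}
V_n^{p,q}(f,t)=\sum_{k_1,k_2} c_{p,k_1}c_{q,k_2}\,2^{-nH(k_1+k_2)}\,W_n^{p-2k_1,\,q-2k_2}(f,t),
\end{equation*}
where $W_n^{p',q'}(f,t):=\sum_j \Delta_{j,n}f(X^1,X^2)\,I^{(1)}_{p'}\big(\delta_{(j+1)2^{-n/2}}^{\otimes p'}\big)\,I^{(2)}_{q'}\big(\delta_{(j+1)2^{-n/2}}^{\otimes q'}\big)$. Since $p+q$ is odd, the integer $p'+q'$ is also always odd and in particular $\geq 1$.

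Next, I would estimate $\|W_n^{p',q'}(f,t)\|_{L^2}^2$ by expanding the square and applying the product formula (\ref{product formula}) separately to the pair $I^{(1)}_{p'}(\delta_{j_1+1}^{\otimes p'})\,I^{(1)}_{p'}(\delta_{j_2+1}^{\otimes p'})$ and to the pair $I^{(2)}_{q'}(\delta_{j_1+1}^{\otimes q'})\,I^{(2)}_{q'}(\delta_{j_2+1}^{\otimes q'})$, which produces a finite double sum indexed by contraction orders $0\leq r_1\leq p'$ and $0\leq r_2\leq q'$. The leading choice $(r_1,r_2)=(p',q')$ contributes $p'!\,q'!\sum_{j_1,j_2}\langle\delta_{j_1+1},\delta_{j_2+1}\rangle_{\mathcal G}^{\,p'+q'}E[\Delta_{j_1,n}f\,\Delta_{j_2,n}f]$, which is bounded by $\|f\|_\infty^2$ times (\ref{13}), hence by $Ct\,2^{n(1/2-(p'+q')H)}$. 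For each remaining choice (with $r_1<p'$ or $r_2<q'$) I would invoke the duality formula (\ref{duality formula}) successively on the $X^1$- and $X^2$-sides to convert the surviving multiple integrals into iterated Malliavin derivatives of $\Delta_{j_1,n}f\cdot\Delta_{j_2,n}f$; the Leibniz rule (\ref{Leibnitz1}) expands these into linear combinations of symmetric tensor products of midpoint vectors $\tfrac{1}{2}(\varepsilon_{j_i2^{-n/2}}+\varepsilon_{(j_i+1)2^{-n/2}})$, whose contractions against $\delta_{j_1+1}^{\otimes\alpha}\otimes\delta_{j_2+1}^{\otimes\beta}$ become sums of products of inner products controlled by (\ref{12}), (\ref{lemma2-1}) and (\ref{lemma2-3}).

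Combining via the triangle inequality, $\|V_n^{p,q}(f,t)\|_{L^2}\leq C\sum_{k_1,k_2}2^{-nH(k_1+k_2)}\|W_n^{p-2k_1,\,q-2k_2}\|_{L^2}$. Using the identity $k_1+k_2=(p+q-p'-q')/2$, the $(p',q')$-dependence drops out of the exponent and the overall bound reduces to $C\sqrt{t}\,2^{n(1/4-(p+q)H/2)}$. This tends to $0$ precisely when $(p+q)H>1/2$, i.e., exactly when either $H>1/6$ and $p+q\geq 3$, or $H=1/6$ and $p+q\geq 5$ (using that $p+q$ is odd). The main obstacle will be the analysis of the subleading $(r_1,r_2)$ contractions: the Leibniz expansion generates many pairings of midpoint vectors against $\delta$-vectors, and for each pairing one must correctly choose between the pointwise bound (\ref{12}), the off-diagonal summation bound (\ref{lemma2-3}), or the crucial ``midpoint-cancellation'' bound (\ref{lemma2-1})---which saves an extra $2^{nH}$ factor precisely when the midpoint $\tau_{j_i}$ is paired with $\delta_{j_i+1}$ carrying the same index. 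Verifying that this case distinction always yields a total no worse than the leading-order bound $Ct^\alpha\,2^{n(1/2-(p'+q')H)}$ is the combinatorial heart of the argument.
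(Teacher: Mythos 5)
Your proposal is correct and follows essentially the same route as the paper's proof: a Hermite/chaos expansion of the increments, the product formula, then duality and Leibniz rules, with the leading full contraction controlled by (\ref{13}) and the partial contractions handled by (\ref{12}) together with the telescoping bounds (\ref{14})--(\ref{15}), yielding the same dominant rate $2^{n(1/2-(p+q)H)}$ for the second moment. The only cosmetic difference is that the paper treats the two coordinates asymmetrically (keeping the even-order $X^1$-chaos as an $L^2$-bounded factor via Cauchy--Schwarz and isometry, and running the contraction/duality analysis only on the $X^2$-side), whereas you apply the product formula symmetrically to both chaoses.
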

\begin{proof}{}
We suppose that $p$ is even and $q$ is odd (the proof when $p$ is odd and $q$ is even is exactly the same). We have, for all $k \in \N^*$, $x^{2k} =\sum_{i=1}^kb_{2k,i}H_{2i}(x)+ b_{2k,0}$ and $x^{2k-1}= \sum_{i=1}^k a_{2k-1,i}H_{2i-1}(x)$, where $H_n$ is the $n$th Hermite polynomial,  $b_{2k,i}$ and $a_{2k-1,i}$ are some explicit constants (if interested, the reader can find these explicit constants, e.g.,  in \cite[Corollary 1.2]{zeineddine}).
Set
\[
\phi(j,j'):= \Delta_{j,n} f(X^{1},X^{2})\Delta_{j',n} f(X^{1},X^{2}).
\] 
Recall that for $i \in \{1,2\}$ we denote  $X^{i,n}_j := 2^{\frac{nH}{2}}X^{i}_{j2^{-\frac{n}{2}}}$. We distinguish two cases: if $p\neq 0$ and if $p=0$\,,
\begin{enumerate}
\item \underline{If $p\neq 0$ :} Then, we have
\begin{eqnarray}
V_n^{p,q}(f,t)&=& b_{p,0}2^{-\frac{nH(p+q)}{2}}\sum_{k'=1}^{\frac{q+1}{2}}a_{q,k'}\sum_{j=0}^{\lfloor 2^{\frac{n}{2}} t \rfloor -1}\Delta_{j,n} f(X^{1},X^{2})H_{2k'-1}\big(X^{2,n}_{j+1} - X^{2,n}_{j}\big)\notag\\
&& + 2^{-\frac{nH(p+q)}{2}}\sum_{k=1}^{\frac{p}{2}}\sum_{k'=1}^{\frac{q+1}{2}} a_{q,k'}b_{p,k}\sum_{j=0}^{\lfloor 2^{\frac{n}{2}} t \rfloor -1}\Delta_{j,n} f(X^{1},X^{2})H_{2k}\big( X^{1,n}_{j+1} - X^{1,n}_{j}\big)\notag\\
&& \hspace{6cm} \times H_{2k'-1}\big(X^{2,n}_{j+1} - X^{2,n}_{j}\big)\notag\\
&=& b_{p,0}\sum_{k'=1}^{\frac{q+1}{2}}a_{q,k'} V_{n,1}(f,k',t) + \sum_{k=1}^{\frac{p}{2}}\sum_{k'=1}^{\frac{q+1}{2}} a_{q,k'}b_{p,k} V_{n,2}(f,k,k',t),\label{conclusion,prop H>1/6}
\end{eqnarray} 
with obvious notation at the last equality. Let us now prove the convergence to 0 as $n$ tends to infinity, in the $L^2$ sense, of $V_{n,1}(f,k',t)$ and $ V_{n,2}(f,k,k',t)$ for all $k \in \{1, \ldots , \frac{p}{2}\}$ and $k' \in \{1, \ldots ,\frac{q+1}{2}\}$,
\begin{enumerate}
 
 \item \underline{Convergence to 0, in $L^2$, of $V_{n,1}(f,k',t)$ :} For all $k' \in \{1, \ldots ,\frac{q+1}{2}\}$,
 \begin{eqnarray*}
 &&E\big[ \big(V_{n,1}(f,k',t)\big)^2 \big] \\
 &=& 2^{-nH(p+q)}\sum_{j,j'=0}^{\lfloor 2^{\frac{n}{2}} t \rfloor -1}E\bigg(\phi(j,j')H_{2k'-1}\big(X^{2,n}_{j+1} - X^{2,n}_{j}\big) H_{2k'-1}\big(X^{2,n}_{j'+1} - X^{2,n}_{j'}\big)\bigg)\\
 &=& 2^{-nH\big(p+q - (2k'-1)\big)}\sum_{j,j'=0}^{\lfloor 2^{\frac{n}{2}} t \rfloor -1}E\bigg(\phi(j,j')I^{(2)}_{2k'-1}\big(\delta_{(j+1)2^{-n/2}}^{\otimes 2k'-1}\big) I^{(2)}_{2k'-1}\big(\delta_{(j'+1)2^{-n/2}}^{\otimes 2k'-1}\big)\bigg)\\
 \end{eqnarray*}
 \begin{eqnarray*}
 &=& 2^{-nH\big(p+q - (2k'-1)\big)}\sum_{a=0}^{2k'-1} a! \binom{2k'-1}{a}^2\sum_{j,j'=0}^{\lfloor 2^{\frac{n}{2}} t \rfloor -1}E\bigg(\phi(j,j')\\
 && \hspace{2cm} \times I^{(2)}_{4k'-2-2a}\big(\delta_{(j+1)2^{-n/2}}^{\otimes 2k'-1-a}\otimes\delta_{(j'+1)2^{-n/2}}^{\otimes 2k'-1-a}\big)\bigg) \langle \delta_{(j+1)2^{-n/2}}, \delta_{(j'+1)2^{-n/2}} \rangle^a\\
 &=& 2^{-nH\big(p+q - (2k'-1)\big)}\sum_{a=0}^{2k'-1} a! \binom{2k'-1}{a}^2\sum_{j,j'=0}^{\lfloor 2^{\frac{n}{2}} t \rfloor -1}E\bigg(\bigg \langle D_{X^{2}}^{4k'-2-2a}\big(\phi(j,j')\big) ,\\
 && \hspace{2cm} \delta_{(j+1)2^{-n/2}}^{\otimes 2k'-1-a}\otimes\delta_{(j'+1)2^{-n/2}}^{\otimes 2k'-1-a}\bigg\rangle \bigg) \langle \delta_{(j+1)2^{-n/2}}, \delta_{(j'+1)2^{-n/2}} \rangle^a\\
 &=& \sum_{a=0}^{2k'-1}a! \binom{2k'-1}{a}^2 Q_n^{(k',a)}(t),
 \end{eqnarray*}
 with obvious notation at the last equality and  with the second equality following from (\ref{linear-isometry}), the third one from (\ref{product formula}) and the fourth one from (\ref{duality formula}). Recall that $f \in C_b^{\infty}$. We have the following estimates.
 \begin{itemize}
 
 \item \underline{Case $a= 2k'-1$} 
 \begin{eqnarray*}
 |Q_n^{(k',2k'-1)}(t)| &\leq & 2^{-nH\big(p+q - (2k'-1)\big)}\sum_{j,j'=0}^{\lfloor 2^{\frac{n}{2}} t \rfloor -1}E\big( \big|\phi(j,j')\big| \big)\\
 && \hspace{4cm} \times \big|\langle \delta_{(j+1)2^{-n/2}}, \delta_{(j'+1)2^{-n/2}} \rangle\big|^{2k'-1}\\
 & \leq & C 2^{-nH\big(p+q - (2k'-1)\big)} t 2^{n(\frac12 - (2k'-1)H)}= C t 2^{-n[H(p+q) - \frac12]},
 \end{eqnarray*}
 where we have the second inequality by (\ref{13}).

 \item \underline{Preparation to the cases where $0\leq a \leq 2k'-2$} \\
  
 Thanks to (\ref{Leibnitz1}) we have
 
 \begin{eqnarray}
 && D_{X^{2}}^{4k'-2-2a}\big(\phi(j,j')\big)=\sum_{l=0}^{4k'-2-2a}\phi_l(j,j')\label{differential, h>1/6}\\
 &&\times \bigg(\frac{\varepsilon_{j2^{-n/2}} + \varepsilon_{(j+1)2^{-n/2} }}{2}\bigg)^l \tilde{\otimes}\bigg(\frac{\varepsilon_{j'2^{-n/2}} + \varepsilon_{(j'+1)2^{-n/2} }}{2}\bigg)^{4k'-2-2a -l},\notag
 \end{eqnarray}
 where $\phi_l(j,j')$ is a quantity having a similar form as $\phi(j,j')$. So, we have
 \item \underline{Case $ 1\leq a \leq 2k'-2$} 
 \begin{eqnarray*}
 &&|Q_n^{(k',a)}(t)|\\
  &\leq & 2^{-nH\big(p+q - (2k'-1)\big)}\sum_{l=0}^{4k'-2-2a}\sum_{j,j'=0}^{\lfloor 2^{\frac{n}{2}} t \rfloor -1}E\big(\big|\phi_l(j,j')\big|\big)\times \\
  \end{eqnarray*}
  \begin{eqnarray*}
 && \bigg| \bigg \langle \bigg(\frac{\varepsilon_{j2^{-n/2}} + \varepsilon_{(j+1)2^{-n/2} }}{2}\bigg)^l \tilde{\otimes}\bigg(\frac{\varepsilon_{j'2^{-n/2}} + \varepsilon_{(j'+1)2^{-n/2} }}{2}\bigg)^{4k'-2-2a -l},\\
 &&  \delta_{(j+1)2^{-n/2}}^{\otimes 2k'-1-a}\otimes\delta_{(j'+1)2^{-n/2}}^{\otimes 2k'-1-a} \bigg \rangle \bigg|\big|\langle \delta_{(j+1)2^{-n/2}}, \delta_{(j'+1)2^{-n/2}} \rangle\big|^a\\
 &\leq & C2^{-nH\big(p+q - (2k'-1)\big)}(2^{-nH})^{4k'-2-2a}\sum_{j,j'=0}^{\lfloor 2^{\frac{n}{2}} t \rfloor -1}\big|\langle \delta_{(j+1)2^{-n/2}}, \delta_{(j'+1)2^{-n/2}} \rangle\big|^a\\
 & \leq & Ct2^{-nH\big(p+q - (2k'-1)\big)}(2^{-nH})^{4k'-2-2a}2^{n(\frac12 - aH)}\\
 &=& Ct2^{-n[H(p+q+2k'-1-a)- \frac12]},
 \end{eqnarray*}
 where we have the second inequality thanks to (\ref{12}) and the third one thanks to (\ref{13}).
 \item \underline{Case $a=0$} 
 \begin{eqnarray}
 &&|Q_n^{(k',0)}(t)|\\
  &\leq & 2^{-nH\big(p+q - (2k'-1)\big)}\sum_{l=0}^{4k'-2}\sum_{j,j'=0}^{\lfloor 2^{\frac{n}{2}} t \rfloor -1}E\big(\big|\phi_l(j,j')\big|\big)\times \notag\\
 && \bigg| \bigg \langle \bigg(\frac{\varepsilon_{j2^{-n/2}} + \varepsilon_{(j+1)2^{-n/2} }}{2}\bigg)^l \tilde{\otimes}\bigg(\frac{\varepsilon_{j'2^{-n/2}} + \varepsilon_{(j'+1)2^{-n/2} }}{2}\bigg)^{4k'-2 -l}, \notag\\
 &&  \hspace{5cm}\delta_{(j+1)2^{-n/2}}^{\otimes 2k'-1}\otimes\delta_{(j'+1)2^{-n/2}}^{\otimes 2k'-1} \bigg \rangle \bigg|\notag\\
 &\leq & C 2^{-nH\big(p+q - (2k'-1)\big)}\sum_{l=0}^{4k'-2}\sum_{j,j'=0}^{\lfloor 2^{\frac{n}{2}} t \rfloor -1} \bigg| \bigg \langle \bigg(\frac{\varepsilon_{j2^{-n/2}} + \varepsilon_{(j+1)2^{-n/2} }}{2}\bigg)^l \notag \\         
 && \tilde{\otimes}\bigg(\frac{\varepsilon_{j'2^{-n/2}} + \varepsilon_{(j'+1)2^{-n/2} }}{2}\bigg)^{4k'-2 -l},
 \delta_{(j+1)2^{-n/2}}^{\otimes 2k'-1}\otimes\delta_{(j'+1)2^{-n/2}}^{\otimes 2k'-1} \bigg \rangle \bigg|\notag\\ \label{Q_n^0}
 \end{eqnarray}
 We define 
 \begin{eqnarray*}
 E_n^{(k',l)}(j,j')&:=& \bigg| \bigg \langle \bigg(\frac{\varepsilon_{j2^{-n/2}} + \varepsilon_{(j+1)2^{-n/2} }}{2}\bigg)^l \tilde{\otimes}\bigg(\frac{\varepsilon_{j'2^{-n/2}} + \varepsilon_{(j'+1)2^{-n/2} }}{2}\bigg)^{4k'-2 -l},\\
 && \hspace{4cm} \delta_{(j+1)2^{-n/2}}^{\otimes 2k'-1}\otimes\delta_{(j'+1)2^{-n/2}}^{\otimes 2k'-1} \bigg \rangle \bigg|,
 \end{eqnarray*}
 If $l=0$, 
 observe  by (\ref{12}) that 
 \[ 
 E_n^{(k',0)}(j,j') \leq (2^{-nH})^{4k'-3}\: \bigg|\bigg \langle \bigg(\frac{\varepsilon_{j'2^{-n/2}} + \varepsilon_{(j'+1)2^{-n/2} }}{2}\bigg),\delta_{(j+1)2^{-n/2}} \bigg \rangle \bigg|.
 \]
 If $l\neq 0$ then
 \[
 E_n^{(k',l)}(j,j') \leq (2^{-nH})^{4k'-3}\: \bigg|\bigg \langle \bigg(\frac{\varepsilon_{j2^{-n/2}} + \varepsilon_{(j+1)2^{-n/2} }}{2}\bigg),\delta_{(j'+1)2^{-n/2}} \bigg \rangle \bigg|.
 \]
By combining these previous estimates with (\ref{Q_n^0}), (\ref{14}) and (\ref{15}) we deduce that
\begin{eqnarray*}
|Q_n^{(k',0)}(t)| &\leq & C 2^{-nH\big(p+q - (2k'-1)\big)}(2^{-nH})^{4k'-3}(2^{-nH-1} + 2^{1+n/2} t^{2H + 1})\\
& \leq & C2^{-nH[p+q +2k'-1]} + C2^{-n[H(p+q+2k'-2) -\frac12]}t^{2H+1}.
\end{eqnarray*} 
 \end{itemize}
 Finally, we deduce that for all $k' \in \{1, \ldots ,\frac{q+1}{2}\}$,
 \begin{eqnarray}
 E\big[ \big(V_{n,1}(f,k',t)\big)^2 \big] &\leq & Ct\big( \sum_{a=1}^{2k'-1}2^{-n[H(p+q + 2k'-1-a) - \frac12]}\big) \label{V_{n,1}}\\
 && + C2^{-n[H(p+q+2k'-2) -\frac12]}t^{2H+1} + C2^{-nH[p+q +2k'-1]}.  \notag
\end{eqnarray}     
It is clear that either for $H > 1/6$ and $p+q \geq 3$ or for $H =1/6$ and $p+q \geq 5$, we have  $V_{n,1}(f,k',t) \overset{L^2}{\longrightarrow} 0$ as $n$ tends to infinity.

 \item \underline{Convergence to 0, in $L^2$, of $ V_{n,2}(f,k,k',t)$ :} For all $k \in \{1, \ldots , \frac{p}{2}\}$ and $k' \in \{1, \ldots ,\frac{q+1}{2}\}$, thanks to (\ref{linear-isometry}) we have
 \begin{eqnarray*}
 V_{n,2}(f,k,k',t)&=& 2^{-\frac{nH(p+q)}{2}}\sum_{j=0}^{\lfloor 2^{\frac{n}{2}} t \rfloor -1}\Delta_{j,n} f(X^{1},X^{2})H_{2k}\big( X^{1,n}_{j+1} - X^{1,n}_{j}\big)\\
&& \hspace{3cm} \times H_{2k'-1}\big(X^{2,n}_{j+1} - X^{2,n}_{j}\big)\\
&=& 2^{-\frac{nH(p+q)}{2}}\sum_{j=0}^{\lfloor 2^{\frac{n}{2}} t \rfloor -1}\Delta_{j,n} f(X^{1},X^{2})I^{(1)}_{2k}\big( (2^{\frac{nH}{2}}\delta_{(j+1)2^{-n/2}})^{\otimes 2k}\big)\\
&& \hspace{3cm} \times I^{(2)}_{2k'-1}\big((2^{\frac{nH}{2}}\delta_{(j+1)2^{-n/2}})^{\otimes 2k'-1} \big).\\
 \end{eqnarray*}
We thus get
 \begin{eqnarray*}
 && E\big[ \big(V_{n,2}(f,k,k',t)\big)^2 \big]\\
  &=& 2^{-nH[p+q - (2k'-1)]}\sum_{j,j'=0}^{\lfloor 2^{\frac{n}{2}} t \rfloor -1}E\bigg( \phi(j,j')I^{(1)}_{2k}\big( (2^{\frac{nH}{2}}\delta_{(j+1)2^{-n/2}})^{\otimes 2k}\big)\\
  && \hspace{2cm}\times I^{(1)}_{2k}\big( (2^{\frac{nH}{2}}\delta_{(j'+1)2^{-n/2}})^{\otimes 2k}\big)I^{(2)}_{2k'-1}\big(\delta_{(j+1)2^{-n/2}}^{\otimes 2k'-1} \big)I^{(2)}_{2k'-1}\big(\delta_{(j'+1)2^{-n/2}}^{\otimes 2k'-1} \big)\bigg)\\
&=& 2^{-nH\big(p+q - (2k'-1)\big)}\sum_{a=0}^{2k'-1} a! \binom{2k'-1}{a}^2\sum_{j,j'=0}^{\lfloor 2^{\frac{n}{2}} t \rfloor -1}E\bigg(\phi(j,j')\\
&& \times I^{(1)}_{2k}\big( (2^{\frac{nH}{2}}\delta_{(j+1)2^{-n/2}})^{\otimes 2k}\big)I^{(1)}_{2k}\big( (2^{\frac{nH}{2}}\delta_{(j'+1)2^{-n/2}})^{\otimes 2k}\big)\\
&&  \times I^{(2)}_{4k'-2-2a}\big(\delta_{(j+1)2^{-n/2}}^{\otimes 2k'-1-a}\otimes\delta_{(j'+1)2^{-n/2}}^{\otimes 2k'-1-a}\big)\bigg)\langle \delta_{(j+1)2^{-n/2}}, \delta_{(j'+1)2^{-n/2}} \rangle^a\\
\end{eqnarray*}
\begin{eqnarray*}
&=& 2^{-nH\big(p+q - (2k'-1)\big)}\sum_{a=0}^{2k'-1} a! \binom{2k'-1}{a}^2\sum_{j,j'=0}^{\lfloor 2^{\frac{n}{2}} t \rfloor -1}E\bigg(\bigg \langle D_{X^{2}}^{4k'-2-2a}\big(\phi(j,j')\big) ,\\
 && \delta_{(j+1)2^{-n/2}}^{\otimes 2k'-1-a}\otimes\delta_{(j'+1)2^{-n/2}}^{\otimes 2k'-1-a}\bigg\rangle I^{(1)}_{2k}\big( (2^{\frac{nH}{2}}\delta_{(j+1)2^{-n/2}})^{\otimes 2k}\big)I^{(1)}_{2k}\big( (2^{\frac{nH}{2}}\delta_{(j'+1)2^{-n/2}})^{\otimes 2k}\big)\bigg)\\
 && \hspace{2cm}\times \langle \delta_{(j+1)2^{-n/2}}, \delta_{(j'+1)2^{-n/2}} \rangle^a\\
 &=& \sum_{a=0}^{2k'-1} a! \binom{2k'-1}{a}^2 \tilde{Q}_n^{(k',a)}(t),
 \end{eqnarray*}
with obvious notation at the last equality. Recall that $f \in C_b^{\infty}$. We have the following estimates.
 \begin{itemize}
 
 \item \underline{Case $a=2k'-1$}
 \begin{eqnarray*}
 &&\big|\tilde{Q}_n^{(k',2k'-1)}(t)\big|\\
  &\leq & 2^{-nH\big(p+q - (2k'-1)\big)}\sum_{j,j'=0}^{\lfloor 2^{\frac{n}{2}} t \rfloor -1}E\bigg( \big|\phi(j,j') I^{(1)}_{2k}\big( (2^{\frac{nH}{2}}\delta_{(j+1)2^{-n/2}})^{\otimes 2k}\big)\\
 && \hspace{1cm} \times I^{(1)}_{2k}\big( (2^{\frac{nH}{2}}\delta_{(j'+1)2^{-n/2}})^{\otimes 2k}\big)\bigg)\big| \big) \big|\langle \delta_{(j+1)2^{-n/2}}, \delta_{(j'+1)2^{-n/2}} \rangle\big|^{2k'-1}
 \end{eqnarray*}
 \begin{eqnarray*}
 &\leq & C 2^{-nH\big(p+q - (2k'-1)\big)}\sum_{j,j'=0}^{\lfloor 2^{\frac{n}{2}} t \rfloor -1} \|I^{(1)}_{2k}\big( (2^{\frac{nH}{2}}\delta_{(j+1)2^{-n/2}})^{\otimes 2k}\big)\|_2 \\
 && \hspace{1cm} \times \|I^{(1)}_{2k}\big( (2^{\frac{nH}{2}}\delta_{(j'+1)2^{-n/2}})^{\otimes 2k}\big)\|_2\big|\langle \delta_{(j+1)2^{-n/2}}, \delta_{(j'+1)2^{-n/2}} \rangle\big|^{2k'-1}\\
 &\leq & C2^{-nH\big(p+q - (2k'-1)\big)}\sum_{j,j'=0}^{\lfloor 2^{\frac{n}{2}} t \rfloor -1}\big|\langle \delta_{(j+1)2^{-n/2}}, \delta_{(j'+1)2^{-n/2}} \rangle\big|^{2k'-1}\\
 &\leq & C t 2^{-nH\big(p+q - (2k'-1)\big)} 2^{n(\frac12 - (2k'-1)H)} = C t 2^{-n[H(p+q) - \frac12]},
 \end{eqnarray*}
 where the third inequality is a consequence of (\ref{isometry}) and the fourth one a consequence of (\ref{13}).
 
 \item \underline{Preparation to the cases where $0 \leq a \leq 2k'-2$} \\
 Thanks to (\ref{differential, h>1/6}), we have
 \begin{eqnarray*}
 && \bigg|E\bigg(\bigg \langle D_{X^{2}}^{4k'-2-2a}\big(\phi(j,j')\big) ,\delta_{(j+1)2^{-n/2}}^{\otimes 2k'-1-a}\otimes\delta_{(j'+1)2^{-n/2}}^{\otimes 2k'-1-a}\bigg\rangle\\
 && \hspace{3cm}  I^{(1)}_{2k}\big( (2^{\frac{nH}{2}}\delta_{(j+1)2^{-n/2}})^{\otimes 2k}\big)I^{(1)}_{2k}\big( (2^{\frac{nH}{2}}\delta_{(j'+1)2^{-n/2}})^{\otimes 2k}\big)\bigg)\bigg|\\
 & \leq &\sum_{l=0}^{4k'-2-2a}E\bigg(\big|\phi_l(j,j')I^{(1)}_{2k}\big( (2^{\frac{nH}{2}}\delta_{(j+1)2^{-n/2}})^{\otimes 2k}\big)I^{(1)}_{2k}\big( (2^{\frac{nH}{2}}\delta_{(j'+1)2^{-n/2}})^{\otimes 2k}\big) \big|\bigg)\\
 \end{eqnarray*}
 \begin{eqnarray*}
 && \times\bigg| \bigg \langle \bigg(\frac{\varepsilon_{j2^{-n/2}} + \varepsilon_{(j+1)2^{-n/2} }}{2}\bigg)^l \tilde{\otimes}\bigg(\frac{\varepsilon_{j'2^{-n/2}} + \varepsilon_{(j'+1)2^{-n/2} }}{2}\bigg)^{4k'-2-2a -l} ,\\
 && \hspace{7cm} \delta_{(j+1)2^{-n/2}}^{\otimes 2k'-1-a}\otimes\delta_{(j'+1)2^{-n/2}}^{\otimes 2k'-1-a}\bigg\rangle\bigg|\\
 &\leq & C \sum_{l=0}^{4k'-2-2a}\|I^{(1)}_{2k}\big( (2^{\frac{nH}{2}}\delta_{(j+1)2^{-n/2}})^{\otimes 2k}\big)\|_2 \|I^{(1)}_{2k}\big( (2^{\frac{nH}{2}}\delta_{(j'+1)2^{-n/2}})^{\otimes 2k}\big)\|_2 \\
 && \bigg| \bigg \langle \bigg(\frac{\varepsilon_{j2^{-n/2}} + \varepsilon_{(j+1)2^{-n/2} }}{2}\bigg)^l \tilde{\otimes}\bigg(\frac{\varepsilon_{j'2^{-n/2}} + \varepsilon_{(j'+1)2^{-n/2} }}{2}\bigg)^{4k'-2-2a -l} ,\\
 && \hspace{7cm} \delta_{(j+1)2^{-n/2}}^{\otimes 2k'-1-a}\otimes\delta_{(j'+1)2^{-n/2}}^{\otimes 2k'-1-a}\bigg\rangle\bigg|\\
 \end{eqnarray*}
 \begin{eqnarray*}
 &\leq & C\sum_{l=0}^{4k'-2-2a} \bigg| \bigg \langle \bigg(\frac{\varepsilon_{j2^{-n/2}} + \varepsilon_{(j+1)2^{-n/2} }}{2}\bigg)^l \tilde{\otimes}\bigg(\frac{\varepsilon_{j'2^{-n/2}} + \varepsilon_{(j'+1)2^{-n/2} }}{2}\bigg)^{4k'-2-2a -l} ,\\
 && \hspace{7cm} \delta_{(j+1)2^{-n/2}}^{\otimes 2k'-1-a}\otimes\delta_{(j'+1)2^{-n/2}}^{\otimes 2k'-1-a}\bigg\rangle\bigg|,\\
 \end{eqnarray*}
By the same arguments that was used in the study of $ V_{n,1}(f,k',t)$, we deduce that
 
 \item \underline{Case $1\leq a \leq 2k'-2$}
 \begin{equation*}
 |\tilde{Q}_n^{(k',a)}(t)| \leq   Ct2^{-n[H(p+q+2k'-1-a)- \frac12]}.
 \end{equation*}
 
 \item \underline{Case $a=0$}
 \begin{equation*}
|\tilde{Q}_n^{(k',0)}(t)| \leq  C2^{-nH[p+q +2k'-1]} + C2^{-n[H(p+q+2k'-2) -\frac12]}t^{2H+1}.
\end{equation*} 
 \end{itemize}
 Finally, we deduce that, for all $k \in \{1, \ldots , \frac{p}{2}\}$ and $k' \in \{1, \ldots ,\frac{q+1}{2}\}$,
 \begin{eqnarray}
 E\big[ \big(V_{n,2}(f,k,k',t)\big)^2 \big] &\leq & Ct\big( \sum_{a=1}^{2k'-1}2^{-n[H(p+q + 2k'-1-a) - \frac12]}\big) \label{V_{n,2}}\\
 && + C2^{-n[H(p+q+2k'-2) -\frac12]}t^{2H+1} + C2^{-nH[p+q +2k'-1]}. \notag
\end{eqnarray}     
It is clear that either for $H > 1/6$ and $p+q \geq 3$ or for $H =1/6$ and $p+q \geq 5$, we have  $V_{n,2}(f,k,k',t) \overset{L^2}{\longrightarrow} 0$ as $n$ tends to infinity.
\end{enumerate}
Combining (\ref{conclusion,prop H>1/6}), (\ref{V_{n,1}}) and (\ref{V_{n,2}}), we deduce that 
\begin{eqnarray}
E\big[ \big(V_n^{p,q}(f,t)\big)^2 \big] &\leq & C\sum_{k'=1}^{\frac{q+1}{2}} \bigg(\big( \sum_{a=1}^{2k'-1}2^{-n[H(p+q + 2k'-1-a) - \frac12]}\big)t \label{norme2 Vnpq}\\
 && + 2^{-n[H(p+q+2k'-2) -\frac12]}t^{2H+1} + 2^{-nH[p+q +2k'-1]}\bigg). \notag 
\end{eqnarray}
Hence, the desired conclusion of Proposition \ref{proposition, H >1/6} holds true when $p\neq 0$.

\item \underline{If $p=0$ :} We have
\begin{equation*}
V_n^{0,q}(f,t)= 2^{-\frac{nHq}{2}}\sum_{k'=1}^{\frac{q+1}{2}}a_{q,k'}\sum_{j=0}^{\lfloor 2^{\frac{n}{2}} t \rfloor -1}\Delta_{j,n} f(X^{1},X^{2})H_{2k'-1}\big(X^{2,n}_{j+1} - X^{2,n}_{j}\big).
\end{equation*}
By the same calculation as in the previous case, we deduce that
\begin{eqnarray}
E\big[ \big(V_n^{0,q}(f,t)\big)^2 \big] &\leq & C\sum_{k'=1}^{\frac{q+1}{2}} \bigg(\big( \sum_{a=1}^{2k'-1}2^{-n[H(q + 2k'-1-a) - \frac12]}\big)t \label{norme2 Vn0q}\\
 && + 2^{-n[H(q+2k'-2) -\frac12]}t^{2H+1} + 2^{-nH[q +2k'-1]}\bigg). \notag 
\end{eqnarray}
Hence, the desired conclusion of Proposition \ref{proposition, H >1/6} holds true when $p= 0$ as well,  which ends up the proof of Proposition \ref{proposition, H >1/6}. 
\end{enumerate}
\end{proof}

\subsection{Proof of (\ref{first-main-1})}

Thanks to Lemma \ref{taylor-expansion}, we have 
\begin{eqnarray*}
 && f(X^{1}_{(j+1)2^{-n/2}}, X^{2}_{(j+1)2^{-n/2}})-f(X^{1}_{j2^{-n/2}},X^{2}_{j2^{-n/2}} )\\
 &=& \Delta_{j,n}\frac{\partial f}{\partial x}\big(X^{1}, X^{2}\big)\big(X^{1}_{(j+1)2^{-n/2}}-X^{1}_{j2^{-n/2}}\big) +\Delta_{j,n}\frac{\partial f}{\partial y}\big(X^{1}, X^{2}\big)\big(X^{2}_{(j+1)2^{-n/2}}-X^{2}_{j2^{-n/2}}\big) \\
&&  + \sum_{i=2}^6\sum_{\alpha_1 +\alpha_2 = 2i-1}C(\alpha_1,\alpha_2)\: \Delta_{j,n}\partial_{1 \ldots 1 2 \ldots 2}^{\alpha_1,\alpha_2} f(X^{1}, X^{2})\big(X^{1}_{(j+1)2^{-n/2}}-X^{1}_{j2^{-n/2}}\big)^{\alpha_1}\\
  && \times \big(X^{2}_{(j+1)2^{-n/2}}-X^{2}_{j2^{-n/2}}\big)^{\alpha_2}+ R_{13}\big( \big(X^{1}_{(j+1)2^{-n/2}},X^{2}_{(j+1)2^{-n/2}})\big), \big(X^{1}_{j2^{-n/2}},X^{2}_{j2^{-n/2}})\big) \big).
 \end{eqnarray*}
By Definition \ref{ivan-definition} and (\ref{pq-varition}),  we can  write
 \begin{eqnarray}
 &&f(X^{1}_{\lfloor 2^{\frac{n}{2}} t \rfloor 2^{-n/2}}, X^{2}_{\lfloor 2^{\frac{n}{2}} t \rfloor 2^{-n/2}}) -f(0,0)\label{H>1/6 taylor}\\
  &=& O_n(f,t) + \sum_{i=2}^6\sum_{\alpha_1 +\alpha_2 = 2i-1}C(\alpha_1,\alpha_2)\: V_n^{\alpha_1, \alpha_2}(\partial_{1 \ldots 1 2 \ldots 2}^{\alpha_1,\alpha_2} f,t)\notag\\
 && + \sum_{j=0}^{\lfloor 2^{\frac{n}{2}} t \rfloor -1} R_{13}\big( \big(X^{1}_{(j+1)2^{-n/2}},X^{2}_{(j+1)2^{-n/2}})\big), \big(X^{1}_{j2^{-n/2}},X^{2}_{j2^{-n/2}})\big) \big).\notag
 \end{eqnarray}
By Lemma \ref{taylor-expansion}, we have
\begin{eqnarray*}
&& \sum_{j=0}^{\lfloor 2^{\frac{n}{2}} t \rfloor -1} \bigg|R_{13}\big( \big(X^{1}_{(j+1)2^{-n/2}},X^{2}_{(j+1)2^{-n/2}})\big), \big(X^{1}_{j2^{-n/2}},X^{2}_{j2^{-n/2}})\big) \big)\bigg| \\
&\leq & C_f \sum_{\alpha_1 + \alpha_2 =13}\sum_{j=0}^{\lfloor 2^{\frac{n}{2}} t \rfloor -1} |X^{1}_{(j+1)2^{-n/2}}-X^{1}_{j2^{-n/2}}|^{\alpha_1}|X^{2}_{(j+1)2^{-n/2}}-X^{2}_{j2^{-n/2}})|^{\alpha_2}.
\end{eqnarray*} 
So, we deduce by the independence of $X^{1}$ and $X^{2}$ that 
\begin{eqnarray}
&& \sum_{j=0}^{\lfloor 2^{\frac{n}{2}} t \rfloor -1} E \bigg(\bigg|R_{13}\big( \big(X^{1}_{(j+1)2^{-n/2}},X^{2}_{(j+1)2^{-n/2}})\big), \big(X^{1}_{j2^{-n/2}},X^{2}_{j2^{-n/2}})\big) \big)\bigg|\bigg)\label{H >1/6, reste} \\
&\leq & C_f \sum_{\alpha_1 + \alpha_2 =13}\sum_{j=0}^{\lfloor 2^{\frac{n}{2}} t \rfloor -1} E\big(|X^{1}_{(j+1)2^{-n/2}}-X^{1}_{j2^{-n/2}}|^{\alpha_1}\big)E\big(|X^{2}_{(j+1)2^{-n/2}}-X^{2}_{j2^{-n/2}})|^{\alpha_2}\big)\notag
\end{eqnarray}
\begin{eqnarray}
&= & C_f \sum_{\alpha_1 + \alpha_2 =13} \sum_{j=0}^{\lfloor 2^{\frac{n}{2}} t \rfloor -1} 2^{\frac{-nH(\alpha_1 +\alpha_2)}{2}} E\big[|G|^{\alpha_1}\big]E\big[|G|^{\alpha_2}\big]\notag\\
& \leq& C_f \sum_{\alpha_1 + \alpha_2 =13}E\big[|G|^{\alpha_1}\big]E\big[|G|^{\alpha_2}\big]2^{\frac{-n[H(\alpha_1 +\alpha_2) -1]}{2}}t \to_{n \to \infty} 0,\notag
\end{eqnarray} 
with $G\sim N(0,1)$. On the other hand, by the almost sure continuity of $f\big(X^{1},X^{2}\big)$, one has, almost surely and
as $n\to\infty$,
\begin{equation}
 f(X^{1}_{\lfloor 2^{\frac{n}{2}} t \rfloor 2^{-n/2}}, X^{2}_{\lfloor 2^{\frac{n}{2}} t \rfloor 2^{-n/2}}) -f(0,0)
\to f\big(X^{1}_t, X^{2}_t \big)-f(0,0).\label{H >1/6, pathcontinuity}
\end{equation}
 Finally, the desired conclusion (\ref{first-main-1}) follows from (\ref{H >1/6, reste}), (\ref{H >1/6, pathcontinuity}) and the conclusion of Proposition \ref{proposition, H >1/6}, plugged into (\ref{H>1/6 taylor}).

\subsection{Proof of (\ref{first-main-2})}
We define $W_n(f,t)$ by 
\begin{eqnarray*}
W_n(f,t) &:=& \bigg( K_n^{(1)}(f, t) , K_n^{(2)}(f, t) ,  K_n^{(3)}(f, t) , K_n^{(4)}(f, t) \bigg),
\end{eqnarray*} 
where, for all $i \in \{1, \ldots , 4 \}$, $K_n^{(i)}(f,t)$ is given in Definition \ref{definition Kni}.
Let also define $W(f,t)$ as follows, 
\begin{eqnarray*}
W(f,t)&:=& \bigg( \kappa_1\int_0^t \partial_{111}f(X^{1}_s,X^{2}_s)dB^{1}_s, \kappa_2\int_0^t \partial_{222}f(X^{1}_s,X^{2}_s)dB^{2}_s,\\
&& \hspace{2cm} \kappa_3\int_0^t \partial_{122}f(X^{1}_s,X^{2}_s)dB^{3}_s, \kappa_4\int_0^t \partial_{112}f(X^{1}_s,X^{2}_s)dB^{4}_s \bigg),
\end{eqnarray*}
where $(B^{1}, \ldots, B^{4})$ is a 4-dimensional Brownian motion independent of $(X^{1}, X^{2})$, $\kappa_1^2 = \kappa_2^2 = \frac{1}{96}\sum_{r\in \Z}\rho^3(r)$ and $\kappa_3^2 = \kappa_4^2 = \frac{1}{32}\sum_{r\in \Z}\rho^3(r)$ with $\rho$ defined in (\ref{rho}).

The following theorem will play a crucial role in the proof of (\ref{first-main-2}).

\begin{theorem}\label{f.d.d-h=1/6} Suppose $H=1/6$ and fix $t \geq 0$. Then, as $n\to\infty$,
\begin{equation*}
\big ( X^{1}, X^{2}, W_n(f,t) \big) \to \big( X^{1}, X^{2},  W(f,t)  \big),
\end{equation*}
in $D_{\R^2}[0, \infty)\times \R^4$.

\end{theorem}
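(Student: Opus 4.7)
The process $(X^1,X^2)$ does not depend on $n$, so the claim reduces to showing that the pair $\big(\xi, W_n(f,t)\big)$ converges jointly in law for every smooth cylindrical functional $\xi = \psi(X^1_{s_1},X^2_{s_1},\ldots,X^1_{s_r},X^2_{s_r})$ as in (\ref{xi}). Since conditionally on $X$ the target vector $W(f,t)$ is centered Gaussian with \emph{diagonal} covariance matrix $\mathrm{diag}(\sigma_1^2,\ldots,\sigma_4^2)$, where $\sigma_i^2 = \kappa_i^2\int_0^t \bigl(\partial_{\alpha_i}f(X^1_s,X^2_s)\bigr)^2 ds$ with $(\alpha_1,\alpha_2,\alpha_3,\alpha_4) = (111,222,122,112)$, it suffices to establish, for every $\lambda=(\lambda_1,\ldots,\lambda_4)\in\R^4$,
\[
E\bigl[\xi\, e^{i\langle\lambda,W_n(f,t)\rangle}\bigr] \;\longrightarrow\; E\Bigl[\xi \exp\bigl(-\tfrac12 \textstyle\sum_{i=1}^4 \lambda_i^2 \sigma_i^2\bigr)\Bigr].
\]

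My plan is to prove the latter convergence by the ODE method used in \cite{nourdin, NRS}. Set
\[
\Phi_n(u) := E\bigl[\xi\, e^{iu\langle\lambda,W_n(f,t)\rangle}\bigr],\qquad u\in[0,1],
\]
so that $\Phi_n(0)=E[\xi]$, and compute $\Phi_n'(u)$. Each $K_n^{(i)}(f,t)$ is a finite sum of quantities of the form $\Delta_{j,n}\partial_{\alpha_i}f(X^1,X^2)\,\cdot\,(\text{product of multiple integrals against }\delta_{(j+1)2^{-n/2}})$. Applying the duality formula (\ref{duality formula}) with respect to $X^1$ or $X^2$ (as dictated by the chaos structure) transfers each multiple integral onto the Malliavin derivative of the remaining factor $\xi\,\Delta_{j,n}\partial_{\alpha_i}f(X^1,X^2)\,e^{iu\langle\lambda,W_n(f,t)\rangle}$. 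Using the Leibniz rule (\ref{Leibnitz0})--(\ref{Leibnitz1}) and the chain rule, the derivative splits into: (a) terms in which $D_{X^{(i)}}$ hits $\xi$ or $\Delta_{j,n}\partial_{\alpha_i}f(X^1,X^2)$, which are shown negligible via (\ref{12})--(\ref{15}), Lemma \ref{lemma5}(1), and the moment bounds of Lemmas \ref{lemma3}--\ref{lemma4''}; (b) ``diagonal'' terms in which the derivative lands on the exponential and reconstructs the same multiple integral, producing a factor $iu\lambda_\ell\,\langle D_{X^{(i)}}^{k}K_n^{(\ell)}(f,t),\delta_{(j+1)2^{-n/2}}^{\otimes k}\rangle$. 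The off-diagonal pairings $(i,\ell)$ with $i\neq \ell$ are killed either by the independence of $X^1$ and $X^2$ (for the pair $(K_n^{(1)},K_n^{(2)})$ one can use Lemma \ref{lemma4''} and the fact that mixed chaos expectations factorize) or by mismatched chaos orders that yield extra powers of $2^{-nH}$; cf.\ Lemma \ref{lemma3}, Lemma \ref{lemma3-bis}, Lemma \ref{lemma4}.

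The surviving diagonal contributions combine, after applying the product formula (\ref{product formula}) and recognizing the Riemann sum
\[
\sum_{j=0}^{\lfloor 2^{n/2}t\rfloor -1} \bigl(\Delta_{j,n}\partial_{\alpha_i}f(X^1,X^2)\bigr)^2 \,2^{-n/2}\sum_{r\in\Z}\rho^3(r)\;\longrightarrow\; \Bigl(\sum_{r\in\Z}\rho^3(r)\Bigr)\int_0^t \bigl(\partial_{\alpha_i}f(X^1_s,X^2_s)\bigr)^2 ds,
\]
to give, with the right combinatorial prefactors $\frac{1}{576}\cdot 6 = \frac{1}{96}$ (for $i=1,2$) and $\frac{1}{64}\cdot 2 = \frac{1}{32}$ (for $i=3,4$), the identity
\[
\Phi_n'(u)\;=\;-u\,E\Bigl[\xi\, e^{iu\langle\lambda,W_n(f,t)\rangle}\,\textstyle\sum_{i=1}^4 \lambda_i^2\sigma_i^2\Bigr]\;+\;o(1).
\]
Combined with the uniform fourth-moment tightness $\sup_n E[\|W_n(f,t)\|^4] < \infty$ (which follows from Lemmas \ref{lemma4'}, \ref{lemma4'-bis}, \ref{lemma4''}), a standard Gronwall / subsequential-limit argument then integrates this relation and yields $\Phi_n(1) \to E[\xi\exp(-\tfrac12\sum_i \lambda_i^2\sigma_i^2)]$, as required.

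The main obstacle is the third paragraph above: the bookkeeping of the many terms generated by applying $D_{X^{(i)}}$ to $\xi\,\Delta_{j,n}\partial_{\alpha_i}f\,e^{iu\langle\lambda,W_n\rangle}$ and then summing over $j$ and pairing with $\delta_{(j+1)2^{-n/2}}^{\otimes k}$. This is exactly the purpose of the technical arsenal assembled in Lemmas \ref{lemma3}--\ref{lemma5}: Lemmas \ref{lemma3}--\ref{lemma4''} guarantee that the ``wrong'' pairings (mixed components, off-diagonal indices) contribute at most $O(t+t^2+t^3+t^4)$ uniformly in $n$, while Lemma \ref{lemma5}(2)--(4) controls the Malliavin derivatives of $K_n^{(i)}(f,t)$ themselves, which are needed when $D_{X^{(i)}}$ lands on the exponential and one must identify the correct diagonal variance contribution.
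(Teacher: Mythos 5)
Your plan is correct and follows essentially the same route as the paper: tightness from the $L^2$ (or $L^4$) bounds of Lemmas \ref{lemma3}--\ref{lemma4''}, then identification of the subsequential limit by computing $E\big(iK_n^{(p)}(f,t)e^{i\langle\lambda,W_n(f,t)\rangle}\xi\big)$ via the duality formula, showing the terms where the Malliavin derivative hits $\xi$ or $\Delta_{j,n}\partial_{\alpha}f$ are negligible, and extracting the diagonal variance $\kappa_p^2\int_0^t(\partial_{\alpha_p}f)^2ds$ from the fully contracted term of the product formula (your prefactors $\tfrac{1}{576}\cdot 6=\tfrac1{96}$ and $\tfrac{1}{64}\cdot 2=\tfrac1{32}$ match the paper's $B_{n,10}^{(3,2)}$-type computation). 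The only cosmetic difference is that you differentiate an interpolation $\Phi_n(u)$ in $u\in[0,1]$ while the paper differentiates the conditional characteristic function in $\lambda_p$ and invokes uniqueness for the resulting PDE system; both reduce to the same identity for the subsequential limit conditioned on $(X^1,X^2)$.
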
 

\begin{proof}{}The proof of Theorem \ref{f.d.d-h=1/6} will be done in several steps.

  \subsubsection{\underline{Step 1:} Tightness of $ \big(X^{1},X^{2}, W_n(f,t) \big)$  in $D_{\R^2}[0, \infty )\times \R^4$.}
  
  It suffices to prove that $\big(W_n(f,t)\big)_n$ is bounded in $L^2(\R^4)$. 
  We have 
  \begin{eqnarray}
   && E\big( \|W_n(f,t)\|_{\R^4}^2 \big) = E\big( \|(K_n^{(1)}(f,t), K_n^{(2)}(f,t), K_n^{(3)}(f,t), K_n^{(4)}(f,t))\|_{\R^4}^2\big)\label{tightness}\\
   &=&  \sum_{p=1}^4 E\big( (K_n^{(p)}(f,t))^2 \big).\notag
   \end{eqnarray}
  On the other hand:
 \begin{enumerate}
  
  \item For $p= 1,2$, thanks to (\ref{lemma4}),  we have 
  \[
  \sup_{n\in \N}E\big( (K_n^{(p)}(f,t))^2 \big) \leq  C(t+t^2).
  \]
  
  \item  Thanks to (\ref{lemma3}), we have
  \[
  \sup_{n\in \N}E\big( (K_n^{(3)}(f,t))^2 \big) \leq  C(t+t^2).
  \]
  
  \item Thanks to (\ref{lemma3}),  we have also
  \[
  \sup_{n\in \N}E\big( (K_n^{(4)}(f,t))^2 \big) \leq  C(t+t^2).
  \]
  
 \end{enumerate}
 By combining these previous estimates with (\ref{tightness}), we deduce that $\exists C >0$ independent of $n$ and $t$ such that
 \[
 E\big( \|W_n(f,t)\|_{\R^4}^2 \big)\leq C(t+t^2),
 \]
 which proves the boundedness, in $L^2(\R^4)$, of $(W_n(f,t))_n$ and consequently its tightness.

  \subsubsection{\underline{Step 2.}} By Step 1, the sequence $ \big(X^{1},X^{2}, W_n(f,t) \big)$ is tight in $D_{\R^2}[0, \infty)\times \R^4$. Consider a subsequence converging in law to some limit denoted by 
  \[
   \big(X^{1},X^{2}, W_{\infty}(f,t) \big)
  \]
  (for convenience, we keep the same notation for this subsequence and for the sequence itself).
  
  We have to show in this step that, conditioned on $(X^{1}, X^{2})$, the laws of $W_{\infty}(f,t)$ and $W(f,t)$ are the same:
Let $\lambda =(\lambda_1,\ldots , \lambda_4)$ denote a generic element of $\R^4$ and, for $ \lambda, \beta \in \R^4$, write $\langle \lambda, \beta \rangle$ for $\sum_{i=1}^4 \lambda_i\beta_i$. Let us define $g_1:= \partial_{111}f$, $g_2 := \partial_{222}f$, $g_3:=\partial_{122}f$ and $g_4:= \partial_{112}f$. We consider the conditional characteristic function of $W(f,t)$ given $(X^{1}, X^{2})$:
\begin{equation*}
 \phi(\lambda) := E\big(e^{i \langle \lambda, W(f,t) \rangle}| X^{1},X^{2}\big).
\end{equation*}
Observe that $\phi(\lambda) = e^{-\frac12 \sum_{p=1}^4 \lambda_p^2 q_p}$ where, for $p \in \{1, \ldots , 4 \}$,
\[
q_p = \kappa_p^2  \big( \int_0^t g_p^2(X^{1}_s,X^{2}_s)ds \big).
 \]
 Observe that $\phi$ is the unique solution of the following system of PDEs:
 \begin{equation}\label{EDP}
 \frac{\partial \phi}{\partial \lambda_p}(\lambda)= \phi(\lambda)\big( -\lambda_p q_p \big), \:\:\: p=1, \ldots , 4,
 \end{equation}
 where the unknown function $\phi: \R^4 \to \C$ satisfies the initial condition $\phi(0)=1$.
 
 Recall that our purpose is to show that 
 \[
 E\big(e^{i \langle \lambda, W_{\infty}(f,t) \rangle}| X^{1},X^{2}\big) = E\big(e^{i \langle \lambda, W(f,t) \rangle}| X^{1},X^{2}\big).
 \]
 Thanks to (\ref{EDP}) this is equivalent to show that 
 \begin{eqnarray*}
 \frac{\partial }{\partial \lambda_p}E\big(e^{i \langle \lambda, W_{\infty}(f,t) \rangle}| X^{1},X^{2}\big)&=& E\big(e^{i \langle \lambda, W_{\infty}(f,t) \rangle}| X^{1},X^{2}\big)\big(-\lambda_p \kappa_p^2 \int_0^t g^2_p(X_s^{1},X_s^{2}) ds \big),.
 \end{eqnarray*}
Hence we have to show that, for all $p \in \{1, \ldots , 4 \}$,
 \begin{eqnarray*}
 E\big(iK_{\infty}^{(p)}(f,t)\: e^{i \langle \lambda, W_{\infty}(f,t) \rangle}| X^{1},X^{2}\big)&=& E\big(e^{i \langle \lambda, W_{\infty}(f,t) \rangle}| X^{1},X^{2}\big)\\
 && \times \big(-\lambda_p \kappa_p^2 \int_0^t g^2_p(X_s^{1},X_s^{2}) ds \big).  
 \end{eqnarray*}
 Equivalently, for every random variable $ \xi $ of the form 
 $ \psi\big( X^{1}_{s_1},X^{2}_{s_1}, \ldots, X^{1}_{s_r},X^{2}_{s_r} \big) $, with $r \in \N^*$, $\psi: \R^{2r} \to \R$ belonging to $C_b^{\infty}(\R^{2r})$ and $s_1, \ldots, s_r \in \R$, for all $p \in \{1, \ldots , 4 \}$, we have to prove that
 
 \begin{eqnarray*}
 E\big(iK_{\infty}^{(p)}(f,t)\: e^{i \langle \lambda, W_{\infty}(f,t) \rangle} \xi \big) = E\big( \xi e^{i \langle \lambda, W_{\infty}(f,t) \rangle}\times \big(-\lambda_p \kappa_p^2 \int_0^t g^2_p(X_s^{1},X_s^{2}) ds \big)\big).\notag \\
\end{eqnarray*}
 Since $ \big(X^{1},X^{2}, W_{\infty}(f,t) \big)  $ is defined as the limit in law of $ \big(X^{1},X^{2}, W_n(f,t) \big) $ and since $ W_n(f,t) $ is bounded in $L^2$, we have
 \[
  E\big(iK_{\infty}^{(p)}(f,t)\: e^{i \langle \lambda, W_{\infty}(f,t) \rangle} \xi \big) = \lim_{n \to \infty} E\big(iK_n^{(p)}(f,t)\: e^{i \langle \lambda, W_n(f,t) \rangle} \xi \big).
  \]
  Thus, we have to prove that, for all $p \in \{1, \ldots , 4 \}$,
\begin{eqnarray}
\lim_{n \to \infty}E\big(iK_n^{(p)}(f,t)\: e^{i \langle \lambda , W_n(f,t) \rangle} \xi \big)= E\big( \xi e^{i \langle \lambda , W_{\infty}(f,t) \rangle}\times \big(-\lambda_p \kappa_p^2 \int_0^t g^2_p(X_s^{1},X_s^{2}) ds \big)\big).\notag\\
\label{H=1/6,principal equation}
\end{eqnarray}
Let us compute $E\big(iK_n^{(p)}(f,t)\: e^{i \langle \lambda, W_n(f,t) \rangle} \xi \big)$ for $p=3$ (the calculations are very similar for the other values of $p$). We have
\begin{eqnarray}
&&E\big(iK_n^{(3)}(f,t)\: e^{i \langle \lambda , W_n(f,t) \rangle} \xi \big) \notag\\
&=& \frac{i}{8} 2^{-\frac{n}{4}} \sum_{j=0}^{\lfloor 2^{\frac{n}{2}} t \rfloor -1} E\bigg( \Delta_{j,n}\partial_{122} f(X^{1},X^{2}) H_1\big(X^{1,n}_{j+1} - X^{1,n}_j \big)H_2\big(X^{2,n}_{j+1} - X^{2,n}_j \big) e^{i \langle \lambda, W_n(f,t) \rangle}\xi \bigg)\notag\\
&=& \frac{i}{8} 2^{-\frac{n}{4}} \sum_{j=0}^{\lfloor 2^{\frac{n}{2}} t \rfloor -1} E\bigg( \Delta_{j,n}\partial_{122} f(X^{1},X^{2})I^{(1)}_1(2^{\frac{n}{12}}\delta_{(j+1)2^{-n/2}})I^{(2)}_2(2^{\frac{n}{6}}\delta^{\otimes 2}_{(j+1)2^{-n/2}}) e^{i \langle \lambda, W_n(f,t) \rangle}\xi \bigg)\notag\\
&=& \frac{i}{8} \sum_{j=0}^{\lfloor 2^{\frac{n}{2}} t \rfloor -1} E\bigg( \Delta_{j,n}\partial_{122} f(X^{1},X^{2})I^{(1)}_1(\delta_{(j+1)2^{-n/2}})I^{(2)}_2(\delta^{\otimes 2}_{(j+1)2^{-n/2}}) e^{i \langle \lambda, W_n(f,t) \rangle}\xi \bigg)\notag\\
&=& \frac{i}{8} \sum_{j=0}^{\lfloor 2^{\frac{n}{2}} t \rfloor -1} E\bigg( \langle D_{X^{1}}\big( \Delta_{j,n}\partial_{122} f(X^{1},X^{2}) e^{i \langle \lambda, W_n(f,t) \rangle}\xi \big), \delta_{(j+1)2^{-n/2}} \rangle I^{(2)}_2(\delta^{\otimes 2}_{(j+1)2^{-n/2}})\bigg),\notag\\
\label{first equation}
\end{eqnarray}
where the second equality follows from (\ref{linear-isometry}) and the   last one follows from (\ref{duality formula}). Thanks to (\ref{Leibnitz0}),  the first Malliavin derivative with respect to $X^{1}$ of $\Delta_{j,n}\partial_{122} f(X^{1},X^{2}) e^{i \langle \lambda, W_n(f,t) \rangle}\xi$ is given by
\begin{eqnarray*}
&& D_{X^{1}}\big( \Delta_{j,n}\partial_{122} f(X^{1},X^{2}) e^{i \langle \lambda, W_n(f,t) \rangle}\xi \big)\\
&=& \Delta_{j,n}\partial_{1122} f(X^{1},X^{2})e^{i \langle \lambda, W_n(f,t) \rangle}\xi \bigg(\frac{\varepsilon_{j2^{-n/2}} + \varepsilon_{(j+1)2^{-n/2} }}{2}\bigg)\\
&& + i\Delta_{j,n}\partial_{122} f(X^{1},X^{2})e^{i \langle \lambda, W_n(f,t) \rangle}\xi D_{X^{1}} \langle \lambda, W_n(f,t) \rangle + \Delta_{j,n}\partial_{122} f(X^{1},X^{2}) e^{i \langle \lambda, W_n(f,t) \rangle}D_{X^{1}}\xi.
\end{eqnarray*}
Thus, by (\ref{first equation}), we have
\begin{eqnarray*}
&& E\big(iK_n^{(3)}(f,t)\: e^{i \langle \lambda , W_n(f,t) \rangle} \xi \big)\\
&=& \frac{i}{8} \sum_{j=0}^{\lfloor 2^{\frac{n}{2}} t \rfloor -1} E\bigg( \Delta_{j,n}\partial_{1122} f(X^{1},X^{2})e^{i \langle \lambda, W_n(f,t) \rangle}\xi \: I^{(2)}_2(\delta^{\otimes 2}_{(j+1)2^{-n/2}})\bigg)\\
&& \hspace{2cm} \times \bigg\langle \frac{\varepsilon_{j2^{-n/2}} + \varepsilon_{(j+1)2^{-n/2} }}{2} , \delta_{(j+1)2^{-n/2}} \bigg\rangle \\
&-& \frac{1}{8} \sum_{j=0}^{\lfloor 2^{\frac{n}{2}} t \rfloor -1} E\bigg( \Delta_{j,n}\partial_{122} f(X^{1},X^{2})e^{i \langle \lambda, W_n(f,t) \rangle}\xi \: \big\langle D_{X^{1}} \langle \lambda, W_n(f,t) \rangle , \delta_{(j+1)2^{-n/2}} \big\rangle \\
&& \hspace{2cm}\times I^{(2)}_2(\delta^{\otimes 2}_{(j+1)2^{-n/2}})\bigg)\\
\end{eqnarray*}
\begin{eqnarray*}
&+& \frac{i}{8} \sum_{j=0}^{\lfloor 2^{\frac{n}{2}} t \rfloor -1} E\bigg( \Delta_{j,n}\partial_{122} f(X^{1},X^{2})e^{i \langle \lambda, W_n(f,t) \rangle} \: \big\langle D_{X^{1}} \xi , \delta_{(j+1)2^{-n/2}} \big\rangle I^{(2)}_2(\delta^{\otimes 2}_{(j+1)2^{-n/2}})\bigg)\\
&=& A_n(t) + B_n(t) +C_n(t),
\end{eqnarray*}
with obvious notation at the last equality.

In the next steps we will prove firstly that  $A_n(t) \to 0$ as $n \to \infty$, then that  $B_n(t) \to -\lambda_3 \kappa_3^2E\big( \xi e^{i \langle \lambda , W_{\infty}(f,t) \rangle}\times  \int_0^t g^2_3(X_s^{1},X_s^{2}) ds \big)$ and finally that $C_n(t) \to 0$.

\subsubsection{\underline{Step 3}: Proof of the convergence to 0 of $A_n(t)$.}

Thanks to the boundedness of $e^{i \langle \lambda, W_n(f,t) \rangle}$ and $\xi$, and since $f \in C_b^{\infty}(\R^2)$, we get

\begin{eqnarray*}
|A_n(t)| &\leq& C \sum_{j=0}^{\lfloor 2^{\frac{n}{2}} t \rfloor -1}  E\big( |I^{(2)}_2(\delta^{\otimes 2}_{(j+1)2^{-n/2}})|\big)\bigg| \bigg\langle \frac{\varepsilon_{j2^{-n/2}} + \varepsilon_{(j+1)2^{-n/2} }}{2} , \delta_{(j+1)2^{-n/2}} \bigg\rangle \bigg|\\
&\leq& C \sum_{j=0}^{\lfloor 2^{\frac{n}{2}} t \rfloor -1}  \|I^{(2)}_2(\delta^{\otimes 2}_{(j+1)2^{-n/2}})\|_2\bigg| \bigg\langle \frac{\varepsilon_{j2^{-n/2}} + \varepsilon_{(j+1)2^{-n/2} }}{2} , \delta_{(j+1)2^{-n/2}} \bigg\rangle \bigg|\\
&\leq& C 2^{-n/6}\sum_{j=0}^{\lfloor 2^{\frac{n}{2}} t \rfloor -1} \bigg| \bigg\langle \frac{\varepsilon_{j2^{-n/2}} + \varepsilon_{(j+1)2^{-n/2} }}{2} , \delta_{(j+1)2^{-n/2}} \bigg\rangle \bigg|\\
&\leq& C 2^{-n/6} t^{1/3},
\end{eqnarray*}
where the third inequality follows from (\ref{isometry}) and the last inequality follows from (\ref{lemma2-1}). Hence, $A_n(t) \to 0$ as $n \to \infty$.

\subsubsection{\underline{Step 4}: Study of the convergence of $B_n(t)$.}
\begin{eqnarray}
&& B_n(t)\notag\\
&=&- \frac{1}{8} \sum_{j=0}^{\lfloor 2^{\frac{n}{2}} t \rfloor -1} E\bigg( \Delta_{j,n}\partial_{122} f(X^{1},X^{2})e^{i \langle \lambda, W_n(f,t) \rangle}\xi \: \big\langle D_{X^{1}} \langle \lambda, W_n(f,t) \rangle , \delta_{(j+1)2^{-n/2}} \big\rangle \notag\\
&& \hspace{2cm}\times I^{(2)}_2(\delta^{\otimes 2}_{(j+1)2^{-n/2}})\bigg)\notag
\end{eqnarray}
\begin{eqnarray}
&=& -\frac18 \sum_{p=1}^4 \lambda_p \sum_{j=0}^{\lfloor 2^{\frac{n}{2}} t \rfloor -1} E\bigg( \Delta_{j,n}\partial_{122} f(X^{1},X^{2})e^{i \langle \lambda, W_n(f,t) \rangle}\xi \: \big\langle D_{X^{1}}\big(K^{(p)}_n(t)\big)  , \delta_{(j+1)2^{-n/2}} \big\rangle \notag\\
&& \hspace{2cm} \times I^{(2)}_2(\delta^{\otimes 2}_{(j+1)2^{-n/2}})\bigg)\notag\\
&=&\sum_{p=1}^4 \lambda_p B_n^{(p)}(t),\label{decomposition of Bn}
\end{eqnarray}
with obvious notation at the last equality.
We will prove that 
\begin{equation}
B_n(t) \underset{n \to \infty}{\longrightarrow} - \kappa_3^2E\bigg(e^{i \langle \lambda, W_{\infty}(f,t) \rangle}\xi\times \int_0^t g^2_3(X_s^{1},X_s^{2})ds\bigg),\label{convergence Bn}
\end{equation} 
where $g_3 := \partial_{122}f$, see the beginning  of Step 2.
The proof of (\ref{convergence Bn}) will be done in several steps. Firstly, we will prove the convergence of  $B_n^{(3)}(t)$ to $-\kappa_3^2E\bigg(e^{i \langle \lambda, W_{\infty}(f,t) \rangle}\xi\times \int_0^t \big(\partial_{122}f(X_s^{1},X_s^{2})\big)^2ds\bigg)$. Then, we will prove the convergence to 0 of $B_n^{(p)}(t)$  for the other values of $p$.

\subsubsection*{Study of the convergence of $B_n^{(3)}(t)$.}
\begin{eqnarray}
&& B_n^{(3)}(t)= -\frac18  \sum_{j=0}^{\lfloor 2^{\frac{n}{2}} t \rfloor -1} E\bigg( \Delta_{j,n}\partial_{122} f(X^{1},X^{2})e^{i \langle \lambda, W_n(f,t) \rangle}\xi \label{B_n^3}\\
&& \hspace{4cm} \times \big\langle D_{X^{1}}\big(K^{(3)}_n(t)\big)  , \delta_{(j+1)2^{-n/2}} \big\rangle I^{(2)}_2(\delta^{\otimes 2}_{(j+1)2^{-n/2}})\bigg).\notag
\end{eqnarray}
 Our purpose in this subsection is to  prove the following result :
\begin{equation}
B_n^{(3)}(t) \underset{n \to \infty}{\longrightarrow} -\kappa_3^2E\bigg(e^{i \langle \lambda, W_{\infty}(f,t) \rangle}\xi\times \int_0^t \big(\partial_{122}f(X_s^{1},X_s^{2})\big)^2ds\bigg).\label{convergence B_n^3}
\end{equation}
{\bf Proof  of (\ref{convergence B_n^3})}.
Thanks to (\ref{Leibnitz0}) and (\ref{derivative- multiple,integral}),   we have
\begin{eqnarray*}
&&\big\langle D_{X^{1}}\big(K^{(3)}_n(t)\big)  , \delta_{(j+1)2^{-n/2}} \big\rangle\\
&=& \frac18 \sum_{k=0}^{\lfloor 2^{\frac{n}{2}} t \rfloor -1} \Delta_{k,n}\partial_{1122}f(X^{1},X^{2})I_1^{(1)}(\delta_{(k+1)2^{-n/2}})I_2^{(2)}(\delta_{(k+1)2^{-n/2}}^{\otimes 2}) \\
&& \hspace{6cm} \times \bigg\langle \bigg(\frac{\varepsilon_{k2^{-n/2}} + \varepsilon_{(k+1)2^{-n/2} }}{2} \bigg), \delta_{(j+1)2^{-n/2}}\bigg\rangle\\
&& +  \frac18 \sum_{k=0}^{\lfloor 2^{\frac{n}{2}} t \rfloor -1} \Delta_{k,n}\partial_{122}f(X^{1},X^{2})I_2^{(2)}(\delta_{(k+1)2^{-n/2}}^{\otimes 2})\big\langle \delta_{(k+1)2^{-n/2}}, \delta_{(j+1)2^{-n/2}} \big\rangle.
\end{eqnarray*}
Using the duality formula (\ref{duality formula}), we deduce   that
\begin{eqnarray}
&& B_n^{(3)}(t)\notag\\
&=& -\frac{1}{64}  \sum_{j,k=0}^{\lfloor 2^{\frac{n}{2}} t \rfloor -1} E\bigg( \Delta_{j,n}\partial_{122} f(X^{1},X^{2}) \Delta_{k,n}\partial_{1122}f(X^{1},X^{2})I_1^{(1)}(\delta_{(k+1)2^{-n/2}})I_2^{(2)}(\delta_{(k+1)2^{-n/2}}^{\otimes 2}) \notag \\
&& \hspace{3cm}\times I^{(2)}_2(\delta^{\otimes 2}_{(j+1)2^{-n/2}})e^{i \langle \lambda, W_n(f,t) \rangle}\xi\bigg) \bigg\langle \bigg(\frac{\varepsilon_{k2^{-n/2}} + \varepsilon_{(k+1)2^{-n/2} }}{2} \bigg), \delta_{(j+1)2^{-n/2}}\bigg\rangle\notag\\
&& -\frac{1}{64}  \sum_{j,k=0}^{\lfloor 2^{\frac{n}{2}} t \rfloor -1} E\bigg( \Delta_{j,n}\partial_{122} f(X^{1},X^{2}) \Delta_{k,n}\partial_{122}f(X^{1},X^{2})I_2^{(2)}(\delta_{(k+1)2^{-n/2}}^{\otimes 2}) \notag \\
&& \hspace{3cm}\times I^{(2)}_2(\delta^{\otimes 2}_{(j+1)2^{-n/2}})e^{i \langle \lambda, W_n(f,t) \rangle}\xi\bigg)\big\langle \delta_{(k+1)2^{-n/2}}, \delta_{(j+1)2^{-n/2}} \big\rangle \notag
\end{eqnarray}
\begin{eqnarray}
&=&-\frac{1}{64}  \sum_{j,k=0}^{\lfloor 2^{\frac{n}{2}} t \rfloor -1} E\bigg(\bigg\langle D_{X^{2}}^2 \bigg( \Delta_{j,n}\partial_{122} f(X^{1},X^{2}) \Delta_{k,n}\partial_{1122}f(X^{1},X^{2})I_1^{(1)}(\delta_{(k+1)2^{-n/2}})\notag\\
&& \times I_2^{(2)}(\delta_{(k+1)2^{-n/2}}^{\otimes 2})e^{i \langle \lambda, W_n(f,t) \rangle}\xi\bigg),\delta^{\otimes 2}_{(j+1)2^{-n/2}} \bigg \rangle \bigg) \bigg\langle \bigg(\frac{\varepsilon_{k2^{-n/2}} + \varepsilon_{(k+1)2^{-n/2} }}{2} \bigg), \delta_{(j+1)2^{-n/2}}\bigg\rangle\notag\\
&& -\frac{1}{64}  \sum_{j,k=0}^{\lfloor 2^{\frac{n}{2}} t \rfloor -1} E\bigg(\bigg\langle D_{X^{2}}^2 \bigg( \Delta_{j,n}\partial_{122} f(X^{1},X^{2}) \Delta_{k,n}\partial_{122}f(X^{1},X^{2})I_2^{(2)}(\delta_{(k+1)2^{-n/2}}^{\otimes 2})\notag\\
&& \hspace{1cm}\times e^{i \langle \lambda, W_n(f,t) \rangle}\xi\bigg),\delta^{\otimes 2}_{(j+1)2^{-n/2}} \bigg \rangle \bigg) \big\langle \delta_{(k+1)2^{-n/2}}, \delta_{(j+1)2^{-n/2}} \big\rangle\notag\\
&=& B_n^{(3,1)}(t) + B_n^{(3,2)}(t), \label{definitionB_n^3}
\end{eqnarray}
with obvious notation at the last equality. In the following two steps, we will prove firstly that $B_n^{(3,2)}(t) \to -\kappa_3^2E\bigg(e^{i \langle \lambda, W_{\infty}(f,t) \rangle}\xi\times \int_0^t\big(\partial_{122} f(X_s^{1},X_s^{2})\big)^2ds\bigg)$, then that $B_n^{(3,1)}(t) \to 0$ as $n \to \infty$.

\begin{enumerate}
\item \underline{Convergence of $B_n^{(3,2)}(t)$ as $n \to \infty$ :} Let us prove that 
\begin{equation}
B_n^{(3,2)}(t) \underset{n \to \infty}{\longrightarrow} -\kappa_3^2E\bigg(e^{i \langle \lambda, W_{\infty}(f,t) \rangle}\xi\times \int_0^t\big(\partial_{122} f(X_s^{1},X_s^{2})\big)^2ds\bigg).\label{convergence B_n^{3,2}}
\end{equation} 

{\bf Proof of (\ref{convergence B_n^{3,2}})}.
Thanks to (\ref{Leibnitz0}) and (\ref{derivative- multiple,integral})  , we have
\begin{eqnarray}
 D_{X^{2}}^2 \bigg( \Delta_{j,n}\partial_{122} f(X^{1},X^{2}) \Delta_{k,n}\partial_{122}f(X^{(1)},X^{(2)})I_2^{(2)}(\delta_{(k+1)2^{-n/2}}^{\otimes 2}) e^{i \langle \lambda, W_n(f,t) \rangle}\xi\bigg)\notag\\\label{derivative,B_n,1}
\end{eqnarray}
\begin{eqnarray}
&=& D_{X^{2}}^2 \big(\Delta_{j,n}\partial_{122} f(X^{1},X^{2}) \Delta_{k,n}\partial_{122}f(X^{1},X^{2})\big) I_2^{(2)}(\delta_{(k+1)2^{-n/2}}^{\otimes 2}) e^{i \langle \lambda, W_n(f,t) \rangle}\xi \notag\\
&& + 2D_{X^{2}}\big(\Delta_{j,n}\partial_{122} f(X^{1},X^{2}) \Delta_{k,n}\partial_{122}f(X^{1},X^{2})\big) \tilde{\otimes}D_{X^{2}}\big(I_2^{(2)}(\delta_{(k+1)2^{-n/2}}^{\otimes 2}) e^{i \langle \lambda, W_n(f,t) \rangle}\xi\big) \notag\\
&& + \Delta_{j,n}\partial_{122} f(X^{1},X^{2}) \Delta_{k,n}\partial_{122}f(X^{1},X^{2})D_{X^{2}}^2\big(I_2^{(2)}(\delta_{(k+1)2^{-n/2}}^{\otimes 2}) e^{i \langle \lambda, W_n(f,t) \rangle}\xi\big) \notag\\
&=& \mathcal{D}_1 + \mathcal{D}_2 + \mathcal{D}_3, \label{first, 3derivatives}
\end{eqnarray}
with obvious notion at the last equality. We also have
\begin{eqnarray}
&&D_{X^{2}} \big( \Delta_{j,n}\partial_{122} f(X^{1},X^{2}) \Delta_{k,n}\partial_{122}f(X^{1},X^{2})\big)\label{derivative,B_n,2}\\
&=& \Delta_{j,n}\partial_{1222} f(X^{1},X^{2})\Delta_{k,n}\partial_{122}f(X^{1},X^{2})\bigg(\frac{\varepsilon_{j2^{-n/2}} + \varepsilon_{(j+1)2^{-n/2} }}{2} \bigg)\notag\\
&& + \Delta_{j,n}\partial_{122} f(X^{1},X^{2})\Delta_{k,n}\partial_{1222}f(X^{1},X^{2})\bigg(\frac{\varepsilon_{k2^{-n/2}} + \varepsilon_{(k+1)2^{-n/2} }}{2} \bigg),\notag
\end{eqnarray} 
and
\begin{eqnarray}
&& D_{X^{2}}^2 \big( \Delta_{j,n}\partial_{122} f(X^{1},X^{2}) \Delta_{k,n}\partial_{122}f(X^{1},X^{2})\big)\label{derivative,B_n,3}\\
&=&\Delta_{j,n}\partial_{12222} f(X^{1},X^{2})\Delta_{k,n}\partial_{122}f(X^{1},X^{2})\bigg(\frac{\varepsilon_{j2^{-n/2}} + \varepsilon_{(j+1)2^{-n/2} }}{2} \bigg)^{\otimes 2}\notag\\
&& + 2 \Delta_{j,n}\partial_{1222} f(X^{1},X^{2}) \Delta_{k,n}\partial_{1222}f(X^{1},X^{2})\bigg(\frac{\varepsilon_{j2^{-n/2}} + \varepsilon_{(j+1)2^{-n/2} }}{2} \bigg)\tilde{\otimes}\notag\\
&& \bigg(\frac{\varepsilon_{k2^{-n/2}} + \varepsilon_{(k+1)2^{-n/2} }}{2} \bigg)\notag\\
&& + \Delta_{j,n}\partial_{122} f(X^{1},X^{2}) \Delta_{k,n}\partial_{12222}f(X^{1},X^{2})\bigg(\frac{\varepsilon_{k2^{-n/2}} + \varepsilon_{(k+1)2^{-n/2} }}{2} \bigg)^{\otimes 2}.\notag
\end{eqnarray}
On the other hand, we have
\begin{eqnarray}
&& D_{X^{2}}\big(I_2^{(2)}(\delta_{(k+1)2^{-n/2}}^{\otimes 2}) e^{i \langle \lambda, W_n(f,t) \rangle}\xi\big)\label{derivative,B_n,4}\\
&=& 2I_1^{(2)}(\delta_{(k+1)2^{-n/2}}) e^{i \langle \lambda, W_n(f,t) \rangle}\xi\delta_{(k+1)2^{-n/2}} + I_2^{(2)}(\delta_{(k+1)2^{-n/2}}^{\otimes 2})D_{X^{2}}\big(e^{i \langle \lambda, W_n(f,t) \rangle}\xi\big)\notag\\
&=& 2I_1^{(2)}(\delta_{(k+1)2^{-n/2}}) e^{i \langle \lambda, W_n(f,t) \rangle}\xi\delta_{(k+1)2^{-n/2}} + i I_2^{(2)}(\delta_{(k+1)2^{-n/2}}^{\otimes 2})e^{i \langle \lambda, W_n(f,t) \rangle}\xi \notag\\
&& \times D_{X^{2}}(\langle \lambda, W_n(f,t) \rangle)+ I_2^{(2)}(\delta_{(k+1)2^{-n/2}}^{\otimes 2})e^{i \langle \lambda, W_n(f,t) \rangle} D_{X^{2}}(\xi).\notag
\end{eqnarray}
Also
\begin{eqnarray}
&&D_{X^{2}}^2\big(I_2^{(2)}(\delta_{(k+1)2^{-n/2}}^{\otimes 2}) e^{i \langle \lambda, W_n(f,t) \rangle}\xi\big)\label{derivative,B_n,5}\\
&=& 2e^{i \langle \lambda, W_n(f,t) \rangle}\xi \delta_{(k+1)2^{-n/2}}^{\otimes 2} + 4I_1^{(2)}(\delta_{(k+1)2^{-n/2}})\delta_{(k+1)2^{-n/2}}\tilde{\otimes}D_{X^{2}}(e^{i \langle \lambda, W_n(f,t) \rangle}\xi)\notag
\end{eqnarray}
\begin{eqnarray*}
&& + I_2^{(2)}(\delta_{(k+1)2^{-n/2}}^{\otimes 2})D^2_{X^{2}}(e^{i \langle \lambda, W_n(f,t) \rangle}\xi)\notag\\
&=& 2e^{i \langle \lambda, W_n(f,t) \rangle}\xi \delta_{(k+1)2^{-n/2}}^{\otimes 2} + 4iI_1^{(2)}(\delta_{(k+1)2^{-n/2}})e^{i \langle \lambda, W_n(f,t) \rangle}\xi \delta_{(k+1)2^{-n/2}}\tilde{\otimes}\\
&& D_{X^{2}}(\langle \lambda, W_n(f,t) \rangle) + 4I_1^{(2)}(\delta_{(k+1)2^{-n/2}})e^{i \langle \lambda, W_n(f,t) \rangle} \delta_{(k+1)2^{-n/2}}\tilde{\otimes}D_{X^{2}}(\xi)\\
&& + i I_2^{(2)}(\delta_{(k+1)2^{-n/2}}^{\otimes 2})e^{i \langle \lambda, W_n(f,t) \rangle}\xi  D^2_{X^{2}}(\langle \lambda, W_n(f,t) \rangle) - I_2^{(2)}(\delta_{(k+1)2^{-n/2}}^{\otimes 2})e^{i \langle \lambda, W_n(f,t) \rangle}\xi\\
&& \times \big(D_{X^{2}}(\langle \lambda, W_n(f,t) \rangle)\big)^{\otimes 2} + 2iI_2^{(2)}(\delta_{(k+1)2^{-n/2}}^{\otimes 2})e^{i \langle \lambda, W_n(f,t) \rangle}D_{X^{2}}(\langle \lambda, W_n(f,t) \rangle)\tilde{\otimes}\\
&& D_{X^{2}}(\xi) + I_2^{(2)}(\delta_{(k+1)2^{-n/2}}^{\otimes 2})e^{i \langle \lambda, W_n(f,t) \rangle}D^2_{X^{2}}(\xi).
\end{eqnarray*}
From (\ref{derivative,B_n,2}), (\ref{derivative,B_n,3}), (\ref{derivative,B_n,4}), (\ref{derivative,B_n,5}) and (\ref{first, 3derivatives}), we deduce that
\begin{eqnarray}
&& \mathcal{D}_1= \Delta_{j,n}\partial_{12222} f(X^{1},X^{2})\Delta_{k,n}\partial_{122}f(X^{1},X^{2})I_2^{(2)}(\delta_{(k+1)2^{-n/2}}^{\otimes 2}) e^{i \langle \lambda, W_n(f,t) \rangle}\label{D1}\\
&& \times \xi\bigg(\frac{\varepsilon_{j2^{-n/2}} + \varepsilon_{(j+1)2^{-n/2} }}{2} \bigg)^{\otimes 2} 
+ 2 \Delta_{j,n}\partial_{1222} f(X^{1},X^{2}) \Delta_{k,n}\partial_{1222}f(X^{1},X^{2})\notag\\
&& \times I_2^{(2)}(\delta_{(k+1)2^{-n/2}}^{\otimes 2}) e^{i \langle \lambda, W_n(f,t) \rangle}\xi\bigg(\frac{\varepsilon_{j2^{-n/2}} + \varepsilon_{(j+1)2^{-n/2} }}{2} \bigg)\tilde{\otimes}\bigg(\frac{\varepsilon_{k2^{-n/2}} + \varepsilon_{(k+1)2^{-n/2} }}{2} \bigg)\notag\\
&& + \Delta_{j,n}\partial_{122} f(X^{1},X^{2}) \Delta_{k,n}\partial_{12222}f(X^{1},X^{2})I_2^{(2)}(\delta_{(k+1)2^{-n/2}}^{\otimes 2}) e^{i \langle \lambda, W_n(f,t) \rangle}\xi\notag\\
&& \times \bigg(\frac{\varepsilon_{k2^{-n/2}} + \varepsilon_{(k+1)2^{-n/2} }}{2} \bigg)^{\otimes 2},\notag
\end{eqnarray}
\begin{eqnarray}
&& \mathcal{D}_2=  4\Delta_{j,n}\partial_{1222} f(X^{1},X^{2})\Delta_{k,n}\partial_{122}f(X^{1},X^{2})I_1^{(2)}(\delta_{(k+1)2^{-n/2}})\times\label{D2}
\end{eqnarray}
\begin{eqnarray}
&&  e^{i \langle \lambda, W_n(f,t) \rangle}\xi \bigg(\frac{\varepsilon_{j2^{-n/2}} + \varepsilon_{(j+1)2^{-n/2} }}{2} \bigg)\tilde{\otimes}\delta_{(k+1)2^{-n/2}}\notag\\
&& + 2i\Delta_{j,n}\partial_{1222} f(X^{1},X^{2})\Delta_{k,n}\partial_{122}f(X^{1},X^{2})I_2^{(2)}(\delta_{(k+1)2^{-n/2}}^{\otimes 2})e^{i \langle \lambda, W_n(f,t) \rangle}\xi\notag\\
&& \times \bigg(\frac{\varepsilon_{j2^{-n/2}} + \varepsilon_{(j+1)2^{-n/2} }}{2} \bigg)\tilde{\otimes}D_{X^{2}}(\langle \lambda, W_n(f,t) \rangle) + 2\Delta_{j,n}\partial_{1222} f(X^{1},X^{2})\notag\\
&& \times \Delta_{k,n}\partial_{122}f(X^{1},X^{2})I_2^{(2)}(\delta_{(k+1)2^{-n/2}}^{\otimes 2})e^{i \langle \lambda, W_n(f,t) \rangle}\bigg(\frac{\varepsilon_{j2^{-n/2}} + \varepsilon_{(j+1)2^{-n/2} }}{2} \bigg)\tilde{\otimes}D_{X^{2}}(\xi)\notag\\
&& + 4\Delta_{j,n}\partial_{122} f(X^{1},X^{2})\Delta_{k,n}\partial_{1222}f(X^{1},X^{2})I_1^{(2)}(\delta_{(k+1)2^{-n/2}}) e^{i \langle \lambda, W_n(f,t) \rangle}\xi\notag\\
&& \times \bigg(\frac{\varepsilon_{k2^{-n/2}} + \varepsilon_{(k+1)2^{-n/2} }}{2} \bigg)\tilde{\otimes}\delta_{(k+1)2^{-n/2}} + 2i \Delta_{j,n}\partial_{122} f(X^{1},X^{2})\Delta_{k,n}\partial_{1222}f(X^{1},X^{2})\notag\\
&& \times I_2^{(2)}(\delta_{(k+1)2^{-n/2}}^{\otimes 2})e^{i \langle \lambda, W_n(f,t) \rangle}\xi\bigg(\frac{\varepsilon_{k2^{-n/2}} + \varepsilon_{(k+1)2^{-n/2} }}{2} \bigg)\tilde{\otimes}D_{X^{2}}(\langle \lambda, W_n(f,t) \rangle)\notag\\
&& + 2\Delta_{j,n}\partial_{122} f(X^{1},X^{2})\Delta_{k,n}\partial_{1222}f(X^{1},X^{2})I_2^{(2)}(\delta_{(k+1)2^{-n/2}}^{\otimes 2})e^{i \langle \lambda, W_n(f,t) \rangle}\notag\\
&& \times \bigg(\frac{\varepsilon_{k2^{-n/2}}+ \varepsilon_{(k+1)2^{-n/2} }}{2} \bigg)\tilde{\otimes}D_{X^{2}}(\xi),\notag
\end{eqnarray}
\begin{eqnarray}
&& \mathcal{D}_3= 2\Delta_{j,n}\partial_{122} f(X^{1},X^{2}) \Delta_{k,n}\partial_{122}f(X^{1},X^{2})\label{D3}\\
&& \times e^{i \langle \lambda, W_n(f,t) \rangle}\xi \delta_{(k+1)2^{-n/2}}^{\otimes 2} + 4i \Delta_{j,n}\partial_{122} f(X^{1},X^{2}) \Delta_{k,n}\partial_{122}f(X^{1},X^{2})\notag\\
&& \times I_1^{(2)}(\delta_{(k+1)2^{-n/2}})e^{i \langle \lambda, W_n(f,t) \rangle}\xi \delta_{(k+1)2^{-n/2}}\tilde{\otimes}D_{X^{2}}(\langle \lambda, W_n(f,t) \rangle)+ 4 \Delta_{j,n}\partial_{122} f(X^{1},X^{2})\notag\\ 
&& \times \Delta_{k,n}\partial_{122}f(X^{1},X^{2})I_1^{(2)}(\delta_{(k+1)2^{-n/2}})e^{i \langle \lambda, W_n(f,t) \rangle} \delta_{(k+1)2^{-n/2}}\tilde{\otimes}D_{X^{2}}(\xi)\notag\\
&& +i \Delta_{j,n}\partial_{122} f(X^{1},X^{2}) \Delta_{k,n}\partial_{122}f(X^{1},X^{2})
 I_2^{(2)}(\delta_{(k+1)2^{-n/2}}^{\otimes 2})e^{i \langle \lambda, W_n(f,t) \rangle}\xi \notag\\
&&  \times D^2_{X^{2}}(\langle \lambda, W_n(f,t) \rangle) - \Delta_{j,n}\partial_{122} f(X^{1},X^{2}) \Delta_{k,n}\partial_{122}f(X^{1},X^{2})I_2^{(2)}(\delta_{(k+1)2^{-n/2}}^{\otimes 2})\notag\\
&& \times e^{i \langle \lambda, W_n(f,t) \rangle}\xi \big(D_{X^{2}}(\langle \lambda, W_n(f,t) \rangle)\big)^{\otimes 2}+2i \Delta_{j,n}\partial_{122} f(X^{1},X^{2}) \Delta_{k,n}\partial_{122}f(X^{1},X^{2})\notag\\
&& \times I_2^{(2)}(\delta_{(k+1)2^{-n/2}}^{\otimes 2})e^{i \langle \lambda, W_n(f,t) \rangle}D_{X^{2}}(\langle \lambda, W_n(f,t) \rangle)\tilde{\otimes}D_{X^{2}}(\xi)+ \Delta_{j,n}\partial_{122} f(X^{1},X^{2})\notag\\ 
&& \times \Delta_{k,n}\partial_{122}f(X^{1},X^{2})I_2^{(2)}(\delta_{(k+1)2^{-n/2}}^{\otimes 2})e^{i \langle \lambda, W_n(f,t) \rangle}D^2_{X^{2}}(\xi).\notag
\end{eqnarray}

By plugging (\ref{D1}), (\ref{D2}) and (\ref{D3}) in (\ref{first, 3derivatives}), then by plugging (\ref{derivative,B_n,1}) in $B_n^{(3,2)}(t)$, we get
\begin{equation*}
B_n^{(3,2)}(t) = B_n^{(3,2,a)}(t) + B_n^{(3,2,b)}(t) + B_n^{(3,2,c)}(t),
\end{equation*}
where
\begin{eqnarray*}
&& B_n^{(3,2,a)}(t)\\
&=& -\frac{1}{64}\sum_{j,k=0}^{\lfloor 2^{\frac{n}{2}} t \rfloor -1}E\bigg ( \Delta_{j,n}\partial_{12222} f(X^{1},X^{2})\Delta_{k,n}\partial_{122}f(X^{1},X^{2})I_2^{(2)}(\delta_{(k+1)2^{-n/2}}^{\otimes 2})\\ 
&& \times e^{i \langle \lambda, W_n(f,t) \rangle}\xi\bigg)\bigg \langle \bigg(\frac{\varepsilon_{j2^{-n/2}} + \varepsilon_{(j+1)2^{-n/2} }}{2} \bigg)^{\otimes 2} , \delta_{(j+1)2^{-n/2}}^{\otimes 2} \bigg \rangle \langle \delta_{(k+1)2^{-n/2}}, \delta_{(j+1)2^{-n/2}} \rangle\\
&& -\frac{2}{64} \sum_{j,k=0}^{\lfloor 2^{\frac{n}{2}} t \rfloor -1}E\bigg (\Delta_{j,n}\partial_{1222} f(X^{1},X^{2}) \Delta_{k,n}\partial_{1222}f(X^{1},X^{2})I_2^{(2)}(\delta_{(k+1)2^{-n/2}}^{\otimes 2})\\ 
&& \times e^{i \langle \lambda, W_n(f,t) \rangle}\xi \bigg) \bigg \langle \bigg(\frac{\varepsilon_{j2^{-n/2}} + \varepsilon_{(j+1)2^{-n/2} }}{2} \bigg)\tilde{\otimes}\bigg(\frac{\varepsilon_{k2^{-n/2}} + \varepsilon_{(k+1)2^{-n/2} }}{2} \bigg), \delta_{(j+1)2^{-n/2}}^{\otimes 2} \bigg \rangle \\
&& \times \langle \delta_{(k+1)2^{-n/2}}, \delta_{(j+1)2^{-n/2}} \rangle\\
&&  -\frac{1}{64}\sum_{j,k=0}^{\lfloor 2^{\frac{n}{2}} t \rfloor -1}E\bigg (\Delta_{j,n}\partial_{122} f(X^{1},X^{2}) \Delta_{k,n}\partial_{12222}f(X^{1},X^{2})I_2^{(2)}(\delta_{(k+1)2^{-n/2}}^{\otimes 2})\\ 
&&\times e^{i \langle \lambda, W_n(f,t) \rangle}\xi\bigg)\bigg\langle \bigg(\frac{\varepsilon_{k2^{-n/2}} + \varepsilon_{(k+1)2^{-n/2} }}{2} \bigg)^{\otimes 2} , \delta_{(j+1)2^{-n/2}}^{\otimes 2} \bigg \rangle \langle \delta_{(k+1)2^{-n/2}}, \delta_{(j+1)2^{-n/2}} \rangle\\
&& = \sum_{i=1}^{3} B_{n,i}^{(3,2)}(t),
\end{eqnarray*}
\begin{eqnarray*}
&& B_n^{(3,2,b)}(t)\\
&=& -\frac{4}{64}\sum_{j,k=0}^{\lfloor 2^{\frac{n}{2}} t \rfloor -1}E\bigg (\Delta_{j,n}\partial_{1222} f(X^{1},X^{2})\Delta_{k,n}\partial_{122}f(X^{1},X^{2})I_1^{(2)}(\delta_{(k+1)2^{-n/2}})\\
&& \times e^{i \langle \lambda, W_n(f,t) \rangle}\xi  \bigg)\bigg \langle \bigg(\frac{\varepsilon_{j2^{-n/2}} + \varepsilon_{(j+1)2^{-n/2} }}{2} \bigg)\tilde{\otimes}\delta_{(k+1)2^{-n/2}} ,\delta_{(j+1)2^{-n/2}}^{\otimes 2} \bigg \rangle \\
&&\times \langle \delta_{(k+1)2^{-n/2}}, \delta_{(j+1)2^{-n/2}} \rangle\\
&& - \frac{2i}{64}\sum_{j,k=0}^{\lfloor 2^{\frac{n}{2}} t \rfloor -1}E\bigg (\Delta_{j,n}\partial_{1222} f(X^{1},X^{2})\Delta_{k,n}\partial_{122}f(X^{1},X^{2})I_2^{(2)}(\delta_{(k+1)2^{-n/2}}^{\otimes 2})e^{i \langle \lambda, W_n(f,t) \rangle}\xi\\
&& \times \bigg \langle \bigg(\frac{\varepsilon_{j2^{-n/2}} + \varepsilon_{(j+1)2^{-n/2} }}{2} \bigg)\tilde{\otimes}D_{X^{2}}(\langle \lambda, W_n(f,t) \rangle) , \delta_{(j+1)2^{-n/2}}^{\otimes 2} \bigg \rangle\bigg) \\
&& \times \langle \delta_{(k+1)2^{-n/2}}, \delta_{(j+1)2^{-n/2}} \rangle \\
\end{eqnarray*}
\begin{eqnarray*}
&& - \frac{2}{64}\sum_{j,k=0}^{\lfloor 2^{\frac{n}{2}} t \rfloor -1}E\bigg (\Delta_{j,n}\partial_{1222} f(X^{1},X^{2})\Delta_{k,n}\partial_{122}f(X^{1},X^{2})I_2^{(2)}(\delta_{(k+1)2^{-n/2}}^{\otimes 2})e^{i \langle \lambda, W_n(f,t) \rangle}\\
&& \times \bigg \langle \bigg(\frac{\varepsilon_{j2^{-n/2}} + \varepsilon_{(j+1)2^{-n/2} }}{2} \bigg)\tilde{\otimes}D_{X^{2}}(\xi) , \delta_{(j+1)2^{-n/2}}^{\otimes 2} \bigg \rangle\bigg) \langle \delta_{(k+1)2^{-n/2}}, \delta_{(j+1)2^{-n/2}} \rangle \\
&& - \frac{4}{64}\sum_{j,k=0}^{\lfloor 2^{\frac{n}{2}} t \rfloor -1}E\bigg ( \Delta_{j,n}\partial_{122} f(X^{1},X^{2})\Delta_{k,n}\partial_{1222}f(X^{1},X^{2})I_1^{(2)}(\delta_{(k+1)2^{-n/2}}) e^{i \langle \lambda, W_n(f,t) \rangle}\xi\bigg)\\
&& \times \bigg \langle \bigg(\frac{\varepsilon_{k2^{-n/2}} + \varepsilon_{(k+1)2^{-n/2} }}{2} \bigg)\tilde{\otimes}\delta_{(k+1)2^{-n/2}} , \delta_{(j+1)2^{-n/2}}^{\otimes 2} \bigg \rangle \langle \delta_{(k+1)2^{-n/2}}, \delta_{(j+1)2^{-n/2}} \rangle \\
&& -\frac{2i}{64}\sum_{j,k=0}^{\lfloor 2^{\frac{n}{2}} t \rfloor -1}E\bigg (\Delta_{j,n}\partial_{122} f(X^{1},X^{2})\Delta_{k,n}\partial_{1222}f(X^{1},X^{2})I_2^{(2)}(\delta_{(k+1)2^{-n/2}}^{\otimes 2})e^{i \langle \lambda, W_n(f,t) \rangle}\xi\\
&& \times \bigg \langle \bigg(\frac{\varepsilon_{k2^{-n/2}} + \varepsilon_{(k+1)2^{-n/2} }}{2} \bigg)\tilde{\otimes}D_{X^{2}}(\langle \lambda, W_n(f,t) \rangle) , \delta_{(j+1)2^{-n/2}}^{\otimes 2} \bigg \rangle\bigg)\\
&&\times \langle \delta_{(k+1)2^{-n/2}}, \delta_{(j+1)2^{-n/2}} \rangle\\
&& -\frac{2}{64}\sum_{j,k=0}^{\lfloor 2^{\frac{n}{2}} t \rfloor -1}E\bigg ( \Delta_{j,n}\partial_{122} f(X^{1},X^{2})\Delta_{k,n}\partial_{1222}f(X^{1},X^{2})I_2^{(2)}(\delta_{(k+1)2^{-n/2}}^{\otimes 2})e^{i \langle \lambda, W_n(f,t) \rangle}\\
&& \times \bigg \langle \bigg(\frac{\varepsilon_{k2^{-n/2}} + \varepsilon_{(k+1)2^{-n/2} }}{2} \bigg)\tilde{\otimes}D_{X^{2}}(\xi) , \delta_{(j+1)2^{-n/2}}^{\otimes 2} \bigg \rangle \bigg)\langle \delta_{(k+1)2^{-n/2}}, \delta_{(j+1)2^{-n/2}} \rangle\\
&& = \sum_{i=4}^{9} B_{n,i}^{(3,2)}(t),
\end{eqnarray*}
\begin{eqnarray*}
&& B_n^{(3,2,c)}(t)\\
&=& -\frac{2}{64}\sum_{j,k=0}^{\lfloor 2^{\frac{n}{2}} t \rfloor -1}E\bigg ( \Delta_{j,n}\partial_{122} f(X^{1},X^{2}) \Delta_{k,n}\partial_{122}f(X^{1},X^{2})e^{i \langle \lambda, W_n(f,t) \rangle}\xi \bigg)\\
&& \times \langle \delta_{(k+1)2^{-n/2}}^{\otimes 2} , \delta_{(j+1)2^{-n/2}}^{\otimes 2} \rangle \langle \delta_{(k+1)2^{-n/2}}, \delta_{(j+1)2^{-n/2}} \rangle\\
&& - \frac{4i}{64} \sum_{j,k=0}^{\lfloor 2^{\frac{n}{2}} t \rfloor -1}E\bigg( \Delta_{j,n}\partial_{122} f(X^{1},X^{2}) \Delta_{k,n}\partial_{122}f(X^{1},X^{2})I_1^{(2)}(\delta_{(k+1)2^{-n/2}})e^{i \langle \lambda, W_n(f,t) \rangle}\xi \\
&& \times \bigg \langle \delta_{(k+1)2^{-n/2}}\tilde{\otimes}D_{X^{2}}(\langle \lambda, W_n(f,t) \rangle) , \delta_{(j+1)2^{-n/2}}^{\otimes 2} \bigg\rangle\bigg) \langle \delta_{(k+1)2^{-n/2}}, \delta_{(j+1)2^{-n/2}} \rangle\\
\end{eqnarray*}
 \begin{eqnarray*} 
&& - \frac{4}{64}\sum_{j,k=0}^{\lfloor 2^{\frac{n}{2}} t \rfloor -1}E\bigg(\Delta_{j,n}\partial_{122} f(X^{1},X^{2})\Delta_{k,n}\partial_{122}f(X^{1},X^{2})I_1^{(2)}(\delta_{(k+1)2^{-n/2}})e^{i \langle \lambda, W_n(f,t) \rangle} \\
&& \times \bigg \langle \delta_{(k+1)2^{-n/2}}\tilde{\otimes}D_{X^{2}}(\xi) ,\delta_{(j+1)2^{-n/2}}^{\otimes 2} \bigg\rangle \bigg) \langle \delta_{(k+1)2^{-n/2}}, \delta_{(j+1)2^{-n/2}} \rangle\\
&& - \frac{i}{64} \sum_{j,k=0}^{\lfloor 2^{\frac{n}{2}} t \rfloor -1}E\bigg( \Delta_{j,n}\partial_{122} f(X^{1},X^{2}) \Delta_{k,n}\partial_{122}f(X^{1},X^{2})
 I_2^{(2)}(\delta_{(k+1)2^{-n/2}}^{\otimes 2})e^{i \langle \lambda, W_n(f,t) \rangle}\xi \\
 && \times \bigg \langle D^2_{X^{2}}(\langle \lambda, W_n(f,t) \rangle) , \delta_{(j+1)2^{-n/2}}^{\otimes 2} \bigg\rangle\bigg) \langle \delta_{(k+1)2^{-n/2}}, \delta_{(j+1)2^{-n/2}} \rangle\\
 && + \frac{1}{64}\sum_{j,k=0}^{\lfloor 2^{\frac{n}{2}} t \rfloor -1}E\bigg( \Delta_{j,n}\partial_{122} f(X^{1},X^{2}) \Delta_{k,n}\partial_{122}f(X^{1},X^{2})I_2^{(2)}(\delta_{(k+1)2^{-n/2}}^{\otimes 2})e^{i \langle \lambda, W_n(f,t) \rangle}\xi \\
 && \times\bigg \langle \big(D_{X^{2}}(\langle \lambda, W_n(f,t) \rangle)\big)^{\otimes 2}, \delta_{(j+1)2^{-n/2}}^{\otimes 2} \bigg\rangle\bigg) \langle \delta_{(k+1)2^{-n/2}}, \delta_{(j+1)2^{-n/2}} \rangle\\
&& - \frac{2i}{64}\sum_{j,k=0}^{\lfloor 2^{\frac{n}{2}} t \rfloor -1}E\bigg(\Delta_{j,n}\partial_{122} f(X^{1},X^{2}) \Delta_{k,n}\partial_{122}f(X^{1},X^{2}) I_2^{(2)}(\delta_{(k+1)2^{-n/2}}^{\otimes 2})e^{i \langle \lambda, W_n(f,t) \rangle} \\
&& \times \bigg \langle D_{X^{2}}(\langle \lambda, W_n(f,t) \rangle)\tilde{\otimes}D_{X^{2}}(\xi) , \delta_{(j+1)2^{-n/2}}^{\otimes 2} \bigg\rangle\bigg) \langle \delta_{(k+1)2^{-n/2}}, \delta_{(j+1)2^{-n/2}} \rangle\\
&& - \frac{1}{64} \sum_{j,k=0}^{\lfloor 2^{\frac{n}{2}} t \rfloor -1}E\bigg(\Delta_{j,n}\partial_{122} f(X^{1},X^{2})\Delta_{k,n}\partial_{122}f(X^{1},X^{2})I_2^{(2)}(\delta_{(k+1)2^{-n/2}}^{\otimes 2})e^{i \langle \lambda, W_n(f,t) \rangle} \\
&& \times \bigg \langle D^2_{X^{2}}(\xi) , \delta_{(j+1)2^{-n/2}}^{\otimes 2} \bigg\rangle\bigg) \langle \delta_{(k+1)2^{-n/2}}, \delta_{(j+1)2^{-n/2}} \rangle\\
&& = \sum_{i=10}^{16} B_{n,i}^{(3,2)}(t).
\end{eqnarray*}

Now, we will prove firstly that $B_{n,10}^{(3,2)}(t) \to -\kappa_3^2E\bigg(e^{i \langle \lambda, W_{\infty}(f,t) \rangle}\xi\times \int_0^t\big(\partial_{122} f(X_s^{1},X_s^{2})\big)^2ds\bigg)$, then we will prove the convergence to 0 of $B_{n,i}^{(3,2)}(t)$ for $i \in \{1, \ldots , 16\} \backslash \{10\}$.
\begin{itemize}
\item \underline{Convergence of $B_{n,10}^{(3,2)}(t)$ :}

\begin{eqnarray*}
B_{n,10}^{(3,2)}(t)&=& -\frac{1}{32}\sum_{j,k=0}^{\lfloor 2^{\frac{n}{2}} t \rfloor -1}E\bigg ( \Delta_{j,n}\partial_{122} f(X^{1},X^{2}) \Delta_{k,n}\partial_{122}f(X^{1},X^{2})e^{i \langle \lambda, W_n(f,t) \rangle}\xi \bigg)\\
&& \times \langle \delta_{(k+1)2^{-n/2}} , \delta_{(j+1)2^{-n/2}} \rangle^3.
\end{eqnarray*}
Let us show that 
\begin{equation}
B_{n,10}^{(3,2)}(t) \underset{n \to \infty}{\longrightarrow} -\kappa_3^2 E\bigg(e^{i \langle \lambda, W_{\infty}(f,t) \rangle}\xi\times \int_0^t\big(\partial_{122} f(X_s^{1},X_s^{2})\big)^2ds\bigg). \label{convergenceB_n10,1}
\end{equation}
Let us prove firstly that, a.s.,
\begin{eqnarray}
&&\frac{1}{32}\sum_{j,k=0}^{\lfloor 2^{\frac{n}{2}} t \rfloor -1} \Delta_{j,n}\partial_{122} f(X^{1},X^{2}) \Delta_{k,n}\partial_{122}f(X^{1},X^{2}) \langle \delta_{(k+1)2^{-n/2}} , \delta_{(j+1)2^{-n/2}} \rangle^3 \notag \\  
&& \underset{n \to \infty}{\longrightarrow}\kappa_3^2 \int_0^t\big(\partial_{122} f(X_s^{1},X_s^{2})\big)^2ds.\label{convergence-Heinetheorem}
\end{eqnarray}
We have, see (\ref{rho}) for the definition of $\rho$ :
\begin{eqnarray}
&& \frac{1}{32}\sum_{j,k=0}^{\lfloor 2^{\frac{n}{2}} t \rfloor -1} \Delta_{j,n}\partial_{122} f(X^{1},X^{2}) \Delta_{k,n}\partial_{122}f(X^{1},X^{2}) \langle \delta_{(k+1)2^{-n/2}} , \delta_{(j+1)2^{-n/2}} \rangle^3 \notag\\
&=& \frac{2^{-n/2}}{32}\sum_{j,k=0}^{\lfloor 2^{\frac{n}{2}} t \rfloor -1} \Delta_{j,n}\partial_{122} f(X^{1},X^{2}) \Delta_{k,n}\partial_{122}f(X^{1},X^{2}) \rho(j-k)^3\notag \\
&=& \frac{2^{-n/2}}{32}\sum_{r=1-\lfloor 2^{\frac{n}{2}} t \rfloor}^{\lfloor 2^{\frac{n}{2}} t \rfloor -1}\rho(r)^3\sum_{k= 0 \vee (-r)}^{(\lfloor 2^{\frac{n}{2}} t \rfloor -1 -r)\wedge (\lfloor 2^{\frac{n}{2}} t \rfloor -1)}\Delta_{r+k,n}\partial_{122} f(X^{1},X^{2}) \label{convergence B_n10,2}\\
&& \hspace{7cm}\times  \Delta_{k,n}\partial_{122}f(X^{1},X^{2}),\notag
\end{eqnarray}
where the last equality comes from the simple change of variable $r=j-k$, together with a Fubini argument. Observe that
\begin{eqnarray*}
&&\bigg|2^{-n/2}\sum_{k= 0 \vee (-r)}^{(\lfloor 2^{\frac{n}{2}} t \rfloor -1 -r)\wedge (\lfloor 2^{\frac{n}{2}} t \rfloor -1)}\Delta_{r+k,n}\partial_{122} f(X^{1},X^{2}) \Delta_{k,n}\partial_{122}f(X^{1},X^{2})\\
&& - \int_0^t\big(\partial_{122} f(X_s^{1},X_s^{2})\big)^2ds\bigg|\\
&\leq & \bigg|2^{-n/2}\sum_{k= 0 \vee (-r)}^{(\lfloor 2^{\frac{n}{2}} t \rfloor -1 -r)\wedge (\lfloor 2^{\frac{n}{2}} t \rfloor -1)}\Delta_{r+k,n}\partial_{122} f(X^{1},X^{2}) \Delta_{k,n}\partial_{122}f(X^{1},X^{2})\\
&& - 2^{-n/2}\sum_{k= 0 \vee (-r)}^{(\lfloor 2^{\frac{n}{2}} t \rfloor -1 -r)\wedge (\lfloor 2^{\frac{n}{2}} t \rfloor -1)}\partial_{122} f(X^{1}_{k2^{-n/2}},X^{2}_{k2^{-n/2}}) \Delta_{k,n}\partial_{122}f(X^{1},X^{2}) \bigg|\\
&& + \bigg|2^{-n/2}\sum_{k= 0 \vee (-r)}^{(\lfloor 2^{\frac{n}{2}} t \rfloor -1 -r)\wedge (\lfloor 2^{\frac{n}{2}} t \rfloor -1)}\partial_{122}f(X^{1}_{k2^{-n/2}},X^{2}_{k2^{-n/2}}) \Delta_{k,n}\partial_{122}f(X^{1},X^{2})
\end{eqnarray*}
\begin{eqnarray*}
&& - 2^{-n/2}\sum_{k= 0 \vee (-r)}^{(\lfloor 2^{\frac{n}{2}} t \rfloor -1 -r)\wedge (\lfloor 2^{\frac{n}{2}} t \rfloor -1)}\big(\partial_{122} f(X^{1}_{k2^{-n/2}},X^{2}_{k2^{-n/2}})\big)^2 \bigg|\\
&& + \bigg|2^{-n/2}\sum_{k= 0 \vee (-r)}^{(\lfloor 2^{\frac{n}{2}} t \rfloor -1 -r)\wedge (\lfloor 2^{\frac{n}{2}} t \rfloor -1)}\big(\partial_{122} f(X^{1}_{k2^{-n/2}},X^{2}_{k2^{-n/2}})\big)^2 \\
&& - \int_0^t\big(\partial_{122} f(X_s^{1},X_s^{2})\big)^2ds\bigg|\\
&=& r_{1,n} + r_{2,n} +r_{3,n},
\end{eqnarray*}
with obvious notation at the last equality. Let us prove the convergence to 0 of $r_{1,n}$, $r_{2,n}$ and $r_{3,n}$.
 
For any fixed integer $r > 0$ (the case $r \leq 0$ being similar), by Theorem \ref{theorem-Taylor} and since $f \in C_b^{\infty}$,  we have
\begin{eqnarray*}
r_{1,n} &\leq & \|\partial_{122} f\|_{\infty}2^{-n/2}\sum_{k= 0 }^{\lfloor 2^{\frac{n}{2}} t \rfloor -1}\big|\Delta_{r+k,n}\partial_{122} f(X^{1},X^{2})- \partial_{122} f(X^{1}_{k2^{-n/2}},X^{2}_{k2^{-n/2}})\big|\\
&\leq & C 2^{-n/2}\sum_{k= 0 }^{\lfloor 2^{\frac{n}{2}} t \rfloor -1}\big|\partial_{1122} f(X^{1}_{k2^{-n/2}},X^{2}_{k2^{-n/2}})\big|\\
&& \hspace{2cm}\times \big| X^{1}_{(r+k+1)2^{-n/2}}+ X^{1}_{(r+k)2^{-n/2}} - 2X^{1}_{k2^{-n/2}}\big|\\
&& + C2^{-n/2}\sum_{k= 0 }^{\lfloor 2^{\frac{n}{2}} t \rfloor -1}\big|\partial_{1222} f(X^{1}_{k2^{-n/2}},X^{2}_{k2^{-n/2}})\big|\\
&& \hspace{2cm}\times \big| X^{2}_{(r+k+1)2^{-n/2}} + X^{2}_{(r+k)2^{-n/2}}- 2X^{2}_{k2^{-n/2}}\big|\\
&& + C2^{-n/2}\sum_{k= 0 }^{\lfloor 2^{\frac{n}{2}} t \rfloor -1}\bigg|R_2\bigg( (X^{1}_{k2^{-n/2}},X^{2}_{k2^{-n/2}}) ,\\
&& \hspace{2cm}\bigg(\frac{X^{1}_{(r+k+1)2^{-n/2}} + X^{1}_{(r+k)2^{-n/2}}}{2},\frac{X^{2}_{(r+k+1)2^{-n/2}} + X^{2}_{(r+k)2^{-n/2}}}{2}\bigg)  \bigg)\bigg|\\
&\leq & Ct\sup_{0\leq k \leq \lfloor 2^{\frac{n}{2}} t \rfloor -1} \big|X^{1}_{(r+k+1)2^{-n/2}}+ X^{1}_{(r+k)2^{-n/2}} - 2X^{1}_{k2^{-n/2}} \big|\\
&& + Ct\sup_{0\leq k \leq \lfloor 2^{\frac{n}{2}} t \rfloor -1} \big|X^{2}_{(r+k+1)2^{-n/2}}+ X^{2}_{(r+k)2^{-n/2}} - 2X^{2}_{k2^{-n/2}} \big|\\
&& + C2^{-n/2}\sum_{k= 0 }^{\lfloor 2^{\frac{n}{2}} t \rfloor -1}\bigg|R_2\bigg( (X^{1}_{k2^{-n/2}},X^{2}_{k2^{-n/2}}) ,\\
&& \hspace{2cm}\bigg(\frac{X^{1}_{(r+k+1)2^{-n/2}} + X^{1}_{(r+k)2^{-n/2}}}{2},\frac{X^{2}_{(r+k+1)2^{-n/2}} + X^{2}_{(r+k)2^{-n/2}}}{2}\bigg)  \bigg)\bigg|.
\end{eqnarray*}
By Theorem \ref{theorem-Taylor} and since $f \in C_b^{\infty}$, we have
\begin{eqnarray*}
&&\bigg|R_2\bigg( (X^{1}_{k2^{-n/2}},X^{2}_{k2^{-n/2}}) ,\bigg(\frac{X^{1}_{(r+k+1)2^{-n/2}} + X^{1}_{(r+k)2^{-n/2}}}{2},\\
&& \hspace{5cm}\frac{X^{2}_{(r+k+1)2^{-n/2}} + X^{2}_{(r+k)2^{-n/2}}}{2}\bigg)  \bigg)\bigg|\\
&\leq & C\bigg(\big|X^{1}_{(r+k+1)2^{-n/2}}+ X^{1}_{(r+k)2^{-n/2}} - 2X^{1}_{k2^{-n/2}} \big|^2 \\
&&+ \big|X^{1}_{(r+k+1)2^{-n/2}}+ X^{1}_{(r+k)2^{-n/2}} - 2X^{1}_{k2^{-n/2}} \big|\big|X^{2}_{(r+k+1)2^{-n/2}}+ X^{2}_{(r+k)2^{-n/2}}\\
&& - 2X^{2}_{k2^{-n/2}} \big| + \big|X^{2}_{(r+k+1)2^{-n/2}}+ X^{2}_{(r+k)2^{-n/2}} - 2X^{2}_{k2^{-n/2}} \big|^2 \bigg).
\end{eqnarray*}
We finally get
\begin{eqnarray*}
r_{1,n} &\leq & C\sup_{0\leq k \leq \lfloor 2^{\frac{n}{2}} t \rfloor -1} \big|X^{1}_{(r+k+1)2^{-n/2}}+ X^{1}_{(r+k)2^{-n/2}} - 2X^{1}_{k2^{-n/2}} \big|\\
&& +C\sup_{0\leq k \leq \lfloor 2^{\frac{n}{2}} t \rfloor -1} \big|X^{1}_{(r+k+1)2^{-n/2}}+ X^{1}_{(r+k)2^{-n/2}} - 2X^{1}_{k2^{-n/2}} \big|^2\\
&& + C\sup_{0\leq k \leq \lfloor 2^{\frac{n}{2}} t \rfloor -1} \big|X^{2}_{(r+k+1)2^{-n/2}}+ X^{2}_{(r+k)2^{-n/2}} - 2X^{2}_{k2^{-n/2}} \big|\\
&&+C\sup_{0\leq k \leq \lfloor 2^{\frac{n}{2}} t \rfloor -1} \big|X^{2}_{(r+k+1)2^{-n/2}}+ X^{2}_{(r+k)2^{-n/2}} - 2X^{2}_{k2^{-n/2}} \big|^2\\
&&+ C\sup_{0\leq k \leq \lfloor 2^{\frac{n}{2}} t \rfloor -1}\bigg( \big|X^{1}_{(r+k+1)2^{-n/2}}+ X^{1}_{(r+k)2^{-n/2}} - 2X^{1}_{k2^{-n/2}} \big|\\
&& \hspace{3cm}\times\big|X^{2}_{(r+k+1)2^{-n/2}}+ X^{2}_{(r+k)2^{-n/2}} - 2X^{2}_{k2^{-n/2}} \big|\bigg).
\end{eqnarray*}
By Heine's theorem, the last quantities converge to 0 almost surely. Thus, $r_{1,n} \to 0$ as $n \to \infty$. We can prove similarly that $r_{2,n} \to 0$ as $n \to \infty$. Finally, it is clear that $r_{3,n} \to 0$ as $n \to \infty$. Hence, we have proved that, for all fixed $r \in \Z$, a.s.,
\begin{eqnarray*}
&& 2^{-n/2}\sum_{k= 0 \vee (-r)}^{(\lfloor 2^{\frac{n}{2}} t \rfloor -1 -r)\wedge (\lfloor 2^{\frac{n}{2}} t \rfloor -1)}\Delta_{r+k,n}\partial_{122} f(X^{1},X^{2}) \Delta_{k,n}\partial_{122}f(X^{1},X^{2}) \underset{n \to \infty}{\longrightarrow}\\
&& \int_0^t\big(\partial_{122} f(X_s^{1},X_s^{2})\big)^2ds.
\end{eqnarray*} 
By combining a bounded convergence argument with (\ref{convergence B_n10,2}) (observe that $\kappa_3^2:=\frac{1}{32}\sum_{r \in \Z}\rho^3(r) < \infty$), we deduce that, a.s.,
\begin{eqnarray*}
&&\frac{1}{32}\sum_{j,k=0}^{\lfloor 2^{\frac{n}{2}} t \rfloor -1} \Delta_{j,n}\partial_{122} f(X^{1},X^{2}) \Delta_{k,n}\partial_{122}f(X^{1},X^{2}) \langle \delta_{(k+1)2^{-n/2}} , \delta_{(j+1)2^{-n/2}} \rangle^3  \\
&& \underset{n \to \infty}{\longrightarrow}\kappa_3^2 \int_0^t\big(\partial_{122} f(X_s^{1},X_s^{2})\big)^2ds.
\end{eqnarray*}
Thus (\ref{convergence-Heinetheorem}) holds true.

Since $\big ( X^{1}, X^{2}, W_n(f,t) \big) \to \big( X^{1}, X^{2},  W_{\infty}(f,t)  \big)$ in $D_{\R^2}[0, \infty)\times \R^4$, we deduce the following convergence in law in $\R^3$,
\begin{eqnarray*}
&& \bigg(e^{i \langle \lambda, W_n(f,t) \rangle},\: \xi, \: \frac{1}{32}\sum_{j,k=0}^{\lfloor 2^{\frac{n}{2}} t \rfloor -1} \Delta_{j,n}\partial_{122} f(X^{1},X^{2}) \Delta_{k,n}\partial_{122}f(X^{1},X^{2})\\
&& \times  \langle \delta_{(k+1)2^{-n/2}} , \delta_{(j+1)2^{-n/2}} \rangle^3\bigg) \overset{Law}{\longrightarrow}\bigg(e^{i \langle \lambda, W_{\infty}(f,t) \rangle},\: \xi, \: \kappa_3^2 \int_0^t\big(\partial_{122} f(X_s^{1},X_s^{2})\big)^2ds \bigg).
\end{eqnarray*}
By boundedness of $e^{i \langle \lambda, W_n(f,t) \rangle}$, $\xi$ and $\partial_{122}f$, we deduce that (\ref{convergenceB_n10,1}) follows.


\item \underline{Convergence to 0 of $B_{n,1}^{(3,2)}(t)$, $B_{n,2}^{(3,2)}(t)$ and $B_{n,3}^{(3,2)}(t)$.}
Let us first focus on $B_{n,1}^{(3,2)}(t)$. Since $f \in C_b^{\infty}$, $e^{i \langle \lambda, W_n(f,t) \rangle}$ and $\xi$ are bounded and by Cauchy-Schwarz inequality and (\ref{isometry}), we have
\begin{eqnarray*}
 && \bigg| E\bigg ( \Delta_{j,n}\partial_{12222} f(X^{1},X^{2})\Delta_{k,n}\partial_{122}f(X^{1},X^{2})I_2^{(2)}(\delta_{(k+1)2^{-n/2}}^{\otimes 2}) e^{i \langle \lambda, W_n(f,t) \rangle}\xi\bigg) \bigg|\\
 & \leq & C \|I_2^{(2)}(\delta_{(k+1)2^{-n/2}}^{\otimes 2})\|_2 \leq C 2^{-n/6}.
 \end{eqnarray*}
Combining this fact with (\ref{12}) and (\ref{13}), we deduce that
 \begin{eqnarray*}
 |B_{n,1}^{(3,2)}(t)| &\leq & C 2^{-n/6}(2^{-n/6})^2\sum_{j,k=0}^{\lfloor 2^{\frac{n}{2}} t \rfloor -1} \big|\langle \delta_{(k+1)2^{-n/2}}, \delta_{(j+1)2^{-n/2}} \rangle\big|\\
 && \leq C2^{-n/6}2^{-n/3}t2^{n/3} = Ct2^{-n/6}.
 \end{eqnarray*}
 
Let us now turn to $B_{n,2}^{(3,2)}(t)$ and $B_{n,3}^{(3,2)}(t)$. Relying to the same arguments, we get
 \begin{eqnarray*}
 B_{n,2}^{(3,2)}(t)& \leq & Ct2^{-n/6}\\
 B_{n,3}^{(3,2)}(t)& \leq & Ct2^{-n/6}.
\end{eqnarray*}  
It is now clear that $B_{n,1}^{(3,2)}(t)$, $B_{n,2}^{(3,2)}(t)$ and $B_{n,3}^{(3,2)}(t)$ converge to 0 as $n \to \infty$.

\item \underline{Convergence to 0 of $B_{n,4}^{(3,2)}(t)$ and $B_{n,7}^{(3,2)}(t)$.} 
Let us first focus on $B_{n,4}^{(3,2)}(t)$. Since $f \in C_b^{\infty}$, $e^{i \langle \lambda, W_n(f,t) \rangle}$ and $\xi$ are bounded and by Cauchy-Schwarz inequality and (\ref{isometry}) we have
\begin{eqnarray*}
 && \bigg| E\bigg (\Delta_{j,n}\partial_{1222} f(X^{1},X^{2})\Delta_{k,n}\partial_{122}f(X^{1},X^{2})I_1^{(2)}(\delta_{(k+1)2^{-n/2}})e^{i \langle \lambda, W_n(f,t) \rangle}\xi  \bigg)\bigg|\\
 & \leq & C \| I_1^{(2)}(\delta_{(k+1)2^{-n/2}})\|_2 \leq C 2^{-n/12}.
 \end{eqnarray*}
 Combining this fact with (\ref{12}) and (\ref{13}), we get
 \begin{eqnarray*}
 |B_{n,4}^{(3,2)}(t)| &\leq & C2^{-n/12}\sum_{j,k=0}^{\lfloor 2^{\frac{n}{2}} t \rfloor -1} \bigg|\bigg \langle \bigg(\frac{\varepsilon_{j2^{-n/2}} + \varepsilon_{(j+1)2^{-n/2} }}{2} \bigg) ,\delta_{(j+1)2^{-n/2}} \bigg \rangle\bigg| \\
 && \times \langle \delta_{(k+1)2^{-n/2}}, \delta_{(j+1)2^{-n/2}} \rangle^2\\
 &\leq & C2^{-n/12}2^{-n/6}\sum_{j,k=0}^{\lfloor 2^{\frac{n}{2}} t \rfloor -1}\langle \delta_{(k+1)2^{-n/2}}, \delta_{(j+1)2^{-n/2}} \rangle^2\\
 & \leq & C2^{-n/12}2^{-n/6}t2^{n/6} \leq Ct2^{-n/12}.
 \end{eqnarray*}
 
Let us now turn to $B_{n,7}^{(3,2)}(t)$. By the same arguments, we get
 \begin{equation*}
 |B_{n,7}^{(3,2)}(t)| \leq Ct2^{-n/12}.
\end{equation*} 
It is now clear that $B_{n,4}^{(3,2)}(t)$ and $B_{n,7}^{(3,2)}(t)$ converge to 0 as $n \to \infty$. 

\item \underline{Convergence to 0 of $B_{n,5}^{(3,2)}(t)$ and $B_{n,8}^{(3,2)}(t)$.} Let us first focus on $B_{n,5}^{(3,2)}(t)$. Since $f \in C_b^{\infty}$, $e^{i \langle \lambda, W_n(f,t) \rangle}$ and $\xi$ are bounded and thanks to (\ref{12}) and to the Cauchy-Schwarz inequality, we get
\begin{eqnarray*}
&&|B_{n,5}^{(3,2)}(t)|\\
 &\leq & C2^{-n/6}\sum_{j,k=0}^{\lfloor 2^{\frac{n}{2}} t \rfloor -1}E\bigg (\big|I_2^{(2)}(\delta_{(k+1)2^{-n/2}}^{\otimes 2})\big| \big| \big\langle D_{X^{2}}(\langle \lambda, W_n(f,t) \rangle) , \delta_{(j+1)2^{-n/2}}\big\rangle\big| \bigg)\\
&&  \times \big|\langle \delta_{(k+1)2^{-n/2}}, \delta_{(j+1)2^{-n/2}} \rangle\big|\\
& \leq & C2^{-n/6} \sum_{p=1}^4 |\lambda_p|\sum_{j,k=0}^{\lfloor 2^{\frac{n}{2}} t \rfloor -1}E\bigg (\big|I_2^{(2)}(\delta_{(k+1)2^{-n/2}}^{\otimes 2})\big| \big| \big\langle D_{X^{2}}(K_n^{(p)}(t)) , \delta_{(j+1)2^{-n/2}}\big\rangle\big| \bigg)\\
&&  \times \big|\langle \delta_{(k+1)2^{-n/2}}, \delta_{(j+1)2^{-n/2}} \rangle\big|\\
&\leq & C2^{-n/6} \sum_{p=1}^4 |\lambda_p|\sum_{j,k=0}^{\lfloor 2^{\frac{n}{2}} t \rfloor -1} \|I_2^{(2)}(\delta_{(k+1)2^{-n/2}}^{\otimes 2})\|_2 \|\big\langle D_{X^{2}}(K_n^{(p)}(t)) , \delta_{(j+1)2^{-n/2}}\big\rangle\|_2 \\
&&   \times \big|\langle \delta_{(k+1)2^{-n/2}}, \delta_{(j+1)2^{-n/2}} \rangle\big|.
\end{eqnarray*}
Due to (\ref{isometry}), (\ref{lemma5-2}) and  (\ref{13}), we have
\begin{eqnarray*}
|B_{n,5}^{(3,2)}(t)| & \leq & C2^{-n/6}2^{-n/6}2^{-n/6}\big(t^2 + t +1 \big)^{\frac12}\sum_{j,k=0}^{\lfloor 2^{\frac{n}{2}} t \rfloor -1}\big|\langle \delta_{(k+1)2^{-n/2}}, \delta_{(j+1)2^{-n/2}} \rangle\big|\\
& \leq & Ct\big(t^2 + t +1 \big)^{\frac12}2^{-n/6}.
\end{eqnarray*}

Let us now turn to $B_{n,8}^{(3,2)}(t)$. The same arguments shows that
\begin{equation*}
|B_{n,8}^{(3,2)}(t)| \leq Ct\big(t^2 + t +1 \big)^{\frac12}2^{-n/6}.
\end{equation*}
It is now clear that $B_{n,5}^{(3,2)}(t)$ and $B_{n,8}^{(3,2)}(t)$ converge to 0 as $n \to \infty$.

\item \underline{Convergence to 0 of $B_{n,6}^{(3,2)}(t)$ and $B_{n,9}^{(3,2)}(t)$.} Let us first focus on $B_{n,6}^{(3,2)}(t)$. Since $f \in C_b^{\infty}$, $e^{i \langle \lambda, W_n(f,t) \rangle}$ is bounded and due to (\ref{12}), (\ref{lemma5-1}), (\ref{isometry}) and (\ref{13}), we have
\begin{eqnarray*}
|B_{n,6}^{(3,2)}(t)| &\leq & C2^{-n/6}\sum_{j,k=0}^{\lfloor 2^{\frac{n}{2}} t \rfloor -1}E\bigg (\big|I_2^{(2)}(\delta_{(k+1)2^{-n/2}}^{\otimes 2})\big| \big|\big\langle D_{X^{2}}(\xi) , \delta_{(j+1)2^{-n/2}} \big\rangle\big|\bigg)\\
&& \times \big|\langle \delta_{(k+1)2^{-n/2}}, \delta_{(j+1)2^{-n/2}} \rangle\big|\\
&\leq & C2^{-n/6}2^{-n/6}\sum_{j,k=0}^{\lfloor 2^{\frac{n}{2}} t \rfloor -1} \|I_2^{(2)}(\delta_{(k+1)2^{-n/2}}^{\otimes 2})\|_2\big|\langle \delta_{(k+1)2^{-n/2}}, \delta_{(j+1)2^{-n/2}} \rangle\big|\\
&\leq & C2^{-n/2}t2^{n/3} =Ct2^{-n/6}.
\end{eqnarray*}

Let us now turn to $B_{n,9}^{(3,2)}(t)$. The same arguments shows that
\begin{equation*}
|B_{n,9}^{(3,2)}(t)| \leq Ct2^{-n/6}.
\end{equation*}
We deduce that $B_{n,6}^{(3,2)}(t)$ and $B_{n,9}^{(3,2)}(t)$ both converge to 0 as $n \to \infty$.

\item \underline{Convergence to 0 of $B_{n,11}^{(3,2)}(t)$ :} Since $f \in C_b^{\infty}$, $e^{i \langle \lambda, W_n(f,t) \rangle}$ and $\xi$ are bounded and due to (\ref{lemma5-2}), (\ref{isometry}) and (\ref{13}), we have
\begin{eqnarray*}
&&|B_{n,11}^{(3,2)}(t)|\\
 &\leq & C\sum_{j,k=0}^{\lfloor 2^{\frac{n}{2}} t \rfloor -1}E\bigg( \big|I_1^{(2)}(\delta_{(k+1)2^{-n/2}})\big|\big| \big\langle D_{X^{2}}(\langle \lambda, W_n(f,t) \rangle) , \delta_{(j+1)2^{-n/2}} \big\rangle\big|  \bigg)\\
&& \times \big\langle \delta_{(k+1)2^{-n/2}}, \delta_{(j+1)2^{-n/2}} \big\rangle^2\\
\end{eqnarray*}
\begin{eqnarray*}
&\leq & C \sum_{p=1}^4 |\lambda_p|\sum_{j,k=0}^{\lfloor 2^{\frac{n}{2}} t \rfloor -1}E\bigg( \big|I_1^{(2)}(\delta_{(k+1)2^{-n/2}})\big|\big| \big\langle D_{X^{2}}(K_n^{(p)}(t)) , \delta_{(j+1)2^{-n/2}} \big\rangle\big|  \bigg)\\
&& \times \big\langle \delta_{(k+1)2^{-n/2}}, \delta_{(j+1)2^{-n/2}} \big\rangle^2\\
&\leq & C \sum_{p=1}^4 |\lambda_p|\sum_{j,k=0}^{\lfloor 2^{\frac{n}{2}} t \rfloor -1}\|I_1^{(2)}(\delta_{(k+1)2^{-n/2}})\|_2 \|\big\langle D_{X^{2}}(K_n^{(p)}(t)) , \delta_{(j+1)2^{-n/2}} \big\rangle\|_2 \\
&& \times \big\langle \delta_{(k+1)2^{-n/2}}, \delta_{(j+1)2^{-n/2}} \big\rangle^2\\
\end{eqnarray*}
\begin{eqnarray*}
&\leq & C2^{-n/12}2^{-n/6}(t^2+t+1)^{\frac12}\sum_{j,k=0}^{\lfloor 2^{\frac{n}{2}} t \rfloor -1}\big\langle \delta_{(k+1)2^{-n/2}}, \delta_{(j+1)2^{-n/2}} \big\rangle^2\\
& \leq & Ct(t^2+t+1)^{\frac12} 2^{-n/12}.
\end{eqnarray*}
Hence, $B_{n,11}^{(3,2)}(t)$ converge to 0 as $n \to \infty$.

\item \underline{Convergence to 0 of $B_{n,12}^{(3,2)}(t)$ :} Since $f \in C_b^{\infty}$, $e^{i \langle \lambda, W_n(f,t) \rangle}$ is bounded and due to (\ref{lemma5-1}), (\ref{isometry}) and (\ref{13}), we have
\begin{eqnarray*}
|B_{n,12}^{(3,2)}(t)| &\leq & C \sum_{j,k=0}^{\lfloor 2^{\frac{n}{2}} t \rfloor -1}E\bigg(\big|I_1^{(2)}(\delta_{(k+1)2^{-n/2}})\big| \big|\big\langle D_{X^{2}}(\xi) ,\delta_{(j+1)2^{-n/2}}\big\rangle\big|  \bigg)\\
&& \times \langle \delta_{(k+1)2^{-n/2}}, \delta_{(j+1)2^{-n/2}} \rangle^2\\ 
&\leq & C2^{-n/6} \sum_{j,k=0}^{\lfloor 2^{\frac{n}{2}} t \rfloor -1}\|I_1^{(2)}(\delta_{(k+1)2^{-n/2}})\|_2 \langle \delta_{(k+1)2^{-n/2}}, \delta_{(j+1)2^{-n/2}} \rangle^2\\
& \leq & C 2^{-n/12}t.
\end{eqnarray*}
Thus, $B_{n,12}^{(3,2)}(t)$ converge to 0 as $n \to \infty$.

\item \underline{Convergence to 0 of $B_{n,13}^{(3,2)}(t)$ :}
\begin{eqnarray*}
|B_{n,13}^{(3,2)}(t)|&\leq & C \sum_{j,k=0}^{\lfloor 2^{\frac{n}{2}} t \rfloor -1}E\bigg( 
 \big|I_2^{(2)}(\delta_{(k+1)2^{-n/2}}^{\otimes 2})\big|\big| \langle D^2_{X^{2}}(\langle \lambda, W_n(f,t) \rangle) , \delta_{(j+1)2^{-n/2}}^{\otimes 2} \rangle\big| \bigg) \\
 && \times \big|\langle \delta_{(k+1)2^{-n/2}}, \delta_{(j+1)2^{-n/2}} \rangle\big|\\
 &\leq & C\sum_{p=1}^4 |\lambda_p|\sum_{j,k=0}^{\lfloor 2^{\frac{n}{2}} t \rfloor -1}E\bigg( 
 \big|I_2^{(2)}(\delta_{(k+1)2^{-n/2}}^{\otimes 2})\big|\big| \langle D^2_{X^{2}}(K_n^{(p)}) , \delta_{(j+1)2^{-n/2}}^{\otimes 2} \rangle\big| \bigg) \\
 && \times \big|\langle \delta_{(k+1)2^{-n/2}}, \delta_{(j+1)2^{-n/2}} \rangle\big|\\
 \end{eqnarray*}
 \begin{eqnarray*}
 &\leq & C\sum_{p=1}^4 |\lambda_p|\sum_{j,k=0}^{\lfloor 2^{\frac{n}{2}} t \rfloor -1} 
 \|I_2^{(2)}(\delta_{(k+1)2^{-n/2}}^{\otimes 2})\|_2\| \langle D^2_{X^{2}}(K_n^{(p)}) , \delta_{(j+1)2^{-n/2}}^{\otimes 2} \rangle\|_2\\
 && \times  \big|\langle \delta_{(k+1)2^{-n/2}}, \delta_{(j+1)2^{-n/2}} \rangle\big|\\
 & \leq & C2^{-n/6}2^{-n/3}(t^2+t+1)^{\frac12}2^{n/3}t =C2^{-n/6}(t^2+t+1)^{\frac12}t,
\end{eqnarray*}
where the fourth inequality is due to (\ref{isometry}), (\ref{lemma5-2}) and (\ref{13}). Hence, $B_{n,13}^{(3,2)}(t) \to 0$ as $n \to \infty$. 

\item \underline{Convergence to 0 of $B_{n,14}^{(3,2)}(t)$ :} Since $f\in C_b^{\infty}$ and thanks to (\ref{isometry}), (\ref{lemma5-4}) and (\ref{13}), we have
\begin{eqnarray*}
|B_{n,14}^{(3,2)}(t)| &\leq & C\sum_{j,k=0}^{\lfloor 2^{\frac{n}{2}} t \rfloor -1}E\bigg( \big| I_2^{(2)}(\delta_{(k+1)2^{-n/2}}^{\otimes 2})\big|\big\langle D_{X^{2}}(\langle \lambda, W_n(f,t) \rangle), \delta_{(j+1)2^{-n/2}} \big\rangle^2\bigg)\\
&& \times  \big|\langle \delta_{(k+1)2^{-n/2}}, \delta_{(j+1)2^{-n/2}} \rangle\big|\\
& \leq & C \sum_{j,k=0}^{\lfloor 2^{\frac{n}{2}} t \rfloor -1} \| I_2^{(2)}(\delta_{(k+1)2^{-n/2}}^{\otimes 2})\|_2 \|\langle D_{X^{2}}(\langle \lambda, W_n(f,t) \rangle), \delta_{(j+1)2^{-n/2}} \big\rangle^2\|_2\\
&& \times  \big|\langle \delta_{(k+1)2^{-n/2}}, \delta_{(j+1)2^{-n/2}} \rangle\big|\\
& \leq & C \sum_{p=1}^4 (\lambda_p)^2\sum_{j,k=0}^{\lfloor 2^{\frac{n}{2}} t \rfloor -1} \| I_2^{(2)}(\delta_{(k+1)2^{-n/2}}^{\otimes 2})\|_2 \|\langle D_{X^{2}}(K_n^{(p)}(t)), \delta_{(j+1)2^{-n/2}} \big\rangle^2\|_2\\
&& \times  \big|\langle \delta_{(k+1)2^{-n/2}}, \delta_{(j+1)2^{-n/2}} \rangle\big|\\
& \leq & C2^{-n/6}2^{-n/3}(1+t+t^2+t^3)^{\frac12}\sum_{j,k=0}^{\lfloor 2^{\frac{n}{2}} t \rfloor -1}\big|\langle \delta_{(k+1)2^{-n/2}}, \delta_{(j+1)2^{-n/2}} \rangle\big|\\
& \leq & C2^{-n/6}(1+t+t^2+t^3)^{\frac12}t.
 \end{eqnarray*}
 It is now clear that $B_{n,14}^{(3,2)}(t) \to 0$ as $n \to \infty$.
 
 \item \underline{Convergence to 0 of $B_{n,15}^{(3,2)}(t)$ :} Since $f \in C_b^{\infty}$ and thanks to (\ref{lemma5-1}), (\ref{lemma5-2}), (\ref{isometry}) and (\ref{13}), we obtain
 \begin{eqnarray*}
 && |B_{n,15}^{(3,2)}(t)| \leq C \sum_{j,k=0}^{\lfloor 2^{\frac{n}{2}} t \rfloor -1}E\bigg(\big|I_2^{(2)}(\delta_{(k+1)2^{-n/2}}^{\otimes 2})\big| \big|\langle D_{X^{2}}(\langle \lambda, W_n(f,t) \rangle) , \delta_{(j+1)2^{-n/2}} \big\rangle\big| \\
 && \times \big|\big\langle D_{X^{2}}(\xi),\delta_{(j+1)2^{-n/2}}\big\rangle\big|\bigg) \big|\langle \delta_{(k+1)2^{-n/2}}, \delta_{(j+1)2^{-n/2}} \rangle\big|\\
 \end{eqnarray*}
 \begin{eqnarray*}
 &\leq & C 2^{-n/6}\sum_{p=1}^4|\lambda_p |\sum_{j,k=0}^{\lfloor 2^{\frac{n}{2}} t \rfloor -1}E\bigg(\big|I_2^{(2)}(\delta_{(k+1)2^{-n/2}}^{\otimes 2})\big| \big|\langle D_{X^{2}}(K_n^{(p)}(t)) , \delta_{(j+1)2^{-n/2}} \big\rangle\big| \bigg)\\
 && \times  \big|\langle \delta_{(k+1)2^{-n/2}}, \delta_{(j+1)2^{-n/2}} \rangle\big|\\
 &\leq & C 2^{-n/6}\sum_{p=1}^4|\lambda_p |\sum_{j,k=0}^{\lfloor 2^{\frac{n}{2}} t \rfloor -1} \|I_2^{(2)}(\delta_{(k+1)2^{-n/2}}^{\otimes 2})\|_2 \|\langle D_{X^{2}}(K_n^{(p)}(t)) , \delta_{(j+1)2^{-n/2}} \big\rangle\|_2\\
 && \times  \big|\langle \delta_{(k+1)2^{-n/2}}, \delta_{(j+1)2^{-n/2}} \rangle\big|\\
 &\leq & C(2^{-n/6})^3(t^2+t+1)^{\frac12}\sum_{j,k=0}^{\lfloor 2^{\frac{n}{2}} t \rfloor -1}\big|\langle \delta_{(k+1)2^{-n/2}}, \delta_{(j+1)2^{-n/2}} \rangle\big|\\
 &\leq & C2^{-n/6}(t^2+t+1)^{\frac12}t.
\end{eqnarray*} 
Hence, $B_{n,15}^{(3,2)}(t) \to 0$ as $n \to \infty$.

\item \underline{Convergence to 0 of $B_{n,16}^{(3,2)}(t)$ :} Since $f\in C_b^{\infty}$ and thanks to (\ref{lemma5-1}), (\ref{isometry}) and (\ref{13}), we deduce that
\begin{eqnarray*}
|B_{n,16}^{(3,2)}(t)| &\leq & C \sum_{j,k=0}^{\lfloor 2^{\frac{n}{2}} t \rfloor -1}E\bigg(\big|I_2^{(2)}(\delta_{(k+1)2^{-n/2}}^{\otimes 2})\big|\big|\big\langle D^2_{X^{2}}(\xi) , \delta_{(j+1)2^{-n/2}}^{\otimes 2} \big\rangle \big|\bigg)\\
&& \times  \big|\langle \delta_{(k+1)2^{-n/2}}, \delta_{(j+1)2^{-n/2}} \rangle \big|\\
&\leq & C2^{-n/3}\sum_{j,k=0}^{\lfloor 2^{\frac{n}{2}} t \rfloor -1}\|I_2^{(2)}(\delta_{(k+1)2^{-n/2}}^{\otimes 2})\|_2\big|\langle \delta_{(k+1)2^{-n/2}}, \delta_{(j+1)2^{-n/2}} \rangle \big|\\
&\leq & C2^{-n/6}t.
\end{eqnarray*}
Thus, $B_{n,16}^{(3,2)}(t) \to 0$ as $n \to \infty$.
\end{itemize}

Finally, we have shown  that 
\[
B_n^{(3,2)}(t) \underset{n \to \infty}{\longrightarrow} -\kappa_3^2E\bigg(e^{i \langle \lambda, W_{\infty}(f,t) \rangle}\xi\times \int_0^t\big(\partial_{122} f(X_s^{1},X_s^{2})\big)^2ds\bigg),
\] 
and (\ref{convergence B_n^{3,2}}) holds true.
\qed

Recall that we are proving (\ref{convergence B_n^3}). Moreover, by (\ref{definitionB_n^3}), $B^{(3)}_n(t) = B_n^{(3,1)}(t) + B_n^{(3,2)}(t)$. So, it remains to prove  the convergence to 0 of $ B_n^{(3,1)}(t)$.

\item \underline{Convergence to 0 of $B_n^{(3,1)}(t)$ as $n \to \infty$.}
\begin{eqnarray*}
 B_n^{(3,1)}(t)&=& -\frac{1}{64}  \sum_{j,k=0}^{\lfloor 2^{\frac{n}{2}} t \rfloor -1} E\bigg(\bigg\langle D_{X^{2}}^2 \bigg( \Delta_{j,n}\partial_{122} f(X^{1},X^{2}) \Delta_{k,n}\partial_{1122}f(X^{1},X^{2})\notag\\
&& \hspace{1cm}\times I_1^{(1)}(\delta_{(k+1)2^{-n/2}}) I_2^{(2)}(\delta_{(k+1)2^{-n/2}}^{\otimes 2})e^{i \langle \lambda, W_n(f,t) \rangle}\xi\bigg),\delta^{\otimes 2}_{(j+1)2^{-n/2}} \bigg \rangle \bigg)\\
&& \hspace{3cm} \times  \bigg\langle \bigg(\frac{\varepsilon_{k2^{-n/2}} + \varepsilon_{(k+1)2^{-n/2} }}{2} \bigg), \delta_{(j+1)2^{-n/2}}\bigg\rangle.\notag\\
\end{eqnarray*}
Observe that 
\begin{eqnarray*}
&& D_{X^{2}}^2 \bigg( \Delta_{j,n}\partial_{122} f(X^{1},X^{2}) \Delta_{k,n}\partial_{1122}f(X^{1},X^{2}) I_1^{(1)}(\delta_{(k+1)2^{-n/2}}) I_2^{(2)}(\delta_{(k+1)2^{-n/2}}^{\otimes 2})\\
&& \hspace{5cm} \times e^{i \langle \lambda, W_n(f,t) \rangle}\xi\bigg)\\
&=& I_1^{(1)}(\delta_{(k+1)2^{-n/2}})  D_{X^{2}}^2 \bigg( \Delta_{j,n}\partial_{122} f(X^{1},X^{2}) \Delta_{k,n}\partial_{1122}f(X^{1},X^{2})I_2^{(2)}(\delta_{(k+1)2^{-n/2}}^{\otimes 2})\notag\\
&& \hspace{5cm}\times e^{i \langle \lambda, W_n(f,t) \rangle}\xi\bigg).
\end{eqnarray*}
As in the case of (\ref{derivative,B_n,1}), the same calculations show that
\begin{eqnarray*}
&& D_{X^{2}}^2 \bigg( \Delta_{j,n}\partial_{122} f(X^{1},X^{2}) \Delta_{k,n}\partial_{1122}f(X^{1},X^{2})I_2^{(2)}(\delta_{(k+1)2^{-n/2}}^{\otimes 2}) e^{i \langle \lambda, W_n(f,t) \rangle}\xi\bigg)\\
&=& \mathscr{D}_1 + \mathscr{D}_2 + \mathscr{D}_3,
\end{eqnarray*}
where
\begin{eqnarray*}
&& \mathscr{D}_1= \Delta_{j,n}\partial_{12222} f(X^{1},X^{2})\Delta_{k,n}\partial_{1122}f(X^{1},X^{2})I_2^{(2)}(\delta_{(k+1)2^{-n/2}}^{\otimes 2}) e^{i \langle \lambda, W_n(f,t) \rangle}\xi\\
&& \times \bigg(\frac{\varepsilon_{j2^{-n/2}} + \varepsilon_{(j+1)2^{-n/2} }}{2} \bigg)^{\otimes 2} 
+ 2 \Delta_{j,n}\partial_{1222} f(X^{1},X^{2}) \Delta_{k,n}\partial_{11222}f(X^{1},X^{2})\\
&& \times I_2^{(2)}(\delta_{(k+1)2^{-n/2}}^{\otimes 2}) e^{i \langle \lambda, W_n(f,t) \rangle}\xi\bigg(\frac{\varepsilon_{j2^{-n/2}} + \varepsilon_{(j+1)2^{-n/2} }}{2} \bigg)\tilde{\otimes}\bigg(\frac{\varepsilon_{k2^{-n/2}} + \varepsilon_{(k+1)2^{-n/2} }}{2} \bigg)\\
&& + \Delta_{j,n}\partial_{122} f(X^{1},X^{2}) \Delta_{k,n}\partial_{112222}f(X^{1},X^{2})I_2^{(2)}(\delta_{(k+1)2^{-n/2}}^{\otimes 2}) e^{i \langle \lambda, W_n(f,t) \rangle}\xi\\
&& \times \bigg(\frac{\varepsilon_{k2^{-n/2}} + \varepsilon_{(k+1)2^{-n/2} }}{2} \bigg)^{\otimes 2},
\end{eqnarray*}
\begin{eqnarray*}
&&\mathscr{D}_2 = 4\Delta_{j,n}\partial_{1222} f(X^{1},X^{2})\Delta_{k,n}\partial_{1122}f(X^{1},X^{2})\\
&& I_1^{(2)}(\delta_{(k+1)2^{-n/2}})e^{i \langle \lambda, W_n(f,t) \rangle}\xi \bigg(\frac{\varepsilon_{j2^{-n/2}} + \varepsilon_{(j+1)2^{-n/2} }}{2} \bigg)\tilde{\otimes}\delta_{(k+1)2^{-n/2}}\\
&& + 2i\Delta_{j,n}\partial_{1222} f(X^{1},X^{2})\Delta_{k,n}\partial_{1122}f(X^{1},X^{2})I_2^{(2)}(\delta_{(k+1)2^{-n/2}}^{\otimes 2})e^{i \langle \lambda, W_n(f,t) \rangle}\xi\\
&& \times \bigg(\frac{\varepsilon_{j2^{-n/2}} + \varepsilon_{(j+1)2^{-n/2} }}{2} \bigg)\tilde{\otimes}D_{X^{2}}(\langle \lambda, W_n(f,t) \rangle) + 2\Delta_{j,n}\partial_{1222} f(X^{1},X^{2})\\
&& \times \Delta_{k,n}\partial_{1122}f(X^{1},X^{2})I_2^{(2)}(\delta_{(k+1)2^{-n/2}}^{\otimes 2})e^{i \langle \lambda, W_n(f,t) \rangle}\bigg(\frac{\varepsilon_{j2^{-n/2}} + \varepsilon_{(j+1)2^{-n/2} }}{2} \bigg)\tilde{\otimes}D_{X^{2}}(\xi)\\
&& + 4\Delta_{j,n}\partial_{122} f(X^{1},X^{2})\Delta_{k,n}\partial_{11222}f(X^{1},X^{2})I_1^{(2)}(\delta_{(k+1)2^{-n/2}}) e^{i \langle \lambda, W_n(f,t) \rangle}\xi\\
&& \times \bigg(\frac{\varepsilon_{k2^{-n/2}} + \varepsilon_{(k+1)2^{-n/2} }}{2} \bigg)\tilde{\otimes}\delta_{(k+1)2^{-n/2}} + 2i \Delta_{j,n}\partial_{122} f(X^{1},X^{2})\Delta_{k,n}\partial_{11222}f(X^{1},X^{2})\\
&& \times I_2^{(2)}(\delta_{(k+1)2^{-n/2}}^{\otimes 2})e^{i \langle \lambda, W_n(f,t) \rangle}\xi\bigg(\frac{\varepsilon_{k2^{-n/2}} + \varepsilon_{(k+1)2^{-n/2} }}{2} \bigg)\tilde{\otimes}D_{X^{2}}(\langle \lambda, W_n(f,t) \rangle)\\
&& + 2\Delta_{j,n}\partial_{122} f(X^{1},X^{2})\Delta_{k,n}\partial_{11222}f(X^{1},X^{2})I_2^{(2)}(\delta_{(k+1)2^{-n/2}}^{\otimes 2})e^{i \langle \lambda, W_n(f,t) \rangle}\\
&& \times \bigg(\frac{\varepsilon_{k2^{-n/2}} + \varepsilon_{(k+1)2^{-n/2} }}{2} \bigg)\tilde{\otimes}D_{X^{2}}(\xi),
\end{eqnarray*}
\begin{eqnarray*}
&& \mathscr{D}_3 = 2\Delta_{j,n}\partial_{122} f(X^{1},X^{2}) \Delta_{k,n}\partial_{1122}f(X^{1},X^{2})\\
&& \times e^{i \langle \lambda, W_n(f,t) \rangle}\xi \delta_{(k+1)2^{-n/2}}^{\otimes 2} + 4i \Delta_{j,n}\partial_{122} f(X^{1},X^{2}) \Delta_{k,n}\partial_{1122}f(X^{1},X^{2})\\
&& \times I_1^{(2)}(\delta_{(k+1)2^{-n/2}})e^{i \langle \lambda, W_n(f,t) \rangle}\xi \delta_{(k+1)2^{-n/2}}\tilde{\otimes}D_{X^{2}}(\langle \lambda, W_n(f,t) \rangle)+ 4 \Delta_{j,n}\partial_{122} f(X^{1},X^{2})\\ && \times \Delta_{k,n}\partial_{1122}f(X^{1},X^{2})I_1^{(2)}(\delta_{(k+1)2^{-n/2}})e^{i \langle \lambda, W_n(f,t) \rangle} \delta_{(k+1)2^{-n/2}}\tilde{\otimes}D_{X^{2}}(\xi)\\
&& +i \Delta_{j,n}\partial_{122} f(X^{1},X^{2}) \Delta_{k,n}\partial_{1122}f(X^{1},X^{2})
 I_2^{(2)}(\delta_{(k+1)2^{-n/2}}^{\otimes 2})e^{i \langle \lambda, W_n(f,t) \rangle}\xi \notag\\
&&  \times D^2_{X^{2}}(\langle \lambda, W_n(f,t) \rangle) - \Delta_{j,n}\partial_{122} f(X^{1},X^{2}) \Delta_{k,n}\partial_{1122}f(X^{1},X^{2})I_2^{(2)}(\delta_{(k+1)2^{-n/2}}^{\otimes 2})\\
&& \times e^{i \langle \lambda, W_n(f,t) \rangle}\xi \big(D_{X^{2}}(\langle \lambda, W_n(f,t) \rangle)\big)^{\otimes 2}+2i \Delta_{j,n}\partial_{122} f(X^{1},X^{2}) \Delta_{k,n}\partial_{1122}f(X^{1},X^{2})\\
&& \times I_2^{(2)}(\delta_{(k+1)2^{-n/2}}^{\otimes 2})e^{i \langle \lambda, W_n(f,t) \rangle}D_{X^{2}}(\langle \lambda, W_n(f,t) \rangle)\tilde{\otimes}D_{X^{2}}(\xi) + \Delta_{j,n}\partial_{122} f(X^{1},X^{2})\\ && \times \Delta_{k,n}\partial_{1122}f(X^{1},X^{2})I_2^{(2)}(\delta_{(k+1)2^{-n/2}}^{\otimes 2})e^{i \langle \lambda, W_n(f,t) \rangle}D^2_{X^{2}}(\xi).
\end{eqnarray*}
Hence, we have

\begin{equation*}
B_n^{(3,1)}(t) = B_n^{(3,1,a)}(t) + B_n^{(3,1,b)}(t) + B_n^{(3,1,c)}(t),
\end{equation*}
where
\begin{eqnarray*}
&& B_n^{(3,1,a)}(t)\\
&=& -\frac{1}{64}\sum_{j,k=0}^{\lfloor 2^{\frac{n}{2}} t \rfloor -1}E\bigg ( \Delta_{j,n}\partial_{12222} f(X^{1},X^{2})\Delta_{k,n}\partial_{1122}f(X^{1},X^{2})I_1^{(1)}(\delta_{(k+1)2^{-n/2}})\\
&& \times I_2^{(2)}(\delta_{(k+1)2^{-n/2}}^{\otimes 2}) e^{i \langle \lambda, W_n(f,t) \rangle}\xi\bigg) \bigg \langle \bigg(\frac{\varepsilon_{j2^{-n/2}} + \varepsilon_{(j+1)2^{-n/2} }}{2} \bigg)^{\otimes 2} , \delta_{(j+1)2^{-n/2}}^{\otimes 2} \bigg \rangle\\
&& \times  \bigg\langle \frac{\varepsilon_{k2^{-n/2}}+\varepsilon_{(k+1)2^{-n/2}}}{2}, \delta_{(j+1)2^{-n/2}} \bigg\rangle\\
&& -\frac{2}{64} \sum_{j,k=0}^{\lfloor 2^{\frac{n}{2}} t \rfloor -1}E\bigg (\Delta_{j,n}\partial_{1222} f(X^{1},X^{2}) \Delta_{k,n}\partial_{11222}f(X^{1},X^{2})I_1^{(1)}(\delta_{(k+1)2^{-n/2}}) \\
&& \times I_2^{(2)}(\delta_{(k+1)2^{-n/2}}^{\otimes 2}) e^{i \langle \lambda, W_n(f,t) \rangle}\xi \bigg)  \bigg \langle \bigg(\frac{\varepsilon_{j2^{-n/2}} + \varepsilon_{(j+1)2^{-n/2} }}{2} \bigg)\tilde{\otimes}\bigg(\frac{\varepsilon_{k2^{-n/2}} + \varepsilon_{(k+1)2^{-n/2} }}{2} \bigg), \\
&& \hspace{3cm} \delta_{(j+1)2^{-n/2}}^{\otimes 2} \bigg \rangle  \bigg\langle \frac{\varepsilon_{k2^{-n/2}}+\varepsilon_{(k+1)2^{-n/2}}}{2}, \delta_{(j+1)2^{-n/2}} \bigg\rangle\\
&&  -\frac{1}{64}\sum_{j,k=0}^{\lfloor 2^{\frac{n}{2}} t \rfloor -1}E\bigg (\Delta_{j,n}\partial_{122} f(X^{1},X^{2}) \Delta_{k,n}\partial_{112222}f(X^{1},X^{2})I_1^{(1)}(\delta_{(k+1)2^{-n/2}})\\
&& \times I_2^{(2)}(\delta_{(k+1)2^{-n/2}}^{\otimes 2}) e^{i \langle \lambda, W_n(f,t) \rangle}\xi\bigg)\bigg\langle \bigg(\frac{\varepsilon_{k2^{-n/2}} + \varepsilon_{(k+1)2^{-n/2} }}{2} \bigg)^{\otimes 2} , \delta_{(j+1)2^{-n/2}}^{\otimes 2} \bigg \rangle\\
&& \times \bigg\langle \frac{\varepsilon_{k2^{-n/2}}+\varepsilon_{(k+1)2^{-n/2}}}{2}, \delta_{(j+1)2^{-n/2}} \bigg\rangle\\
&=& \sum_{i=1}^{3} B_{n,i}^{(3,1)}(t),
\end{eqnarray*}
\begin{eqnarray*}
&& B_n^{(3,1,b)}(t)\\
&=& -\frac{4}{64}\sum_{j,k=0}^{\lfloor 2^{\frac{n}{2}} t \rfloor -1}E\bigg (\Delta_{j,n}\partial_{1222} f(X^{1},X^{2})\Delta_{k,n}\partial_{1122}f(X^{1},X^{2})I_1^{(1)}(\delta_{(k+1)2^{-n/2}})\\
&& \times  I_1^{(2)}(\delta_{(k+1)2^{-n/2}})e^{i \langle \lambda, W_n(f,t) \rangle}\xi  \bigg) \bigg \langle \bigg(\frac{\varepsilon_{j2^{-n/2}} + \varepsilon_{(j+1)2^{-n/2} }}{2} \bigg)\tilde{\otimes}\delta_{(k+1)2^{-n/2}} ,\delta_{(j+1)2^{-n/2}}^{\otimes 2} \bigg \rangle \\
&& \times \bigg\langle \frac{\varepsilon_{k2^{-n/2}}+\varepsilon_{(k+1)2^{-n/2}}}{2}, \delta_{(j+1)2^{-n/2}} \bigg\rangle\\
\end{eqnarray*}
\begin{eqnarray*}
&& - \frac{2i}{64}\sum_{j,k=0}^{\lfloor 2^{\frac{n}{2}} t \rfloor -1}E\bigg (\Delta_{j,n}\partial_{1222} f(X^{1},X^{2})\Delta_{k,n}\partial_{1122}f(X^{1},X^{2})I_1^{(1)}(\delta_{(k+1)2^{-n/2}}) \\
&& \times I_2^{(2)}(\delta_{(k+1)2^{-n/2}}^{\otimes 2})e^{i \langle \lambda, W_n(f,t) \rangle}\xi \\
&& \times \bigg \langle \bigg(\frac{\varepsilon_{j2^{-n/2}} + \varepsilon_{(j+1)2^{-n/2} }}{2} \bigg)\tilde{\otimes}D_{X^{2}}(\langle \lambda, W_n(f,t) \rangle) , \delta_{(j+1)2^{-n/2}}^{\otimes 2} \bigg \rangle\bigg)\\
&& \times  \bigg\langle \frac{\varepsilon_{k2^{-n/2}}+\varepsilon_{(k+1)2^{-n/2}}}{2}, \delta_{(j+1)2^{-n/2}} \bigg\rangle \\
&& - \frac{2}{64}\sum_{j,k=0}^{\lfloor 2^{\frac{n}{2}} t \rfloor -1}E\bigg (\Delta_{j,n}\partial_{1222} f(X^{1},X^{2})\Delta_{k,n}\partial_{1122}f(X^{1},X^{2})I_1^{(1)}(\delta_{(k+1)2^{-n/2}})\\
&& \times  I_2^{(2)}(\delta_{(k+1)2^{-n/2}}^{\otimes 2})e^{i \langle \lambda, W_n(f,t) \rangle} \bigg \langle \bigg(\frac{\varepsilon_{j2^{-n/2}} + \varepsilon_{(j+1)2^{-n/2} }}{2} \bigg)\tilde{\otimes}D_{X^{2}}(\xi) , \delta_{(j+1)2^{-n/2}}^{\otimes 2} \bigg \rangle\bigg)\\
&&\times \bigg\langle \frac{\varepsilon_{k2^{-n/2}}+\varepsilon_{(k+1)2^{-n/2}}}{2}, \delta_{(j+1)2^{-n/2}} \bigg\rangle \\
&& - \frac{4}{64}\sum_{j,k=0}^{\lfloor 2^{\frac{n}{2}} t \rfloor -1}E\bigg ( \Delta_{j,n}\partial_{122} f(X^{1},X^{2})\Delta_{k,n}\partial_{11222}f(X^{1},X^{2})I_1^{(1)}(\delta_{(k+1)2^{-n/2}})\\
&& \times I_1^{(2)}(\delta_{(k+1)2^{-n/2}}) e^{i \langle \lambda, W_n(f,t) \rangle}\xi\bigg)\bigg \langle \bigg(\frac{\varepsilon_{k2^{-n/2}} + \varepsilon_{(k+1)2^{-n/2} }}{2} \bigg)\tilde{\otimes}\delta_{(k+1)2^{-n/2}} ,\\
&& \hspace{2cm}\delta_{(j+1)2^{-n/2}}^{\otimes 2} \bigg \rangle \bigg\langle \frac{\varepsilon_{k2^{-n/2}}+\varepsilon_{(k+1)2^{-n/2}}}{2}, \delta_{(j+1)2^{-n/2}} \bigg\rangle \\
&& -\frac{2i}{64}\sum_{j,k=0}^{\lfloor 2^{\frac{n}{2}} t \rfloor -1}E\bigg (\Delta_{j,n}\partial_{122} f(X^{1},X^{2})\Delta_{k,n}\partial_{11222}f(X^{1},X^{2})I_1^{(1)}(\delta_{(k+1)2^{-n/2}})\\
&& \times I_2^{(2)}(\delta_{(k+1)2^{-n/2}}^{\otimes 2})e^{i \langle \lambda, W_n(f,t) \rangle}\xi\\
&& \times \bigg \langle \bigg(\frac{\varepsilon_{k2^{-n/2}} + \varepsilon_{(k+1)2^{-n/2} }}{2} \bigg)\tilde{\otimes}D_{X^{2}}(\langle \lambda, W_n(f,t) \rangle) , \delta_{(j+1)2^{-n/2}}^{\otimes 2} \bigg \rangle\bigg) \\
&& \times \bigg\langle \frac{\varepsilon_{k2^{-n/2}}+\varepsilon_{(k+1)2^{-n/2}}}{2}, \delta_{(j+1)2^{-n/2}} \bigg\rangle\\
&& -\frac{2}{64}\sum_{j,k=0}^{\lfloor 2^{\frac{n}{2}} t \rfloor -1}E\bigg ( \Delta_{j,n}\partial_{122} f(X^{1},X^{2})\Delta_{k,n}\partial_{11222}f(X^{1},X^{2})I_1^{(1)}(\delta_{(k+1)2^{-n/2}})\\
&& \times I_2^{(2)}(\delta_{(k+1)2^{-n/2}}^{\otimes 2})e^{i \langle \lambda, W_n(f,t) \rangle} \bigg \langle \bigg(\frac{\varepsilon_{k2^{-n/2}} + \varepsilon_{(k+1)2^{-n/2} }}{2} \bigg)\tilde{\otimes}D_{X^{2}}(\xi) , \delta_{(j+1)2^{-n/2}}^{\otimes 2} \bigg \rangle \bigg)\\
&& \times \bigg\langle \frac{\varepsilon_{k2^{-n/2}}+\varepsilon_{(k+1)2^{-n/2}}}{2}, \delta_{(j+1)2^{-n/2}} \bigg\rangle\\
\end{eqnarray*}
\begin{equation*}
= \sum_{i=4}^{9} B_{n,i}^{(3,1)}(t),
\end{equation*}
\begin{eqnarray*}
&& B_n^{(3,1,c)}(t)\\
&=& -\frac{2}{64}\sum_{j,k=0}^{\lfloor 2^{\frac{n}{2}} t \rfloor -1}E\bigg ( \Delta_{j,n}\partial_{122} f(X^{1},X^{2}) \Delta_{k,n}\partial_{1122}f(X^{1},X^{2})I_1^{(1)}(\delta_{(k+1)2^{-n/2}})e^{i \langle \lambda, W_n(f,t) \rangle}\xi \bigg)\\
&& \times \langle \delta_{(k+1)2^{-n/2}}^{\otimes 2} , \delta_{(j+1)2^{-n/2}}^{\otimes 2} \rangle \bigg\langle \frac{\varepsilon_{k2^{-n/2}}+\varepsilon_{(k+1)2^{-n/2}}}{2}, \delta_{(j+1)2^{-n/2}} \bigg\rangle\\
&& - \frac{4i}{64} \sum_{j,k=0}^{\lfloor 2^{\frac{n}{2}} t \rfloor -1}E\bigg( \Delta_{j,n}\partial_{122} f(X^{1},X^{2}) \Delta_{k,n}\partial_{1122}f(X^{1},X^{2})I_1^{(1)}(\delta_{(k+1)2^{-n/2}})\\
&& \times I_1^{(2)}(\delta_{(k+1)2^{-n/2}})e^{i \langle \lambda, W_n(f,t) \rangle}\xi  \bigg \langle \delta_{(k+1)2^{-n/2}}\tilde{\otimes}D_{X^{2}}(\langle \lambda, W_n(f,t) \rangle) , \delta_{(j+1)2^{-n/2}}^{\otimes 2} \bigg\rangle\bigg)\\
&& \times  \bigg\langle \frac{\varepsilon_{k2^{-n/2}}+\varepsilon_{(k+1)2^{-n/2}}}{2}, \delta_{(j+1)2^{-n/2}} \bigg\rangle\\
&& - \frac{4}{64}\sum_{j,k=0}^{\lfloor 2^{\frac{n}{2}} t \rfloor -1}E\bigg(\Delta_{j,n}\partial_{122} f(X^{1},X^{2})\Delta_{k,n}\partial_{1122}f(X^{1},X^{2})I_1^{(1)}(\delta_{(k+1)2^{-n/2}})\\
&& \times  I_1^{(2)}(\delta_{(k+1)2^{-n/2}})e^{i \langle \lambda, W_n(f,t) \rangle}  \bigg \langle \delta_{(k+1)2^{-n/2}}\tilde{\otimes}D_{X^{2}}(\xi) ,\delta_{(j+1)2^{-n/2}}^{\otimes 2} \bigg\rangle \bigg)\\
&& \times \bigg\langle \frac{\varepsilon_{k2^{-n/2}}+\varepsilon_{(k+1)2^{-n/2}}}{2}, \delta_{(j+1)2^{-n/2}} \bigg\rangle\\ 
&& - \frac{i}{64} \sum_{j,k=0}^{\lfloor 2^{\frac{n}{2}} t \rfloor -1}E\bigg( \Delta_{j,n}\partial_{122} f(X^{1},X^{2}) \Delta_{k,n}\partial_{1122}f(X^{1},X^{2})I_1^{(1)}(\delta_{(k+1)2^{-n/2}})\\
 && \times I_2^{(2)}(\delta_{(k+1)2^{-n/2}}^{\otimes 2})e^{i \langle \lambda, W_n(f,t) \rangle}\xi  \bigg \langle D^2_{X^{2}}(\langle \lambda, W_n(f,t) \rangle) , \delta_{(j+1)2^{-n/2}}^{\otimes 2} \bigg\rangle\bigg)\\
 && \times \bigg\langle \frac{\varepsilon_{k2^{-n/2}}+\varepsilon_{(k+1)2^{-n/2}}}{2}, \delta_{(j+1)2^{-n/2}} \bigg\rangle\\
&& + \frac{1}{64}\sum_{j,k=0}^{\lfloor 2^{\frac{n}{2}} t \rfloor -1}E\bigg( \Delta_{j,n}\partial_{122} f(X^{1},X^{2}) \Delta_{k,n}\partial_{1122}f(X^{1},X^{2})I_1^{(1)}(\delta_{(k+1)2^{-n/2}})\\
 && \times I_2^{(2)}(\delta_{(k+1)2^{-n/2}}^{\otimes 2})e^{i \langle \lambda, W_n(f,t) \rangle}\xi \bigg \langle \big(D_{X^{2}}(\langle \lambda, W_n(f,t) \rangle)\big)^{\otimes 2}, \delta_{(j+1)2^{-n/2}}^{\otimes 2} \bigg\rangle\bigg) \\
 && \times \bigg\langle \frac{\varepsilon_{k2^{-n/2}}+\varepsilon_{(k+1)2^{-n/2}}}{2}, \delta_{(j+1)2^{-n/2}} \bigg\rangle\\
 \end{eqnarray*}
\begin{eqnarray*}
&& - \frac{2i}{64}\sum_{j,k=0}^{\lfloor 2^{\frac{n}{2}} t \rfloor -1}E\bigg(\Delta_{j,n}\partial_{122} f(X^{1},X^{2}) \Delta_{k,n}\partial_{1122}f(X^{1},X^{2})I_1^{(1)}(\delta_{(k+1)2^{-n/2}})\\
&&\times I_2^{(2)}(\delta_{(k+1)2^{-n/2}}^{\otimes 2})e^{i \langle \lambda, W_n(f,t) \rangle} \bigg \langle D_{X^{2}}(\langle \lambda, W_n(f,t) \rangle)\tilde{\otimes}D_{X^{2}}(\xi) , \delta_{(j+1)2^{-n/2}}^{\otimes 2} \bigg\rangle\bigg)\\
&& \times \bigg\langle \frac{\varepsilon_{k2^{-n/2}}+\varepsilon_{(k+1)2^{-n/2}}}{2}, \delta_{(j+1)2^{-n/2}} \bigg\rangle\\
&& - \frac{1}{64} \sum_{j,k=0}^{\lfloor 2^{\frac{n}{2}} t \rfloor -1}E\bigg(\Delta_{j,n}\partial_{122} f(X^{1},X^{2})\Delta_{k,n}\partial_{1122}f(X^{1},X^{2})I_1^{(1)}(\delta_{(k+1)2^{-n/2}})\\
&& \times  I_2^{(2)}(\delta_{(k+1)2^{-n/2}}^{\otimes 2})e^{i \langle \lambda, W_n(f,t) \rangle}\bigg \langle D^2_{X^{2}}(\xi) , \delta_{(j+1)2^{-n/2}}^{\otimes 2} \bigg\rangle\bigg) \\
&& \times \bigg\langle \frac{\varepsilon_{k2^{-n/2}}+\varepsilon_{(k+1)2^{-n/2}}}{2}, \delta_{(j+1)2^{-n/2}} \bigg\rangle\\
&& = \sum_{i=10}^{16} B_{n,i}^{(3,1)}(t).
\end{eqnarray*}
We claim that, as $n \to \infty$, $B_{n,i}^{(3,1)}(t) \to 0$ for $i \in \{1, \ldots , 16\}$. The proof of this claim is similar to the proof of the convergence to 0 of $B_{n,i}^{(3,2)}(t)$ for $i \in \{1, \ldots , 16\} \backslash \{10\}$ and is left  to the reader. (The reader could find the detailed proof of this claim in my PhD thesis \cite{my thesis}, Chapter 4: Proof of (4.1.4)).

Finally, we have   that $B_n^{(3,1)}(t) \to 0$ as $n \to \infty$.
\end{enumerate}
 We have proved that 
 \begin{equation*}
 B_n^{(3,2)}(t) \underset{n \to \infty}{\longrightarrow} -\kappa_3^2E\bigg(e^{i \langle \lambda, W_{\infty}(f,t) \rangle}\xi\times \int_0^t\big(\partial_{122} f(X_s^{1},X_s^{2})\big)^2ds\bigg),
\end{equation*}
and
\begin{equation*}
 B_n^{(3,1)}(t) \underset{n \to \infty}{\longrightarrow} 0. 
\end{equation*}
Taking into account that $B_n^{(3)}(t) = B_n^{(3,1)}(t) + B_n^{(3,2)}(t)$ by (\ref{definitionB_n^3}), we deduce that
\[
B_n^{(3)}(t) \underset{n \to \infty}{\longrightarrow} -\kappa_3^2E\bigg(e^{i \langle \lambda, W_{\infty}(f,t) \rangle}\xi\times \int_0^t\big(\partial_{122} f(X_s^{1},X_s^{2})\big)^2ds\bigg).
\]
Consequently, (\ref{convergence B_n^3}) holds true.
\end{proof}

In order to prove (\ref{convergence Bn}), it remains to prove the convergence to 0 of $B_n^{(p)}(t)$, defined in (\ref{decomposition of Bn}),  for $p=1,2,4$.
\subsubsection*{Proof of the convergence to 0 of $B_n^{(1)}(t)$.}
\begin{eqnarray*}
&&B_n^{(1)}(t)\\
&=& -\frac18  \sum_{j=0}^{\lfloor 2^{\frac{n}{2}} t \rfloor -1} E\bigg( \Delta_{j,n}\partial_{122} f(X^{1},X^{2})e^{i \langle \lambda, W_n(f,t) \rangle}\xi \: \big\langle D_{X^{1}}\big(K^{(1)}_n(t)\big)  , \delta_{(j+1)2^{-n/2}} \big\rangle I^{(2)}_2(\delta^{\otimes 2}_{(j+1)2^{-n/2}})\bigg).\\
\end{eqnarray*}
Recall that, by Definition \ref{definition Kni},
\[ 
K^{(1)}_n(t)  = \frac{1}{24} \sum_{l=0}^{\lfloor 2^{\frac{n}{2}} t \rfloor -1} \Delta_{l,n}\partial_{111} f(X^{1},X^{2}) I^{(1)}_3 \big(\delta_{(l+1)2^{-n/2}}^{\otimes 3} \big).
\]
We deduce that
\begin{eqnarray*}
&&B_n^{(1)}(t)\\
&=& -\frac{1}{64}\sum_{j,l=0}^{\lfloor 2^{\frac{n}{2}} t \rfloor -1} E\bigg( \Delta_{j,n}\partial_{122} f(X^{1},X^{2})\Delta_{l,n}\partial_{111} f(X^{1},X^{2}) I^{(1)}_2(\delta^{\otimes 2}_{(l+1)2^{-n/2}})\\
&& \times I^{(2)}_2(\delta^{\otimes 2}_{(j+1)2^{-n/2}})e^{i \langle \lambda, W_n(f,t) \rangle}\xi\bigg)  \big\langle \delta_{(l+1)2^{-n/2}}  , \delta_{(j+1)2^{-n/2}} \big\rangle \\
&& -\frac{1}{8 \times 24}\sum_{j,l=0}^{\lfloor 2^{\frac{n}{2}} t \rfloor -1} E\bigg( \Delta_{j,n}\partial_{122} f(X^{1},X^{2})\Delta_{l,n}\partial_{1111} f(X^{1},X^{2}) I^{(1)}_3(\delta^{\otimes 3}_{(l+1)2^{-n/2}})\\
&& \times I^{(2)}_2(\delta^{\otimes 2}_{(j+1)2^{-n/2}})e^{i \langle \lambda, W_n(f,t) \rangle}\xi\bigg)  \bigg\langle \frac{\varepsilon_{l2^{-n/2}} + \varepsilon_{(l+1)2^{-n/2}}}{2}  , \delta_{(j+1)2^{-n/2}} \bigg\rangle \\
&=& B_n^{(1,1)}(t) + B_n^{(1,2)}(t).
\end{eqnarray*}
Let us prove the convergence to 0 of $B_n^{(1,1)}(t)$ and  $B_n^{(1,2)}(t)$.

\begin{enumerate}

\item \underline{Convergence to 0 of $B_n^{(1,1)}(t)$.} Observe that, thanks to (\ref{duality formula}), we have
\begin{eqnarray}
&&B_n^{(1,1)}(t) \label{Bn(1,1)}\\
&=& -\frac{1}{64}\sum_{j,l=0}^{\lfloor 2^{\frac{n}{2}} t \rfloor -1} E\bigg(\bigg\langle D^2_{X^{2}}\big( \Delta_{j,n}\partial_{122} f(X^{1},X^{2})\Delta_{l,n}\partial_{111} f(X^{1},X^{2})\notag\\
&&  e^{i \langle \lambda, W_n(f,t) \rangle}\xi\big) , \delta^{\otimes 2}_{(j+1)2^{-n/2}} \bigg\rangle  I^{(1)}_2(\delta^{\otimes 2}_{(l+1)2^{-n/2}})\bigg)  \big\langle \delta_{(l+1)2^{-n/2}}  , \delta_{(j+1)2^{-n/2}} \big\rangle.\notag
\end{eqnarray}
We have shown before that
\begin{eqnarray*}
&& D^2_{X^{2}}\big( \Delta_{j,n}\partial_{122} f(X^{1},X^{2})\Delta_{l,n}\partial_{111} f(X^{1},X^{2})e^{i \langle \lambda, W_n(f,t) \rangle}\xi\big)\\
&=& D^2_{X^{2}}\big( \Delta_{j,n}\partial_{122} f(X^{1},X^{2})\Delta_{l,n}\partial_{111} f(X^{1},X^{2})\big)e^{i \langle \lambda, W_n(f,t) \rangle}\xi\\
&& + 2 D_{X^{2}}\big( \Delta_{j,n}\partial_{122} f(X^{1},X^{2})\Delta_{l,n}\partial_{111} f(X^{1},X^{2})\big)\tilde{\otimes}D_{X^{2}}(e^{i \langle \lambda, W_n(f,t) \rangle}\xi)\\
&& + \Delta_{j,n}\partial_{122} f(X^{1},X^{2})\Delta_{l,n}\partial_{111} f(X^{1},X^{2})D^2_{X^{2}}(e^{i \langle \lambda, W_n(f,t) \rangle}\xi).
\end{eqnarray*}
that
\begin{eqnarray*}
&&D_{X^{2}} \big( \Delta_{j,n}\partial_{122} f(X^{1},X^{2}) \Delta_{l,n}\partial_{111}f(X^{1},X^{2})\big)\\
&=& \Delta_{j,n}\partial_{1222} f(X^{1},X^{2})\Delta_{l,n}\partial_{111}f(X^{1},X^{2})\bigg(\frac{\varepsilon_{j2^{-n/2}} + \varepsilon_{(j+1)2^{-n/2} }}{2} \bigg)\notag\\
&& + \Delta_{j,n}\partial_{122} f(X^{1},X^{2})\Delta_{l,n}\partial_{1112}f(X^{1},X^{2})\bigg(\frac{\varepsilon_{l2^{-n/2}} + \varepsilon_{(l+1)2^{-n/2} }}{2} \bigg),\notag
\end{eqnarray*} 
that
\begin{eqnarray*}
&& D_{X^{2}}^2 \big( \Delta_{j,n}\partial_{122} f(X^{1},X^{2}) \Delta_{l,n}\partial_{111}f(X^{1},X^{2})\big)\\
&=&\Delta_{j,n}\partial_{12222} f(X^{1},X^{2})\Delta_{l,n}\partial_{111}f(X^{1},X^{2})\bigg(\frac{\varepsilon_{j2^{-n/2}} + \varepsilon_{(j+1)2^{-n/2} }}{2} \bigg)^{\otimes 2}\notag\\
&& + 2 \Delta_{j,n}\partial_{1222} f(X^{1},X^{2}) \Delta_{l,n}\partial_{1112}f(X^{1},X^{2})\bigg(\frac{\varepsilon_{j2^{-n/2}} + \varepsilon_{(j+1)2^{-n/2} }}{2} \bigg)\tilde{\otimes}\notag\\
&& \hspace{6cm}\bigg(\frac{\varepsilon_{l2^{-n/2}} + \varepsilon_{(l+1)2^{-n/2} }}{2} \bigg)\notag\\
&& + \Delta_{j,n}\partial_{122} f(X^{1},X^{2}) \Delta_{l,n}\partial_{11122}f(X^{1},X^{2})\bigg(\frac{\varepsilon_{l2^{-n/2}} + \varepsilon_{(l+1)2^{-n/2} }}{2} \bigg)^{\otimes 2},\notag
\end{eqnarray*}
that 
\begin{equation*}
D_{X^{2}}(e^{i \langle \lambda, W_n(f,t) \rangle}\xi)= ie^{i \langle \lambda, W_n(f,t) \rangle}\xi D_{X^{2}}\langle \lambda, W_n(f,t) \rangle + e^{i \langle \lambda, W_n(f,t) \rangle} D_{X^{2}}\xi,
\end{equation*}
and that
\begin{eqnarray*}
&& D^2_{X^{2}}(e^{i \langle \lambda, W_n(f,t) \rangle}\xi)\\
&=& -e^{i \langle \lambda, W_n(f,t) \rangle}\xi \big(D_{X^{2}}\langle \lambda, W_n(f,t) \rangle\big)^{\otimes 2} +ie^{i \langle \lambda, W_n(f,t) \rangle}\xi D^2_{X^{2}}\langle \lambda, W_n(f,t) \rangle\\
&& + 2ie^{i \langle \lambda, W_n(f,t) \rangle} D_{X^{2}}\langle \lambda, W_n(f,t) \rangle \tilde{\otimes}D_{X^{2}}\xi + e^{i \langle \lambda, W_n(f,t) \rangle}D^2_{X^{2}}\xi.
\end{eqnarray*}
We deduce that
\begin{eqnarray}
&& D^2_{X^{2}}\big( \Delta_{j,n}\partial_{122} f(X^{1},X^{2})\Delta_{l,n}\partial_{111} f(X^{1},X^{2})e^{i \langle \lambda, W_n(f,t) \rangle}\xi\big) \label{derivative2-Bn(1,1)}\\
&=&\Delta_{j,n}\partial_{12222} f(X^{1},X^{2})\Delta_{l,n}\partial_{111}f(X^{1},X^{2})e^{i \langle \lambda, W_n(f,t) \rangle}\xi\bigg(\frac{\varepsilon_{j2^{-n/2}} + \varepsilon_{(j+1)2^{-n/2} }}{2} \bigg)^{\otimes 2} \notag\\
&& + 2 \Delta_{j,n}\partial_{1222} f(X^{1},X^{2}) \Delta_{l,n}\partial_{1112}f(X^{1},X^{2})e^{i \langle \lambda, W_n(f,t) \rangle}\xi\bigg(\frac{\varepsilon_{j2^{-n/2}} + \varepsilon_{(j+1)2^{-n/2} }}{2} \bigg)\tilde{\otimes}\notag\\
&& \hspace{6cm}\bigg(\frac{\varepsilon_{l2^{-n/2}} + \varepsilon_{(l+1)2^{-n/2} }}{2} \bigg) \notag
\end{eqnarray}
\begin{eqnarray}
&& + \Delta_{j,n}\partial_{122} f(X^{1},X^{2}) \Delta_{l,n}\partial_{11122}f(X^{1},X^{2})e^{i \langle \lambda, W_n(f,t) \rangle}\xi\bigg(\frac{\varepsilon_{l2^{-n/2}} + \varepsilon_{(l+1)2^{-n/2} }}{2} \bigg)^{\otimes 2} \notag\\
&& +2i \Delta_{j,n}\partial_{1222} f(X^{1},X^{2})\Delta_{l,n}\partial_{111}f(X^{1},X^{2})e^{i \langle \lambda, W_n(f,t) \rangle}\xi\bigg(\frac{\varepsilon_{j2^{-n/2}} + \varepsilon_{(j+1)2^{-n/2} }}{2} \bigg)\tilde{\otimes} \notag\\
&& \hspace{5cm} D_{X^{2}}\langle \lambda, W_n(f,t) \rangle \notag\\
&& + 2i \Delta_{j,n}\partial_{122} f(X^{1},X^{2})\Delta_{l,n}\partial_{1112}f(X^{1},X^{2})e^{i \langle \lambda, W_n(f,t) \rangle}\xi\bigg(\frac{\varepsilon_{l2^{-n/2}} + \varepsilon_{(l+1)2^{-n/2} }}{2} \bigg)\tilde{\otimes} \notag\\
&& \hspace{5cm}D_{X^{2}}\langle \lambda, W_n(f,t) \rangle \notag\\
&& + 2 \Delta_{j,n}\partial_{1222} f(X^{1},X^{2})\Delta_{l,n}\partial_{111}f(X^{1},X^{2})e^{i \langle \lambda, W_n(f,t) \rangle}\bigg(\frac{\varepsilon_{j2^{-n/2}} + \varepsilon_{(j+1)2^{-n/2} }}{2} \bigg)\tilde{\otimes}D_{X^{2}}\xi \notag\\
&& + 2 \Delta_{j,n}\partial_{122} f(X^{1},X^{2})\Delta_{l,n}\partial_{1112}f(X^{1},X^{2})e^{i \langle \lambda, W_n(f,t) \rangle}\bigg(\frac{\varepsilon_{l2^{-n/2}} + \varepsilon_{(l+1)2^{-n/2} }}{2} \bigg)\tilde{\otimes}D_{X^{2}}\xi \notag\\
&& - \Delta_{j,n}\partial_{122} f(X^{1},X^{2})\Delta_{l,n}\partial_{111} f(X^{1},X^{2})e^{i \langle \lambda, W_n(f,t) \rangle}\xi \big(D_{X^{2}}\langle \lambda, W_n(f,t) \rangle\big)^{\otimes 2} \notag\\
&& +i \Delta_{j,n}\partial_{122} f(X^{1},X^{2})\Delta_{l,n}\partial_{111} f(X^{1},X^{2})e^{i \langle \lambda, W_n(f,t) \rangle}\xi D^2_{X^{2}}\langle \lambda, W_n(f,t) \rangle \notag\\
&& +2i \Delta_{j,n}\partial_{122} f(X^{1},X^{2})\Delta_{l,n}\partial_{111} f(X^{1},X^{2})e^{i \langle \lambda, W_n(f,t) \rangle} D_{X^{2}}\langle \lambda, W_n(f,t) \rangle \tilde{\otimes}D_{X^{2}}\xi \notag\\
&& +\Delta_{j,n}\partial_{122} f(X^{1},X^{2})\Delta_{l,n}\partial_{111} f(X^{1},X^{2})e^{i \langle \lambda, W_n(f,t) \rangle}D^2_{X^{2}}\xi. \notag
\end{eqnarray}
By plugging (\ref{derivative2-Bn(1,1)}) into (\ref{Bn(1,1)}),  we deduce that
\begin{eqnarray*}
&&B_n^{(1,1)}(t) \\
&=& -\frac{1}{64}\sum_{j,l=0}^{\lfloor 2^{\frac{n}{2}} t \rfloor -1} E\bigg( \Delta_{j,n}\partial_{12222} f(X^{1},X^{2})\Delta_{l,n}\partial_{111}f(X^{1},X^{2})e^{i \langle \lambda, W_n(f,t) \rangle}\xi I^{(1)}_2(\delta^{\otimes 2}_{(l+1)2^{-n/2}})\bigg) \\
&& \times \bigg\langle \bigg(\frac{\varepsilon_{j2^{-n/2}} + \varepsilon_{(j+1)2^{-n/2} }}{2} \bigg), \delta_{(j+1)2^{-n/2}} \bigg\rangle^2 \big\langle \delta_{(l+1)2^{-n/2}}  , \delta_{(j+1)2^{-n/2}} \big\rangle\\
&& -\frac{1}{32}\sum_{j,l=0}^{\lfloor 2^{\frac{n}{2}} t \rfloor -1} E\bigg( \Delta_{j,n}\partial_{1222} f(X^{1},X^{2}) \Delta_{l,n}\partial_{1112}f(X^{1},X^{2})e^{i \langle \lambda, W_n(f,t) \rangle}\xi I^{(1)}_2(\delta^{\otimes 2}_{(l+1)2^{-n/2}})\bigg) \\
&& \times \bigg\langle \bigg(\frac{\varepsilon_{j2^{-n/2}} + \varepsilon_{(j+1)2^{-n/2} }}{2} \bigg), \delta_{(j+1)2^{-n/2}} \bigg\rangle \bigg\langle \bigg(\frac{\varepsilon_{l2^{-n/2}} + \varepsilon_{(l+1)2^{-n/2} }}{2} \bigg), \delta_{(j+1)2^{-n/2}} \bigg\rangle\\
&& \times \big\langle \delta_{(l+1)2^{-n/2}}  , \delta_{(j+1)2^{-n/2}} \big\rangle\\
&&-\frac{1}{64}\sum_{j,l=0}^{\lfloor 2^{\frac{n}{2}} t \rfloor -1} E\bigg( \Delta_{j,n}\partial_{122} f(X^{1},X^{2})\Delta_{l,n}\partial_{11122}f(X^{1},X^{2})e^{i \langle \lambda, W_n(f,t) \rangle}\xi I^{(1)}_2(\delta^{\otimes 2}_{(l+1)2^{-n/2}})\bigg) \\
&& \times \bigg\langle \bigg(\frac{\varepsilon_{l2^{-n/2}} + \varepsilon_{(l+1)2^{-n/2} }}{2} \bigg), \delta_{(j+1)2^{-n/2}} \bigg\rangle^2 \big\langle \delta_{(l+1)2^{-n/2}}  , \delta_{(j+1)2^{-n/2}} \big\rangle\\
\end{eqnarray*}
\begin{eqnarray*}
&&  -\frac{i}{32}\sum_{j,l=0}^{\lfloor 2^{\frac{n}{2}} t \rfloor -1} E\bigg( \Delta_{j,n}\partial_{1222} f(X^{1},X^{2})\Delta_{l,n}\partial_{111}f(X^{1},X^{2})e^{i \langle \lambda, W_n(f,t) \rangle}\xi I^{(1)}_2(\delta^{\otimes 2}_{(l+1)2^{-n/2}})\\
&& \bigg\langle  D_{X^{2}}\langle \lambda, W_n(f,t) \rangle , \delta_{(j+1)2^{-n/2}} \bigg\rangle\bigg) \bigg\langle \bigg(\frac{\varepsilon_{j2^{-n/2}} + \varepsilon_{(j+1)2^{-n/2} }}{2} \bigg), \delta_{(j+1)2^{-n/2}} \bigg\rangle\\
&& \times \big\langle \delta_{(l+1)2^{-n/2}}  , \delta_{(j+1)2^{-n/2}} \big\rangle\\
&& -\frac{i}{32}\sum_{j,l=0}^{\lfloor 2^{\frac{n}{2}} t \rfloor -1} E\bigg( \Delta_{j,n}\partial_{122} f(X^{1},X^{2})\Delta_{l,n}\partial_{1112}f(X^{1},X^{2})e^{i \langle \lambda, W_n(f,t) \rangle}\xi I^{(1)}_2(\delta^{\otimes 2}_{(l+1)2^{-n/2}})\\
&& \bigg\langle  D_{X^{2}}\langle \lambda, W_n(f,t) \rangle , \delta_{(j+1)2^{-n/2}} \bigg\rangle\bigg) \bigg\langle \bigg(\frac{\varepsilon_{l2^{-n/2}} + \varepsilon_{(l+1)2^{-n/2} }}{2} \bigg), \delta_{(j+1)2^{-n/2}} \bigg\rangle\\
&& \times \big\langle \delta_{(l+1)2^{-n/2}}  , \delta_{(j+1)2^{-n/2}} \big\rangle\\
&& -\frac{1}{32}\sum_{j,l=0}^{\lfloor 2^{\frac{n}{2}} t \rfloor -1} E\bigg( \Delta_{j,n}\partial_{1222} f(X^{1},X^{2})\Delta_{l,n}\partial_{111}f(X^{1},X^{2})e^{i \langle \lambda, W_n(f,t) \rangle} I^{(1)}_2(\delta^{\otimes 2}_{(l+1)2^{-n/2}})\\
&& \bigg\langle  D_{X^{2}}\xi , \delta_{(j+1)2^{-n/2}} \bigg\rangle\bigg) \bigg\langle \bigg(\frac{\varepsilon_{j2^{-n/2}} + \varepsilon_{(j+1)2^{-n/2} }}{2} \bigg), \delta_{(j+1)2^{-n/2}} \bigg\rangle\\
&& \times \big\langle \delta_{(l+1)2^{-n/2}}  , \delta_{(j+1)2^{-n/2}} \big\rangle\\
&& -\frac{1}{32}\sum_{j,l=0}^{\lfloor 2^{\frac{n}{2}} t \rfloor -1} E\bigg( \Delta_{j,n}\partial_{122} f(X^{1},X^{2})\Delta_{l,n}\partial_{1112}f(X^{1},X^{2})e^{i \langle \lambda, W_n(f,t) \rangle} I^{(1)}_2(\delta^{\otimes 2}_{(l+1)2^{-n/2}})\\
&& \bigg\langle  D_{X^{2}}\xi , \delta_{(j+1)2^{-n/2}} \bigg\rangle\bigg) \bigg\langle \bigg(\frac{\varepsilon_{l2^{-n/2}} + \varepsilon_{(l+1)2^{-n/2} }}{2} \bigg), \delta_{(j+1)2^{-n/2}} \bigg\rangle \big\langle \delta_{(l+1)2^{-n/2}}  , \delta_{(j+1)2^{-n/2}} \big\rangle\\
&& + \frac{1}{64}\sum_{j,l=0}^{\lfloor 2^{\frac{n}{2}} t \rfloor -1} E\bigg(\Delta_{j,n}\partial_{122} f(X^{1},X^{2})\Delta_{l,n}\partial_{111} f(X^{1},X^{2})e^{i \langle \lambda, W_n(f,t) \rangle}\xi I^{(1)}_2(\delta^{\otimes 2}_{(l+1)2^{-n/2}})\\
&&\big\langle \big(D_{X^{2}}\langle \lambda, W_n(f,t) \rangle\big)^{\otimes 2},  \delta_{(j+1)2^{-n/2}}^{\otimes 2} \big \rangle \bigg) \big\langle \delta_{(l+1)2^{-n/2}}  , \delta_{(j+1)2^{-n/2}} \big\rangle\\ 
&& -\frac{i}{64}\sum_{j,l=0}^{\lfloor 2^{\frac{n}{2}} t \rfloor -1} E\bigg(\Delta_{j,n}\partial_{122} f(X^{1},X^{2})\Delta_{l,n}\partial_{111} f(X^{1},X^{2})e^{i \langle \lambda, W_n(f,t) \rangle}\xi I^{(1)}_2(\delta^{\otimes 2}_{(l+1)2^{-n/2}})\\
&& \big\langle D^2_{X^{2}}\langle \lambda, W_n(f,t) \rangle ,  \delta_{(j+1)2^{-n/2}}^{\otimes 2} \big \rangle \bigg) \big\langle \delta_{(l+1)2^{-n/2}}  , \delta_{(j+1)2^{-n/2}} \big\rangle\\ 
&& -\frac{i}{32}\sum_{j,l=0}^{\lfloor 2^{\frac{n}{2}} t \rfloor -1} E\bigg(\Delta_{j,n}\partial_{122} f(X^{1},X^{2})\Delta_{l,n}\partial_{111} f(X^{1},X^{2})e^{i \langle \lambda, W_n(f,t) \rangle} I^{(1)}_2(\delta^{\otimes 2}_{(l+1)2^{-n/2}})\\
&& \big\langle  \big(D_{X^{2}}\langle \lambda, W_n(f,t) \rangle\big)\tilde{\otimes} D_{X^{2}} \xi ,\delta_{(j+1)2^{-n/2}}^{\otimes 2} \big \rangle \bigg) \big\langle \delta_{(l+1)2^{-n/2}}  , \delta_{(j+1)2^{-n/2}} \big\rangle\\ 
\end{eqnarray*}
\begin{eqnarray*}
&& - \frac{1}{64}\sum_{j,l=0}^{\lfloor 2^{\frac{n}{2}} t \rfloor -1} E\bigg(\Delta_{j,n}\partial_{122} f(X^{1},X^{2})\Delta_{l,n}\partial_{111} f(X^{1},X^{2})e^{i \langle \lambda, W_n(f,t) \rangle} I^{(1)}_2(\delta^{\otimes 2}_{(l+1)2^{-n/2}})\\
&& \big\langle D^2_{X^{2}}\xi ,\delta_{(j+1)2^{-n/2}}^{\otimes 2} \big \rangle \bigg)  \big\langle \delta_{(l+1)2^{-n/2}}  , \delta_{(j+1)2^{-n/2}} \big\rangle, \\
&=& \sum_{i=1}^{11}B_{n,i}^{(1,1)}(t).
\end{eqnarray*}
Let us prove the convergence to 0 of $B_{n,i}^{(1,1)}(t)$ for all $i = 4,5,6,7$ and 8. For the other values of $i$, the proof of the convergence to 0 of $B_{n,i}^{(1,1)}(t)$ is similar.

\begin{itemize}

\item \underline{Convergence to 0 of $B_{n,4}^{(1,1)}(t)$ and $B_{n,5}^{(1,1)}(t)$.} Since $f \in C_b^{\infty}$, $e^{i \langle \lambda, W_n(f,t) \rangle}$ and $\xi$ are bounded and thanks to (\ref{12}), the Cauchy-Schwarz inequality, (\ref{isometry}), (\ref{lemma5-2}) and (\ref{13}), we have
\begin{eqnarray*}
&& |B_{n,4}^{(1,1)}(t)|\\
&\leq & C 2^{-n/6}\sum_{j,l=0}^{\lfloor 2^{\frac{n}{2}} t \rfloor -1} E\bigg( \big|I^{(1)}_2(\delta^{\otimes 2}_{(l+1)2^{-n/2}})\big|\big|\big\langle  D_{X^{2}}\langle \lambda, W_n(f,t) \rangle , \delta_{(j+1)2^{-n/2}} \big\rangle\big|\bigg)\\
&& \times \big|\big\langle \delta_{(l+1)2^{-n/2}}  , \delta_{(j+1)2^{-n/2}} \big\rangle\big|\\
& \leq & C 2^{-n/6}\sum_{p=1}^4 |\lambda_p| \sum_{j,l=0}^{\lfloor 2^{\frac{n}{2}} t \rfloor -1} E\bigg( \big|I^{(1)}_2(\delta^{\otimes 2}_{(l+1)2^{-n/2}})\big|\big|\big\langle  D_{X^{2}}(K_n^{(p)}(t)) , \delta_{(j+1)2^{-n/2}} \big\rangle\big|\bigg)\\
&& \times \big|\big\langle \delta_{(l+1)2^{-n/2}}  , \delta_{(j+1)2^{-n/2}} \big\rangle\big|\\
&\leq & C 2^{-n/6}\sum_{p=1}^4 |\lambda_p| \sum_{j,l=0}^{\lfloor 2^{\frac{n}{2}} t \rfloor -1}  \|I^{(1)}_2(\delta^{\otimes 2}_{(l+1)2^{-n/2}})\|_2\|\big\langle  D_{X^{2}}(K_n^{(p)}(t)) , \delta_{(j+1)2^{-n/2}} \big\rangle\|_2\\
&& \times \big|\big\langle \delta_{(l+1)2^{-n/2}}  , \delta_{(j+1)2^{-n/2}} \big\rangle\big|\\
&\leq & C(2^{-n/6})^22^{-n/6}(t^2+t+1)^{\frac12}2^{n/3}t = C 2^{-n/6}(t^2+t+1)^{\frac12}t.
\end{eqnarray*}
We can prove similarly that $|B_{n,5}^{(1,1)}(t)| \leq C 2^{-n/6}(t^2+t+1)^{\frac12}t$. We deduce that $B_{n,4}^{(1,1)}(t)$ and $B_{n,5}^{(1,1)}(t)$ converge to 0 as $n \to \infty$.

\item \underline{Convergence to 0 of $B_{n,6}^{(1,1)}(t)$ and $B_{n,7}^{(1,1)}(t)$.} Since $f \in C_b^{\infty}$, $e^{i \langle \lambda, W_n(f,t) \rangle}$ and $\xi$ are bounded and thanks to (\ref{12}), (\ref{lemma5-1}), the Cauchy-Schwarz inequality, (\ref{isometry}) and (\ref{13}), we get
\begin{eqnarray*}
&& |B_{n,6}^{(1,1)}(t)|\\
&\leq & C2^{-n/6}\sum_{j,l=0}^{\lfloor 2^{\frac{n}{2}} t \rfloor -1} E\bigg( \big|I^{(1)}_2(\delta^{\otimes 2}_{(l+1)2^{-n/2}})\big|\big|\big\langle D_{X^{2}}(\xi), \delta_{(j+1)2^{-n/2}} \big\rangle\big| \bigg)\\
&& \times \big|\big\langle \delta_{(l+1)2^{-n/2}}  , \delta_{(j+1)2^{-n/2}} \big\rangle\big|\\
&\leq & C2^{-n/3}\sum_{j,l=0}^{\lfloor 2^{\frac{n}{2}} t \rfloor -1}  \|I^{(1)}_2(\delta^{\otimes 2}_{(l+1)2^{-n/2}})\|_2\big|\big\langle \delta_{(l+1)2^{-n/2}}  , \delta_{(j+1)2^{-n/2}} \big\rangle\big|\\
&\leq & C2^{-n/3}2^{-n/6}2^{n/3}t=C2^{-n/6}t.
\end{eqnarray*}
We can prove similarly that $|B_{n,7}^{(1,1)}(t)| \leq C2^{-n/6}t.$ Thus, we get that $B_{n,6}^{(1,1)}(t)$ and $B_{n,7}^{(1,1)}(t)$ converge to 0 as $n \to \infty$.

\item \underline{Convergence to 0 of $B_{n,8}^{(1,1)}(t)$.} Thanks to (\ref{lemma5-4}) among other things, we deduce that
\begin{eqnarray*}
&&|B_{n,8}^{(1,1)}(t)|\\
&\leq & C \sum_{j,l=0}^{\lfloor 2^{\frac{n}{2}} t \rfloor -1} E\bigg(\big|I^{(1)}_2(\delta^{\otimes 2}_{(l+1)2^{-n/2}})\big| \big\langle D_{X^{2}}\langle \lambda, W_n(f,t) \rangle, \delta_{(j+1)2^{-n/2}} \big \rangle^2 \bigg)\\
&& \times  \big|\big\langle \delta_{(l+1)2^{-n/2}}  , \delta_{(j+1)2^{-n/2}} \big\rangle\big|\\ 
&\leq & C \sum_{p=1}^4 \lambda^2_p\sum_{j,l=0}^{\lfloor 2^{\frac{n}{2}} t \rfloor -1} E\bigg(\big|I^{(1)}_2(\delta^{\otimes 2}_{(l+1)2^{-n/2}})\big| \big\langle D_{X^{2}}(K_n^{(p)}(t)), \delta_{(j+1)2^{-n/2}} \big \rangle^2 \bigg)\\
&& \times  \big|\big\langle \delta_{(l+1)2^{-n/2}}  , \delta_{(j+1)2^{-n/2}} \big\rangle\big|\\
&\leq & C\sum_{p=1}^4 \lambda^2_p \sum_{j,l=0}^{\lfloor 2^{\frac{n}{2}} t \rfloor -1} \|I^{(1)}_2(\delta^{\otimes 2}_{(l+1)2^{-n/2}})\|_2 \|\big\langle D_{X^{2}}(K_n^{(p)}(t)), \delta_{(j+1)2^{-n/2}} \big \rangle^2 \|_2\\
&& \times  \big|\big\langle \delta_{(l+1)2^{-n/2}}  , \delta_{(j+1)2^{-n/2}} \big\rangle\big|\\
&\leq & C2^{-n/6}2^{-n/3}(t^3+t^2+t+1)^{\frac12}2^{n/3}t = C2^{-n/6}(t^3+t^2+t+1)^{\frac12}t.
\end{eqnarray*}
It is now clear that $B_{n,8}^{(1,1)}(t) \to 0$ as $n \to \infty$.

\end{itemize}
Finally we have shown that $B_n^{(1,1)}(t) \to 0$ as $n \to \infty$.

\item \underline{Convergence to 0 of $B_n^{(1,2)}(t)$.} The proof is very similar to the previous one and is left to the reader.
\end{enumerate}

\subsubsection*{Proof of the convergence to 0 of $B_n^{(2)}(t)$ and $B_n^{(4)}(t)$.}
 The motivated reader may check that  there is no additional difficulties to prove the convergence to 0 of $B_n^{(p)}(t)$ for $p \in \{2,4\}$. Indeed, all the arguments and techniques which are needed to this proof, were already introduced and used along the analysis of the asymptotic behaviour of $B_n^{(p)}(t)$ for $p \in \{1,3\}$.

Hence, we may consider that the proof of (\ref{convergence Bn}) is done.

\subsubsection{\underline{Step 5}: Convergence of $C_n(t)$ to 0.}
Since $e^{i \langle \lambda, W_n(f,t) \rangle}$ is bounded and $f \in C_b^{\infty}(\R^2)$, we deduce that
\begin{eqnarray*}
|C_n(t)|& =& \bigg|\frac{i}{8} \sum_{j=0}^{\lfloor 2^{\frac{n}{2}} t \rfloor -1} E\bigg( \Delta_{j,n}\partial_{122} f(X^{1},X^{2})e^{i \langle \lambda, W_n(f,t) \rangle} \: \big\langle D_{X^{1}} \xi , \delta_{(j+1)2^{-n/2}} \big\rangle I^{(2)}_2(\delta^{\otimes 2}_{(j+1)2^{-n/2}})\bigg)\bigg|\\
& \leq & C\sum_{j=0}^{\lfloor 2^{\frac{n}{2}} t \rfloor -1} E\big( \big|\big\langle D_{X^{1}} \xi , \delta_{(j+1)2^{-n/2}} \big\rangle\big| \big| I^{(2)}_2(\delta^{\otimes 2}_{(j+1)2^{-n/2}})\big|\big).\\
\end{eqnarray*}
Observe that 
\[
D_{X^{1}} \xi = \sum_{k=1}^r \frac{\partial \psi}{\partial x_{2k-1}}(X^{1}_{s_1},X^{2}_{s_1}, \ldots , X^{1}_{s_r},X^{2}_{s_r})\varepsilon_{s_k}.
\]
Since $\psi \in C_b^{\infty}(\R^{2r})$, we get
\[
\big| \big\langle D_{X^{1}} \xi , \delta_{(j+1)2^{-n/2}} \big\rangle\big| \leq C \sum_{k=1}^r \big|\big\langle \varepsilon_{s_k} , \delta_{(j+1)2^{-n/2}} \big\rangle\big|.
\]
We deduce that
\begin{eqnarray*}
|C_n(t)|& =& C\sum_{k=1}^r \sum_{j=0}^{\lfloor 2^{\frac{n}{2}} t \rfloor -1}\big|\big\langle \varepsilon_{s_k} , \delta_{(j+1)2^{-n/2}} \big\rangle\big| E\big( \big|I^{(2)}_2(\delta^{\otimes 2}_{(j+1)2^{-n/2}})\big|\big)\\
& \leq & C\sum_{k=1}^r \sum_{j=0}^{\lfloor 2^{\frac{n}{2}} t \rfloor -1}\big|\big\langle \varepsilon_{s_k} , \delta_{(j+1)2^{-n/2}} \big\rangle\big| \|I^{(2)}_2(\delta^{\otimes 2}_{(j+1)2^{-n/2}})\|_2 \\
&\leq & C2^{-n/6}\sum_{k=1}^r \sum_{j=0}^{\lfloor 2^{\frac{n}{2}} t \rfloor -1}\big|\big\langle \varepsilon_{s_k} , \delta_{(j+1)2^{-n/2}} \big\rangle\big| \leq C2^{-n/6}t^{1/3}, 
\end{eqnarray*}
where the third inequality follows from (\ref{isometry}) and the last one by (\ref{lemma2-3}). It is now clear that $C_n(t)$ converges to 0.

Finally, putting together the respective conclusions of Steps 1 to 5 lead to the end of the proof of Theorem \ref{f.d.d-h=1/6}. 
\qed

\begin{definition}\label{definition Vn3}
For $f \in C_b^{\infty}(\R^2)$, for all $t \geq 0$, we define $ V^3_n(f,t) $ as follows:
\begin{eqnarray*}
V^3_n(f,t) &:=&  \frac{1}{24} 2^{-\frac{3nH}{2}} \sum_{j=0}^{\lfloor 2^{\frac{n}{2}} t \rfloor -1} \Delta_{j,n}\partial_{111} f(X^{1},X^{2}) \big(X^{1,n}_{j+1} - X^{1,n}_j \big)^3 \\
&& + \frac{1}{24}2^{-\frac{3nH}{2}} \sum_{j=0}^{\lfloor 2^{\frac{n}{2}} t \rfloor -1} \Delta_{j,n}\partial_{222} f(X^{1},X^{2}) \big(X^{2,n}_{j+1} - X^{2,n}_j \big)^3 \notag\\
&& + \frac{1}{8} 2^{-\frac{3nH}{2}} \sum_{j=0}^{\lfloor 2^{\frac{n}{2}} t \rfloor -1} \Delta_{j,n}\partial_{122} f(X^{1},X^{2}) \big(X^{1,n}_{j+1} - X^{1,n}_j \big)\big(X^{2,n}_{j+1} - X^{2,n}_j \big)^2 \notag\\
&& + \frac{1}{8} 2^{-\frac{3nH}{2}} \sum_{j=0}^{\lfloor 2^{\frac{n}{2}} t \rfloor -1} \Delta_{j,n}\partial_{112} f(X^{1},X^{2}) \big(X^{1,n}_{j+1} - X^{1,n}_j \big)^2\big(X^{2,n}_{j+1} - X^{2,n}_j \big), \notag
\end{eqnarray*}
where, for $i \in \{1,2\}$, $X^{i,n}_j := 2^{\frac{nH}{2}}X^{i}_{j2^{-\frac{n}{2}}}$. 
\end{definition}
Since $x^3 = H_3(x) + 3x$, $x^2 = H_2(x) + 1$ and $x=H_1(x)$. We get, for $H=1/6$,
\begin{eqnarray*}
V^3_n(f,t)&=& \frac{1}{24} 2^{-\frac{n}{4}} \sum_{j=0}^{\lfloor 2^{\frac{n}{2}} t \rfloor -1} \Delta_{j,n}\partial_{111} f(X^{1},X^{2}) H_3 \big(X^{1,n}_{j+1} - X^{1,n}_j \big)\\
&& + \frac{1}{24}2^{-\frac{n}{4}} \sum_{j=0}^{\lfloor 2^{\frac{n}{2}} t \rfloor -1} \Delta_{j,n}\partial_{222} f(X^{1},X^{2}) H_3 \big(X^{2,n}_{j+1} - X^{2,n}_j \big)\\
&& + \frac{1}{8} 2^{-\frac{n}{4}} \sum_{j=0}^{\lfloor 2^{\frac{n}{2}} t \rfloor -1} \Delta_{j,n}\partial_{122} f(X^{1},X^{2}) H_1 \big(X^{1,n}_{j+1} - X^{1,n}_j \big)H_2 \big(X^{2,n}_{j+1} - X^{2,n}_j \big)\\
&& + \frac{1}{8} 2^{-\frac{n}{4}} \sum_{j=0}^{\lfloor 2^{\frac{n}{2}} t \rfloor -1} \Delta_{j,n}\partial_{112} f(X^{1},X^{2}) H_2 \big(X^{1,n}_{j+1} - X^{1,n}_j \big) H_1 \big(X^{2,n}_{j+1} - X^{2,n}_j \big)\\
&&+ \frac{1}{8} 2^{-\frac{n}{6}}\bigg( \sum_{j=0}^{\lfloor 2^{\frac{n}{2}} t \rfloor -1}\big(\Delta_{j,n}\partial_{111} f(X^{1},X^{2}) + \Delta_{j,n}\partial_{122} f(X^{1},X^{2})\big) (X^{1}_{j+1} - X^{1}_j )\\
&& \hspace{2cm} + \sum_{j=0}^{\lfloor 2^{\frac{n}{2}} t \rfloor -1}\big(\Delta_{j,n}\partial_{222} f(X^{1},X^{2}) + \Delta_{j,n}\partial_{112} f(X^{1},X^{2})\big) (X^{2}_{j+1} - X^{2}_j ) \bigg)
\end{eqnarray*}
Let us define $P_n(f,t)$ as follows:
\begin{eqnarray*}
&& P_n(f,t) := \frac{1}{8} 2^{-\frac{n}{6}}\bigg( \sum_{j=0}^{\lfloor 2^{\frac{n}{2}} t \rfloor -1}\big(\Delta_{j,n}\partial_{111} f(X^{1},X^{2}) + \Delta_{j,n}\partial_{122} f(X^{1},X^{2})\big) (X^{1}_{j+1} - X^{1}_j )\\
&& \hspace{2cm} + \sum_{j=0}^{\lfloor 2^{\frac{n}{2}} t \rfloor -1}\big(\Delta_{j,n}\partial_{222} f(X^{1},X^{2}) + \Delta_{j,n}\partial_{112} f(X^{1},X^{2})\big) (X^{2}_{j+1} - X^{2}_j ) \bigg).
\end{eqnarray*}
Then, thanks to (\ref{linear-isometry}) and to the Definition \ref{definition Kni}, we deduce that, for $H = 1/6$,
\begin{equation}
V^3_n(f,t) = K_n^{(1)}(f,t) + K_n^{(2)}(f,t) + K_n^{(3)}(f,t) + K_n^{(4)}(f,t) + P_n(f,t). \label{definition Vn3'}
\end{equation}
We have the following corollary of Theorem \ref{f.d.d-h=1/6}.
\begin{corollary}\label{corollary f.d.d.h=1/6} Suppose $H=1/6$. Fix $t \geq 0$. Then
\begin{equation}
\big ( X^{1}, X^{2}, V^3_n(f,t) \big) \overset{f.d.d.}{\longrightarrow} \big( X^{1}, X^{2}, \int_0^t D^3f(X_s)d^3X_s  \big),\label{equation corollary h=1/6}
\end{equation}
where $\int_0^t D^3f(X_s)d^3X_s$ is short-hand for
\begin{eqnarray*}
 \int_0^t D^3f(X_s)d^3X_s &=& \kappa_1\int_0^t\frac{\partial^3 f}{\partial x^3}\big(X^{1}_s,X^{2}_s\big)dB^{1}_s + \kappa_2\int_0^t\frac{\partial^3 f}{\partial y^3}\big(X^{1}_s,X^{2}_s\big)dB^{2}_s \\
 && + \kappa_3\int_0^t\frac{\partial^3 f}{\partial x^2 \partial y}\big(X^{1}_s,X^{2}_s\big)dB^{3}_s + \kappa_4\int_0^t\frac{\partial^3 f}{\partial x \partial y^2}\big(X^{1}_s,X^{2}_s\big)dB^{4}_s \notag
\end{eqnarray*}
with $B= (B^{1}, \ldots , B^{4})$  a 4-dimensional Brownian motion independent of $X$, $\kappa_1^2 = \kappa_2^2 = \frac{1}{96}\sum_{r\in \Z}\rho^3(r)$ and $\kappa_3^2 = \kappa_4^2 = \frac{1}{32}\sum_{r\in \Z}\rho^3(r)$ with $\rho$ defined in (\ref{rho}). Otherwise stated, (\ref{equation corollary h=1/6}) means that 
$V_n^{(3)}(f,t)$ converges stably in law  to the random variable  $\int_0^t D^3f(X_s)d^3X_s$.
\end{corollary}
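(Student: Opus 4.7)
The approach combines Theorem \ref{f.d.d-h=1/6} with the decomposition (\ref{definition Vn3'}) and reduces everything to a vanishing remainder. Indeed, the four coordinates of the limit $W(f,t)$ in Theorem \ref{f.d.d-h=1/6} are precisely the four stochastic integrals whose sum equals $\int_0^t D^3 f(X_s)\, d^3 X_s$; applying the continuous mapping theorem to the addition map $\R^4 \to \R$ therefore yields the joint convergence
\[
\left( X^1, X^2,\, \sum_{i=1}^4 K_n^{(i)}(f,t) \right) \to \left( X^1, X^2, \int_0^t D^3 f(X_s)\, d^3 X_s \right)
\]
in the f.d.d.\ sense. By (\ref{definition Vn3'}), the corollary will follow as soon as $P_n(f,t) \to 0$ in probability.

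The key observation is that, setting $g := \partial_{11} f + \partial_{22} f$ (the 2D Laplacian of $f$), the identities $\partial_1 g = \partial_{111}f + \partial_{122}f$ and $\partial_2 g = \partial_{112}f + \partial_{222}f$ match exactly the coefficients appearing in $P_n(f,t)$, so that $P_n(f,t) = \tfrac{1}{8}\, 2^{-n/6}\, O_n(g,t)$ with $O_n$ as in Definition \ref{ivan-definition}. It therefore suffices to prove the variance bound $E[O_n(g,t)^2] = o(2^{n/3})$. I would carry this out along the lines of the proof of Proposition \ref{proposition, H >1/6} (applied with $p+q = 1$, which is outside the scope of that proposition): expand the square, apply the product formula $I_1^{(i)}(a) I_1^{(i)}(b) = I_2^{(i)}(a \otimes b) + \langle a,b \rangle_{\mathcal G}$ for the diagonal terms, and handle the $I_1^{(1)} I_1^{(2)}$ cross term using the independence of $X^1$ and $X^2$ together with Malliavin duality. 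The $I_2^{(i)}$ pieces are controlled via (\ref{duality formula}), (\ref{12}), (\ref{lemma2-1}) and (\ref{lemma2-3}); the inner-product pieces reduce to the trace-type sum $\sum_{j,k} c_{j,k}\, 2^{-nH} \rho(k-j)$, where $c_{j,k} = E[\partial_i g(\Delta_{j,n}) \partial_i g(\Delta_{k,n})]$.

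The main obstacle is that naive bounds on this last sum yield only $O(2^{n/3})$, showing that $P_n$ is bounded in $L^2$ but not that it vanishes. To squeeze out the additional decay I would rely on the exact telescoping identity $\sum_{k=0}^{N-1}\rho(k-j) = \tfrac12[(N-j)^{2H}-(N-1-j)^{2H}+(j+1)^{2H}-j^{2H}]$ (a direct consequence of $\rho(r) = \tfrac12(|r+1|^{2H}+|r-1|^{2H}-2|r|^{2H})$), whose bulk has size $j^{2H-1}+(N-j)^{2H-1}$ and whose full series vanishes, $\sum_{r\in\Z}\rho(r) = \lim_{N\to\infty}(|N+1|^{2H}-|N|^{2H}) = 0$ since $H = 1/6 < 1/2$. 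Combined via Abel summation with a Hölder-type control of the increment $c_{j,k}-c_{j,j}$ obtained from the regularity of the paths of $X^1, X^2$, this should upgrade $O(2^{n/3})$ to $o(2^{n/3})$, yielding $P_n(f,t)\to 0$ in $L^2$ and thereby completing the proof of the corollary.
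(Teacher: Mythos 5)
Your reduction is exactly the paper's: Theorem \ref{f.d.d-h=1/6} plus the decomposition (\ref{definition Vn3'}) (with continuous mapping applied to the sum of the four coordinates of $W_n(f,t)$) leaves only the claim $P_n(f,t)\overset{P}{\longrightarrow}0$, and your identity $P_n(f,t)=\tfrac18\,2^{-n/6}O_n(g,t)$ with $g=\partial_{11}f+\partial_{22}f$ is correct and is also the paper's starting point. The gap is in how you kill this term. You need $E[O_n(g,t)^2]=o(2^{n/3})$, you concede that the lemmas you cite give only $O(2^{n/3})$, and the upgrade is asserted (``this should upgrade\dots'') rather than proved. Concretely: (i) your cancellation argument (telescoping of $\rho$ plus Abel summation against a H\"older modulus of $c_{j,k}$) is formulated only for the trace term $2^{-nH}\sum_{j,k}c_{j,k}\rho(k-j)$, but the second-chaos contributions are equally borderline under the estimates you cite --- for instance the cross pairing $\sum_{j,k}\big|\big\langle \tfrac{\varepsilon_{j2^{-n/2}}+\varepsilon_{(j+1)2^{-n/2}}}{2},\delta_{(k+1)2^{-n/2}}\big\rangle\big|\,\big|\big\langle \tfrac{\varepsilon_{k2^{-n/2}}+\varepsilon_{(k+1)2^{-n/2}}}{2},\delta_{(j+1)2^{-n/2}}\big\rangle\big|$ comes out as $O(2^{n/3})$ if one bounds one factor by (\ref{12}) and sums the other via (\ref{14})--(\ref{15}) or (\ref{lemma2-3}), so it too needs a finer pointwise analysis (of $|j-k|^{2H-1}$ type) that you do not supply; (ii) even for the trace term the Abel-summation step must actually be carried out (one needs $\sum_r|\rho(r)|\,|r|^{H}<\infty$ and the count $2^{-n/6}\cdot 2^{n/2}\cdot 2^{-n/12}=2^{n/4}=o(2^{n/3})$, which works, but only after the computation). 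As written, the proof that $P_n(f,t)\to 0$ is incomplete.

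The paper sidesteps all of this with an iteration trick worth knowing. Applying Lemma \ref{taylor-expansion} to $g$ gives $O_n(g,t)=g(X^1_{\lfloor 2^{n/2}t\rfloor 2^{-n/2}},X^2_{\lfloor 2^{n/2}t\rfloor 2^{-n/2}})-g(0,0)-\sum_{i\ge 2}\sum_{\alpha_1+\alpha_2=2i-1}C(\alpha_1,\alpha_2)V_n^{\alpha_1,\alpha_2}(\partial_{1\ldots12\ldots2}^{\alpha_1,\alpha_2}g,t)-\sum_j R_{13}(\cdots)$; the boundary and remainder terms are harmless, the variations of order $\ge 5$ vanish by Proposition \ref{proposition, H >1/6}, and the order-$3$ term is $V_n^3(g,t)=\sum_i K_n^{(i)}(g,t)+P_n(g,t)$, whose $K$-part is bounded in $L^2$ by the tightness step of Theorem \ref{f.d.d-h=1/6}. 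Hence $2^{-n/6}O_n(g,t)$ reduces to $2^{-n/6}P_n(g,t)=\tfrac18\,2^{-n/3}O_n(h,t)$ with $h=\partial_{11}g+\partial_{22}g$, and at the prefactor $2^{-n/3}$ a trivial $L^1$ bound, $E|V_n^3(h,t)|\le C2^{n/4}t$, finishes the argument. Either complete the missing pointwise estimates for every borderline term of your variance computation, or switch to this iteration.
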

\begin{proof}{}
Thanks to (\ref{definition Vn3'}), if we prove that
\begin{equation}
P_n(f,t) \overset{P}{\longrightarrow} 0 \:\:\text{\:as\:} \:\: n \to \infty, \label{convergence Pn}
\end{equation}
then we can deduce (\ref{equation corollary h=1/6}) immediately from Theorem \ref{f.d.d-h=1/6}. So, let us prove (\ref{convergence Pn}).\\
We define $g := \partial_{11}f + \partial_{22}f$. Thanks to Lemma \ref{taylor-expansion}, we have 
\begin{eqnarray*}
 && g(X^{1}_{(j+1)2^{-n/2}}, X^{2}_{(j+1)2^{-n/2}})- g(X^{1}_{j2^{-n/2}},X^{2}_{j2^{-n/2}} )\\
 &=& \Delta_{j,n} \partial_1 g\big(X^{1}, X^{2}\big)\big(X^{1}_{(j+1)2^{-n/2}}-X^{1}_{j2^{-n/2}}\big) +\Delta_{j,n}\partial_2 g \big(X^{1}, X^{2}\big)\big(X^{2}_{(j+1)2^{-n/2}}-X^{2}_{j2^{-n/2}}\big) \\
&&  + \sum_{i=2}^6\sum_{\alpha_1 +\alpha_2 = 2i-1}C(\alpha_1,\alpha_2)\: \Delta_{j,n}\partial_{1 \ldots 1 2 \ldots 2}^{\alpha_1,\alpha_2} g(X^{1}, X^{2})\big(X^{1}_{(j+1)2^{-n/2}}-X^{1}_{j2^{-n/2}}\big)^{\alpha_1}\\
  && \hspace{1cm}\times \big(X^{2}_{(j+1)2^{-n/2}}-X^{2}_{j2^{-n/2}}\big)^{\alpha_2}+ R_{13}\big( \big(X^{1}_{(j+1)2^{-n/2}},X^{2}_{(j+1)2^{-n/2}})\big), \big(X^{1}_{j2^{-n/2}},X^{2}_{j2^{-n/2}})\big) \big).
 \end{eqnarray*}

Since $f\in C^\infty$, we have in particular that $\partial_1g = \partial_{111}f + \partial_{122}f$ and $\partial_2g = \partial_{112}f + \partial_{222}f$. So, by combining this fact with the definition of $V_n^{\alpha_1, \alpha_2}(\cdot,t)$ given in (\ref{pq-varition}) and a telescoping argument, we get
\begin{eqnarray*}
 &&g(X^{1}_{\lfloor 2^{\frac{n}{2}} t \rfloor 2^{-n/2}}, X^{2}_{\lfloor 2^{\frac{n}{2}} t \rfloor 2^{-n/2}}) -g(0,0)\\
  &=& 82^{n/6} P_n(f,t) + \sum_{i=2}^6\sum_{\alpha_1 +\alpha_2 = 2i-1}C(\alpha_1,\alpha_2)\: V_n^{\alpha_1, \alpha_2}(\partial_{1 \ldots 1 2 \ldots 2}^{\alpha_1,\alpha_2} g,t)\notag\\
 && + \sum_{j=0}^{\lfloor 2^{\frac{n}{2}} t \rfloor -1} R_{13}\big( \big(X^{1}_{(j+1)2^{-n/2}},X^{2}_{(j+1)2^{-n/2}})\big), \big(X^{1}_{j2^{-n/2}},X^{2}_{j2^{-n/2}})\big) \big).
 \end{eqnarray*}
This way, we deduce that 
\begin{eqnarray*}
 P_n(f,t) &=& \frac18 2^{-n/6} \big(g(X^{1}_{\lfloor 2^{\frac{n}{2}} t \rfloor 2^{-n/2}}, X^{2}_{\lfloor 2^{\frac{n}{2}} t \rfloor 2^{-n/2}}) -g(0,0)\big) \label{first equality Pn}\\
 && - \frac18 2^{-n/6}\sum_{i=2}^6\sum_{\alpha_1 +\alpha_2 = 2i-1}C(\alpha_1,\alpha_2)\: V_n^{\alpha_1, \alpha_2}(\partial_{1 \ldots 1 2 \ldots 2}^{\alpha_1,\alpha_2} g,t)\notag\\
 && - \frac18 2^{-n/6}\sum_{j=0}^{\lfloor 2^{\frac{n}{2}} t \rfloor -1} R_{13}\big( \big(X^{1}_{(j+1)2^{-n/2}},X^{2}_{(j+1)2^{-n/2}})\big), \big(X^{1}_{j2^{-n/2}},X^{2}_{j2^{-n/2}})\big) \big)\notag\\
 &=& - \frac18 2^{-n/6}\sum_{\alpha_1 +\alpha_2 = 3}C(\alpha_1,\alpha_2)\: V_n^{\alpha_1, \alpha_2}(\partial_{1 \ldots 1 2 \ldots 2}^{\alpha_1,\alpha_2} g,t)\notag\\
 && + r_{n,1}(t),\notag
\end{eqnarray*}
with obvious notation at the last equality.
Thanks to Proposition \ref{proposition, H >1/6}, (\ref{H >1/6, reste}) and since, by continuity of $g(X^1,X^2)$, we have a.s. $g(X^{1}_{\lfloor 2^{\frac{n}{2}} t \rfloor 2^{-n/2}}, X^{2}_{\lfloor 2^{\frac{n}{2}} t \rfloor 2^{-n/2}}) \to g(X_t^1,X_t^2)$, we deduce that 
\begin{equation}
r_{n,1}(t) \overset{P}{\to} 0 \:\:\text{\: as \:} \:\: n \to \infty. \label{convergence rn1}
\end{equation}
So, it remains to prove that
\begin{equation}
2^{-n/6}\sum_{\alpha_1 +\alpha_2 = 3}C(\alpha_1,\alpha_2)\: V_n^{\alpha_1, \alpha_2}(\partial_{1 \ldots 1 2 \ldots 2}^{\alpha_1,\alpha_2} g,t) \overset{P}{\longrightarrow} 0. \label{second convergence Pn}
\end{equation}
By Lemma \ref{taylor-expansion}, we have $C(3,0)=C(0,3)=\frac{1}{24}$ and $C(2,1)=C(1,2)=\frac18$. As a result,
\begin{equation}
\sum_{\alpha_1 +\alpha_2 = 3}C(\alpha_1,\alpha_2)\: V_n^{\alpha_1, \alpha_2}(\partial_{1 \ldots 1 2 \ldots 2}^{\alpha_1,\alpha_2} g,t) = V_n^3(g,t), \label{transforming Vn3}
\end{equation}
with $V_n^3(g,t)$  given in Definition \ref{definition Vn3}. Thanks to (\ref{definition Vn3'}), we have 
\[
 2^{-n/6}V^3_n(g,t) = 2^{-n/6}\big(K_n^{(1)}(g,t) + K_n^{(2)}(g,t) + K_n^{(3)}(g,t) + K_n^{(4)}(g,t)\big) + 2^{-n/6}P_n(g,t).
 \]
 By Theorem \ref{f.d.d-h=1/6}, we have that $2^{-n/6}\big(K_n^{(1)}(g,t) + K_n^{(2)}(g,t) + K_n^{(3)}(g,t) + K_n^{(4)}(g,t)\big) \overset{P}{\longrightarrow} 0 $. So, in order to prove (\ref{second convergence Pn}), we have to show that, as $n \to \infty$
  \begin{equation}
  2^{-n/6}P_n(g,t)\overset{P}{\longrightarrow} 0. \label{third convergence Pn}
  \end{equation}
Set $h:= \partial_{11}g + \partial_{22}g$. Thanks to Lemma \ref{taylor-expansion}, we have 
\begin{eqnarray*}
 && h(X^{1}_{(j+1)2^{-n/2}}, X^{2}_{(j+1)2^{-n/2}})- h(X^{1}_{j2^{-n/2}},X^{2}_{j2^{-n/2}} )\\
 &=& \Delta_{j,n} \partial_1 h\big(X^{1}, X^{2}\big)\big(X^{1}_{(j+1)2^{-n/2}}-X^{1}_{j2^{-n/2}}\big) +\Delta_{j,n}\partial_2 h \big(X^{1}, X^{2}\big)\big(X^{2}_{(j+1)2^{-n/2}}-X^{2}_{j2^{-n/2}}\big) \\
&&  + \sum_{i=2}^6\sum_{\alpha_1 +\alpha_2 = 2i-1}C(\alpha_1,\alpha_2)\: \Delta_{j,n}\partial_{1 \ldots 1 2 \ldots 2}^{\alpha_1,\alpha_2} h(X^{1}, X^{2})\big(X^{1}_{(j+1)2^{-n/2}}-X^{1}_{j2^{-n/2}}\big)^{\alpha_1}\\
  && \hspace{1cm}\times \big(X^{2}_{(j+1)2^{-n/2}}-X^{2}_{j2^{-n/2}}\big)^{\alpha_2}+ R_{13}\big( \big(X^{1}_{(j+1)2^{-n/2}},X^{2}_{(j+1)2^{-n/2}})\big), \big(X^{1}_{j2^{-n/2}},X^{2}_{j2^{-n/2}})\big) \big).
 \end{eqnarray*}
 Observe that $\partial_1h = \partial_{111}g + \partial_{122}g$ and $\partial_2h = \partial_{112}g + \partial_{222}g$. By the same  arguments that has been used previously, we get
\begin{eqnarray*}
 &&h(X^{1}_{\lfloor 2^{\frac{n}{2}} t \rfloor 2^{-n/2}}, X^{2}_{\lfloor 2^{\frac{n}{2}} t \rfloor 2^{-n/2}}) -h(0,0)\\
  &=& 82^{n/6} P_n(g,t) + \sum_{i=2}^6\sum_{\alpha_1 +\alpha_2 = 2i-1}C(\alpha_1,\alpha_2)\: V_n^{\alpha_1, \alpha_2}(\partial_{1 \ldots 1 2 \ldots 2}^{\alpha_1,\alpha_2} h,t)\notag\\
 && + \sum_{j=0}^{\lfloor 2^{\frac{n}{2}} t \rfloor -1} R_{13}\big( \big(X^{1}_{(j+1)2^{-n/2}},X^{2}_{(j+1)2^{-n/2}})\big), \big(X^{1}_{j2^{-n/2}},X^{2}_{j2^{-n/2}})\big) \big).
 \end{eqnarray*}
 Hence, we have
 \begin{eqnarray*}
 P_n(g,t) &=& \frac18 2^{-n/6} \big(h(X^{1}_{\lfloor 2^{\frac{n}{2}} t \rfloor 2^{-n/2}}, X^{2}_{\lfloor 2^{\frac{n}{2}} t \rfloor 2^{-n/2}}) -h(0,0)\big) \\
 && - \frac18 2^{-n/6}\sum_{i=2}^6\sum_{\alpha_1 +\alpha_2 = 2i-1}C(\alpha_1,\alpha_2)\: V_n^{\alpha_1, \alpha_2}(\partial_{1 \ldots 1 2 \ldots 2}^{\alpha_1,\alpha_2} h,t)\notag\\
 && - \frac18 2^{-n/6}\sum_{j=0}^{\lfloor 2^{\frac{n}{2}} t \rfloor -1} R_{13}\big( \big(X^{1}_{(j+1)2^{-n/2}},X^{2}_{(j+1)2^{-n/2}})\big), \big(X^{1}_{j2^{-n/2}},X^{2}_{j2^{-n/2}})\big) \big)\notag\\
 &=& - \frac18 2^{-n/6}\sum_{\alpha_1 +\alpha_2 = 3}C(\alpha_1,\alpha_2)\: V_n^{\alpha_1, \alpha_2}(\partial_{1 \ldots 1 2 \ldots 2}^{\alpha_1,\alpha_2} h,t)\notag\\
 && + r_{n,2}(t),\notag
\end{eqnarray*}
with obvious notation at the last equality. Thus, we finally have 
\begin{eqnarray*}
 2^{-n/6}P_n(g,t)&=& - \frac18 2^{-n/3}\sum_{\alpha_1 +\alpha_2 = 3}C(\alpha_1,\alpha_2)\: V_n^{\alpha_1, \alpha_2}(\partial_{1 \ldots 1 2 \ldots 2}^{\alpha_1,\alpha_2} h,t)\notag\\
 && + 2^{-n/6}r_{n,2}(t).\notag
\end{eqnarray*}
By the same arguments that has been used to prove (\ref{convergence rn1}), we deduce that $r_{n,2}(t) \overset{P}{\to} 0 $ as $n \to \infty$. Hence, to prove (\ref{third convergence Pn}) it remains to show that, as $n \to \infty$
\begin{equation}
2^{-n/3}\sum_{\alpha_1 +\alpha_2 = 3}C(\alpha_1,\alpha_2)\: V_n^{\alpha_1, \alpha_2}(\partial_{1 \ldots 1 2 \ldots 2}^{\alpha_1,\alpha_2} h,t) \overset{P}{\longrightarrow} 0. \label{final convergence Pn}
\end{equation}
In fact, since $h \in C_b^{\infty}$ and by the definition of $V_n^{\alpha_1, \alpha_2}(\partial_{1 \ldots 1 2 \ldots 2}^{\alpha_1,\alpha_2} h,t)$ given in (\ref{pq-varition}), we deduce that
\begin{eqnarray}
&& 2^{-n/3}\big|\sum_{\alpha_1 +\alpha_2 = 3}C(\alpha_1,\alpha_2)\: V_n^{\alpha_1, \alpha_2}(\partial_{1 \ldots 1 2 \ldots 2}^{\alpha_1,\alpha_2} h,t)\big|\notag\\
&\leq & C  2^{-n/3} \sum_{j=0}^{\lfloor 2^{\frac{n}{2}} t \rfloor -1} \big|X^{1}_{j+1} - X^{1}_j \big|^3 + C2^{-n/3} \sum_{j=0}^{\lfloor 2^{\frac{n}{2}} t \rfloor -1}  \big|X^{2}_{j+1} - X^{2}_j \big|^3\label{last inequality corollary}\\
&& + C2^{-n/3} \sum_{j=0}^{\lfloor 2^{\frac{n}{2}} t \rfloor -1}  \big|X^{1}_{j+1} - X^{1}_j \big| \big|X^{2}_{j+1} - X^{2}_j \big|^2 + C 2^{-n/3} \sum_{j=0}^{\lfloor 2^{\frac{n}{2}} t \rfloor -1}  \big|X^{1}_{j+1} - X^{1}_j \big|^2  \big|X^{2}_{j+1} - X^{2}_j \big|.\notag
\end{eqnarray}
Recall the following notation: for $i \in \{1,2\}$, $X^{i,n}_j := 2^{\frac{n}{12}}X^{i}_{j2^{-\frac{n}{2}}}$. We deduce that, for all $p \in \N^*$, 
\[
E[|X^{i}_{j+1} - X^{i}_j |^p]= 2^{-\frac{np}{12}}E[|X^{i,n}_{j+1} - X^{i,n}_j |^p]= 2^{-\frac{np}{12}}E[|G|^p],
\]
 where $G\sim N(0,1)$. Thanks to this identity , to the independence of $X^1, X^2$ and to (\ref{last inequality corollary}),  we deduce that
 \begin{eqnarray*}
&& 2^{-n/3}E\bigg(\big|\sum_{\alpha_1 +\alpha_2 = 3}C(\alpha_1,\alpha_2)\: V_n^{\alpha_1, \alpha_2}(\partial_{1 \ldots 1 2 \ldots 2}^{\alpha_1,\alpha_2} h,t)\big|\bigg)\notag\\
&\leq & C  2^{-n/3} 2^{-n/4} E[|G|^3]\lfloor 2^{\frac{n}{2}} t \rfloor + C2^{-n/3} 2^{-n/4} E[|G|]E[|G|^2]\lfloor 2^{\frac{n}{2}} t \rfloor\\
& \leq & C2^{-n/12}t \underset{n \to \infty}{\longrightarrow} 0.
\end{eqnarray*}
Convergence (\ref{final convergence Pn}) follows immediately. Consequently, we have proved (\ref{third convergence Pn}), (\ref{second convergence Pn}) and (\ref{convergence Pn}). It finishes the proof of Corollary \ref{corollary f.d.d.h=1/6}.
\end{proof}

\underline{We are now ready to prove (\ref{first-main-2}).}
Thanks to Lemma \ref{taylor-expansion}, we have 
\begin{eqnarray*}
 && f(X^{1}_{(j+1)2^{-n/2}}, X^{2}_{(j+1)2^{-n/2}})-f(X^{1}_{j2^{-n/2}},X^{2}_{j2^{-n/2}} )\\
 &=& \Delta_{j,n}\frac{\partial f}{\partial x}\big(X^{1}, X^{2}\big)\big(X^{1}_{(j+1)2^{-n/2}}-X^{1}_{j2^{-n/2}}\big) +\Delta_{j,n}\frac{\partial f}{\partial y}\big(X^{1}, X^{2}\big)\big(X^{2}_{(j+1)2^{-n/2}}-X^{2}_{j2^{-n/2}}\big) \\
&&  + \sum_{i=2}^6\sum_{\alpha_1 +\alpha_2 = 2i-1}C(\alpha_1,\alpha_2)\: \Delta_{j,n}\partial_{1 \ldots 1 2 \ldots 2}^{\alpha_1,\alpha_2} f(X^{1}, X^{2})\big(X^{1}_{(j+1)2^{-n/2}}-X^{1}_{j2^{-n/2}}\big)^{\alpha_1}\\
  && \hspace{1cm}\times \big(X^{2}_{(j+1)2^{-n/2}}-X^{2}_{j2^{-n/2}}\big)^{\alpha_2}+ R_{13}\big( \big(X^{1}_{(j+1)2^{-n/2}},X^{2}_{(j+1)2^{-n/2}})\big), \big(X^{1}_{j2^{-n/2}},X^{2}_{j2^{-n/2}})\big) \big).
 \end{eqnarray*}
 Then, by Definition \ref{ivan-definition} and (\ref{pq-varition}),  we can  write
 \begin{eqnarray*}
 &&f(X^{1}_{\lfloor 2^{\frac{n}{2}} t \rfloor 2^{-n/2}}, X^{2}_{\lfloor 2^{\frac{n}{2}} t \rfloor 2^{-n/2}}) -f(0,0)\\
  &=& O_n(f,t) + \sum_{i=2}^6\sum_{\alpha_1 +\alpha_2 = 2i-1}C(\alpha_1,\alpha_2)\: V_n^{\alpha_1, \alpha_2}(\partial_{1 \ldots 1 2 \ldots 2}^{\alpha_1,\alpha_2} f,t)\notag\\
 && + \sum_{j=0}^{\lfloor 2^{\frac{n}{2}} t \rfloor -1} R_{13}\big( \big(X^{1}_{(j+1)2^{-n/2}},X^{2}_{(j+1)2^{-n/2}})\big), \big(X^{1}_{j2^{-n/2}},X^{2}_{j2^{-n/2}})\big) \big).\notag
 \end{eqnarray*}
 By the same arguments that has been used to show (\ref{transforming Vn3}), we get
 \[
 \sum_{\alpha_1 +\alpha_2 = 3}C(\alpha_1,\alpha_2)\: V_n^{\alpha_1, \alpha_2}(\partial_{1 \ldots 1 2 \ldots 2}^{\alpha_1,\alpha_2} f,t) = V_n^3(f,t).
 \]
Combining this fact with our Taylor's expansion, we deduce that
\begin{eqnarray}
&& O_n(f,t) \label{H=1/6 taylor}\\ 
 &=& f(X^{1}_{\lfloor 2^{\frac{n}{2}} t \rfloor 2^{-n/2}}, X^{2}_{\lfloor 2^{\frac{n}{2}} t \rfloor 2^{-n/2}}) -f(0,0) - V_n^3(f,t) \notag\\
 && - \sum_{i=3}^6\sum_{\alpha_1 +\alpha_2 = 2i-1}C(\alpha_1,\alpha_2)\: V_n^{\alpha_1, \alpha_2}(\partial_{1 \ldots 1 2 \ldots 2}^{\alpha_1,\alpha_2} f,t)\notag\\
 && - \sum_{j=0}^{\lfloor 2^{\frac{n}{2}} t \rfloor -1} R_{13}\big( \big(X^{1}_{(j+1)2^{-n/2}},X^{2}_{(j+1)2^{-n/2}})\big), \big(X^{1}_{j2^{-n/2}},X^{2}_{j2^{-n/2}})\big) \big).\notag
 \end{eqnarray} 
Thanks to Proposition \ref{proposition, H >1/6}, we have
\begin{equation}
\sum_{i=3}^6\sum_{\alpha_1 +\alpha_2 = 2i-1}C(\alpha_1,\alpha_2)\: V_n^{\alpha_1, \alpha_2}(\partial_{1 \ldots 1 2 \ldots 2}^{\alpha_1,\alpha_2} f,t) \overset{P}{\longrightarrow} 0. \label{final rest1}
\end{equation}
On the other hand, by (\ref{H >1/6, reste}) we have
\begin{equation}
 \sum_{j=0}^{\lfloor 2^{\frac{n}{2}} t \rfloor -1} R_{13}\big( \big(X^{1}_{(j+1)2^{-n/2}},X^{2}_{(j+1)2^{-n/2}})\big), \big(X^{1}_{j2^{-n/2}},X^{2}_{j2^{-n/2}})\big) \big) \overset{P}{\longrightarrow}0 \label{final rest2}
\end{equation} 
 Observe also that, by the almost sure continuity of $f\big(X^{1},X^{2}\big)$, one has, almost surely and
as $n\to\infty$,
\begin{equation}
 f(X^{1}_{\lfloor 2^{\frac{n}{2}} t \rfloor 2^{-n/2}}, X^{2}_{\lfloor 2^{\frac{n}{2}} t \rfloor 2^{-n/2}}) -f(0,0)
\to f\big(X^{1}_t, X^{2}_t \big)-f(0,0).\label{H =1/6, pathcontinuity}
\end{equation}
 Finally, the desired conclusion (\ref{first-main-2}) follows from  (\ref{final rest1}), (\ref{final rest2}), (\ref{H =1/6, pathcontinuity}) and the conclusion of Corollary \ref{corollary f.d.d.h=1/6}, plugged into (\ref{H=1/6 taylor}).

\subsection{Proof of (\ref{first-main-3})} Using $b^3 - a^3 = 3 \big(\frac{a+b}{2}\big)^2(b-a) + \frac14 (b-a)^3$, one can write
\begin{eqnarray*}
 O_n(x\mapsto x^3,t) -  (X_t^1)^3 &=& - V^3_n(x\mapsto x^3,t) + \sum_{j=0}^{\lfloor 2^{\frac{n}{2}} t \rfloor -1} \big( (X^{1}_{(j+1)2^{-n/2}})^3 - (X^{1}_{j2^{-n/2}})^3 \big) -  (X_t^1)^3,
\end{eqnarray*}
where $O_n(\cdot,t)$ is introduced in Definition \ref{ivan-definition} and $V_n^3(\cdot,t)$ is given in Definition \ref{definition Vn3}. As a result, and since $(X^1_{\lfloor 2^{\frac{n}{2}} t \rfloor2^{-n/2}})^3 \to (X^1_t)^3$ a.s. as $n \to \infty$, one deduces that if $O_n(x\mapsto x^3,t)$ converges stably in law, then $V^3_n(x\mapsto x^3,t)$ must converge as well. But it is known (see for example (1.8) in \cite{NNT}) that $2^{-n(\frac12 - 3H)}V^3_n(x\mapsto x^3,t)$ converges in law to a non degenerate limit. This fact being  in contradiction with the convergence of $V^3_n(x\mapsto x^3,t)$, we deduce that (\ref{first-main-3}) holds.

\section{Proof of Theorem \ref{second-main}}

We divide the proof of Theorem \ref{second-main} in several steps.

\subsection{\underline{Step 1}: A key algebraic lemma}
For each
integer $n\geq 1$, $k\in\Z$ and real number $t\geq 0$, let $U_{j,n}(t)$ (resp.
$D_{j,n}(t)$) denote the number of \textit{upcrossings} (resp.
\textit{downcrossings}) of the interval
$[j2^{-n/2},(j+1)2^{-n/2}]$ within the first $\lfloor 2^n
t\rfloor$ steps of the random walk $\{Y(T_{k,n})\}_{k\geq 0}$, where $(T_{k,n})_{k\geq 0}$ is introduced in (\ref{TN}). That is,
\begin{eqnarray}
U_{j,n}(t)=\sharp\big\{k=0,\ldots,\lfloor 2^nt\rfloor -1 :&&
\notag
\\ Y(T_{k,n})\!\!\!\!&=&\!\!\!\!j2^{-n/2}\mbox{ and }Y(T_{k+1,n})=(j+1)2^{-n/2}
\big\}; \notag\\
D_{j,n}(t)=\sharp\big\{k=0,\ldots,\lfloor 2^nt\rfloor -1:&&
\notag
\\ Y(T_{k,n})\!\!\!\!&=&\!\!\!\!(j+1)2^{-n/2}\mbox{ and }Y(T_{k+1,n})=j2^{-n/2}
\big\}.\notag
\end{eqnarray}
\begin{definition} For $f \in C_b^\infty$ and $t\geq 0$, for all $p,q \in \N$ such that $p+q$ is odd, we define $\tilde{V}_n^{p,q}(f,t)$ as follows:
\begin{eqnarray}
\tilde{V}_n^{p,q}(f,t)&=&\sum_{k=0}^{\lfloor 2^n t \rfloor -1} f\big(\frac12(Z^1_{T_{k,n}}+ Z^1_{T_{k+1,n}}),\frac12 (Z^2_{T_{k,n}}+ Z^2_{T_{k+1,n}}) \big)(Z^1_{T_{k+1,n}}-Z^1_{T_{k,n}})^p\notag\\
&& \hspace{7cm}\times (Z^2_{T_{k+1,n}}-Z^2_{T_{k,n}})^q. \label{definition tilde{V}pq}
\end{eqnarray}
\end{definition}
While easy, the following lemma taken from \cite[Lemma 2.4]{kh-lewis1} is going to be the key when
studying the asymptotic behavior $\tilde{V}_n^{p,q}(f,t)$. 
Its main feature is to separate $(X^1, X^2)$ from $Y$, thus providing a representation of
$\tilde{V}_n^{p,q}(f,t)$ which is amenable to analysis.

\begin{lemme}\label{lemme-kl}
Fix $f\in C^\infty_b$, $t\geq 0$ and for all $p,q \in \N$ such that $p+ q$ is odd, we have
then
\begin{eqnarray*}
&&\tilde{V}_n^{p,q}(f,t)\\
&=& \sum_{j\in\Z}
f\big(\frac12(X^1_{(j+1)2^{-n/2}}+ X^1_{j2^{-n/2}}),\frac12 (X^2_{(j+1)2^{-n/2}}+ X^2_{j2^{-n/2}}) \big)(X^1_{(j+1)2^{-n/2}} - X^1_{j2^{-n/2}})^p\\
&& \hspace{6cm}\times(X^2_{(j+1)2^{-n/2}} - X^2_{j2^{-n/2}})^q \big(U_{j,n}(t)-D_{j,n}(t)\big).
\end{eqnarray*}
\end{lemme}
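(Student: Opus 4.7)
The plan is to rewrite the $k$-indexed sum defining $\tilde{V}_n^{p,q}(f,t)$ as a $j$-indexed sum by classifying each step of the random walk $\mathscr{Y}_n = \{Y(T_{k,n})\}_{k\geq 0}$ according to the interval of the dyadic grid $\mathscr{D}_n$ that it crosses. The decisive observation is that by construction of the stopping times $T_{k,n}$, the successive values $Y(T_{k,n})$ and $Y(T_{k+1,n})$ are neighbours on $\mathscr{D}_n$, so each index $k\in\{0,\ldots,\lfloor 2^n t\rfloor-1\}$ falls into exactly one of two categories: either $\{Y(T_{k,n}),Y(T_{k+1,n})\}=\{j2^{-n/2},(j+1)2^{-n/2}\}$ with $Y(T_{k,n})<Y(T_{k+1,n})$ (an upcrossing of $[j2^{-n/2},(j+1)2^{-n/2}]$, counted by $U_{j,n}(t)$), or the same with the inequality reversed (a downcrossing, counted by $D_{j,n}(t)$).

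First I would write $Z^i_{T_{k,n}}=X^i_{Y(T_{k,n})}$ and substitute into the definition \eqref{definition tilde{V}pq}, then apply a Fubini-type interchange to reindex by $j\in\Z$: each $k$ contributes either to the $j$-th upcrossing count or the $j$-th downcrossing count. On an upcrossing of $[j2^{-n/2},(j+1)2^{-n/2}]$, the summand equals
\[
f\!\left(\tfrac{X^1_{(j+1)2^{-n/2}}+X^1_{j2^{-n/2}}}{2},\tfrac{X^2_{(j+1)2^{-n/2}}+X^2_{j2^{-n/2}}}{2}\right)\big(X^1_{(j+1)2^{-n/2}}-X^1_{j2^{-n/2}}\big)^p\big(X^2_{(j+1)2^{-n/2}}-X^2_{j2^{-n/2}}\big)^q,
\]
whereas on a downcrossing, the $f$-argument is unchanged thanks to the symmetry of the midpoint in its two endpoints, and each increment $Z^i_{T_{k+1,n}}-Z^i_{T_{k,n}}$ changes sign, producing a global factor $(-1)^{p+q}=-1$ since $p+q$ is odd.

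Summing the $U_{j,n}(t)$ identical upcrossing contributions and the $D_{j,n}(t)$ sign-reversed downcrossing contributions for each fixed $j$ then yields the common upcrossing expression multiplied by $U_{j,n}(t)-D_{j,n}(t)$, and finally summing over $j\in\Z$ gives exactly the right-hand side of the lemma. There is no real obstacle here: the only point requiring care is to make sure the $f$-argument is genuinely identical for up- and down-crossings (this is where the midpoint symmetry of $\tilde{V}_n^{p,q}$ is essential, and where the lemma would fail had we used a left- or right-endpoint evaluation of $f$), and to track the sign cancellation produced by the oddness of $p+q$; the rest is bookkeeping.
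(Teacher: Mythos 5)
Your argument is correct and is precisely the standard one: the paper itself gives no proof but simply cites \cite[Lemma 2.4]{kh-lewis1}, and your reconstruction — classifying each step $k$ of the random walk $\mathscr{Y}_n$ as an up- or down-crossing of a unique dyadic interval, using the midpoint symmetry of the $f$-argument, and extracting the sign $(-1)^{p+q}=-1$ from the reversed increments — is exactly how that lemma is established. Nothing is missing.
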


\subsection{\underline{Step 2}: Transforming the 2D weighted power variations of odd order}

By \cite[Lemma 2.5]{kh-lewis1}, one has
\[
U_{j,n}(t) - D_{j,n}(t)= \left\{
\begin{array}{lcl}
 1_{\{0\leq j< j^*(n,t)\}} & &\mbox{if $j^*(n,t) > 0$}\\
 0 & &\mbox{if $j^{*} = 0$}\\
  -1_{\{j^*(n,t)\leq j<0\}} & &\mbox{if $j^*(n,t)< 0$}
  \end{array}
\right.,
\]
where $j^*(n,t)=2^{n/2}Y_{T_{\lfloor 2^n t\rfloor,n}}$.
As a consequence, we have  
\begin{enumerate}

\item \underline{If $j^*(n,t) > 0$ :}
\begin{eqnarray*}
&&\tilde{V}_n^{p,q}(f,t)\\
&=& \sum_{j=0}^{j^*(n,t)-1}f\big(\frac12(X^{1,+}_{(j+1)2^{-n/2}}+ X^{1,+}_{j2^{-n/2}}),\frac12 (X^{2,+}_{(j+1)2^{-n/2}}+ X^{2,+}_{j2^{-n/2}}) \big) \\
&& \hspace{1.5cm}\times \big(X^{1,+}_{(j+1)2^{-n/2}}- X^{1,+}_{j2^{-n/2}}\big)^p\big(X^{2,+}_{(j+1)2^{-n/2}}- X^{2,+}_{j2^{-n/2}}\big)^q.
\end{eqnarray*}

\item \underline{If $j^{*} = 0$ :}  $\tilde{V}_n^{p,q}(f,t) = 0$.

\item \underline{If $j^*(n,t) < 0$ :}
 \begin{eqnarray*}
 && \tilde{V}_n^{p,q}(f,t)\\
 &=& \sum_{j=0}^{|j^*(n,t)|-1} f\big(\frac12(X^{1,-}_{(j+1)2^{-n/2}}+ X^{1,-}_{j2^{-n/2}}),\frac12 (X^{2,-}_{(j+1)2^{-n/2}}+ X^{2,-}_{j2^{-n/2}}) \big) \\
&& \hspace{1.5cm}\times \big(X^{1,-}_{(j+1)2^{-n/2}}- X^{1,-}_{j2^{-n/2}}\big)^p\big(X^{2,-}_{(j+1)2^{-n/2}}- X^{2,-}_{j2^{-n/2}}\big)^q, 
 \end{eqnarray*}
 where, for $i \in \{1,2\}$, $X^{i,+}_t := X^i_t$ for $t\geq 0$, $X^{i,-}_{-t} :=X^i_t$ for $t<0$. 
 \end{enumerate}

Let us introduce the following sequence of processes $W_{\pm,n}^{p,q}$:
\begin{eqnarray}
W_{\pm, n}^{p,q}(f,t)&=&
  \sum_{j=0}^{\lfloor 2^{n/2} t \rfloor -1} f\big(\frac12(X^{1,\pm}_{(j+1)2^{-n/2}}+ X^{1,\pm}_{j2^{-n/2}}),\frac12 (X^{2,\pm}_{(j+1)2^{-n/2}}+ X^{2,\pm}_{j2^{-n/2}}) \big)\notag\\
  && \hspace{2cm}\times \big(X^{1,\pm}_{(j+1)2^{-n/2}}- X^{1,\pm}_{j2^{-n/2}}\big)^p  \big(X^{2,\pm}_{(j+1)2^{-n/2}}- X^{2,\pm}_{j2^{-n/2}}\big)^q, \quad t \geq 0 \notag\\
 W_n^{p,q}(f,t)&=&\left \{ \begin{array}{lc}
                      W_{+,n}^{p,q}(f,t) &\text{if $t \geq 0$}\notag\\
                      W_{-,n}^{p,q}(f,-t) &\text{if $t < 0$}
                      \end{array}
                      \right. .
\end{eqnarray}
We then have that  
   \begin{equation}
   \tilde{V}_n^{p,q}(f,t) = W_n^{p,q}(f,Y_{T_{\lfloor 2^n t\rfloor,n}}). \label{transforme tilde{V}pq}
   \end{equation}

\subsection{\underline{Step 3}: Known results for the 2D fractional Brownian motion}
\begin{itemize}

\item If $H > 1/6$, $p+q \geq 3$ and if $H = 1/6$, $p+q \geq 5$, then, thanks to (\ref{norme2 Vnpq}) and (\ref{norme2 Vn0q}), we have for all $t \geq 0$
\begin{eqnarray}
E\big[ \big(W_{\pm,n}^{p,q}(f,t)\big)^2 \big] &\leq & C\sum_{k'=1}^{\frac{q+1}{2}}\bigg(\big( \sum_{a=1}^{2k'-1}2^{-n[H(p+q + 2k'-1-a) - \frac12]}\big)t + 2^{-n[H(p+q+2k'-2) -\frac12]}t^{2H+1}\notag\\
 &&  + 2^{-nH[p+q +2k'-1]}\bigg). \label{norme2 Wnpq} 
\end{eqnarray}

\item If $H= 1/6$, for all $t \in \R$, we define $W_n^{(3)}(f,t)$ as follows:
\begin{equation}
W_n^{(3)}(f,t)= \sum_{p+q=3}C(p,q)W_n^{p,q}(\partial_{1 \ldots 1 2 \ldots 2}^{p,q} f,t),\label{definition Wn3}
\end{equation}
where $C(3,0)=C(0,3)= \frac{1}{24}$ and $C(2,1)=C(1,2)=\frac18$. Then, thanks to Theorem \ref{f.d.d-h=1/6} we have, for $H=1/6$, for any fixed $t \in \R$ and as $n \to \infty$
\begin{eqnarray}
(X^1, X^2, W_n^{(3)}(f,t)) \overset{fdd}{\longrightarrow} (X^1, X^2, \int_0^t D^3f(X_s)d^3X_s) \label{convergence Wn3}
\end{eqnarray}
where $\int_0^t D^3f(X_s)d^3X_s$ is short-hand for
\begin{eqnarray*}
 \int_0^t D^3f(X_s)d^3X_s &=& \kappa_1\int_0^t\frac{\partial^3 f}{\partial x^3}\big(X^{1}_s,X^{2}_s\big)dB^{1}_s + \kappa_2\int_0^t\frac{\partial^3 f}{\partial y^3}\big(X^{1}_s,X^{2}_s\big)dB^{2}_s \\
 && + \kappa_3\int_0^t\frac{\partial^3 f}{\partial x^2 \partial y}\big(X^{1}_s,X^{2}_s\big)dB^{3}_s + \kappa_4\int_0^t\frac{\partial^3 f}{\partial x \partial y^2}\big(X^{1}_s,X^{2}_s\big)dB^{4}_s \notag
\end{eqnarray*}
with $B= (B^{1}, \ldots , B^{4})$  a 4-dimensional two-sided Brownian motion independent of $(X^1, X^2)$, $\kappa_1^2 = \kappa_2^2 = \frac{1}{96}\sum_{r\in \Z}\rho^3(r)$ and $\kappa_3^2 = \kappa_4^2 = \frac{1}{32}\sum_{r\in \Z}\rho^3(r)$ with $\rho$ defined in (\ref{rho}).
\end{itemize}

\subsection{\underline{Step 4}: Moment bounds for $W_n^{(3)}(f,\cdot)$}
Fix $f\in C_b^\infty$ and set $H= 1/6$. We claim the existence of $C >0$ such that, for all real numbers $s <t$ and all $n \in \N$,
\begin{equation}\label{bounds for Wn3}
E[( W_n^{(3)}(f,t) - W_n^{(3)}(f,s))^2] \leq C \max\big(|s|^{1/3}, |t|^{1/3}\big)\big(2^{-n/2}+ |t-s| \big).
\end{equation}
\begin{proof}{} 
By the definition of $W_n^{(3)}(f,t)$ in (\ref{definition Wn3}), we deduce that
\begin{equation*}
E[( W_n^{(3)}(f,t) - W_n^{(3)}(f,s))^2] \leq C \sum_{p+q=3}E[ (W_n^{p,q}(\partial_{1 \ldots 1 2 \ldots 2}^{p,q} f,t)-W_n^{p,q}(\partial_{1 \ldots 1 2 \ldots 2}^{p,q} f,s))^2].
\end{equation*}
So, if we prove that for all $f \in C_b^\infty$ and for all $p,q \in \N$  such that $p+q=3$,
\[
E[ (W_n^{p,q}( f,t)-W_n^{p,q}( f,s))^2] \leq C \max\big(|s|^{1/3}, |t|^{1/3}\big)\big(2^{-n/2}+ |t-s| \big),
\]
then the conclusion  (\ref{bounds for Wn3}) follows immediately. In fact, we will prove the last inequality only for $p=1$ and $q=2$, the proof being similar for the other values\footnote{When $p=3,\, q=0$ or $p=0,\,q=3$ the reader will find a very similar result in Step 4 in \cite{ItoD1}} of $p$ and $q$.

For $p=1,\, q=2$, bearing the notation of Step 2 in mind, we have
\begin{eqnarray*}
W_{\pm,n}^{1,2}(f,t)&=& \frac18 2^{-\frac{n}{4}}\sum_{j=0}^{\lfloor 2^{\frac{n}{2}} t \rfloor -1}\Delta_{j,n} f(X^{1,\pm},X^{2,\pm})\big(2^{\frac{n}{12}}(X^{1,\pm}_{(j+1)2^{-n/2}} - X^{1,\pm}_{j2^{-n/2}})\big)\\
&& \times \big(2^{\frac{n}{12}}(X^{2,\pm}_{(j+1)2^{-n/2}} - X^{2,\pm}_{j2^{-n/2}})\big)^2\\
&=&  \frac18 2^{-\frac{n}{4}}\sum_{j=0}^{\lfloor 2^{\frac{n}{2}} t \rfloor -1}\Delta_{j,n} f(X^{1,\pm},X^{2,\pm})H_{1}\big(2^{\frac{n}{12}}(X^{1,\pm}_{(j+1)2^{-n/2}} - X^{1,\pm}_{j2^{-n/2}})\big)\\
&& + \frac18 2^{-\frac{n}{4}}\sum_{j=0}^{\lfloor 2^{\frac{n}{2}} t \rfloor -1}\Delta_{j,n} f(X^{1,\pm},X^{2,\pm})H_{1}\big(2^{\frac{n}{12}}(X^{1,\pm}_{(j+1)2^{-n/2}} - X^{1,\pm}_{j2^{-n/2}})\big)\\
&& \times H_{2}\big(2^{\frac{n}{12}}(X^{2,\pm}_{(j+1)2^{-n/2}} - X^{2,\pm}_{j2^{-n/2}})\big)\\
&&=: \widetilde{W}_{\pm,n}^{1,2}(f,t) + \overline{W}_{\pm,n}^{1,2}(f,t).
\end{eqnarray*}
 We claim that:
 \begin{eqnarray}\label{bounds for Wn12}
 E[ (\widetilde{W}_n^{1,2}( f,t)- \widetilde{W}_n^{1,2}( f,s))^2] &\leq & C \max\big(|s|^{1/3}, |t|^{1/3}\big)\big(2^{-n/2}+ |t-s| \big); \label{bounds for Wn12}\\
 E[ (\overline{W}_n^{1,2}( f,t)- \overline{W}_n^{1,2}( f,s))^2] &\leq & C \max\big(|s|^{1/3}, |t|^{1/3}\big)\big(2^{-n/2}+ |t-s| \big).\label{bounds for Wn12'}
 \end{eqnarray}
It suffices to prove (\ref{bounds for Wn12'}) which is representative of the difficulty. To do so, we distinguish two cases according to the signs of $s,t \in \R$ (and reducing the problem by symmetry):
\begin{enumerate}
\item[(1)] If $0 \leq s < t$ (the case $s < t \leq 0$  being similar), then
\begin{eqnarray*}
&& E[( \overline{W}_n^{1,2}(f,t) - \overline{W}_n^{1,2}(f,s))^2] = E[( \overline{W}_{+,n}^{1,2}(f,t) - \overline{W}_{+,n}^{1,2}(f,s))^2]\\
&=& \frac{1}{64} 2^{-n/2}\sum_{j,j'=\lfloor 2^{n/2} s \rfloor}^{\lfloor 2^{\frac{n}{2}} t \rfloor -1}E\bigg( \Delta_{j,n} f(X^{1,+},X^{2,+})\Delta_{j',n} f(X^{1,+},X^{2,+})I^{(1)}_{1}\big( (2^{\frac{n}{12}}\delta_{(j+1)2^{-n/2}})\big)\\
&& \times I^{(2)}_{2}\big( (2^{\frac{n}{12}}\delta_{(j+1)2^{-n/2}})^{\otimes 2}\big)I^{(1)}_{1}\big( (2^{\frac{n}{12}}\delta_{(j'+1)2^{-n/2}})\big)I^{(2)}_{2}\big( (2^{\frac{n}{12}}\delta_{(j'+1)2^{-n/2}})^{\otimes 2}\big)\bigg)\\
&=&\frac{1}{64} \sum_{j,j'=\lfloor 2^{n/2} s \rfloor}^{\lfloor 2^{\frac{n}{2}} t \rfloor -1}E\bigg( \Delta_{j,n} f(X^{1,+},X^{2,+})\Delta_{j',n} f(X^{1,+},X^{2,+})I^{(1)}_{1}\big( \delta_{(j+1)2^{-n/2}}\big)\\
&& \times I^{(1)}_{1}\big( \delta_{(j'+1)2^{-n/2}}\big) I^{(2)}_{2}\big( \delta^{\otimes 2}_{(j+1)2^{-n/2}}\big)I^{(2)}_{2}\big( \delta^{\otimes 2}_{(j'+1)2^{-n/2}}\big)\bigg),\\
\end{eqnarray*}
where we have the first equality by (\ref{linear-isometry}).  Relying to the product formula (\ref{product formula}), we deduce that this latter quantity is less than or equal to
\begin{eqnarray}
&&  \frac{1}{64} \sum_{j,j'=\lfloor 2^{n/2} s \rfloor}^{\lfloor 2^{\frac{n}{2}} t \rfloor -1}\bigg|E\bigg( \Delta_{j,n} f(X^{1,+},X^{2,+})\Delta_{j',n} f(X^{1,+},X^{2,+}) I^{(2)}_{2}\big( \delta^{\otimes 2}_{(j+1)2^{-n/2}}\big)\notag\\
&& \hspace{4cm}\times I^{(2)}_{2}\big( \delta^{\otimes 2}_{(j'+1)2^{-n/2}}\big)\bigg)\bigg||\langle \delta_{(j+1)2^{-n/2}}, \delta_{(j'+1)2^{-n/2}} \rangle| \notag\\
&& +\frac{1}{64} \sum_{j,j'=\lfloor 2^{n/2} s \rfloor}^{\lfloor 2^{\frac{n}{2}} t \rfloor -1}\bigg|E\bigg( \Delta_{j,n} f(X^{1,+},X^{2,+})\Delta_{j',n} f(X^{1,+},X^{2,+})\notag\\
&& \hspace{2cm}\times I^{(1)}_{2}\big( \delta_{(j+1)2^{-n/2}}\otimes\delta_{(j'+1)2^{-n/2}}\big)I^{(2)}_{2}\big( \delta^{\otimes 2}_{(j+1)2^{-n/2}}\big)I^{(2)}_{2}\big( \delta^{\otimes 2}_{(j'+1)2^{-n/2}}\big)\bigg)\bigg|\notag\\
&&=: \sum_{i=1}^2 Q_n^{+,i}(s,t). \label{definition of Qn+i}
\end{eqnarray}
We have then the following estimates.
\begin{itemize}
\item \underline{Case $i =1$.} Since $f \in C_b^\infty$, and thanks to the Cauchy-Schwarz inequality and to (\ref{isometry}), we have
\begin{eqnarray}
&&\bigg|E\bigg( \Delta_{j,n} f(X^{1,+},X^{2,+})\Delta_{j',n} f(X^{1,+},X^{2,+}) I^{(2)}_{2}\big( \delta^{\otimes 2}_{(j+1)2^{-n/2}}\big)\label{step4: inequality case i=1}\\
&& \hspace{4cm}\times I^{(2)}_{2}\big( \delta^{\otimes 2}_{(j'+1)2^{-n/2}}\big)\bigg)\bigg|\notag\\
& \leq & C \|I^{(2)}_{2}\big( \delta^{\otimes 2}_{(j+1)2^{-n/2}}\big)\|_2 \|I^{(2)}_{2}\big( \delta^{\otimes 2}_{(j'+1)2^{-n/2}}\big)\|_2 \notag\\
&\leq & C (2^{-n/6})^2.\notag
\end{eqnarray}
We deduce that
\begin{eqnarray*}
&& Q_n^{+,1}(s,t)\\
& \leq & C 2^{-n/3}\sum_{j,j'=\lfloor 2^{n/2} s \rfloor}^{\lfloor 2^{\frac{n}{2}} t \rfloor -1}|\langle \delta_{(j+1)2^{-n/2}}, \delta_{(j'+1)2^{-n/2}} \rangle|\\
&\leq& C 2^{-n/2}\sum_{j,j'= \lfloor 2^{n/2}s\rfloor}^{\lfloor 2^{n/2}t \rfloor -1}
\big|\frac12( |j-j'+1|^{1/3} + |j-j'-1|^{1/3} - 2 |j-j'|^{1/3}) \big|\notag \\
&=&  C2^{-n/2} \sum_{j = \lfloor 2^{n/2}s\rfloor }^{\lfloor 2^{n/2}t \rfloor -1}\sum_{q =j - \lfloor 2^{n/2}t \rfloor + 1}^{j - \lfloor 2^{n/2}s\rfloor } \big|\rho(q) \big|,\notag
\end{eqnarray*}
with $\rho(q)$ defined in (\ref{rho}). By a Fubini argument, it comes
\begin{eqnarray}
&&Q_n^{+,1}(s,t)\notag\\
&\leq& C2^{-n/2} \!\!\!\sum_{q = \lfloor 2^{n/2}s \rfloor  - \lfloor 2^{n/2}t \rfloor +1}^{\lfloor 2^{n/2}t \rfloor - \lfloor 2^{n/2}s \rfloor  -1} \!\!\!\!\!|\rho(q)| \bigg( (q+ \lfloor 2^{n/2}t \rfloor)\wedge
\lfloor 2^{n/2}t \rfloor\\
&& \hspace{5cm} - ( q+ \lfloor 2^{n/2}s\rfloor )\vee \lfloor 2^{n/2}s\rfloor \bigg)\notag\\
&\leq&  C2^{-n/2} \sum_{q = \lfloor 2^{n/2}s \rfloor  - \lfloor 2^{n/2}t \rfloor +1}^{\lfloor 2^{n/2}t \rfloor - \lfloor 2^{n/2}s \rfloor  -1} |\rho(q)|\big( \lfloor 2^{n/2}t \rfloor - \lfloor 2^{n/2}s \rfloor \big)\notag\\
&\leq& C2^{-n/2} \sum_{q \in \Z}|\rho(q)|\big|\lfloor 2^{n/2}t \rfloor - \lfloor 2^{n/2}s \rfloor \big|= C 2^{-n/2}\big|\lfloor 2^{n/2}t \rfloor - \lfloor 2^{n/2}s \rfloor \big|\notag\\
& \leq&  C2^{-n/2}\big(\big|\lfloor 2^{n/2}t \rfloor -2^{n/2}t \big| + 2^{n/2}\big|t-s\big| +  \big|\lfloor 2^{n/2}s \rfloor -2^{n/2}s\big|\big) \notag\\
&\leq& C ( 2^{-n/2} + |t-s|). \label{majoration of Qn1(s,t)}
\end{eqnarray}
Note that $\sum_{q \in \Z}|\rho(q)| < \infty$ since $H<\frac12$.

\item \underline{Case $i=2$.} \:\:\: Thanks to the duality formula (\ref{duality formula}) and to the Leibniz rule (\ref{Leibnitz1}), one has that
\begin{eqnarray*}
&&\bigg|E\bigg( \Delta_{j,n} f(X^{1,+},X^{2,+})\Delta_{j',n} f(X^{1,+},X^{2,+})I^{(1)}_{2}\big( \delta_{(j+1)2^{-n/2}}\otimes\delta_{(j'+1)2^{-n/2}}\big)\\
&& \hspace{4cm}\times I^{(2)}_{2}\big( \delta^{\otimes 2}_{(j+1)2^{-n/2}}\big)I^{(2)}_{2}\big( \delta^{\otimes 2}_{(j'+1)2^{-n/2}}\big)\bigg)\bigg|\\
&=&\bigg|E\bigg(\bigg \langle D_{X^{1}}^{2}\big(\Delta_{j,n} f(X^{1,+},X^{2,+})\Delta_{j',n} f(X^{1,+},X^{2,+})\big),\delta_{(j+1)2^{-n/2}}\otimes\delta_{(j'+1)2^{-n/2}} \bigg\rangle\\
&& \hspace{4cm}\times I^{(2)}_{2}\big( \delta^{\otimes 2}_{(j+1)2^{-n/2}}\big)I^{(2)}_{2}\big( \delta^{\otimes 2}_{(j'+1)2^{-n/2}}\big)\bigg)\bigg|\\
\end{eqnarray*}
\begin{eqnarray*}
&\leq & E\bigg(\bigg|\Delta_{j,n} \partial_{11}f(X^{1,+},X^{2,+})\Delta_{j',n} f(X^{1,+},X^{2,+})\big)I^{(2)}_{2}\big( \delta^{\otimes 2}_{(j+1)2^{-n/2}}\big)\\
&& \times I^{(2)}_{2}\big( \delta^{\otimes 2}_{(j'+1)2^{-n/2}}\big)\bigg|\bigg)\bigg|\bigg\langle \bigg(\frac{\varepsilon_{j2^{-n/2}}+ \varepsilon_{(j+1)2^{-n/2}}}{2}\bigg)^{\otimes 2},\delta_{(j+1)2^{-n/2}}\otimes\delta_{(j'+1)2^{-n/2}} \bigg\rangle\bigg|\\
&& + 2 E\bigg(\bigg|\Delta_{j,n} \partial_{1}f(X^{1,+},X^{2,+})\Delta_{j',n} \partial_1 f(X^{1,+},X^{2,+})\big)I^{(2)}_{2}\big( \delta^{\otimes 2}_{(j+1)2^{-n/2}}\big)\\
&& \times I^{(2)}_{2}\big( \delta^{\otimes 2}_{(j'+1)2^{-n/2}}\big)\bigg|\bigg) \bigg|\bigg\langle \bigg(\frac{\varepsilon_{j2^{-n/2}}+ \varepsilon_{(j+1)2^{-n/2}}}{2}\bigg)\tilde{\otimes}\bigg(\frac{\varepsilon_{j'2^{-n/2}}+ \varepsilon_{(j'+1)2^{-n/2}}}{2}\bigg),\\
&& \hspace{4cm} \delta_{(j+1)2^{-n/2}}\otimes\delta_{(j'+1)2^{-n/2}} \bigg\rangle\bigg|\\
&& + E\bigg(\bigg|\Delta_{j,n} f(X^{1,+},X^{2,+})\Delta_{j',n} \partial_{11}f(X^{1,+},X^{2,+})\big)I^{(2)}_{2}\big( \delta^{\otimes 2}_{(j+1)2^{-n/2}}\big)\\
&& \times I^{(2)}_{2}\big( \delta^{\otimes 2}_{(j'+1)2^{-n/2}}\big)\bigg|\bigg)\bigg|\bigg\langle \bigg(\frac{\varepsilon_{j'2^{-n/2}}+ \varepsilon_{(j'+1)2^{-n/2}}}{2}\bigg)^{\otimes 2},\delta_{(j+1)2^{-n/2}}\otimes\delta_{(j'+1)2^{-n/2}} \bigg\rangle\bigg|\\
&& =:d_n^1 + d_n^2 +d_n^3.
\end{eqnarray*}
Observe that, thanks to (\ref{step4: inequality case i=1}), we get
\begin{eqnarray*}
d_n^1&\leq & C2^{-n/3}\bigg|\bigg\langle \bigg(\frac{\varepsilon_{j2^{-n/2}}+ \varepsilon_{(j+1)2^{-n/2}}}{2}\bigg)^{\otimes 2},\delta_{(j+1)2^{-n/2}}\otimes\delta_{(j'+1)2^{-n/2}} \bigg\rangle\bigg|,\\
d_n^2 &\leq & C2^{-n/3}\bigg|\bigg\langle \bigg(\frac{\varepsilon_{j2^{-n/2}}+ \varepsilon_{(j+1)2^{-n/2}}}{2}\bigg)\tilde{\otimes}\bigg(\frac{\varepsilon_{j'2^{-n/2}}+ \varepsilon_{(j'+1)2^{-n/2}}}{2}\bigg),\\
&& \hspace{8cm} \delta_{(j+1)2^{-n/2}}\otimes\delta_{(j'+1)2^{-n/2}} \bigg\rangle\bigg|,\\
d_n^3 &\leq &  C2^{-n/3}\bigg|\bigg\langle \bigg(\frac{\varepsilon_{j'2^{-n/2}}+ \varepsilon_{(j'+1)2^{-n/2}}}{2}\bigg)^{\otimes 2},\delta_{(j+1)2^{-n/2}}\otimes\delta_{(j'+1)2^{-n/2}} \bigg\rangle\bigg|.
\end{eqnarray*}
By (\ref{12}), recall that $|\langle \varepsilon_u, \delta_{(j+1)2^{-n/2}} \rangle | \leq  2^{-n/6}$ for all $u\geq 0$ and all $j \in \N$. We thus get,
\begin{eqnarray*}
d_n^1 + d_n^2 + d_n^3 &\leq & C 2^{-n/2}\big( |\langle\varepsilon_{j2^{-n/2}}, \delta_{(j'+1)2^{-n/2}} \rangle| + |\langle\varepsilon_{(j+1)2^{-n/2}}, \delta_{(j'+1)2^{-n/2}} \rangle|\\
&& + |\langle\varepsilon_{j'2^{-n/2}}, \delta_{(j+1)2^{-n/2}} \rangle| + |\langle\varepsilon_{(j'+1)2^{-n/2}}, \delta_{(j+1)2^{-n/2}} \rangle|\\
&& + |\langle\varepsilon_{j'2^{-n/2}} + \varepsilon_{(j'+1)2^{-n/2}}, \delta_{(j'+1)2^{-n/2}} \rangle|\big).
\end{eqnarray*}
For instance, we can write
\begin{eqnarray*}
 && 2^{-n/2}\sum_{j,j'=\lfloor 2^{n/2}s\rfloor}^{\lfloor 2^{n/2}t\rfloor -1}
 | \langle \varepsilon_{(j'+1)2^{-n/2}}; \delta_{(j+1)2^{-n/2}}\rangle|\\
&  =&\frac12 2^{-2n/3} \sum_{j,j'=\lfloor 2^{n/2}s\rfloor}^{\lfloor 2^{n/2}t\rfloor -1}
 \big|
 (j+1)^{1/3}-j^{1/3}+|j'-j+1|^{1/3}-|j'-j|^{1/3}
 \big|\\
 \end{eqnarray*}
 \begin{eqnarray*}
 &\leq& \frac12 2^{-2n/3} \sum_{j,j'=\lfloor 2^{n/2}s\rfloor}^{\lfloor 2^{n/2}t\rfloor -1}
 \big((j+1)^{1/3}-j^{1/3}\big)\\
 &&+
 \frac12 2^{-2n/3} \sum_{\lfloor 2^{n/2}s\rfloor\leq j\leq j'\leq \lfloor 2^{n/2}t\rfloor-1}
 \big((j'-j+1)^{1/3}-(j'-j)^{1/3}\big)\\
  &&+
 \frac12 2^{-2n/3} \sum_{\lfloor 2^{n/2}s\rfloor\leq j'<j\leq \lfloor 2^{n/2}t\rfloor-1}
 \big((j-j')^{1/3}-(j-j'-1)^{1/3}\big)\\
 &\leq&\frac32\,2^{-2n/3}\big(\lfloor 2^{n/2}t\rfloor - \lfloor 2^{n/2}s\rfloor\big) \lfloor 2^{n/2}t\rfloor ^{1/3} \leq \frac{3t^{1/3}}2\,\big(2^{-n/2}+ |t-s|\big).
  \end{eqnarray*}
   Similarly,
  \begin{eqnarray*}
  2^{-n/2}\sum_{j,j'=\lfloor 2^{n/2}s\rfloor}^{\lfloor 2^{n/2}t\rfloor -1}
  |\langle \varepsilon_{j2^{-n/2}}; \delta_{(j'+1)2^{-n/2}}\rangle|&\leq&\frac{3t^{1/3}}2\,\big(2^{-n/2}+|t-s|\big);\\
   2^{-n/2}\sum_{j,j'=\lfloor 2^{n/2}s\rfloor}^{\lfloor 2^{n/2}t\rfloor -1}
  |\langle \varepsilon_{(j+1)2^{-n/2}}; \delta_{(j'+1)2^{-n/2}}\rangle|&\leq&\frac{3t^{1/3}}2\,\big(2^{-n/2}+|t-s|\big);\\
   2^{-n/2}\sum_{j,j'=\lfloor 2^{n/2}s\rfloor}^{\lfloor 2^{n/2}t\rfloor -1}
  |\langle \varepsilon_{j'2^{-n/2}}; \delta_{(j+1)2^{-n/2}}\rangle|&\leq&\frac{3t^{1/3}}2\,\big(2^{-n/2}+|t-s|\big);\\
     2^{-n/2}\sum_{j,j'=\lfloor 2^{n/2}s\rfloor}^{\lfloor 2^{n/2}t\rfloor -1}
  |\langle \varepsilon_{j'2^{-n/2}}+\varepsilon_{(j'+1)2^{-n/2}}; \delta_{(j'+1)2^{-n/2}}\rangle|&\leq&\frac{3t^{1/3}}2\,\big(2^{-n/2}+|t-s|\big).
  \end{eqnarray*}
  As a consequence, we deduce 
  \begin{equation}
  Q_n^{+,2}(s,t) \leq C t^{1/3} \big(2^{-n/2}+|t-s|\big). \label{majoration Qn2(s,t)}
  \end{equation}
\end{itemize}
Combining (\ref{definition of Qn+i}), (\ref{majoration of Qn1(s,t)}) and (\ref{majoration Qn2(s,t)}) finally shows our claim (\ref{bounds for Wn12'}).

\item[(2)] If $s < 0 \leq t$, then 
\begin{eqnarray*}
&& E[( \overline{W}_n^{1,2}(f,t) - \overline{W}_n^{1,2}(f,s))^2] = E[( \overline{W}_{+,n}^{1,2}(f,t) - \overline{W}_{-,n}^{1,2}(f,-s))^2]\\
&\leq & 2E[( \overline{W}_{+,n}^{1,2}(f,t))^2] + 2E[( \overline{W}_{-,n}^{1,2}(f,-s))^2].
\end{eqnarray*}
By (1) with $s=0$, one can write
\[
E[( \overline{W}_{+,n}^{1,2}(f,t))^2] \leq C t^{1/3}\big(2^{-n/2}+t \big).
\]
Similarly
\[
E[( \overline{W}_{-,n}^{1,2}(f,-s))^2] \leq C (-s)^{1/3}\big(2^{-n/2}+ (-s) \big).
\]
We deduce that
\[
E[( \overline{W}_n^{1,2}(f,t) - \overline{W}_n^{1,2}(f,s))^2] \leq C \max\big(t^{1/3}, (-s)^{1/3}\big)\big(2^{-n/2}+ (t-s) \big).
\]
That is, (\ref{bounds for Wn12'}) holds true in this case.
\end{enumerate}
\end{proof}

\subsection{\underline{Step 5}: Limits of the 2D weighted power variations of odd order}
Fix $f \in C_b^{\infty}$ and $t\geq 0$. We claim that, if $H\in\big[\frac16,\frac12\big)$ and $p+q \geq 5$ then, as $n\to\infty$,
\begin{equation}\label{first convergence 2D}
\tilde{V}_n^{p,q}(f,t) \overset{\rm prob}{\longrightarrow}  0.
\end{equation}
Moreover, if $H\in\big(\frac16,\frac12\big)$ and $p+q = 3$ then, as $n\to\infty$,
\begin{equation}\label{second convergence 2D}
\tilde{V}_n^{p,q}(f,t)  \overset{\rm prob}{\longrightarrow}  0.
\end{equation}
For all $t \geq 0$, we define $\tilde{V}_n^{(3)}(f,t)$ as follows
\begin{equation}
\tilde{V}_n^{(3)}(f,t) = \sum_{p+q =3}C(p,q)\tilde{V}_n^{p,q}(\partial_{1 \ldots 1 2 \ldots 2}^{p,q} f,t), \label{definition tilde{V}n3}
\end{equation}
with $C(3,0)=C(0,3)=\frac{1}{24}$ and $C(2,1)=C(1,2)=\frac18$.  Observe that thanks to (\ref{transforme tilde{V}pq}) and (\ref{definition Wn3}), we have 
\begin{equation}
\tilde{V}_n^{(3)}(f,t) = W_n^{(3)}(f,Y_{T_{\lfloor 2^n t\rfloor,n}}). \label{decomposition tilde{V}n3}
\end{equation}
Then, we claim that, for $H=1/6$, for any fixed $t\geq 0$, as $n \to \infty$ 
\begin{eqnarray}
(X^1, X^2, Y, \tilde{V}_n^{(3)}(f,t)) \overset{fdd}{\longrightarrow} (X^1, X^2, Y, \int_0^{Y_t} D^3f(X_s)d^3X_s), \label{convergence tilde{V}n3}
\end{eqnarray}
where $\int_0^t D^3f(X_s)d^3X_s$ is short-hand for
\begin{eqnarray*}
 \int_0^t D^3f(X_s)d^3X_s &=& \kappa_1\int_0^t\frac{\partial^3 f}{\partial x^3}\big(X^{1}_s,X^{2}_s\big)dB^{1}_s + \kappa_2\int_0^t\frac{\partial^3 f}{\partial y^3}\big(X^{1}_s,X^{2}_s\big)dB^{2}_s \\
 && + \kappa_3\int_0^t\frac{\partial^3 f}{\partial x^2 \partial y}\big(X^{1}_s,X^{2}_s\big)dB^{3}_s + \kappa_4\int_0^t\frac{\partial^3 f}{\partial x \partial y^2}\big(X^{1}_s,X^{2}_s\big)dB^{4}_s \notag
\end{eqnarray*}
 with $B= (B^{1}, \ldots , B^{4})$  a 4-dimensional two-sided Brownian motion independent of $(X^1, X^2)$ and also independent of $Y$. The constants $\kappa_1, \ldots, \kappa_4$ are the same as in (\ref{convergence Wn3}). Otherwise stated, (\ref{convergence tilde{V}n3}) means that 
$\tilde{V}_n^{(3)}(f,t)$ converges stably in law  to the random variable  $\int_0^{Y_t} D^3f(X_s)d^3X_s$. 

Indeed, combining (\ref{transforme tilde{V}pq}), (\ref{norme2 Wnpq}) together with the independence of $Y$ and $(X^1, X^2)$ (by the definition of $Z$ in (\ref{fBmBt})), we deduce that
\begin{eqnarray*}
E\big[ \big(\tilde{V}_n^{p,q}(f,t)\big)^2 \big] &\leq & C\sum_{k'=1}^{\frac{q+1}{2}}\bigg(\big( \sum_{a=1}^{2k'-1}2^{-n[H(p+q + 2k'-1-a) - \frac12]}\big)E(|Y_{T_{\lfloor 2^n t\rfloor,n}}|)\\
&& + 2^{-n[H(p+q+2k'-2) -\frac12]}E\big(|Y_{T_{\lfloor 2^n t\rfloor,n}}|^{2H+1}\big)  + 2^{-nH[p+q +2k'-1]}\bigg). 
\end{eqnarray*}
 On the other hand, recall from \cite[Lemma 2.3]{kh-lewis1} that $Y_{T_{\lfloor 2^n t\rfloor,n}} \overset{L^2}{\longrightarrow} Y_t$ as $n \to \infty$. So, combining this fact with the last inequality, we deduce that (\ref{first convergence 2D}) and (\ref{second convergence 2D}) hold true.

Now, using the decomposition (\ref{decomposition tilde{V}n3}), the conclusion of Step 4 (to pass from $Y_{T_{\lfloor 2^n t\rfloor,n}}$ to $Y_t$)  and the convergence: $Y_{T_{\lfloor 2^n t\rfloor,n}} \overset{L^2}{\longrightarrow} Y_t$, we deduce that the limit
of
$\tilde{V}_n^{(3)}(f,t) $
is the same as that of $ W_{n}^{(3)}(f,Y_t).$
Thus, the proof of (\ref{convergence tilde{V}n3}) then follows directly from  (\ref{convergence Wn3}) and the fact that $Y$ is independent of $(X^1, X^2)$ and independent of $(B^1, \ldots, B^4)$.

\subsection{\underline{Step 6}: Proving (\ref{second-main-1}) and (\ref{second-main-2})}
Let us introduce the following notation: for $f\in C_b^\infty$, for $j \in \N$, $\Delta_{j,n} f(Z^{1},Z^{2}):= f\big(\frac12(Z^1_{T_{j,n}}+ Z^1_{T_{j+1,n}}),\frac12 (Z^2_{T_{j,n}}+ Z^2_{T_{j+1,n}}) \big)$. Then, 
thanks to Lemma \ref{taylor-expansion}, we have 
\begin{eqnarray*}
 && f(Z^{1}_{T_{j+1,n}}, Z^{2}_{T_{j+1,n}})-f(Z^{1}_{T_{j,n}},Z^{2}_{T_{j,n}} )\\
 &=& \Delta_{j,n}\frac{\partial f}{\partial x}\big(Z^{1}, Z^{2}\big)\big(Z^{1}_{T_{j+1,n}}-Z^{1}_{T_{j,n}}\big) +\Delta_{j,n}\frac{\partial f}{\partial y}\big(Z^{1}, Z^{2}\big)\big(Z^{2}_{T_{j+1,n}}-Z^{2}_{T_{j,n}}\big) \\
&&  + \sum_{i=2}^6\sum_{\alpha_1 +\alpha_2 = 2i-1}C(\alpha_1,\alpha_2)\: \Delta_{j,n}\partial_{1 \ldots 1 2 \ldots 2}^{\alpha_1,\alpha_2} f(Z^{1}, Z^{2})\big(Z^{1}_{T_{j+1,n}}-Z^{1}_{T_{j,n}}\big)^{\alpha_1}\\
  && \hspace{1cm}\times \big(Z^{2}_{T_{j+1,n}}- Z^{2}_{T_{j,n}}\big)^{\alpha_2}+ R_{13}\big( \big(Z^{1}_{T_{j+1,n}},Z^{2}_{T_{j+1,n}})\big), \big(Z^{1}_{T_{j,n}},Z^{2}_{T_{j,n}})\big) \big).
 \end{eqnarray*}
 Then, by the Definition \ref{ivan-definition'}   and (\ref{definition tilde{V}pq}),  we can  write
 \begin{eqnarray*}
 &&f(Z^{1}_{T_{\lfloor 2^n t \rfloor ,n}}, Z^{2}_{T_{\lfloor 2^n t \rfloor , n}}) -f(0,0)\\
  &=& \tilde{O}_n(f,t) + \sum_{i=2}^6\sum_{\alpha_1 +\alpha_2 = 2i-1}C(\alpha_1,\alpha_2)\: \tilde{V}_n^{\alpha_1, \alpha_2}(\partial_{1 \ldots 1 2 \ldots 2}^{\alpha_1,\alpha_2} f,t)\notag\\
 && + \sum_{j=0}^{\lfloor 2^n t \rfloor -1} R_{13}\big( \big(Z^{1}_{T_{j+1,n}},Z^{2}_{T_{j+1,n}})\big), \big(Z^{1}_{T_{j,n}},Z^{2}_{T_{j,n}})\big) \big).\notag
 \end{eqnarray*}
Thanks to (\ref{definition tilde{V}n3}), we can write
\begin{eqnarray}
&& \tilde{O}_n(f,t) \label{second H>=1/6 taylor}\\
&=& f(Z^{1}_{T_{\lfloor 2^n t \rfloor ,n}}, Z^{2}_{T_{\lfloor 2^n t \rfloor , n}}) -f(0,0) - \tilde{V}_n^{(3)}(f,t)\notag\\
&& -\sum_{i=3}^6\sum_{\alpha_1 +\alpha_2 = 2i-1}C(\alpha_1,\alpha_2)\: \tilde{V}_n^{\alpha_1, \alpha_2}(\partial_{1 \ldots 1 2 \ldots 2}^{\alpha_1,\alpha_2} f,t)\notag\\
&& -\sum_{j=0}^{\lfloor 2^n t \rfloor -1} R_{13}\big( \big(Z^{1}_{T_{j+1,n}},Z^{2}_{T_{j+1,n}})\big), \big(Z^{1}_{T_{j,n}},Z^{2}_{T_{j,n}})\big) \big). \notag
\end{eqnarray}
By Lemma \ref{taylor-expansion}, we have, with $G\sim N(0,1)$,
\begin{eqnarray}
&& \sum_{j=0}^{\lfloor 2^{\frac{n}{2}} t \rfloor -1}E \bigg( \bigg| R_{13}\big( \big(Z^{1}_{T_{j+1,n}},Z^{2}_{T_{j+1,n}})\big), \big(Z^{1}_{T_{j,n}},Z^{2}_{T_{j,n}})\big) \big)\bigg| \bigg) \notag\\
& \leq & C_f \sum_{\alpha_1 + \alpha_2 =13}\sum_{j=0}^{\lfloor 2^n t \rfloor -1}E\bigg(\big|Z^{1}_{T_{j+1,n}} - Z^{1}_{T_{j,n}} \big|^{\alpha_1}\big| Z^{2}_{T_{j+1,n}} - Z^{2}_{T_{j,n}} \big|^{\alpha_2} \bigg) \notag\\
& \leq & C_f \sum_{\alpha_1 + \alpha_2 =13}\sum_{j=0}^{\lfloor 2^{\frac{n}{2}} t \rfloor -1}\||Z^{1}_{T_{j+1,n}} - Z^{1}_{T_{j,n}} |^{\alpha_1}\|_2 \|| Z^{2}_{T_{j+1,n}} - Z^{2}_{T_{j,n}} |^{\alpha_2} \|_2 \notag\\
&=& C_f 2^{-\frac{13nH}{2}}\sum_{\alpha_1 + \alpha_2 =13}\sum_{j=0}^{\lfloor 2^n t \rfloor -1}\big(E[G^{2\alpha_1}]E[G^{2\alpha_2}]\big)^{1/2} \leq Ct 2^{-n(\frac{13H}{2} -1)}. \label{second R13}
\end{eqnarray}

On the other hand, by continuity of $f\circ Z$  and due to (\ref{lemma2.2}), one has, almost surely and
as $n\to\infty$,
\begin{equation}\label{pm1}
 f(Z^{1}_{T_{\lfloor 2^n t \rfloor ,n}}, Z^{2}_{T_{\lfloor 2^n t \rfloor , n}}) -f(0,0)
\to f(Z^1_t, Z^2_t)-f(0,0).
\end{equation}

Finally, when $H>\frac16$ the desired conclusion (\ref{second-main-1}) follows from
(\ref{first convergence 2D}), (\ref{second convergence 2D}), (\ref{second R13}) and (\ref{pm1}) plugged into (\ref{second H>=1/6 taylor}).
The proof of (\ref{second-main-2}) when $H=\frac16$ is similar, the only difference being that one has (\ref{convergence tilde{V}n3}) instead of (\ref{second convergence 2D}), thus leading
to the bracket term  $\int_0^t D^3f(Z_s)d^3Z_s$ in (\ref{second-main-2}).

\subsection{\underline{Step 7}: Proving (\ref{second-main-3})}
Using $b^3 - a^3 = 3 \big(\frac{a+b}{2}\big)^2(b-a) + \frac14 (b-a)^3$, one can write,
\begin{eqnarray*}
 \tilde{O}_n(x\mapsto x^3,t) -  (Z_t^1)^3 &=& - \tilde{V}^3_n(x\mapsto x^3,t) + \sum_{j=0}^{\lfloor 2^n t \rfloor -1} \big( (Z^{1}_{T_{j+1,n}})^3 - (Z^{1}_{T_{j,n}})^3 \big) -  (Z_t^1)^3.
\end{eqnarray*}
 As a result and since, by (\ref{lemma2.2}), $(Z^{1}_{T_{\lfloor 2^n t \rfloor ,n}} )^3 \to (Z^1_t)^3$ a.s. as $n \to \infty$, one deduces that if $\tilde{O}_n(x\mapsto x^3,t)$ converges stably in law, then $\tilde{V}^3_n(x\mapsto x^3,t)$ must converge as well. But
it is shown in \cite[Corollary 1.2]{zeineddine} that
$2^{-n(1-6H)/4}\tilde{V}_n^{(3)}(x \mapsto x^3,t)$ converges in law
to a non degenerate limit. This fact being  in contradiction with the convergence of $\tilde{V}^3_n(x\mapsto x^3,t)$, we deduce that (\ref{second-main-3}) holds.

\section{Proof of Lemma \ref{biglemma}}

\subsection{Proof of (\ref{lemma3})}
 We will consider only the case $(i,j) = (1,2)$ (by symmetry, the proof is very similar for $(i,j) =(2,1)$ and is left to the reader.) By the product formula (\ref{product formula}), we have
 \begin{eqnarray}
 I^{(1)}_1 \big(\delta_{(i_3+1)2^{-n/2}} \big)I^{(1)}_1 \big(\delta_{(i_4+1)2^{-n/2}} \big)&=& I^{(1)}_2 \big(\delta_{(i_3+1)2^{-n/2}}\otimes \delta_{(i_4+1)2^{-n/2}}\big)\label{prod1}\\
 && + \langle \delta_{(i_3+1)2^{-n/2}}, \delta_{(i_4+1)2^{-n/2}} \rangle \notag\\
 I^{(2)}_2 \big(\delta_{(i_3+1)2^{-n/2}}^{\otimes 2} \big)I^{(2)}_2 \big(\delta_{(i_4+1)2^{-n/2}}^{\otimes 2} \big)&=& 
 I^{(2)}_4 \big(\delta_{(i_3+1)2^{-n/2}}^{\otimes 2}\otimes\delta_{(i_4+1)2^{-n/2}}^{\otimes 2} \big) \label{prod2}\\
 && + 4 I^{(2)}_2 \big(\delta_{(i_3+1)2^{-n/2}}\otimes\delta_{(i_4+1)2^{-n/2}} \big)\langle \delta_{(i_3+1)2^{-n/2}}, \delta_{(i_4+1)2^{-n/2}} \rangle \notag\\
 && + 2 \langle \delta_{(i_3+1)2^{-n/2}}, \delta_{(i_4+1)2^{-n/2}} \rangle^2. \notag
 \end{eqnarray}
 Thanks to (\ref{prod1}) we deduce that, for all $ i_1, i_2 \in \{0, \ldots, \lfloor 2^{\frac{n}{2}} t \rfloor -1\}$,
 \begin{eqnarray*}
 &&\sum_{i_3,i_4=0}^{\lfloor 2^{\frac{n}{2}} t \rfloor -1}\bigg|E\bigg(\Delta_{i_1,n}f_1(X^{1},X^{2})\Delta_{i_2,n}f_2(X^{1},X^{2})\Delta_{i_3,n}f_3(X^{1},X^{2})\\
 && \times \Delta_{i_4,n}f_4(X^{1},X^{2})I^{(1)}_1 \big(\delta_{(i_3+1)2^{-n/2}} \big)I^{(2)}_2 \big(\delta_{(i_3+1)2^{-n/2}}^{\otimes 2} \big)I^{(1)}_1 \big(\delta_{(i_4+1)2^{-n/2}} \big)I^{(2)}_2 \big(\delta_{(i_4+1)2^{-n/2}}^{\otimes 2} \big)\bigg)\bigg|\\
 & \leq & \sum_{i_3,i_4=0}^{\lfloor 2^{\frac{n}{2}} t \rfloor -1}\bigg|E\bigg(\Delta_{i_1,n}f_1(X^{1},X^{2})\Delta_{i_2,n}f_2(X^{1},X^{2})\Delta_{i_3,n}f_3(X^{1},X^{2})\\
 && \times \Delta_{i_4,n}f_4(X^{1},X^{2})I^{(1)}_2 \big(\delta_{(i_3+1)2^{-n/2}}\otimes  \delta_{(i_4+1)2^{-n/2}}\big)I^{(2)}_2 \big(\delta_{(i_3+1)2^{-n/2}}^{\otimes 2} \big)I^{(2)}_2 \big(\delta_{(i_4+1)2^{-n/2}}^{\otimes 2} \big)\bigg)\bigg|\\
 && + \sum_{i_3,i_4=0}^{\lfloor 2^{\frac{n}{2}} t \rfloor -1}\bigg|E\bigg(\Delta_{i_1,n}f_1(X^{1},X^{2})\Delta_{i_2,n}f_2(X^{1},X^{2})\Delta_{i_3,n}f_3(X^{1},X^{2})\\
 && \times \Delta_{i_4,n}f_4(X^{(1)},X^{(2)})I^{(2)}_2 \big(\delta_{(i_3+1)2^{-n/2}}^{\otimes 2} \big)I^{(2)}_2 \big(\delta_{(i_4+1)2^{-n/2}}^{\otimes 2} \big)\bigg)\bigg|\big|\langle \delta_{(i_3+1)2^{-n/2}}, \delta_{(i_4+1)2^{-n/2}} \rangle \big|\\
 &=& M_{n,1}(i_1,i_2,t) + M_{n,2}(i_1,i_2,t),
 \end{eqnarray*}
with obvious notation at the last line. 
 Set  
 \[
 \phi(i_1,i_2,i_3,i_4):=\Delta_{i_1,n}f_1(X^{1},X^{2})\Delta_{i_2,n}f_2(X^{1},X^{2})\Delta_{i_3,n}f_3(X^{1},X^{2})
  \Delta_{i_4,n}f_4(X^{1},X^{2}).
 \]
 Let us prove that, for $i \in \{1,2\}$,  $\exists C >0$ such that :
 \begin{equation}\label{boundedness}
 \sup_{n\geq 0}\sup_{i_1,i_2 \in \{0, \ldots, \lfloor 2^{\frac{n}{2}} t \rfloor -1\}} M_{n,i}(i_1,i_2,t) \leq C(t + t^2).
 \end{equation}
 \begin{enumerate}
 
 \item \underline{for $i=1$ :}  thanks to the duality formula (\ref{duality formula}), we have
 \begin{eqnarray*}
 M_{n,1}(i_1,i_2,t)&=&\sum_{i_3,i_4=0}^{\lfloor 2^{\frac{n}{2}} t \rfloor -1}\bigg|E\bigg(\bigg \langle D_{X^{1}}\big(\phi(i_1,i_2,i_3,i_4) \big) , \delta_{(i_3+1)2^{-n/2}}\otimes  \delta_{(i_4+1)2^{-n/2}} \bigg\rangle \\
 && \hspace{4cm} \times I^{(2)}_2 \big(\delta_{(i_3+1)2^{-n/2}}^{\otimes 2} \big)I^{(2)}_2 \big(\delta_{(i_4+1)2^{-n/2}}^{\otimes 2} \big)\bigg)\bigg|.\\
 \end{eqnarray*}
Observe that, thanks to (\ref{Leibnitz0}) and  (\ref{Leibnitz1}), we have
\begin{eqnarray*}
&& D_{X^{1}}\big(\phi(i_1,i_2,i_3,i_4) \big) = \sum_{j=1}^4\phi_j(i_1,i_2,i_3,i_4)\bigg(\frac{\varepsilon_{i_j2^{-n/2}} + \varepsilon_{(i_j+1)2^{-n/2} }}{2}\bigg),
\end{eqnarray*}
where $\phi_j(i_1,i_2,i_3,i_4)$ is a quantity having a similar form as $\phi(i_1,i_2,i_3,i_4)$ and arising when one differentiates $\Delta_{i_j,n}f_j(X^{1},X^{2})$ in $\phi(i_1,i_2,i_3,i_4)$ with respect to $X^{1}$. 
 By combining this fact with (\ref{12}), we get
 \begin{eqnarray*}
 &&M_{n,1}(i_1,i_2,t)\\
 &\leq & (2^{-n/6})^2 \sum_{j=1}^4 \sum_{i_3,i_4=0}^{\lfloor 2^{\frac{n}{2}} t \rfloor -1}\bigg|E\bigg( \phi_j(i_1,i_2,i_3,i_4)I^{(2)}_2 \big(\delta_{(i_3+1)2^{-n/2}}^{\otimes 2} \big)I^{(2)}_2 \big(\delta_{(i_4+1)2^{-n/2}}^{\otimes 2} \big)\bigg)\bigg|\\
 &=&  \sum_{j=1}^4 M^{(j)}_{n,1}(i_1,i_2, t)
 \end{eqnarray*}
 with obvious notation at the last line. 
 We have to prove that, for all $j \in \{1, \ldots , 4 \}$, one has $\sup_{n \geq 0}\sup_{i_1,i_2 \in \{0, \ldots,\lfloor 2^{\frac{n}{2}} t \rfloor -1\} } M^{(j)}_{n,1}(i_1,i_2,t)$ $\leq C(t+t^2)$. Let us do it.
 Thanks to (\ref{prod2}), we have
 
 \begin{eqnarray*}
 M^{(j)}_{n,1}(i_1,i_2,t)&=& 2^{-n/3} \sum_{i_3,i_4=0}^{\lfloor 2^{\frac{n}{2}} t \rfloor -1}\bigg|E\bigg( \phi_j(i_1,i_2,i_3,i_4)I^{(2)}_4 \big(\delta_{(i_3+1)2^{-n/2}}^{\otimes 2}\otimes\delta_{(i_4+1)2^{-n/2}}^{\otimes 2} \big)\bigg)\bigg|\\
 && + 42^{-n/3} \sum_{i_3,i_4=0}^{\lfloor 2^{\frac{n}{2}} t \rfloor -1}\bigg|E\bigg( \phi_j(i_1,i_2,i_3,i_4) I^{(2)}_2 \big(\delta_{(i_3+1)2^{-n/2}}\otimes\delta_{(i_4+1)2^{-n/2}} \big)\bigg)\bigg|\\
 && \hspace{4cm} \times |\langle \delta_{(i_3+1)2^{-n/2}}, \delta_{(i_4+1)2^{-n/2}} \rangle |\\
 && + 22^{-n/3} \sum_{i_3,i_4=0}^{\lfloor 2^{\frac{n}{2}} t \rfloor -1}\bigg|E\bigg( \phi_j(i_1,i_2,i_3,i_4) \bigg)\bigg|\langle \delta_{(i_3+1)2^{-n/2}}, \delta_{(i_4+1)2^{-n/2}} \rangle^2.
 \end{eqnarray*}
 Thanks to the duality formula (\ref{duality formula}), to (\ref{Leibnitz1}) and to (\ref{12}) and since $\phi_j$ is bounded, we deduce that
 
 \begin{eqnarray*}
 && \bigg|E\bigg( \phi_j(i_1,i_2,i_3,i_4)I^{(2)}_4 \big(\delta_{(i_3+1)2^{-n/2}}^{\otimes 2}\otimes\delta_{(i_4+1)2^{-n/2}}^{\otimes 2} \big)\bigg)\bigg|\\
 &=& \bigg|E\bigg( \big \langle D^4_{X^{2}}\big(\phi_j(i_1,i_2,i_3,i_4)\big), \delta_{(i_3+1)2^{-n/2}}^{\otimes 2}\otimes\delta_{(i_4+1)2^{-n/2}}^{\otimes 2} \big\rangle\bigg)\bigg|\\
 && \leq  C_j (2^{-n/6})^4,\\
 && \bigg|E\bigg( \phi_j(i_1,i_2,i_3,i_4) I^{(2)}_2 \big(\delta_{(i_3+1)2^{-n/2}}\otimes\delta_{(i_4+1)2^{-n/2}} \big)\bigg)\bigg|\\
 &=& \bigg|E\bigg(  \big\langle D^2_{X^{2}}\big(\phi_j(i_1,i_2,i_3,i_4)\big), \delta_{(i_3+1)2^{-n/2}}\otimes\delta_{(i_4+1)2^{-n/2}} \big\rangle \bigg)\bigg| \\
 && \leq C_j (2^{-n/6})^2.
 \end{eqnarray*}
By combining these inequalities with (\ref{13}), we get
 \begin{eqnarray*}
 M^{(j)}_{n,1}(i_1,i_2,t)&\leq & C_j 2^{-n} 2^{n} t^2 + C_j t 2^{-2n/3}2^{n/3} + C_j t 2^{-n/3} 2^{n/6}\\
 & \leq & C_j (t +t^2).
 \end{eqnarray*}
Hence $\exists C >0$ such that for all $j \in \{1, \ldots , 4 \}$, $\sup_{n \geq 0}\sup_{i_1,i_2 \in \{0, \ldots,\lfloor 2^{\frac{n}{2}} t \rfloor -1 \}} M^{(j)}_{n,1}(i_1,i_2,t) \leq C(t +t^2)$. So, we have the desired conclusion (\ref{boundedness}) for $i=1$.

 \item \underline{for $i=2$ :} Thanks to (\ref{prod2}), we have
 
 \begin{eqnarray*}
 M_{n,2}(i_1,i_2,t) &=&\sum_{i_3,i_4=0}^{\lfloor 2^{\frac{n}{2}} t \rfloor -1}\bigg|E\bigg(\phi(i_1,i_2,i_3,i_4)I^{(2)}_4 \big(\delta_{(i_3+1)2^{-n/2}}^{\otimes 2}\otimes\delta_{(i_4+1)2^{-n/2}}^{\otimes 2} \big)\bigg)\bigg|\\
 && \hspace{4cm} \times \big|\langle \delta_{(i_3+1)2^{-n/2}}, \delta_{(i_4+1)2^{-n/2}} \rangle \big|\\
 && + 4 \sum_{i_3,i_4=0}^{\lfloor 2^{\frac{n}{2}} t \rfloor -1}\bigg|E\bigg(\phi(i_1,i_2,i_3,i_4)I^{(2)}_2 \big(\delta_{(i_3+1)2^{-n/2}}\otimes\delta_{(i_4+1)2^{-n/2}} \big)\bigg)\bigg|\\
 && \hspace{4cm} \times \langle \delta_{(i_3+1)2^{-n/2}}, \delta_{(i_4+1)2^{-n/2}} \rangle^2\\
 && + 2 \sum_{i_3,i_4=0}^{\lfloor 2^{\frac{n}{2}} t \rfloor -1}\bigg|E\big(\phi(i_1,i_2,i_3,i_4)\big)\bigg| \big|\langle \delta_{(i_3+1)2^{-n/2}}, \delta_{(i_4+1)2^{-n/2}} \rangle\big|^3.
 \end{eqnarray*}
 By the same arguments as used in the previous case, we deduce that
 \[
 M_{n,2}(i_1,i_2,t) \leq C (2^{-n/6})^4 t 2^{n/3} + C (2^{-n/6})^2 t 2^{n/6} + C t \leq C t.
 \]
 Hence, $\exists C >0$ such that $\sup_{n \geq 0} \sup_{i_1,i_2 \in \{0,\ldots, \lfloor 2^{\frac{n}{2}} t \rfloor -1\}} M_{n,2}(i_1,i_2,t) \leq C(t+t^2)$. So, we have the desired conclusion (\ref{boundedness}) for $i=2$. This ends the proof of  (\ref{lemma3}).
 \end{enumerate}

\subsection{Proof of (\ref{lemma4})} 
We will consider only the case $i=2$ (by symmetry, the proof is very similar for $i=1$). 
Set  
 \[
 \phi(i_1,i_2,i_3,i_4):=\Delta_{i_1,n}f_1(X^{1},X^{2})\Delta_{i_2,n}f_2(X^{1},X^{2})\Delta_{i_3,n}f_3(X^{1},X^{2})\Delta_{i_4,n}f_4(X^{1},X^{2}).
 \]
Using the product formula (\ref{product formula}), we have that $I^{(2)}_3 \big(\delta_{(i_3+1)2^{-n/2}}^{\otimes 3} \big)I^{(2)}_3 \big(\delta_{(i_4+1)2^{-n/2}}^{\otimes 3} \big)$ equals
\begin{eqnarray*}
&& I^{(2)}_6 \big(\delta_{(i_3+1)2^{-n/2}}^{\otimes 3}\otimes\delta_{(i_4+1)2^{-n/2}}^{\otimes 3} \big) + 9I^{(2)}_4 \big(\delta_{(i_3+1)2^{-n/2}}^{\otimes 2}\otimes\delta_{(i_4+1)2^{-n/2}}^{\otimes 2} \big)\langle \delta_{(i_3+1)2^{-n/2}}, \delta_{(i_4+1)2^{-n/2}} \rangle\\
&& + 18 I^{(2)}_2 \big(\delta_{(i_3+1)2^{-n/2}}\otimes\delta_{(i_4+1)2^{-n/2}} \big)\langle \delta_{(i_3+1)2^{-n/2}}, \delta_{(i_4+1)2^{-n/2}} \rangle^2 + 6\langle \delta_{(i_3+1)2^{-n/2}}, \delta_{(i_4+1)2^{-n/2}} \rangle^3.
\end{eqnarray*} 
So, we get
\begin{eqnarray*}
&& \sum_{i_3,i_4=0}^{\lfloor 2^{\frac{n}{2}} t \rfloor -1}\bigg|E\bigg(\phi(i_1,i_2,i_3,i_4)I^{(2)}_3 \big(\delta_{(i_3+1)2^{-n/2}}^{\otimes 3} \big)I^{(2)}_3 \big(\delta_{(i_4+1)2^{-n/2}}^{\otimes 3} \big)\bigg)\bigg|\\
&=& \sum_{i_3,i_4=0}^{\lfloor 2^{\frac{n}{2}} t \rfloor -1}\bigg|E\bigg(\phi(i_1,i_2,i_3,i_4)I^{(2)}_6 \big(\delta_{(i_3+1)2^{-n/2}}^{\otimes 3}\otimes\delta_{(i_4+1)2^{-n/2}}^{\otimes 3} \big)\bigg)\bigg|\\
&& + 9 \sum_{i_3,i_4=0}^{\lfloor 2^{\frac{n}{2}} t \rfloor -1}\bigg|E\bigg(\phi(i_1,i_2,i_3,i_4)I^{(2)}_4 \big(\delta_{(i_3+1)2^{-n/2}}^{\otimes 2}\otimes\delta_{(i_4+1)2^{-n/2}}^{\otimes 2} \big)\bigg)\bigg| |\langle \delta_{(i_3+1)2^{-n/2}}, \delta_{(i_4+1)2^{-n/2}} \rangle |\\
&& + 18 \sum_{i_3,i_4=0}^{\lfloor 2^{\frac{n}{2}} t \rfloor -1}\bigg|E\bigg(\phi(i_1,i_2,i_3,i_4)I^{(2)}_2 \big(\delta_{(i_3+1)2^{-n/2}}\otimes\delta_{(i_4+1)2^{-n/2}} \big)\bigg)\bigg| \langle \delta_{(i_3+1)2^{-n/2}}, \delta_{(i_4+1)2^{-n/2}} \rangle^2 \\
&& + 6 \sum_{i_3,i_4=0}^{\lfloor 2^{\frac{n}{2}} t \rfloor -1}\bigg|E\bigg(\phi(i_1,i_2,i_3,i_4)\bigg)\bigg| |\langle \delta_{(i_3+1)2^{-n/2}}, \delta_{(i_4+1)2^{-n/2}} \rangle|^3.
\end{eqnarray*}
Thanks to the duality formula (\ref{duality formula}), we get
\begin{eqnarray*}
&& \sum_{i_3,i_4=0}^{\lfloor 2^{\frac{n}{2}} t \rfloor -1}\bigg|E\bigg(\phi(i_1,i_2,i_3,i_4)I^{(2)}_3 \big(\delta_{(i_3+1)2^{-n/2}}^{\otimes 3} \big)I^{(2)}_3 \big(\delta_{(i_4+1)2^{-n/2}}^{\otimes 3} \big)\bigg)\bigg|\\
&=& \sum_{i_3,i_4=0}^{\lfloor 2^{\frac{n}{2}} t \rfloor -1}\bigg|E\bigg( \big \langle D^6_{X^{2}}\big(\phi(i_1,i_2,i_3,i_4)\big),\delta_{(i_3+1)2^{-n/2}}^{\otimes 3}\otimes\delta_{(i_4+1)2^{-n/2}}^{\otimes 3} \big\rangle\bigg)\bigg|\\
\end{eqnarray*}
\begin{eqnarray*}
&& + 9 \sum_{i_3,i_4=0}^{\lfloor 2^{\frac{n}{2}} t \rfloor -1}\bigg|E\bigg(\big \langle D^4_{X^{2}}\big(\phi(i_1,i_2,i_3,i_4)\big), \delta_{(i_3+1)2^{-n/2}}^{\otimes 2}\otimes\delta_{(i_4+1)2^{-n/2}}^{\otimes 2} \big\rangle\bigg)\bigg|\\
&& \hspace{2cm}\times |\langle \delta_{(i_3+1)2^{-n/2}}, \delta_{(i_4+1)2^{-n/2}} \rangle |\\
&& + 18 \sum_{i_3,i_4=0}^{\lfloor 2^{\frac{n}{2}} t \rfloor -1}\bigg|E\bigg(\langle D^2_{X^{2}}\big(\phi(i_1,i_2,i_3,i_4)\big), \delta_{(i_3+1)2^{-n/2}}\otimes\delta_{(i_4+1)2^{-n/2}} \big\rangle\bigg)\bigg| \\
&& \hspace{2cm} \times \langle \delta_{(i_3+1)2^{-n/2}}, \delta_{(i_4+1)2^{-n/2}} \rangle^2 \\
&& + 6 \sum_{i_3,i_4=0}^{\lfloor 2^{\frac{n}{2}} t \rfloor -1}\bigg|E\bigg(\phi(i_1,i_2,i_3,i_4)\bigg)\bigg| |\langle \delta_{(i_3+1)2^{-n/2}}, \delta_{(i_4+1)2^{-n/2}} \rangle|^3.
\end{eqnarray*}
Observe that, thanks to (\ref{Leibnitz0}) and (\ref{Leibnitz1}), for any $k \in \{1,2,3\}$, we have
\begin{eqnarray}
&&D^{2k}_{X^{2}}\big(\phi(i_1,i_2,i_3,i_4)\big)\label{bigderivative}\\
&=&  \sum_{a_1+a_2+a_3+a_4=2k}\phi_{(a_1,a_2,a_3,a_4)}(i_1,i_2,i_3,i_4)\bigg(\frac{\varepsilon_{i_12^{-n/2}} + \varepsilon_{(i_1+1)2^{-n/2} }}{2}\bigg)^{\otimes a_1}\notag\\
&& \tilde{\otimes} \bigg(\frac{\varepsilon_{i_22^{-n/2}} + \varepsilon_{(i_2+1)2^{-n/2} }}{2}\bigg)^{\otimes a_2}\tilde{\otimes}\bigg(\frac{\varepsilon_{i_32^{-n/2}} + \varepsilon_{(i_3+1)2^{-n/2} }}{2}\bigg)^{\otimes a_3}\tilde{\otimes}\bigg(\frac{\varepsilon_{i_42^{-n/2}} + \varepsilon_{(i_4+1)2^{-n/2} }}{2}\bigg)^{\otimes a_4}\notag
\end{eqnarray}
where $(a_1,a_2,a_3,a_4) \in \N^4$ and $\phi_{(a_1,a_2,a_3,a_4)}(i_1,i_2,i_3,i_4)$ is a quantity having a similar form as $\phi(i_1,i_2,i_3,i_4)$ and arising when one differentiates $\Delta_{i_j,n}f_j(X^{1},X^{2})$ in $\phi(i_1,i_2,i_3,i_4)$ $a_j$-times  with respect to $X^{2}$. Thanks to (\ref{bigderivative}), (\ref{12}), (\ref{13}) and since $\phi_{(a_1,a_2,a_3,a_4)}(i_1,i_2,i_3,i_4)$ is bounded, we deduce that

\begin{eqnarray*}
&&\sum_{i_3,i_4=0}^{\lfloor 2^{\frac{n}{2}} t \rfloor -1}\bigg|E\bigg(\phi(i_1,i_2,i_3,i_4)I^{(2)}_3 \big(\delta_{(i_3+1)2^{-n/2}}^{\otimes 3} \big)I^{(2)}_3 \big(\delta_{(i_4+1)2^{-n/2}}^{\otimes 3} \big)\bigg)\bigg|\\
& \leq & C(2^{-n/6})^6\: 2^n \: t^2 + C (2^{-n/6})^4\sum_{i_3,i_4=0}^{\lfloor 2^{\frac{n}{2}} t \rfloor -1}|\langle \delta_{(i_3+1)2^{-n/2}}, \delta_{(i_4+1)2^{-n/2}} \rangle | \\
&& + C(2^{-n/6})^2\sum_{i_3,i_4=0}^{\lfloor 2^{\frac{n}{2}} t \rfloor -1}\langle \delta_{(i_3+1)2^{-n/2}}  \delta_{(i_4+1)2^{-n/2}} \rangle^2 + C \sum_{i_3,i_4=0}^{\lfloor 2^{\frac{n}{2}} t \rfloor -1}|\langle \delta_{(i_3+1)2^{-n/2}}  \delta_{(i_4+1)2^{-n/2}} \rangle|^3 \\
&\leq & C t^2 + C 2^{-n/3} t + C 2^{-n/6} t + Ct.
\end{eqnarray*}
Hence, we deduce immediately that
\begin{equation*}
  \sup_{n \geq 1}\sup_{i_1,i_2 \in \{0, \ldots, \lfloor 2^{\frac{n}{2}} t \rfloor -1\}} \sum_{i_3,i_4=0}^{\lfloor 2^{\frac{n}{2}} t \rfloor -1}\bigg|E\bigg(\phi(i_1,i_2,i_3,i_4)I^{(2)}_3 \big(\delta_{(i_3+1)2^{-n/2}}^{\otimes 3} \big)I^{(2)}_3 \big(\delta_{(i_4+1)2^{-n/2}}^{\otimes 3} \big)\bigg)\bigg| \leq C(t + t^2),
\end{equation*}
which end the proof of (\ref{lemma4}).

\subsection{Proof of (\ref{lemma4'})}
We will consider only the case $(i,j) = (1,2)$ (by symmetry, the proof is very similar for $(i,j) =(2,1)$ and is left to the reader.)
Thanks to (\ref{product formula}), we have
\begin{eqnarray*}
&& I^{(1)}_1 \big(\delta_{(i_3+1)2^{-n/2}} \big)I^{(2)}_2 \big(\delta_{(i_3+1)2^{-n/2}}^{\otimes 2} \big)I^{(1)}_1 \big(\delta_{(i_4+1)2^{-n/2}} \big)I^{(2)}_2 \big(\delta_{(i_4+1)2^{-n/2}}^{\otimes 2} \big)\\
&=& I^{(1)}_2 \big(\delta_{(i_3+1)2^{-n/2}}\otimes\delta_{(i_4+1)2^{-n/2}} \big)I^{(2)}_2 \big(\delta_{(i_3+1)2^{-n/2}}^{\otimes 2} \big)I^{(2)}_2 \big(\delta_{(i_4+1)2^{-n/2}}^{\otimes 2} \big)\\
&& + \langle  \delta_{(i_3+1)2^{-n/2}} , \delta_{(i_4+1)2^{-n/2}} \rangle  I^{(2)}_2 \big(\delta_{(i_3+1)2^{-n/2}}^{\otimes 2} \big)I^{(2)}_2 \big(\delta_{(i_4+1)2^{-n/2}}^{\otimes 2} \big)\\
&=& I^{(1)}_2 \big(\delta_{(i_3+1)2^{-n/2}}\otimes\delta_{(i_4+1)2^{-n/2}} \big)I^{(2)}_4 \big(\delta_{(i_3+1)2^{-n/2}}^{\otimes 2} \otimes\delta_{(i_4+1)2^{-n/2}}^{\otimes 2} \big)\\
&& + 4 I^{(1)}_2 \big(\delta_{(i_3+1)2^{-n/2}}\otimes\delta_{(i_4+1)2^{-n/2}} \big)I^{(2)}_2 \big(\delta_{(i_3+1)2^{-n/2}}\otimes\delta_{(i_4+1)2^{-n/2}} \big)\langle  \delta_{(i_3+1)2^{-n/2}} , \delta_{(i_4+1)2^{-n/2}} \rangle \\
&& + 2 I^{(1)}_2 \big(\delta_{(i_3+1)2^{-n/2}}\otimes\delta_{(i_4+1)2^{-n/2}} \big)\langle  \delta_{(i_3+1)2^{-n/2}} , \delta_{(i_4+1)2^{-n/2}} \rangle^2\\
&& + I^{(2)}_4 \big(\delta_{(i_3+1)2^{-n/2}}^{\otimes 2} \otimes\delta_{(i_4+1)2^{-n/2}}^{\otimes 2} \big)\langle  \delta_{(i_3+1)2^{-n/2}} , \delta_{(i_4+1)2^{-n/2}} \rangle\\
&& + 4 I^{(2)}_2 \big(\delta_{(i_3+1)2^{-n/2}} \otimes \delta_{(i_4+1)2^{-n/2}} \big)\langle  \delta_{(i_3+1)2^{-n/2}} , \delta_{(i_4+1)2^{-n/2}} \rangle^2\\
&& + 2 \langle  \delta_{(i_3+1)2^{-n/2}} , \delta_{(i_4+1)2^{-n/2}} \rangle^3.
\end{eqnarray*}
For $i_1, i_2, i_3, i_4 \in \N$, set  $\phi(i_1, i_2, i_3, i_4):= \prod_{a=1}^4 \Delta_{i_a,n}f_a(X^{1},X^{2})$. Then, thanks to the previous estimate, we get
\begin{eqnarray*}
 && \sum_{i_1,i_2,i_3,i_4=0}^{\lfloor 2^{\frac{n}{2}} t \rfloor -1}\bigg|E\bigg(\prod_{a=1}^4 \Delta_{i_a,n}f_a(X^{1},X^{2})I^{(1)}_1 \big(\delta_{(i_a+1)2^{-n/2}} \big)I^{(2)}_2 \big(\delta_{(i_a+1)2^{-n/2}}^{\otimes 2} \big)\bigg)\bigg|\\
 &\leq & \sum_{i_1,i_2,i_3,i_4=0}^{\lfloor 2^{\frac{n}{2}} t \rfloor -1}\bigg|E\bigg(\phi(i_1, i_2, i_3, i_4)\prod_{a=1}^2 I^{(1)}_1 \big(\delta_{(i_a+1)2^{-n/2}} \big)I^{(2)}_2 \big(\delta_{(i_a+1)2^{-n/2}}^{\otimes 2} \big)\\
 &&\hspace{2cm} \times I^{(1)}_2 \big(\delta_{(i_3+1)2^{-n/2}}\otimes\delta_{(i_4+1)2^{-n/2}} \big)I^{(2)}_4 \big(\delta_{(i_3+1)2^{-n/2}}^{\otimes 2} \otimes\delta_{(i_4+1)2^{-n/2}}^{\otimes 2} \big)\bigg)\bigg|\\
 && + 4\sum_{i_1,i_2,i_3,i_4=0}^{\lfloor 2^{\frac{n}{2}} t \rfloor -1}\bigg|E\bigg(\phi(i_1, i_2, i_3, i_4)\prod_{a=1}^2 I^{(1)}_1 \big(\delta_{(i_a+1)2^{-n/2}} \big)I^{(2)}_2 \big(\delta_{(i_a+1)2^{-n/2}}^{\otimes 2} \big)\\
 &&\hspace{2cm} \times I^{(1)}_2 \big(\delta_{(i_3+1)2^{-n/2}}\otimes\delta_{(i_4+1)2^{-n/2}} \big)I^{(2)}_2 \big(\delta_{(i_3+1)2^{-n/2}}\otimes\delta_{(i_4+1)2^{-n/2}} \big)\bigg)\bigg|\\
 && \hspace{2cm} \times |\langle  \delta_{(i_3+1)2^{-n/2}} , \delta_{(i_4+1)2^{-n/2}} \rangle| \\
&& + 2 \sum_{i_1,i_2,i_3,i_4=0}^{\lfloor 2^{\frac{n}{2}} t \rfloor -1}\bigg|E\bigg(\phi(i_1, i_2, i_3, i_4)\prod_{a=1}^2 I^{(1)}_1 \big(\delta_{(i_a+1)2^{-n/2}} \big)I^{(2)}_2 \big(\delta_{(i_a+1)2^{-n/2}}^{\otimes 2} \big)\\
 &&\hspace{2cm}\times I^{(1)}_2 \big(\delta_{(i_3+1)2^{-n/2}}\otimes\delta_{(i_4+1)2^{-n/2}} \big)\bigg)\bigg|\langle  \delta_{(i_3+1)2^{-n/2}} , \delta_{(i_4+1)2^{-n/2}} \rangle^2 \\
 \end{eqnarray*}
\begin{eqnarray*}
 && + \sum_{i_1,i_2,i_3,i_4=0}^{\lfloor 2^{\frac{n}{2}} t \rfloor -1}\bigg|E\bigg(\phi(i_1, i_2, i_3, i_4)\prod_{a=1}^2 I^{(1)}_1 \big(\delta_{(i_a+1)2^{-n/2}} \big)I^{(2)}_2 \big(\delta_{(i_a+1)2^{-n/2}}^{\otimes 2} \big)\\
 &&\hspace{2cm}\times I^{(2)}_4 \big(\delta_{(i_3+1)2^{-n/2}}^{\otimes 2} \otimes\delta_{(i_4+1)2^{-n/2}}^{\otimes 2} \big) \bigg)\bigg||\langle  \delta_{(i_3+1)2^{-n/2}} , \delta_{(i_4+1)2^{-n/2}} \rangle|\\
 && + 4 \sum_{i_1,i_2,i_3,i_4=0}^{\lfloor 2^{\frac{n}{2}} t \rfloor -1}\bigg|E\bigg(\phi(i_1, i_2, i_3, i_4)\prod_{a=1}^2 I^{(1)}_1 \big(\delta_{(i_a+1)2^{-n/2}} \big)I^{(2)}_2 \big(\delta_{(i_a+1)2^{-n/2}}^{\otimes 2} \big)\\
 &&\hspace{2cm}\times I^{(2)}_2 \big(\delta_{(i_3+1)2^{-n/2}} \otimes \delta_{(i_4+1)2^{-n/2}} \big)\bigg)\bigg|\langle  \delta_{(i_3+1)2^{-n/2}} , \delta_{(i_4+1)2^{-n/2}} \rangle^2 \\
 && + 2 \sum_{i_1,i_2,i_3,i_4=0}^{\lfloor 2^{\frac{n}{2}} t \rfloor -1}\bigg|E\bigg(\phi(i_1, i_2, i_3, i_4)\prod_{a=1}^2 I^{(1)}_1 \big(\delta_{(i_a+1)2^{-n/2}} \big)I^{(2)}_2 \big(\delta_{(i_a+1)2^{-n/2}}^{\otimes 2} \big)\bigg)\bigg|\\
 &&\hspace{2cm}\times |\langle  \delta_{(i_3+1)2^{-n/2}} , \delta_{(i_4+1)2^{-n/2}} \rangle|^3 \\
 && =: \sum_{i=1}^6 L_{n,i}(t).
\end{eqnarray*}
Let us prove that, for any $i\in \{1,\ldots,6\}$ there exists $C >0$ (depending only on $f$) such that
\begin{equation}
L_{n,i}(t)\leq C(t+t^2+t^3+t^4). \label{last-desired conclusion}
\end{equation}
Then the desired conclusion of (\ref{lemma4'}) will follow immediately.

Thanks to the duality formula (\ref{duality formula}), we have
\begin{eqnarray*}
L_{n,1}(t) &=& \sum_{i_1,i_2,i_3,i_4=0}^{\lfloor 2^{\frac{n}{2}} t \rfloor -1}\bigg|E\bigg(\bigg\langle D_{X^{(1)}}^2\bigg( \phi(i_1, i_2, i_3, i_4)\prod_{a=1}^2 I^{(1)}_1 \big(\delta_{(i_a+1)2^{-n/2}} \big)I^{(2)}_2 \big(\delta_{(i_a+1)2^{-n/2}}^{\otimes 2} \big)\bigg), \\
 &&\hspace{2cm} \delta_{(i_3+1)2^{-n/2}}\otimes\delta_{(i_4+1)2^{-n/2}} \bigg\rangle I^{(2)}_4 \big(\delta_{(i_3+1)2^{-n/2}}^{\otimes 2} \otimes\delta_{(i_4+1)2^{-n/2}}^{\otimes 2} \big)\bigg)\bigg|\\ 
 &=& \sum_{i_1,i_2,i_3,i_4=0}^{\lfloor 2^{\frac{n}{2}} t \rfloor -1}\bigg|E\bigg(\bigg\langle D_{X^{(1)}}^2\bigg( \phi(i_1, i_2, i_3, i_4)\prod_{a=1}^2 I^{(1)}_1 \big(\delta_{(i_a+1)2^{-n/2}} \big)\bigg), \\
 && \delta_{(i_3+1)2^{-n/2}}\otimes \delta_{(i_4+1)2^{-n/2}} \bigg\rangle \prod_{a=1}^2I^{(2)}_2 \big(\delta_{(i_a+1)2^{-n/2}}^{\otimes 2} \big) I^{(2)}_4 \big(\delta_{(i_3+1)2^{-n/2}}^{\otimes 2} \otimes \delta_{(i_4+1)2^{-n/2}}^{\otimes 2} \big)\bigg)\bigg|\\
\end{eqnarray*}
When computing the second Malliavin derivative 
\[
D_{X^{(1)}}^2\bigg( \phi(i_1, i_2, i_3, i_4)\prod_{a=1}^2 I^{(1)}_1 \big(\delta_{(i_a+1)2^{-n/2}} \big)\bigg),
\]
 there are three types of terms:
\begin{enumerate}
\item[(1)] The first type consists in terms arising when one only differentiates $\phi(i_1, i_2, i_3, i_4)$. By (\ref{12}), these terms are all bounded by
\begin{eqnarray*}
&& C2^{-n/3}\sum_{i_1,i_2,i_3,i_4=0}^{\lfloor 2^{\frac{n}{2}} t \rfloor -1}\bigg|E\bigg( \widetilde{\phi}(i_1, i_2, i_3, i_4)\prod_{a=1}^2 I^{(1)}_1 \big(\delta_{(i_a+1)2^{-n/2}} \big)I^{(2)}_2 \big(\delta_{(i_a+1)2^{-n/2}}^{\otimes 2} \big) \\
 &&\hspace{2cm} \times I^{(2)}_4 \big(\delta_{(i_3+1)2^{-n/2}}^{\otimes 2} \otimes\delta_{(i_4+1)2^{-n/2}}^{\otimes 2} \big)\bigg)\bigg|,
\end{eqnarray*}
where $\widetilde{\phi}(i_1, i_2, i_3, i_4)$ is a quantity having a similar form as $\phi(i_1, i_2, i_3, i_4)$. By the duality formula (\ref{duality formula}), we deduce that the last quantity is equal to
\begin{eqnarray*}
&& C2^{-n/3}\sum_{i_1,i_2,i_3,i_4=0}^{\lfloor 2^{\frac{n}{2}} t \rfloor -1}\bigg|E\bigg(\bigg\langle D_{X^{(2)}}^4\bigg( \widetilde{\phi}(i_1, i_2, i_3, i_4)\prod_{a=1}^2 I^{(1)}_1 \big(\delta_{(i_a+1)2^{-n/2}} \big)I^{(2)}_2 \big(\delta_{(i_a+1)2^{-n/2}}^{\otimes 2} \big)\bigg), \\
 &&\hspace{4cm} \delta_{(i_3+1)2^{-n/2}}^{\otimes 2} \otimes\delta_{(i_4+1)2^{-n/2}}^{\otimes 2} \bigg\rangle\bigg)\bigg|\\
 &=& C2^{-n/3}\sum_{i_1,i_2,i_3,i_4=0}^{\lfloor 2^{\frac{n}{2}} t \rfloor -1}\bigg|E\bigg(\bigg\langle D_{X^{(2)}}^4\bigg( \widetilde{\phi}(i_1, i_2, i_3, i_4)\prod_{a=1}^2 I^{(2)}_2 \big(\delta_{(i_a+1)2^{-n/2}}^{\otimes 2} \big)\bigg), \\
 &&\hspace{4cm} \delta_{(i_3+1)2^{-n/2}}^{\otimes 2} \otimes\delta_{(i_4+1)2^{-n/2}}^{\otimes 2} \bigg\rangle\prod_{a=1}^2I^{(1)}_1 \big(\delta_{(i_a+1)2^{-n/2}} \big)\bigg)\bigg|.\\
\end{eqnarray*}
When computing the fourth Malliavin derivative  
\[
D_{X^{(2)}}^4\bigg( \widetilde{\phi}(i_1, i_2, i_3, i_4)
\prod_{a=1}^2  I^{(2)}_2 \big(\delta_{(i_a+1)2^{-n/2}}^{\otimes 2} \big)\bigg),
\] there are three types of terms:
\begin{enumerate}
\item The first type consists in terms arising when one only differentiates $\widetilde{\phi}(i_1, i_2, i_3, i_4)$. Thanks to (\ref{12}), these terms are all bounded by 
\begin{eqnarray*}
&& C2^{-n}\sum_{i_1,i_2,i_3,i_4=0}^{\lfloor 2^{\frac{n}{2}} t \rfloor -1}\bigg|E\bigg( \bar{\phi}(i_1, i_2, i_3, i_4)\prod_{a=1}^2 I^{(1)}_1 \big(\delta_{(i_a+1)2^{-n/2}} \big)I^{(2)}_2 \big(\delta_{(i_a+1)2^{-n/2}}^{\otimes 2} \big)\bigg)\bigg|,\\
\end{eqnarray*}
where $\bar{\phi}(i_1, i_2, i_3, i_4)$ is a quantity having a similar form as $\widetilde{\phi}(i_1, i_2, i_3, i_4)$. Observe that the last quantity is less than
\begin{eqnarray}
 && Ct^2 \sup_{i_3,i_4 \in \{0, \ldots, \lfloor 2^{\frac{n}{2}} t \rfloor -1\}}\sum_{i_1,i_2=0}^{\lfloor 2^{\frac{n}{2}} t \rfloor -1}\bigg|E\bigg( \bar{\phi}(i_1, i_2, i_3, i_4)\prod_{a=1}^2 I^{(1)}_1 \big(\delta_{(i_a+1)2^{-n/2}} \big)\notag\\
 && \hspace{6cm} \times I^{(2)}_2 \big(\delta_{(i_a+1)2^{-n/2}}^{\otimes 2} \big)\bigg)\bigg|\notag\\
 &\leq & C(t^3+t^4)\label{Lemma2.10-1},
\end{eqnarray}
where the last inequality is a consequence of (\ref{lemma3}).

\item The second type consists in terms arising when one  differentiates $\widetilde{\phi}(i_1, i_2, i_3,i_4)$ and $I^{(2)}_2 \big(\delta_{(i_1+1)2^{-n/2}}^{\otimes 2} \big)$ but not $I^{(2)}_2 \big(\delta_{(i_2+1)2^{-n/2}}^{\otimes 2} \big)$ (the case when one differentiates $\widetilde{\phi}(i_1, i_2, i_3, i_4)$ and $I^{(2)}_2 \big(\delta_{(i_2+1)2^{-n/2}}^{\otimes 2} \big)$ but not $I^{(2)}_2 \big(\delta_{(i_1+1)2^{-n/2}}^{\otimes 2} \big)$ is completely similar). In this case, with $\rho$ defined in (\ref{rho}) and $\alpha \in \{0,1\}$, the corresponding terms are bounded either by
\begin{eqnarray}
&&C2^{-n}\sum_{i_1,i_2,i_3,i_4=0}^{\lfloor 2^{\frac{n}{2}} t \rfloor -1}\bigg|E\bigg( \bar{\phi}(i_1, i_2, i_3, i_4)I^{(2)}_{\alpha} \big(\delta_{(i_1+1)2^{-n/2}}^{\otimes\alpha} \big)I^{(2)}_2 \big(\delta_{(i_2+1)2^{-n/2}}^{\otimes 2} \big)\notag\\
&& \hspace{4cm}\times \prod_{a=1}^2 I^{(1)}_1 \big(\delta_{(i_a+1)2^{-n/2}} \big)\bigg)\bigg||\rho(i_1-i_3)|\label{Lemma2.10-2}
\end{eqnarray}
or by the same quantity with $|\rho(i_1-i_4)|$ instead of $|\rho(i_1-i_3)|$. We have obtained the previous estimate by using (\ref{derivative- multiple,integral}) and (\ref{12}). Observe that, by the duality formula (\ref{duality formula}), we have
\begin{eqnarray*}
&&\bigg|E\bigg( \bar{\phi}(i_1, i_2, i_3, i_4)I^{(2)}_{\alpha} \big(\delta_{(i_1+1)2^{-n/2}}^{\otimes\alpha} \big)I^{(2)}_2 \big(\delta_{(i_2+1)2^{-n/2}}^{\otimes 2} \big)\prod_{a=1}^2 I^{(1)}_1 \big(\delta_{(i_a+1)2^{-n/2}} \big)\bigg)\bigg|\\
&=& \bigg|E\bigg( \bigg\langle D_{X^{(2)}}^2\bigg(\bar{\phi}(i_1, i_2, i_3, i_4)I^{(2)}_{\alpha} \big(\delta_{(i_1+1)2^{-n/2}}^{\otimes\alpha} \big)\bigg), \delta_{(i_2+1)2^{-n/2}}^{\otimes 2} \bigg\rangle\\
&& \hspace{4cm}\times \prod_{a=1}^2 I^{(1)}_1 \big(\delta_{(i_a+1)2^{-n/2}} \big)\bigg)\bigg|\\
&=:& F(i_1,i_2,i_3,i_4,\alpha).
\end{eqnarray*}
We have
\begin{itemize}
\item \underline{For $\alpha = 0$:}
\begin{eqnarray*}
&& F(i_1,i_2,i_3,i_4,0)\\
&=&\bigg|E\bigg( \bigg\langle D_{X^{(2)}}^2\big(\bar{\phi}(i_1, i_2, i_3, i_4)\big), \delta_{(i_2+1)2^{-n/2}}^{\otimes 2} \bigg\rangle \prod_{a=1}^2 I^{(1)}_1 \big(\delta_{(i_a+1)2^{-n/2}} \big)\bigg)\bigg|\\
\end{eqnarray*}
\begin{eqnarray*}
&\leq & C(2^{-n/6})^2\|I^{(1)}_1 \big(\delta_{(i_1+1)2^{-n/2}} \big)\|_2\|I^{(1)}_1 \big(\delta_{(i_2+1)2^{-n/2}} \big)\|_2 \\
&\leq & C2^{-n/2},
\end{eqnarray*}
where we have the first inequality since $f\in C_b^\infty$ and thanks to (\ref{12}) and to the Cauchy-Schwarz inequality. The second inequality follows from (\ref{isometry}). 

\item \underline{For $\alpha =1$:} Thanks to (\ref{Leibnitz0}),(\ref{Leibnitz1}), (\ref{derivative- multiple,integral}),(\ref{12}) and (\ref{isometry}), we have
\begin{eqnarray*}
&& F(i_1,i_2,i_3,i_4,1)\\
&\leq & C2^{-n/3}E\big(\big|I^{(1)}_1 \big(\delta_{(i_1+1)2^{-n/2}} \big)\big|\big|I^{(1)}_1 \big(\delta_{(i_2+1)2^{-n/2}} \big)\big|\big)\\
&\leq & C2^{-n/3}\|I^{(1)}_1 \big(\delta_{(i_1+1)2^{-n/2}} \big)\|_2\|I^{(1)}_1 \big(\delta_{(i_2+1)2^{-n/2}} \big)\|_2 \leq C2^{-n/2}.
\end{eqnarray*}
\end{itemize}
For $\alpha \in \{0,1\}$, by plugging  $F(i_1,i_2,i_3,i_4,\alpha)$ into (\ref{Lemma2.10-2}) we deduce that the quantity given in (\ref{Lemma2.10-2}) is bounded by 
\begin{eqnarray}
 Ct^22^{-n/2}\sum_{i_1, i_3=0}^{\lfloor 2^{\frac{n}{2}} t \rfloor -1}|\rho(i_1-i_3)| \leq Ct^3(\sum_{r\in \Z}|\rho(r)|)\leq Ct^3. \label{Lemma2.10-3}
\end{eqnarray}
Note that $\sum_{r\in \Z}|\rho(r)| < \infty$ because $H=1/6 < 1/2$.

\item The third type consists in terms arising when one  differentiates $\widetilde{\phi}(i_1, i_2, i_3, i_4)$, $I^{(2)}_2 \big(\delta_{(i_1+1)2^{-n/2}}^{\otimes 2} \big)$ and $I^{(2)}_2 \big(\delta_{(i_2+1)2^{-n/2}}^{\otimes 2} \big)$. In this case, thanks to  (\ref{derivative- multiple,integral}) and (\ref{12}), for $\alpha, \beta \in \{0,1\}$ the corresponding terms can be bounded either by
\begin{eqnarray}
&&C2^{-n}\sum_{i_1,i_2,i_3,i_4=0}^{\lfloor 2^{\frac{n}{2}} t \rfloor -1}\bigg|E\bigg( \bar{\phi}(i_1, i_2, i_3, i_4)I^{(2)}_{\alpha} \big(\delta_{(i_1+1)2^{-n/2}}^{\otimes\alpha} \big)I^{(2)}_{\beta} \big(\delta_{(i_2+1)2^{-n/2}}^{\otimes \beta} \big)\notag\\
&& \hspace{2cm}\times \prod_{a=1}^2 I^{(1)}_1 \big(\delta_{(i_a+1)2^{-n/2}} \big)\bigg)\bigg||\rho(i_1-i_3)||\rho(i_2-i_3)|,\label{Lemma2.10-4}
\end{eqnarray} 
or by the same quantity with $|\rho(i_1-i_4)||\rho(i_2-i_4)|$ or $|\rho(i_1-i_3)||\rho(i_2-i_4)|$ or $|\rho(i_2-i_3)||\rho(i_1-i_4)|$ instead of $|\rho(i_1-i_3)||\rho(i_2-i_3)|$. Observe that 
\[
\bigg|E\bigg( \bar{\phi}(i_1, i_2, i_3, i_4)I^{(2)}_{\alpha} \big(\delta_{(i_1+1)2^{-n/2}}^{\otimes\alpha} \big)I^{(2)}_{\beta} \big(\delta_{(i_2+1)2^{-n/2}}^{\otimes \beta} \big)\prod_{a=1}^2 I^{(1)}_1 \big(\delta_{(i_a+1)2^{-n/2}} \big)\bigg)\bigg|
\]
is uniformly bounded in $n$. So, the quantity given in (\ref{Lemma2.10-4}) is bounded by
\begin{eqnarray}
 Ct2^{-n/2}\sum_{i_1,i_2,i_3 =0}^{\lfloor 2^{\frac{n}{2}} t \rfloor -1}|\rho(i_1-i_3)||\rho(i_2-i_3)|\leq  Ct^2(\sum_{r\in \Z}|\rho(r)|)^2 \leq Ct^2. \label{Lemma2.10-5}
\end{eqnarray}
\end{enumerate}
Thanks to (\ref{Lemma2.10-5}), (\ref{Lemma2.10-3}) and (\ref{Lemma2.10-1}), we deduce that the terms of the first type in $L_{n,1}(t)$ agree with the desired conclusion (\ref{last-desired conclusion}). 

\item[(2)] The second type consists in terms arising when one  differentiates $\phi(i_1, i_2, i_3, i_4)$ and $I^{(1)}_1 \big(\delta_{(i_1+1)2^{-n/2}} \big)$, but not $I^{(1)}_1 \big(\delta_{(i_2+1)2^{-n/2}} \big)$ (the case where one differentiates $\phi(i_1, i_2, i_3, i_4)$ and $I^{(1)}_1 \big(\delta_{(i_2+1)2^{-n/2}} \big)$, but not $I^{(1)}_1 \big(\delta_{(i_1+1)2^{-n/2}} \big)$ is completely similar). In this case, thanks to (\ref{12}), the corresponding terms are all bounded either by 
\begin{eqnarray*}
&& C2^{-n/3}\sum_{i_1,i_2,i_3,i_4=0}^{\lfloor 2^{\frac{n}{2}} t \rfloor -1}\bigg|E\bigg( \widetilde{\phi}(i_1, i_2, i_3, i_4)I^{(1)}_1 \big(\delta_{(i_2+1)2^{-n/2}} \big)\prod_{a=1}^2 I^{(2)}_2 \big(\delta_{(i_a+1)2^{-n/2}}^{\otimes 2} \big) \\
 &&\hspace{2cm} \times I^{(2)}_4 \big(\delta_{(i_3+1)2^{-n/2}}^{\otimes 2} \otimes\delta_{(i_4+1)2^{-n/2}}^{\otimes 2} \big)\bigg)\bigg||\rho(i_1-i_3)|,
\end{eqnarray*}
or by the same quantity with $|\rho(i_1-i_4)|$ instead of $|\rho(i_1-i_3)|$. By the duality formula (\ref{duality formula}), the previous quantity is equal to
\begin{eqnarray*}
&& C2^{-n/3}\sum_{i_1,i_2,i_3,i_4=0}^{\lfloor 2^{\frac{n}{2}} t \rfloor -1}\bigg|E\bigg(\bigg\langle D^4_{X^{(2)}}\bigg( \widetilde{\phi}(i_1, i_2, i_3, i_4)\prod_{a=1}^2 I^{(2)}_2 \big(\delta_{(i_a+1)2^{-n/2}}^{\otimes 2} \big)\bigg), \\
 &&\hspace{2cm} \delta_{(i_3+1)2^{-n/2}}^{\otimes 2} \otimes\delta_{(i_4+1)2^{-n/2}}^{\otimes 2} \bigg\rangle I^{(1)}_1 \big(\delta_{(i_2+1)2^{-n/2}} \big)\bigg)\bigg||\rho(i_1-i_3)|.
\end{eqnarray*}
When computing the fourth Malliavin derivative  
\[
D_{X^{(2)}}^4\bigg( \widetilde{\phi}(i_1, i_2, i_3, i_4)
\prod_{a=1}^2  I^{(2)}_2 \big(\delta_{(i_a+1)2^{-n/2}}^{\otimes 2} \big)\bigg),
\] there are three types of terms, exactly as it has been proved previously:
\begin{enumerate}
\item The first type consists in terms arising when one only differentiates $\widetilde{\phi}(i_1, i_2, i_3, i_4)$. Thanks to (\ref{12}), these terms are all bounded by 
\begin{eqnarray}
&& C2^{-n}\sum_{i_1,i_2,i_3,i_4=0}^{\lfloor 2^{\frac{n}{2}} t \rfloor -1}\bigg|E\bigg( \bar{\phi}(i_1, i_2, i_3, i_4)\prod_{a=1}^2 I^{(2)}_2 \big(\delta_{(i_a+1)2^{-n/2}}^{\otimes 2} \big) \notag\\
 &&\hspace{2cm} \times I^{(1)}_1 \big(\delta_{(i_2+1)2^{-n/2}} \big)\bigg)\bigg||\rho(i_1-i_3)|. \label{Lemma2.10-6}
\end{eqnarray}
Observe that  since $f \in C_b^\infty$ and thanks to (\ref{duality formula}), (\ref{12}) and (\ref{isometry}), we have
\begin{eqnarray*}
&&\bigg|E\bigg( \bar{\phi}(i_1, i_2, i_3, i_4)\prod_{a=1}^2 I^{(2)}_2 \big(\delta_{(i_a+1)2^{-n/2}}^{\otimes 2} \big)  I^{(1)}_1 \big(\delta_{(i_2+1)2^{-n/2}} \big)\bigg)\bigg|\\
&=& \bigg|E\bigg( \big\langle D_{X^{(1)}}\big(\bar{\phi}(i_1, i_2, i_3, i_4)\prod_{a=1}^2 I^{(2)}_2 \big(\delta_{(i_a+1)2^{-n/2}}^{\otimes 2} \big)\big) ,\delta_{(i_2+1)2^{-n/2}} \big\rangle\bigg)\bigg|\\
&\leq & C2^{-n/6}E\bigg( \prod_{a=1}^2 \big| I^{(2)}_2 \big(\delta_{(i_a+1)2^{-n/2}}^{\otimes 2} \big)\big|\bigg) \\
&\leq & C2^{-n/6} \|I^{(2)}_2 \big(\delta_{(i_1+1)2^{-n/2}}^{\otimes 2} \big)\|_2\|I^{(2)}_2 \big(\delta_{(i_2+1)2^{-n/2}}^{\otimes 2} \big)\|_2 \leq C2^{-n/2}.
\end{eqnarray*}
Hence, we deduce that the quantity given in (\ref{Lemma2.10-6}) is bounded by 
\begin{eqnarray}
Ct^22^{-n/2}\sum_{i_1,i_3=0}^{\lfloor 2^{\frac{n}{2}} t \rfloor -1}|\rho(i_1-i_3)|\leq Ct^3(\sum_{r\in \Z}|\rho(r)|) \leq Ct^3. \label{Lemma2.10-7}
\end{eqnarray}

\item The second type consists in terms arising when one  differentiates $\widetilde{\phi}(i_1, i_2, i_3, i_4)$ and $I^{(2)}_2 \big(\delta_{(i_1+1)2^{-n/2}}^{\otimes 2} \big)$ but not $I^{(2)}_2 \big(\delta_{(i_2+1)2^{-n/2}}^{\otimes 2} \big)$ (the case when one differentiates $\widetilde{\phi}(i_1, i_2, i_3, i_4)$ and $I^{(2)}_2 \big(\delta_{(i_2+1)2^{-n/2}}^{\otimes 2} \big)$ but not $I^{(2)}_2 \big(\delta_{(i_1+1)2^{-n/2}}^{\otimes 2} \big)$ is completely similar). In this case, thanks to (\ref{12}) and for $\alpha \in \{0,1\}$, the corresponding terms are all bounded either by
\begin{eqnarray}
&&C2^{-n}\sum_{i_1,i_2,i_3,i_4=0}^{\lfloor 2^{\frac{n}{2}} t \rfloor -1}\bigg|E\bigg( \bar{\phi}(i_1, i_2, i_3, i_4)I^{(2)}_{\alpha} \big(\delta_{(i_1+1)2^{-n/2}}^{\otimes\alpha} \big)I^{(2)}_2 \big(\delta_{(i_2+1)2^{-n/2}}^{\otimes 2} \big)\notag\\
&& \hspace{2cm}\times  I^{(1)}_1 \big(\delta_{(i_2+1)2^{-n/2}} \big)\bigg)\bigg||\rho(i_1-i_3)||\rho(i_1-i_4)|\label{Lemma2.10-8}
\end{eqnarray}
or by the same quantity with $|\rho(i_1-i_3)|$ instead of $|\rho(i_1-i_4)|$. Observe that, by (\ref{duality formula}) and (\ref{12}) among other things and since $f \in C_b^\infty$, we have
\begin{eqnarray*}
&&\bigg|E\bigg( \bar{\phi}(i_1, i_2, i_3, i_4)I^{(2)}_{\alpha} \big(\delta_{(i_1+1)2^{-n/2}}^{\otimes\alpha} \big)I^{(2)}_2 \big(\delta_{(i_2+1)2^{-n/2}}^{\otimes 2} \big)I^{(1)}_1 \big(\delta_{(i_2+1)2^{-n/2}} \big)\bigg)\bigg|\\
&= & \bigg|E\bigg( \big\langle D_{X^{(1)}}\big(\bar{\phi}(i_1, i_2, i_3, i_4)\big), \delta_{(i_2+1)2^{-n/2}} \big\rangle I^{(2)}_{\alpha} \big(\delta_{(i_1+1)2^{-n/2}}^{\otimes\alpha} \big)I^{(2)}_2 \big(\delta_{(i_2+1)2^{-n/2}}^{\otimes 2} \big)\bigg)\bigg|\\
&\leq & C2^{-n/6}\bigg|E\bigg( \chi(i_1, i_2, i_3, i_4) I^{(2)}_{\alpha} \big(\delta_{(i_1+1)2^{-n/2}}^{\otimes\alpha} \big)I^{(2)}_2 \big(\delta_{(i_2+1)2^{-n/2}}^{\otimes 2} \big)\bigg)\bigg|\\
&=& C2^{-n/6}\bigg|E\bigg(\big\langle D^2_{X^{(2)}}\big( \chi(i_1, i_2, i_3, i_4) I^{(2)}_{\alpha} \big(\delta_{(i_1+1)2^{-n/2}}^{\otimes\alpha} \big)\big), \delta_{(i_2+1)2^{-n/2}}^{\otimes 2} \big\rangle\bigg)\bigg|\\
&\leq & C2^{-n/2},
\end{eqnarray*}
where $\chi(i_1, i_2, i_3, i_4)$ is a quantity having a similar form as $\bar{\phi}(i_1, i_2, i_3, i_4)$. Thus, we get that the quantity given by (\ref{Lemma2.10-8}) is bounded by
\begin{eqnarray}
C2^{-n}t \sum_{i_1,i_3,i_4=0}^{\lfloor 2^{\frac{n}{2}} t \rfloor -1}|\rho(i_1-i_3)||\rho(i_1-i_4)|&\leq & Ct^22^{-n/2}(\sum_{r\in \Z}|\rho(r)|)^2 \notag\\
&\leq & C2^{-n/2} t^2. \label{Lemma2.10-9}
\end{eqnarray}

\item The third type consists in terms arising when one  differentiates $\widetilde{\phi}(i_1, i_2, i_3, i_4)$, $I^{(2)}_2 \big(\delta_{(i_1+1)2^{-n/2}}^{\otimes 2} \big)$ and $I^{(2)}_2 \big(\delta_{(i_2+1)2^{-n/2}}^{\otimes 2} \big)$. In this case, thanks to (\ref{12}), for $\alpha, \beta \in \{0,1\}$ the corresponding terms can be bounded either by
\begin{eqnarray}
&&C2^{-n}\sum_{i_1,i_2,i_3,i_4=0}^{\lfloor 2^{\frac{n}{2}} t \rfloor -1}\bigg|E\bigg( \bar{\phi}(i_1, i_2, i_3, i_4)I^{(2)}_{\alpha} \big(\delta_{(i_1+1)2^{-n/2}}^{\otimes\alpha} \big)I^{(2)}_{\beta} \big(\delta_{(i_2+1)2^{-n/2}}^{\otimes \beta} \big)\notag\\
&& \hspace{1cm}\times  I^{(1)}_1 \big(\delta_{(i_2+1)2^{-n/2}} \big)\bigg)\bigg||\rho(i_1-i_3)||\rho(i_1-i_4)||\rho(i_2-i_4)|,\label{Lemma2.10-10}
\end{eqnarray} 
or by the same quantity with $|\rho(i_1-i_3)||\rho(i_2-i_3)|$ or $|\rho(i_1-i_3)||\rho(i_2-i_4)|$ or $|\rho(i_2-i_3)||\rho(i_1-i_4)|$ instead of $|\rho(i_1-i_4)||\rho(i_2-i_4)|$. Observe that
\begin{eqnarray*}
\bigg|E\bigg( \bar{\phi}(i_1, i_2, i_3, i_4)I^{(2)}_{\alpha} \big(\delta_{(i_1+1)2^{-n/2}}^{\otimes\alpha} \big)I^{(2)}_{\beta} \big(\delta_{(i_2+1)2^{-n/2}}^{\otimes \beta} \big) I^{(1)}_1 \big(\delta_{(i_2+1)2^{-n/2}} \big)\bigg)\bigg|
\end{eqnarray*}
is uniformly bounded in $n$. So, we deduce that the quantity given by (\ref{Lemma2.10-10}) is bounded by
\begin{eqnarray}
C2^{-n}\sum_{i_1,i_2,i_3,i_4=0}^{\lfloor 2^{\frac{n}{2}} t \rfloor -1}|\rho(i_1-i_3)||\rho(i_1-i_4)||\rho(i_2-i_4)|&\leq & C2^{-n/2}t (\sum_{r\in \Z}|\rho(r)|)^3 \notag\\
& \leq & C2^{-n/2}t. \label{Lemma2.10-11}
\end{eqnarray}
\end{enumerate}
Combining (\ref{Lemma2.10-11}), (\ref{Lemma2.10-9}) and (\ref{Lemma2.10-7}), we deduce that the terms of the second type in $L_{n,1}(t)$ agree with the desired conclusion (\ref{last-desired conclusion}).

\item[(3)] The third type consists in terms arising when one only differentiates $\prod_{a=1}^2I^{(1)}_1 \big(\delta_{(i_a+1)2^{-\frac{n}{2}}} \big)$.
 In this case, thanks to (\ref{Leibnitz0}) and (\ref{derivative- multiple,integral}), the corresponding term is equal to:
 \begin{eqnarray*}
 &&\sum_{i_1,i_2,i_3,i_4=0}^{\lfloor 2^{\frac{n}{2}} t \rfloor -1}\bigg|E\bigg(\phi(i_1, i_2, i_3, i_4)\prod_{a=1}^2 I^{(2)}_2 \big(\delta_{(i_a+1)2^{-n/2}}^{\otimes 2} \big)I^{(2)}_4 \big(\delta_{(i_3+1)2^{-n/2}}^{\otimes 2} \otimes\delta_{(i_4+1)2^{-n/2}}^{\otimes 2} \big)\bigg)\bigg|\\
 &&\hspace{2cm} \times \big|\langle \delta_{(i_1+1)2^{-n/2}}\tilde{\otimes}\delta_{(i_2+1)2^{-n/2}}, \delta_{(i_3+1)2^{-n/2}}\otimes\delta_{(i_4+1)2^{-n/2}}\rangle\big|\\
 &\leq &  2^{-n/3}\sum_{i_1,i_2,i_3,i_4=0}^{\lfloor 2^{\frac{n}{2}} t \rfloor -1}\bigg|E\bigg(\phi(i_1, i_2, i_3, i_4)\prod_{a=1}^2 I^{(2)}_2 \big(\delta_{(i_a+1)2^{-n/2}}^{\otimes 2} \big)\\
 && \hspace{2cm}\times I^{(2)}_4 \big(\delta_{(i_3+1)2^{-n/2}}^{\otimes 2} \otimes\delta_{(i_4+1)2^{-n/2}}^{\otimes 2} \big)\bigg)\bigg||\rho(i_1 -i_3)||\rho(i_2-i_4)|\\
 && +  2^{-n/3}\sum_{i_1,i_2,i_3,i_4=0}^{\lfloor 2^{\frac{n}{2}} t \rfloor -1}\bigg|E\bigg(\phi(i_1, i_2, i_3, i_4)\prod_{a=1}^2 I^{(2)}_2 \big(\delta_{(i_a+1)2^{-n/2}}^{\otimes 2} \big)\\
 &&\hspace{2cm}\times I^{(2)}_4 \big(\delta_{(i_3+1)2^{-n/2}}^{\otimes 2} \otimes\delta_{(i_4+1)2^{-n/2}}^{\otimes 2} \big)\bigg)\bigg||\rho(i_2 -i_3)||\rho(i_1-i_4)|.
 \end{eqnarray*}
 It suffices to prove that the second quantity agree with the desired conclusion (\ref{last-desired conclusion}) (similarly, the first quantity agree as well with (\ref{last-desired conclusion})). Thanks to the duality formula (\ref{duality formula}), we have that the last quantity is equal to
 \begin{eqnarray}
 && 2^{-n/3}\sum_{i_1,i_2,i_3,i_4=0}^{\lfloor 2^{\frac{n}{2}} t \rfloor -1}\bigg|E\bigg(\bigg\langle D^4_{X^{(2)}}\bigg(\phi(i_1, i_2, i_3, i_4)\prod_{a=1}^2 I^{(2)}_2 \big(\delta_{(i_a+1)2^{-n/2}}^{\otimes 2} \big)\bigg), \label{Lemma2.10-12}\\
 && \hspace{3cm}\delta_{(i_3+1)2^{-n/2}}^{\otimes 2} \otimes\delta_{(i_4+1)2^{-n/2}}^{\otimes 2} \bigg\rangle\bigg)\bigg||\rho(i_2 -i_3)||\rho(i_1-i_4)|. \notag
 \end{eqnarray}
 Observe that one can prove, thanks to (\ref{12}) among other things (and following the approach already used several times previously) that 
 \begin{eqnarray*}
 && \bigg|E\bigg(\bigg\langle D^4_{X^{(2)}}\bigg(\phi(i_1, i_2, i_3, i_4)\prod_{a=1}^2 I^{(2)}_2 \big(\delta_{(i_a+1)2^{-n/2}}^{\otimes 2} \big)\bigg),\delta_{(i_3+1)2^{-n/2}}^{\otimes 2} \otimes\delta_{(i_4+1)2^{-n/2}}^{\otimes 2} \bigg\rangle\bigg)\bigg|\\
 &\leq & C2^{-2n/3}.
 \end{eqnarray*}
 Hence, we get that the quantity given in (\ref{Lemma2.10-12}) is bounded by
 \begin{eqnarray*}
 C2^{-n}\sum_{i_1,i_2,i_3,i_4=0}^{\lfloor 2^{\frac{n}{2}} t \rfloor -1}|\rho(i_2 -i_3)||\rho(i_1-i_4)|\leq Ct^2(\sum_{r\in \Z}|\rho(r)|)^2 \leq Ct^2,
 \end{eqnarray*}
 which agrees with the desired conclusion (\ref{last-desired conclusion}).
\end{enumerate}
Finally, we have proved that $L_{n,1}(t)$ agrees with the desired conclusion (\ref{last-desired conclusion}).

The motivated reader may check that there is no additional difficulties to prove that for all $i\in \{2, \ldots, 5\}$, $L_{n,i}(t)$ agrees with the desired conclusion (\ref{last-desired conclusion}). Indeed, all the arguments and techniques which are needed to prove this claim, were already introduced and used along the previous proof. 

It remains to prove that $L_{n,6}(t)$ agrees with the desired conclusion (\ref{last-desired conclusion}).
Observe that 
\begin{eqnarray*}
&& L_{n,6}(t)\\
&=& 22^{-n/2}\sum_{i_1,i_2,i_3,i_4=0}^{\lfloor 2^{\frac{n}{2}} t \rfloor -1}\bigg|E\bigg(\phi(i_1, i_2, i_3, i_4)\prod_{a=1}^2 I^{(1)}_1 \big(\delta_{(i_a+1)2^{-n/2}} \big)I^{(2)}_2 \big(\delta_{(i_a+1)2^{-n/2}}^{\otimes 2} \big)\bigg)\bigg|\\
 &&\hspace{3cm}\times |\rho(i_3 - i_4)|^3\\
 &\leq & 22^{-n/2}\sum_{i_3,i_4=0}^{\lfloor 2^{\frac{n}{2}} t \rfloor -1}\bigg(\sup_{i_3,i_4 \in \{0, \ldots, \lfloor 2^{\frac{n}{2}} t \rfloor -1\}}\sum_{i_1,i_2=0}^{\lfloor 2^{\frac{n}{2}} t \rfloor -1}\bigg|E\bigg(\phi(i_1, i_2, i_3, i_4)\prod_{a=1}^2 I^{(1)}_1 \big(\delta_{(i_a+1)2^{-n/2}} \big)\\
 && \hspace{3cm}\times I^{(2)}_2 \big(\delta_{(i_a+1)2^{-n/2}}^{\otimes 2} \big)\bigg)\bigg| \bigg)|\rho(i_3 - i_4)|^3\\
 &\leq & C(t+t^2)2^{-n/2} \sum_{i_3,i_4=0}^{\lfloor 2^{\frac{n}{2}} t \rfloor -1}|\rho(i_3-i_4)|^3 \leq C(t+t^2)t\big(\sum_{r\in \Z}|\rho(r)|^3\big)\leq C(t^2+t^3),
\end{eqnarray*}
where the second inequality is a consequence of (\ref{lemma3}). Thanks to the previous estimate, it is clear that $L_{n,6}(t)$ agrees with the desired conclusion (\ref{last-desired conclusion}). The proof of  (\ref{lemma4'}) is now complete.

\subsection{Proof of (\ref{lemma4''})}
The proof is similar to the proof of (\ref{lemma4'}) and is left to the reader. See also \cite[Lemma 3.5]{NRS} for a very similar result.
\bigskip 

\textbf{Acknowledgements.} This paper is part of my PhD thesis. I thank my supervisor Ivan Nourdin for several interesting discussions about this article and for his careful reading and helpful comments. Also,  I thank an anonymous referee for his/her many valuable comments and remarks on a previous version of this work.

\end{document}